\def\bilin#1#2{\left\langle#1,\,#2\right\rangle}
 \newtheorem{theorem}{Theorem}[section]
 \newtheorem{corollary}[theorem]{Corollary}
 \newtheorem{lemma}[theorem]{Lemma}
 \newtheorem{definition}[theorem]{Definition}
 \newtheorem{example}[theorem]{Example}
 \newtheorem{remark}[theorem]{Remark}
\numberwithin{equation}{section}
\newenvironment{proof}{\smallskip\par{\sc Proof.}\enspace}%
 {{\unskip\nobreak\hfil\penalty50\hskip2em
          \hbox{}\nobreak\hfil{\rule[-1pt]{5pt}{10pt}}
          \parfillskip=0pt\finalhyphendemerits=0
          \par\medskip}} 
\begin{document}

\vspace*{.3in}

\begin{center}
\LARGE
{\sf A Generalization of Littlewood-Paley Type Inequality for Evolution Systems Associated with Pseudo Differential Operators}
\end{center}

\bigskip
\begin{center}
Un Cig Ji \\
Department of Mathematics\\
Institute for Industrial and Applied Mathematics\\
Chungbuk National University\\
Cheongju 28644, Korea \\
\texttt{E-Mail:uncigji@chungbuk.ac.kr }
\end{center}

\begin{center}
Jae Hun Kim \\
Department of Mathematics\\
Chungbuk National University\\
Cheongju 28644, Korea \\
\texttt{E-Mail:jaehunkim@chungbuk.ac.kr }
\end{center}

\begin{abstract}
In this paper, we first prove that the Littlewood-Paley $g$-function,
related to the convolution corresponding to the composition of pseudo-differential operator and evolution system
associated with pseudo-differential operators, is a bounded operator from
$L^{q}((a,b)\times \mathbb{R}^{d};V)$ with a Hilbert space $V$ into $L^{q}((a,b)\times \mathbb{R}^{d})$.
Secondly, we prove that the sharp function of the Littlewood-Paley $g$-function is bounded by some maximal function.
Finally, by applying Fefferman-Stein theorem and Hardy-Littlewood maximal theorem,
we prove the Littlewood-Paley type inequality for evolution systems associated with pseudo-differential operators.
\end{abstract}

\bigskip
\noindent
{\bf Mathematics Subject Classifications (2020): 42B25,  42B37, 47G30}

\bigskip
\noindent
{\bfseries Keywords: pseudo differential operator, evolution system, Fefferman-Stein theorem,
Hardy-Littlewood maximal theorem, Littlewood-Paley inequality}
\bigskip
\noindent

\tableofcontents
\section{Introduction}
The Littlewood-Paley inequality for the heat semigroup $\{T_{t}\}_{t\geq 0}$ defined by
\begin{align*}
T_{t}f(x)=p(t,\cdot)*f(x),\quad p(t,x)=\frac{1}{(4\pi t)^{d/2}}e^{-\frac{|x|^2}{4t}},
\quad t>0,\quad x\in\mathbb{R}^{d},
\end{align*}
is as follows: for any $p>1$ and $f\in L^{p}(\mathbb{R}^d)$,
\begin{equation}\label{eqn: classcal LP ineq}
\int_{\mathbb{R}^d}\left(\int_{0}^{\infty}|(-\Delta)^{\frac{1}{2}}T_{t}f(x)|^{2}dt\right)^{\frac{p}{2}}dx
\leq N_{p}\int_{\mathbb{R}^d}|f(x)|^p dx.
\end{equation}

Let $\psi:[0,\infty)\times \mathbb{R}^{d}\rightarrow \mathbb{C}$ be a measurable function.
For each $t\geq 0$, the pseudo-differential operator $L_{\psi}(t)$ with the symbol $\psi$
is defined by
\begin{align*}
L_{\psi}(t)f(x)=\mathcal{F}^{-1}(\psi(t,\xi)\mathcal{F}f(\xi))(x)
\end{align*}
for suitable functions $f$ defined on $\mathbb{R}^{d}$,
where $\mathcal{F}$ and $\mathcal{F}^{-1}$ are the Fourier transform
and Fourier inverse transform, respectively (see \eqref{eqn:FTand FIT}).
For each $t>s$, we define the operator $\mathcal{T}_{\psi}(t,s)$ by
\begin{equation*}
\mathcal{T}_{\psi}(t,s)f(x)=p_{\psi}(t,s,\cdot)*f(x)
\end{equation*}
for suitable functions $f$, where
\begin{align*}
p_{\psi}(t,s,x):=\mathcal{F}^{-1}\left(\exp\left(\int_{s}^{t}\psi(r,\xi)dr\right)\right)(x).
\end{align*}
if the right hand side is well-defined, and so we have
\begin{align*}
\mathcal{T}_{\psi}(t,s)f(x)
=\mathcal{F}^{-1}\left(\exp\left(\int_{s}^{t}\psi(r,\xi)dr\right)\mathcal{F}f(\xi)\right)(x).
\end{align*}

Recently, in \cite{Ji-Kim 2025}, the authors proved the following inequality under certain conditions:
for any $0<a<\infty$,
there exists a constant $C>0$ such that
for any $s,l\geq 0$ and $f\in L^{p}(\mathbb{R}^{d})$,
\begin{align*}
\int_{\mathbb{R}^{d}}\left(\int_{s}^{s+a}(t-s)^{\frac{q\gamma_{1}}{\gamma_{2}}-1}
   |L_{\psi_{1}}(l)\mathcal{T}_{\psi_{2}}(t,s)f(x)|^{q}dt\right)^{\frac{p}{q}}dx
\leq C\int_{\mathbb{R}^{d}}|f(x)|^{p}dx,
\end{align*}
where $\gamma_{1},\gamma_{2}$ are corresponding to the orders of the symbols $\psi_{1}$ and $\psi_{2}$, respectively.
(see \textbf{(S1)} in Section \ref{sec: symbol}).
Moreover, the author proved that
if $\psi_{1}(l,\xi)=\psi_{1}(\xi)$ and $\psi_{2}(r,\xi)=\psi_{2}(\xi)$, that is,
$\psi_{1}(l,\xi)$ and $\psi_{2}(r,\xi)$ are constant with respect to the variables $l$ and $r$, respectively,
and $\psi_{1}$, $\psi_{2}$ satisfy the homogeneity
\begin{align*}
\psi_{1}(\lambda \xi)=\lambda^{\gamma_{1}}\psi_{1}(\xi),\quad
\psi_{2}(\lambda \xi)=\lambda^{\gamma_{2}}\psi_{2}(\xi),\quad \lambda>0,
\end{align*}
then there exists a constant $C>0$ such that
for any $s,l\geq 0$ and $f\in L^{p}(\mathbb{R}^{d})$,
\begin{align*}
\int_{\mathbb{R}^{d}}\left(\int_{s}^{\infty}(t-s)^{\frac{q\gamma_{1}}{\gamma_{2}}-1}
   |L_{\psi_{1}}\mathcal{T}_{\psi_{2}}(t,s)f(x)|^{q}dt\right)^{\frac{p}{q}}dx
\leq C\int_{\mathbb{R}^{d}}|f(x)|^{p}dx,
\end{align*}
which is a generalization of \eqref{eqn: classcal LP ineq}.

In \cite{Krylov 1994}, the author proved the extended version of \eqref{eqn: classcal LP ineq}: if $V$ is a Hilbert space, then for any $p\geq 2$ and $f\in L^{p}((a,b)\times\mathbb{R}^d;V)$ with $-\infty\leq a<b\leq \infty$,
\begin{equation}\label{eqn: LP ineq of Krylov}
\int_{\mathbb{R}^d}\int_{a}^{b}\left(\int_{a}^{t}\|(-\Delta)^{\frac{1}{2}}T_{t-s}f(s,x)\|_{V}^{2}ds\right)^{\frac{p}{2}}dtdx
\leq N_{p}\int_{\mathbb{R}^d}\int_{a}^{b}\|f(t,x)\|_{V}^p dtdx.
\end{equation}
As an application, the inequality \eqref{eqn: LP ineq of Krylov} plays an important role for $L^{p}$-theory of the second order linear stochastic partial differential equations (SPDEs). For nore details, we refer to \cite{Krylov 1999}.

In \cite{I. Kim K.-H. Kim 2016}, the authors proved the inequality \eqref{eqn: LP ineq of Krylov}
for the operator $\mathcal{T}_{\psi}(t,s)$:
if $V$ is a Hilbert space, $p\geq 2$ and $f\in C_{c}^{\infty}((a,b)\times\mathbb{R}^d;V)$
with $-\infty\leq a<b\leq\infty$, then there exists a constant $C>0$ such that
\begin{align}
&\int_{\mathbb{R}^{d}}\int_{a}^{b}\left(\int_{a}^{t}
\|(-\Delta)^{\frac{\gamma}{4}}
\mathcal{T}_{\psi}(t,s)f(s,\cdot)(x)\|_{V}^{2}ds\right)^{\frac{p}{2}}dtdx\nonumber\\
&\qquad\leq C\int_{\mathbb{R}^{d}}\int_{a}^{b}
\|f(t,x)\|_{V}^{p}dtdx.\label{eqn: LP ineq in Kim 2016}
\end{align}
The inequality \eqref{eqn: LP ineq in Kim 2016} plays an important role for the $L^{p}$-theory of SPDEs with pseudo-differential operator of arbitrary order. For more details, we refer to  \cite{I. Kim 2016}.

In this paper, we prove the Littlewood-Paley type inequality for the pseudo differential operator $L_{\psi_{1}}(l)$
and the evolution system $\left\{\mathcal{T}_{\psi_{2}}(t,s)\right\}_{t\geq s\geq 0}$
for $(\psi_{1},\psi_{2})\in\mathfrak{S}_{\rm T}\times\mathfrak{S}$ (see Section \ref{sec: symbol}).
More precisely, if $V$ is a separable Hilbert space, $q\geq 2$ and $f\in L^{p}((a,b)\times \mathbb{R}^{d};V)$,
then there exists a constant $C>0$ such that for any $l\geq 0$,
\begin{align}\label{ineq:LP-ineq intro}
&\int_{\mathbb{R}^{d}}\int_{a}^{b}\left(\int_{a}^{t}(t-s)^{\frac{q\gamma_{1}}{\gamma_{2}}-1}
\|L_{\psi_{1}}(l)
\mathcal{T}_{\psi_{2}}(t,s)f(s,\cdot)(x)\|_{V}^{q}ds\right)^{\frac{p}{q}}dtdx\nonumber\\
&\qquad\leq C\int_{\mathbb{R}^{d}}\int_{a}^{b}
\|f(t,x)\|_{V}^{p}dtdx.
\end{align}

This paper is organized as follows.
In Section \ref{sec: symbol},
we recall the definitions and properties related to the pseudo-differential operator
and the operator $\mathcal{T}_{\psi}(t,s)$ with a symbol $\psi$.
Also we estimate the gradient of the kernel of convolution operators.
In Section \ref{sec: convol},
we prove that the Littlewood-Paley $g$-function $\mathcal{G}_{a,q}$ and $\widetilde{\mathcal{G}}_{a,q}$
(see \eqref{eqn: mathcal G a q} and \eqref{eqn: mathcal G a q 2}) is bounded from
$L^{q}((a,b)\times \mathbb{R}^{d};V)$ with a separable Hilbert space $V$
into $L^{q}((a,b)\times \mathbb{R}^{d})$ (see Theorem \ref{thm: LP ineq p=q}).
In Section \ref{sec:sharp},
we recall that the definition of the maximal function and sharp function and
we estimate the sharp function of $\mathcal{G}_{a,q}$ (see Theorem \ref{thm: sharp fct less than maximal}).
In Section \ref{sec:main},
we prove the Littlewood-Paley type inequality for evolution systems associated with pseudo-differential operators
(see Theorem \ref{thm:LP ineq}).
In Appendix \ref{sec: Appendix A},
we prove Theorem \ref{thm: sharp fct less than maximal}.
In Appendix \ref{sec: Appendix B},
as an auxiliary result, we prove that the Sobolev space $H_{p}^{\alpha}(\mathbb{R}^{d};V)$
is continuously embedded in the Besov space $B_{pp}^{\alpha}(\mathbb{R}^{d};V)$
for a separable Hilbert space $V$ (see Theorem \ref{thm: Hps(V) embedded into Bpps(V)}).

An $L^{p}$-theory for SPDEs with pseudo differential operator of arbitrary order driven by
a HIlbert space-valued stochastic process is important.
As an application of \eqref{ineq:LP-ineq intro}, we will construct an $L^{p}$-theory for SPDEs
with pseudo differential operator of arbitrary order driven by a HIlbert space-valued general Gaussian process.
This work is in progress and will appear in a separate paper.

\section{Pseudo-Differential Operators and Evolution Systems}\label{sec: symbol}

Throughout this paper, let $V$ be a separable Hilbert space.
For $1\leq p<\infty$, we denote by $L^{p}(\mathbb{R}^{d};V)$
the Banach space of all strongly measurable functions $u:\mathbb{R}^{d}\rightarrow V$ such that
\begin{align*}
\|u\|_{L^{p}(\mathbb{R}^{d};V)}^{p}=\int_{\mathbb{R}^{d}}\|u(x)\|_{V}^{p}dx<\infty.
\end{align*}
In the case of $V=\mathbb{R}$, we write $L^{p}(\mathbb{R}^{d})=L^{p}(\mathbb{R}^{d};\mathbb{R})$.

For $f\in L^{1}(\mathbb{R}^{d};V)$, the Fourier transform and inverse Fourier transform of $f$ is defined by
\begin{align}\label{eqn:FTand FIT}
\mathcal{F}(f)(\xi)&:=\frac{1}{(2\pi)^{d/2}}\int_{\mathbb{R}^{d}}e^{-i x\cdot\xi}f(x)dx,\nonumber\quad\\
\mathcal{F}^{-1}(f)(x)&:=\frac{1}{(2\pi)^{d/2}}\int_{\mathbb{R}^{d}}e^{i x\cdot\xi}f(\xi)d\xi.
\end{align}
Let $C_{\rm c}^{\infty}(\mathbb{R}^{d};V)$ be the space of
all infinitely differentiable $V$-valued functions on $\mathbb{R}^{d}$ with compact support.
In the case of $V=\mathbb{R}$, we write $C_{\rm c}^{\infty}(\mathbb{R}^{d})=C_{\rm c}^{\infty}(\mathbb{R}^{d};\mathbb{R})$.


Let $\psi:[0,\infty)\times \mathbb{R}^{d}\to \mathbb{C}$ be a given function.
For each $t\geq 0$ and $f\in C_{\rm c}^{\infty}(\mathbb{R}^{d};V)$,
the pseudo-differential operator $L_{\psi}(t)$ with symbol $\psi$ is defined by
\begin{align*}
L_{\psi}(t)f(x)=\mathcal{F}^{-1}(\psi(t,\xi)\mathcal{F}f(\xi))(x)
\end{align*}
if the inverse Fourier transform of right hand side exists.
Then it is obvious that
\begin{align*}
L_{\psi}(t)^{k}f(x)=L_{\psi^{k}}(t)f(x).
\end{align*}
for any $k\in\mathbb{N}$ and $f$ belonging to the domain of $L_{\psi}(t)^{k}$.

For symbols $\psi$ of pseudo-differential operators, we consider the following conditions.
There exist positive constants $\kappa:=\kappa_{\psi},\mu:=\mu_{\psi},\gamma:=\gamma_{\psi}$
and a natural number $N:=N_{\psi}\in\mathbb{N}$ with $N\ge \lfloor\frac{d}{2}\rfloor +1$,
where $\lfloor\eta\rfloor$ is the greatest integer less than or equal to $\eta$, such that
\begin{itemize}
  \item [\textbf{(S1)}] for almost all $t\ge0$ and $\xi\in\mathbb{R}^{d}$ (with respect to the Lebesgue measure),
\begin{equation*}
{\rm Re}[\psi(t,\xi)]\leq -\kappa|\xi|^{\gamma},\quad
\end{equation*}
where ${\rm Re}(z)$ is the real part of the complex number $z$,
  \item [\textbf{(S2)}] for any multi-indices $\alpha=(\alpha_1,\cdots,\alpha_d)\in \mathbb{N}_{0}^{d}$ with
  $|\alpha|:=\alpha_1+\cdots+\alpha_d\leq N$, where $\mathbb{N}_{0}=\mathbb{N}\cup\{0\}$,
   and for almost all $t\ge0$ and $\xi=(\xi_1,\cdots,\xi_d)\in\mathbb{R}^{d}\setminus\widetilde{\boldsymbol{0}}$,
   where $\widetilde{\boldsymbol{0}}:=\{(x_1,\cdots,x_d)|\,\,x_i=0\text{ for some }i=1,\cdots,d\}$,
      \begin{align*}
      |\partial_{\xi}^{\alpha}\psi(t,\xi)|\leq \mu|\xi|^{\gamma-|\alpha|},
      \end{align*}
where $\partial_{\xi}^{\alpha}=\partial_{\xi_1}^{\alpha_1}\cdots \partial_{\xi_d}^{\alpha_d}$
and $\partial_{\xi_i}^{\alpha_i}$ is the $\alpha_i$-th derivative in the variable $\xi_i$,

\item [\textbf{(S3)}] for any multi-indices $\alpha\in \mathbb{N}_{0}^{d}$ with $|\alpha|\leq N$
   and for almost all $t\ge0$ and $\xi\in\mathbb{R}^{d}\setminus\widetilde{\boldsymbol{0}}$,
      \begin{align*}
      |\partial_{t}^{m}\partial_{\xi}^{\alpha}\psi(t,\xi)|\leq \mu|\xi|^{\gamma-|\alpha|},\quad m=0,1.
      \end{align*}
\end{itemize}

Let $\mathfrak{S}$ be the set of all measurable functions (symbols of pseudo-differential operators)
$\psi:[0,\infty)\times \mathbb{R}^{d}\rightarrow \mathbb{C}$ satisfying the conditions \textbf{(S1)} and \textbf{(S2)}.
Also we let $\mathfrak{S}_{\rm T}$ the set of all measurable functions (symbols of pseudo-differential operators)
$\psi:[0,\infty)\times \mathbb{R}^{d}\rightarrow \mathbb{C}$ satisfying the conditions \textbf{(S1)} and \textbf{(S3)}.
By the definitions of $\mathfrak{S}_{\rm T}$ and $\mathfrak{S}$, it is obvious that $\mathfrak{S}_{\rm T}\subset \mathfrak{S}$.

\begin{remark}
\upshape
The conditions \textbf{(S1)} and \textbf{(S2)} have been considered
in \cite{Ji-Kim 2025}. In \cite{I. Kim K.-H. Kim 2016}, the authors considered \textbf{(S1)} and \textbf{(S2)}
with $N=\lfloor\frac{d}{2}\rfloor+1$.
For our study, we need the assumption for the time derivative of symbol
(see Theorems \ref{thm: sharp fct less than maximal} and \ref{thm:LP ineq}).
\end{remark}

\begin{example}[\cite{Ji-Kim 2025}]\label{ex: norm xi power gamma}
\upshape
Let $\gamma>0$ and $\kappa>0$ be given
and let $k:[0,\infty)\to [0,\infty)$ be a nonnegative bounded differentiable function
such that $|k(t)|\leq M_{1}$ and $|k^{\prime}(t)|\leq M_{2}$.
Consider the measurable function $\psi(t,\xi)=-(\kappa+k(t))|\xi|^{\gamma}$
for $t\ge0$ and $\xi\in\mathbb{R}^{d}$.
We can easily see that $\psi$ satisfies the conditions \textbf{(S1)}, \textbf{(S2)} and \textbf{(S3)}.
In fact, the conditions \textbf{(S1)} and \textbf{(S2)} have been checked in \cite{Ji-Kim 2025},
and for any $t\ge0$ and $\xi\in\mathbb{R}^{d}$, it holds that
\begin{align*}
|\partial_{t}\partial_{\xi}^{\alpha}\psi(t,\xi)|
&=|-k^{\prime}(t)\partial_{\xi}^{\alpha}|\xi|^{\gamma}|
\leq M_{2}K_{\gamma,\alpha}|\xi|^{\gamma-|\alpha|},
\end{align*}
which implies that $\psi$ satisfies \textbf{(S3)}.
\end{example}

Let $\psi\in \mathfrak{S}$ be a symbol.
From the condition \textbf{(S1)}, we see that $\exp\left(\int_{s}^{t}\psi(r,\cdot)dr\right)\in L^1(\mathbb{R}^{d})$ for each $t>s$.
Define
\begin{align*}
p_{\psi}(t,s,x)=\mathcal{F}^{-1}\left(\exp\left(\int_{s}^{t}\psi(r,\cdot)dr\right)\right)(x),\quad t>s,\quad x\in\mathbb{R}^d.
\end{align*}
By Corollary 2.10 in \cite{Ji-Kim 2025}, we have $p_{\psi}(t,s,\cdot)\in L^{1}(\mathbb{R}^d)$ for each $t>s$.
Therefore, for each $f\in L^{p}(\mathbb{R}^{d};V)$ and $t>s$,
by applying the Young's convolution inequality, we can define
\begin{equation}\label{eqn: mathcal T}
\mathcal{T}_{\psi}(t,s)f(x)=p_{\psi}(t,s,\cdot)*f(x)=\int_{\mathbb{R}^{d}}p_{\psi}(t,s,x-y)f(y)dy
\end{equation}
as $\mathcal{T}_{\psi}(t,s)f\in L^{p}(\mathbb{R}^{d};V)$.
On the other hand, for any $t\geq r\geq s$ and $f\in L^{p}(\mathbb{R}^{d};V)$,
we can easily see that
\begin{align*}
\mathcal{T}_{\psi}(t,r)\mathcal{T}_{\psi}(r,s)f(x)
&=\mathcal{T}_{\psi}(t,s)f(x),
\end{align*}
which implies that the family $\{\mathcal{T}_{\psi}(t,s)\}_{t\geq s\geq 0}$ is an evolution system.

\section{Boundedness of Littlewood-Paley $g$-Functions}\label{sec: convol}

Let $(\psi_{1},\psi_{2})\in\mathfrak{S}_{\rm T}\times\mathfrak{S}$
and let $l\geq 0$ and $q\geq 2$ be given.
Then for any $a,b\in\mathbb{R}$ with $a<b$, $f\in C_{c}^{\infty}((a,b)\times \mathbb{R}^{d}; V)$
and $x\in\mathbb{R}^{d}$, we define the Littlewood-Paley $g$-functions $\mathcal{G}_{a,q}$ and $\widetilde{\mathcal{G}}_{a,q}$
by
\begin{align}
\mathcal{G}_{a,q} f(l,t,x)
&=\left[\int_{a}^{t}(t-s)^{\frac{q\gamma_{1}}{\gamma_{2}}-1}
             \|L_{\psi_{1}}(l)\mathcal{T}_{\psi_{2}}( t,s)f(s,\cdot)(x)\|_{V}^{q}ds\right]^{\frac{1}{q}}\nonumber\\
&=\left[\int_{a}^{t}(t-s)^{\frac{q\gamma_{1}}{\gamma_{2}}-1}
            \|\left((L_{\psi_{1}}(l)p_{\psi_{2}}(t,s,\cdot))*f(s,\cdot)\right)(x)\|_{V}^{q}ds\right]^{\frac{1}{q}},
            \label{eqn: mathcal G a q}\\
\widetilde{\mathcal{G}}_{a,q} f(t,x)
&:=\mathcal{G}_{a,q} f(t,t,x)\nonumber\\
&=\left[\int_{a}^{t}(t-s)^{\frac{q\gamma_{1}}{\gamma_{2}}-1}
            \|\left((L_{\psi_{1}}(t)p_{\psi_{2}}(t,s,\cdot))*f(s,\cdot)\right)(x)\|_{V}^{q}ds\right]^{\frac{1}{q}}.\label{eqn: mathcal G a q 2}
\end{align}
Then the left-hand sides of the generalized Littlewood-Paley type inequality
(see \eqref{ineq:LP-ineq} and \eqref{ineq:LP-ineq l=t}) can be expressed as
\begin{align}\label{eqn:Lp norm of mathcal G}
\int_{\mathbb{R}^{d}}\int_{a}^{b}|\mathcal{G}_{a,q} f(l,t,x)|^{p}dtdx,\quad
\int_{\mathbb{R}^{d}}\int_{a}^{b}|\widetilde{\mathcal{G}}_{a,q} f(t,x)|^{p}dtdx,
\end{align}
respectively.
In this section, we prove that the operators $\mathcal{G}_{a,q}$ and $\widetilde{\mathcal{G}}_{a,q}$
are bounded from $L^{q}((a,b)\times \mathbb{R}^{d};V)$
into $L^{q}((a,b)\times \mathbb{R}^{d})$.
For our purpose, we need a continuous embedding theorem for vector-valued Sobolev space into the vector-valued Besov space
which will be proved in Appendix B (see Theorem \ref{thm: Hps(V) embedded into Bpps(V)}).

We now introduce the Besov space
and then we first recall the Littlewood-Paley decomposition (see \eqref{eqn: LP decomposition}).
It is well known that there exists a function $\Phi\in C_{\rm c}^{\infty}(\mathbb{R}^{d})$
such that $\Phi$ is nonnegative and
\begin{align}
\text{supp}\, \Phi =\{\xi\in\mathbb{R}^{d}\,|\, 1/2\leq |\xi|\leq 2\},\quad
\sum_{j=-\infty}^{\infty}\Phi(2^{-j}\xi)=1\label{eqn: sum of phi is one}
\end{align}
(see, e.g., Lemma 6.1.7 in \cite{Bergh 1976}).
For each $j\in\mathbb{Z}$ and $f\in L^{2}(\mathbb{R}^{d};V)$, put
\begin{align}
\Delta_{j}f(x)&=\mathcal{F}^{-1}(\Phi(2^{-j}\xi)\mathcal{F}f(\xi))(x),\label{eqn: LP operator}\\
S_{0}(f)(x)&=\sum_{j=-\infty}^{0}\Delta_{j}f(x).\label{eqn: S0}
\end{align}
From \eqref{eqn: sum of phi is one}, it is obvious that
\begin{equation}\label{eqn: LP decomposition}
f(x)=S_{0}(f)(x)+\sum_{j=1}^{\infty}\Delta_{j}f(x),
\end{equation}
which is called the Littlewood-Paley decomposition of $f$.
Note that for any $i,j\in\mathbb{Z}$ with $|i-j|\geq 2$, it holds that
\begin{align*}
\Delta_{i}\Delta_{j}=0,
\end{align*}
which is known as the almost orthogonality,
which implies that the following decomposition:
\begin{align}\label{eqn: pseudo orthogonal property}
\left(L_{\psi_{1}}(l)p_{\psi_{2}}(t,s,\cdot)\right)*f(s,\cdot)
&=\sum_{j=-\infty}^{1}\left(L_{\psi_{1}}(l)p_{\psi_{2},j}(t,s,\cdot)\right)*S_{0}(f(s,\cdot))\nonumber\\
&\qquad   +\sum_{j=1}^{\infty}\sum_{j-1\leq i\leq j+1}\left(L_{\psi_{1}}(l)p_{\psi_2,i}(t,s,\cdot)\right)*f_{j}(s,\cdot),
\end{align}
where $f_{i}=\Delta_{i}f$ and $p_{\psi_{2},j}(t,s,x)=\Delta_{ j}p_{\psi_{2}}(t,s,x)$.

\begin{lemma}[\cite{Ji-Kim 2025}, Lemma 4.2]\label{lem: boundedness of L psi of p j}
Let $(\psi_{1},\psi_{2})\in\mathfrak{S}^{2}$.
Then there exist constants $c,C>0$ depending on $ \boldsymbol{\kappa},\boldsymbol{\mu}$ and $d$ such that
for any $l\geq 0$, $j\in\mathbb{Z}$ and $t>s$,
\begin{align*}
\|L_{\psi_{1}}(l)p_{\psi_{2},j}(t,s,\cdot)\|_{L^{1}(\mathbb{R}^{d})}
&\leq C2^{j\gamma_{1}}e^{-c(t-s)2^{j\gamma_{2}}}.
\end{align*}
\end{lemma}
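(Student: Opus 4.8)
The plan is to realize the kernel as an inverse Fourier transform of a compactly supported symbol and to bound its $L^{1}$-norm by the standard $L^{2}$-duality device. Put
\begin{align*}
m_{j}(\xi)&=\psi_{1}(l,\xi)\,\Phi(2^{-j}\xi)\,\exp\!\left(\int_{s}^{t}\psi_{2}(r,\xi)\,dr\right),\\
K_{j}&:=L_{\psi_{1}}(l)p_{\psi_{2},j}(t,s,\cdot)=\mathcal{F}^{-1}(m_{j}).
\end{align*}
Fix $N:=\lfloor d/2\rfloor+1$; then $2N>d$, and since $N_{\psi_{1}},N_{\psi_{2}}\ge\lfloor d/2\rfloor+1$ we also have $N\le\min\{N_{\psi_{1}},N_{\psi_{2}}\}$. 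By Cauchy--Schwarz, for every $R>0$,
\begin{align*}
\|K_{j}\|_{L^{1}}&\le\Bigl(\int_{|x|\le R}dx\Bigr)^{1/2}\|K_{j}\|_{L^{2}}+\Bigl(\int_{|x|>R}|x|^{-2N}dx\Bigr)^{1/2}\bigl\|\,|x|^{N}K_{j}\bigr\|_{L^{2}}\\
&\le C_{d}R^{d/2}\|K_{j}\|_{L^{2}}+C_{d}R^{d/2-N}\sum_{|\alpha|=N}\|\partial_{\xi}^{\alpha}m_{j}\|_{L^{2}},
\end{align*}
using $|x|^{N}\le C_{d}\sum_{|\alpha|=N}|x^{\alpha}|$ and the Plancherel identity $\|x^{\alpha}K_{j}\|_{L^{2}}=\|\partial_{\xi}^{\alpha}m_{j}\|_{L^{2}}$. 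Hence it suffices to establish
\begin{equation*}
\|\partial_{\xi}^{\alpha}m_{j}\|_{L^{2}}\le C\,2^{j(\gamma_{1}-|\alpha|+d/2)}e^{-c(t-s)2^{j\gamma_{2}}},\qquad 0\le|\alpha|\le N;
\end{equation*}
taking $R=2^{-j}$ then turns both terms of the split into $C\,2^{j\gamma_{1}}e^{-c(t-s)2^{j\gamma_{2}}}$, which is the assertion.

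For the $L^{2}$-estimate I would derive the pointwise bound $|\partial_{\xi}^{\alpha}m_{j}(\xi)|\le C\,2^{j(\gamma_{1}-|\alpha|)}e^{-c(t-s)2^{j\gamma_{2}}}$ for $|\alpha|\le N$, uniformly in $l,s,t$, on $\text{supp}\,\Phi(2^{-j}\cdot)=\{2^{j-1}\le|\xi|\le 2^{j+1}\}$, and then integrate squares over that set, whose measure is $\sim 2^{jd}$. On the support $|\xi|\sim 2^{j}$, and off a Lebesgue-null set $\xi\notin\widetilde{\boldsymbol{0}}$. From \textbf{(S1)} for $\psi_{2}$, $\bigl|\exp(\int_{s}^{t}\psi_{2}(r,\xi)dr)\bigr|=\exp\!\bigl(\int_{s}^{t}{\rm Re}[\psi_{2}(r,\xi)]dr\bigr)\le e^{-\kappa_{\psi_{2}}(t-s)|\xi|^{\gamma_{2}}}\le e^{-c_{0}(t-s)2^{j\gamma_{2}}}$ for a suitable $c_{0}>0$; from \textbf{(S2)} for $\psi_{1}$ and $\psi_{2}$, every $\xi$-derivative of order $\le N$ is controlled by the corresponding power of $|\xi|\sim 2^{j}$. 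Expanding $\partial_{\xi}^{\alpha}m_{j}$ by the Leibniz rule into sums of $\partial_{\xi}^{\beta_{1}}\psi_{1}(l,\cdot)\cdot\partial_{\xi}^{\beta_{2}}[\Phi(2^{-j}\cdot)]\cdot\partial_{\xi}^{\beta_{3}}\exp(\int_{s}^{t}\psi_{2}dr)$ with $\beta_{1}+\beta_{2}+\beta_{3}=\alpha$, expanding the last factor by the Fa\`{a} di Bruno formula — whose order-$m$ terms are products $\prod_{i}\partial_{\xi}^{\gamma_{i}}\!\int_{s}^{t}\psi_{2}dr$ with $\sum_{i}\gamma_{i}=\beta_{3}$, bounded via \textbf{(S2)} by $\mu_{\psi_{2}}^{m}(t-s)^{m}|\xi|^{m\gamma_{2}-|\beta_{3}|}$ — and finally absorbing the polynomial factors by the elementary inequality $u^{m}e^{-c_{0}u}\le C_{m}e^{-c_{0}u/2}$ with $u=(t-s)2^{j\gamma_{2}}$, one obtains $\bigl|\partial_{\xi}^{\beta_{3}}\exp(\int_{s}^{t}\psi_{2}dr)\bigr|\le C\,2^{-j|\beta_{3}|}e^{-(c_{0}/2)(t-s)2^{j\gamma_{2}}}$. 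Multiplying the three factor bounds $\sim 2^{j(\gamma_{1}-|\beta_{1}|)}$, $\sim 2^{-j|\beta_{2}|}$, $\sim 2^{-j|\beta_{3}|}e^{-(c_{0}/2)(t-s)2^{j\gamma_{2}}}$ collapses to $2^{j(\gamma_{1}-|\alpha|)}e^{-(c_{0}/2)(t-s)2^{j\gamma_{2}}}$, giving the pointwise bound with $c=c_{0}/2$.

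I expect the main obstacle to be the Fa\`{a} di Bruno bookkeeping for $\partial_{\xi}^{\beta_{3}}\exp(\int_{s}^{t}\psi_{2}dr)$ together with the $(s,t)$-uniform absorption of the polynomial-in-$(t-s)2^{j\gamma_{2}}$ factors into a slightly smaller exponential; this is precisely the step that forces conditions \textbf{(S1)}--\textbf{(S2)} for $\psi_{2}$ (and not merely for $\psi_{1}$) to be used. A secondary, routine matter is the Plancherel identity $\|x^{\alpha}K_{j}\|_{L^{2}}=\|\partial_{\xi}^{\alpha}m_{j}\|_{L^{2}}$: since \textbf{(S2)} only provides smoothness of $m_{j}$ off the null set $\widetilde{\boldsymbol{0}}$, the integrations by parts underlying this identity are first carried out for the smooth symbols obtained from a mild regularization of $\psi_{1},\psi_{2}$ preserving \textbf{(S1)}--\textbf{(S2)}, and one passes to the limit using that all of the above estimates are uniform in the regularization parameter.
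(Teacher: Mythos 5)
Your argument is correct. Note, however, that this paper does not prove the lemma at all: it is imported verbatim from \cite{Ji-Kim 2025} (Lemma 4.2), so there is no in-paper proof to compare against. What you give is the standard dyadic $L^{1}$ bound: Cauchy--Schwarz split at $R=2^{-j}$ plus Plancherel, reducing everything to pointwise bounds of $\partial_{\xi}^{\alpha}m_{j}$ on the annulus $2^{j-1}\le|\xi|\le 2^{j+1}$, which follow from \textbf{(S1)}--\textbf{(S2)} via Leibniz, Fa\`a di Bruno, and the absorption $u^{m}e^{-c_{0}u}\le C_{m}e^{-c_{0}u/2}$; the numerology ($R^{d/2}\|K_j\|_{2}\sim R^{d/2-N}\sum_{|\alpha|=N}\|\partial^{\alpha}m_j\|_{2}\sim 2^{j\gamma_{1}}e^{-c(t-s)2^{j\gamma_{2}}}$ with $N=\lfloor d/2\rfloor+1$) checks out, and your bound for $\partial_{\xi}^{\beta}\exp(\int_{s}^{t}\psi_{2}\,dr)$ is exactly the estimate the present paper records as \eqref{eqn:psi 2 ineq-2}. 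By contrast, the kernel estimates this paper does prove (Appendix, Lemma \ref{lem: esti gradient kernel 1}, following Lemma 3.2 of \cite{Ji-Kim 2025}) use a cutoff $\rho(\xi/\lambda)$ and the integration-by-parts operator $\mathcal{D}^{*}$ to get pointwise decay in $x$, so your Plancherel route is a legitimate, arguably more economical, alternative for the $L^{1}$ statement. Two minor caveats: your constants (like the paper's elsewhere) also depend on $\boldsymbol{\gamma}$, not only on $\boldsymbol{\kappa},\boldsymbol{\mu},d$ as the statement asserts; and you should say a word justifying $\partial_{\xi}^{\alpha}\int_{s}^{t}\psi_{2}(r,\xi)\,dr=\int_{s}^{t}\partial_{\xi}^{\alpha}\psi_{2}(r,\xi)\,dr$ (dominated convergence using \textbf{(S2)}), though your regularization remark already covers the related issue of identifying a.e.\ derivatives with distributional ones for the Plancherel step.
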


For $p\geq 1$ and $\alpha\in\mathbb{R}$, let $B_{pp}^{\alpha}(\mathbb{R}^{d};V)$
be the Besov space with the norm
\begin{align*}
\|f\|_{B_{pp}^{s}(\mathbb{R}^{d};V)}
=\|S_{0}(f)\|_{L^{p}(\mathbb{R}^{d};V)}+\left(\sum_{j=1}^{\infty}2^{j\alpha p}
            \|\Delta_{j}f\|_{L^{p}(\mathbb{R}^{d};V)}^{p}\right)^{\frac{1}{p}},
\end{align*}
and let $H_{p}^{\alpha}(\mathbb{R}^{d};V)$ be the Sobolev space with the norm
\begin{align*}
\|u\|_{H_{p}^{\alpha}(\mathbb{R}^{d};V)}
=\|(1-\Delta)^{\frac{\alpha}{2}}u\|_{L^{p}(\mathbb{R}^{d};V)}.
\end{align*}
If $V=\mathbb{R}$, then we write
\begin{align*}
B_{pp}^{\alpha}(\mathbb{R}^{d})=B_{pp}^{\alpha}(\mathbb{R}^{d};\mathbb{R}),
\quad
H_{p}^{\alpha}(\mathbb{R}^{d})=H_{p}^{\alpha}(\mathbb{R}^{d};\mathbb{R}).
\end{align*}
In Theorem \ref{thm: Hps(V) embedded into Bpps(V)}, we prove that if $\alpha\in\mathbb{R}$ and $p\geq 2$,
then it holds that
\begin{align*}
\|f\|_{B_{pp}^{\alpha}(\mathbb{R}^{d};V)}\leq C\|f\|_{H_{p}^{\alpha}(\mathbb{R}^{d};V)}
\end{align*}
for some constant $C>0$.
This will be used to prove
the inequalities given in \eqref{eqn:LP p=q 1} and \eqref{eqn:LP p=q 2} (in Theorem \ref{thm: LP ineq p=q}).

\begin{lemma}\label{lem: LP ineq for p equal lambda fixed s}
Let $(\psi_{1},\psi_{2})\in \mathfrak{S}_{\rm T}\times\mathfrak{S}$.
Let $q\geq 1$ be given. Let $f\in L^{q}((a,b);B_{qq}^{0}(\mathbb{R}^{d};V))$ with $-\infty< a<b< \infty$.
Then there exists a constant $C>0$ depending on $a,b,d,q,\boldsymbol{\gamma},\boldsymbol{\mu}$
and $\boldsymbol{\kappa}$ such that for any $l\geq 0$,
\begin{align}
\int_{\mathbb{R}^{d}}\int_{a}^{b}|\mathcal{G}_{a,q}f(l,t,x)|^{q}dtdx
&\leq C\int_{a}^{b}\|f(s,\cdot)\|_{B_{qq}^{0}(\mathbb{R}^{d};V)}^{q}ds,\label{eqn:FE for LPI}\\
\int_{\mathbb{R}^{d}}\int_{a}^{b}|\widetilde{\mathcal{G}}_{a,q}f(t,x)|^{q}dtdx
&\leq C\int_{a}^{b}\|f(s,\cdot)\|_{B_{qq}^{0}(\mathbb{R}^{d};V)}^{q}ds.\label{eqn:FE for LPI 2}
\end{align}
That is, the operators $\mathcal{G}_{a,q}$, $\widetilde{\mathcal{G}}_{a,q}$ are bounded
from $L^{q}((a,b);B_{qq}^{0}(\mathbb{R}^{d};V))$ into $L^{q}((a,b)\times \mathbb{R}^{d})$.
\end{lemma}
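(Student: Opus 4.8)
The plan is to prove the $q$-to-$q$ bound for $\mathcal{G}_{a,q}$ (the estimate \eqref{eqn:FE for LPI}); the statement \eqref{eqn:FE for LPI 2} for $\widetilde{\mathcal{G}}_{a,q}$ follows by the same argument since $\widetilde{\mathcal{G}}_{a,q}f(t,x)=\mathcal{G}_{a,q}f(t,t,x)$ uses the symbol $\psi_{1}(t,\cdot)\in\mathfrak{S}$ in place of $\psi_{1}(l,\cdot)$, and Lemma \ref{lem: boundedness of L psi of p j} holds uniformly over the first time variable. First I would expand $\mathcal{G}_{a,q}f(l,t,x)$ by the almost-orthogonal Littlewood-Paley decomposition \eqref{eqn: pseudo orthogonal property}: inside the $V$-norm, $\left(L_{\psi_{1}}(l)p_{\psi_{2}}(t,s,\cdot)\right)*f(s,\cdot)$ splits into a low-frequency piece convolved with $S_{0}(f(s,\cdot))$ and a sum over $j\geq 1$ of a band-limited kernel convolved with $f_{j}(s,\cdot)=\Delta_{j}f(s,\cdot)$, where only $i\in\{j-1,j,j+1\}$ contribute. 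The goal is to dominate the $L^{q}(dtdx)$ norm of the $g$-function by a weighted sum over $j$ of the corresponding pieces, which will match the Besov norm on the right.

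The engine is Lemma \ref{lem: boundedness of L psi of p j}, which gives $\|L_{\psi_{1}}(l)p_{\psi_{2},j}(t,s,\cdot)\|_{L^{1}}\leq C2^{j\gamma_{1}}e^{-c(t-s)2^{j\gamma_{2}}}$ (valid on $\mathfrak{S}^2\supset\mathfrak{S}_{\rm T}\times\mathfrak{S}$). Combined with Young's convolution inequality $\|K*g\|_{L^{q}(\mathbb{R}^{d};V)}\leq\|K\|_{L^{1}}\|g\|_{L^{q}(\mathbb{R}^{d};V)}$, each frequency band is controlled. The key kernel quantity to compute is, for fixed $j$,
\begin{align*}
\int_{a}^{b}\int_{a}^{t}(t-s)^{\frac{q\gamma_{1}}{\gamma_{2}}-1}\Big(2^{j\gamma_{1}}e^{-c(t-s)2^{j\gamma_{2}}}\Big)^{q}\,ds\,dt,
\end{align*}
and by the substitution $u=(t-s)2^{j\gamma_{2}}$ one sees the $2^{j}$-powers cancel exactly: $(t-s)^{q\gamma_{1}/\gamma_{2}-1}2^{jq\gamma_{1}}=2^{j\gamma_{2}}u^{q\gamma_{1}/\gamma_{2}-1}$, so the $ds$-integral is $\int_{0}^{\infty}u^{q\gamma_{1}/\gamma_{2}-1}e^{-cqu}\,du<\infty$ (a finite Gamma-type constant, since $q\gamma_{1}/\gamma_{2}>0$), uniformly in $j$. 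Thus the weighted time-integral of the $q$-th power of the $j$-th kernel norm is bounded by a constant independent of $j$, $l$, $t$.

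The assembly is then: raise everything to the $q$-th power, use $\|\sum_{j}g_{j}\|_{V}^{q}\lesssim$ (handled via Minkowski/triangle inequality together with the geometric decay in the overlap, or more cleanly by pulling the finite sum over $i\in\{j-1,j,j+1\}$ out first and then summing in $j$), integrate in $x$ and in $(s,t)$, apply Fubini to put the $ds$-integral innermost, and invoke the kernel bound above to get $\int_{\mathbb{R}^d}\int_a^b|\mathcal{G}_{a,q}f|^q\,dt\,dx\leq C\sum_{j\geq 1}\|\Delta_j f\|_{L^q((a,b)\times\mathbb{R}^d;V)}^q + C\|S_0(f)\|_{L^q((a,b)\times\mathbb{R}^d;V)}^q$, which is exactly $C\int_a^b\|f(s,\cdot)\|_{B_{qq}^0(\mathbb{R}^d;V)}^q\,ds$. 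The main obstacle is the interchange of the sum over $j$ with the $L^q$ norm when $q>1$: naively applying Minkowski loses the orthogonality. The clean fix is that because the almost-orthogonality restricts the inner $i$-sum to three terms, one can apply the triangle inequality in $\ell^q$ over $j$ directly to $\|\cdot\|_V$ raised to the $q$ power is not quite right either — instead I would bound $\big\|\sum_{j\geq 1}\sum_{|i-j|\leq 1}(L_{\psi_1}(l)p_{\psi_2,i})*f_j\big\|_V$ by first summing the three $i$'s, giving $\sum_{j\geq 1}\big(\text{(kernel }L^1\text{ for band }j)\big)\|f_j\|_V$-type expressions, and then split the weighted $t$-integral of the $q$-th power: since the kernel decay $e^{-c(t-s)2^{j\gamma_2}}$ is summable in $j$ with a gain, one applies discrete Young / Schur's test in the $j$-index to absorb the sum at the cost of a universal constant, exploiting that $\sum_j 2^{-\epsilon j}$-type tails converge. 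This Schur-type argument in the frequency index, reconciled with the weighted time integral, is where the real work lies; the rest is bookkeeping with Fubini and Young's inequality.
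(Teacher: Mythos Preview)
Your approach is the same as the paper's: Fubini to swap $\int_a^t ds\,dt$ to $\int_s^b dt\,ds$, the almost-orthogonal decomposition \eqref{eqn: pseudo orthogonal property}, the $L^1$ kernel bound of Lemma \ref{lem: boundedness of L psi of p j}, Young's inequality, and the Gamma-integral cancellation of the $2^j$ powers. The paper's own proof is in fact much terser than yours --- after the Fubini step it simply invokes ``the arguments used in the proof of Theorem 4.3 in \cite{Ji-Kim 2025}'' for the fixed-$s$ estimate, so you have supplied more of the mechanism than the paper does here.

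The one place where your description is imprecise is the $\ell^1\to\ell^q$ passage in the $j$-sum. You correctly flag the obstacle, but the phrase ``$\sum_j 2^{-\epsilon j}$-type tails'' is misleading: there is no geometric decay in $j$ (the kernel bound $2^{j\gamma_1}e^{-c\tau 2^{j\gamma_2}}$ \emph{grows} in $j$ until $2^{j\gamma_2}\tau\sim 1$). The clean device, which is what underlies the Schur-type step you allude to, is the weighted H\"older inequality
\[
\Big(\sum_{j\ge1} a_j(\tau)\,b_j\Big)^q \le \Big(\sum_{j\ge1} a_j(\tau)\Big)^{q-1}\sum_{j\ge1} a_j(\tau)\,b_j^q,
\qquad a_j(\tau):=2^{j\gamma_1}e^{-c\tau 2^{j\gamma_2}},\quad b_j:=\|\Delta_j f(s,\cdot)\|_{L^q(\mathbb{R}^d;V)},
\]
together with the elementary bound $\sum_{j\ge1}a_j(\tau)\le C\tau^{-\gamma_1/\gamma_2}$ (Riemann-sum comparison). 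The factor $\tau^{-(q-1)\gamma_1/\gamma_2}$ then combines with $\tau^{q\gamma_1/\gamma_2-1}$ to leave $\tau^{\gamma_1/\gamma_2-1}a_j(\tau)$, whose $d\tau$-integral is $j$-independent after the substitution $u=\tau 2^{j\gamma_2}$, yielding exactly $C\sum_j b_j^q$. With this correction your sketch is complete and matches the paper.
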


\begin{proof}
By the Fubini theorem, we obtain that
\begin{align*}
&\int_{\mathbb{R}^{d}}\int_{a}^{b}|\mathcal{G}_{a,q}f(l,t,x)|^{q}dtdx\\
&=\int_{\mathbb{R}^{d}}\int_{a}^{b}\int_{a}^{t}(t-s)^{\frac{q\gamma_{1}}{\gamma_{2}}-1}
                    \|\left((L_{\psi_{1}}(l)p_{\psi_{2}}(t,s,\cdot))*f(s,\cdot)\right)(x)\|_{V}^{q}dsdtdx\\
&=\int_{a}^{b}\int_{\mathbb{R}^{d}}\int_{s}^{b}(t-s)^{\frac{q\gamma_{1}}{\gamma_{2}}-1}
                    \|\left((L_{\psi_{1}}(l)p_{\psi_{2}}(t,s,\cdot))*f(s,\cdot)\right)(x)\|_{V}^{q}dtdxds.
\end{align*}
By applying \eqref{eqn: pseudo orthogonal property}, Lemma \ref{lem: boundedness of L psi of p j}
and the arguments used in the proof of Theorem 4.3 in \cite{Ji-Kim 2025},
we see that there exists a constant $C>0$ such that
\begin{align*}
\int_{\mathbb{R}^{d}}\int_{s}^{b}(t-s)^{\frac{q\gamma_{1}}{\gamma_{2}}-1}
\|\left((L_{\psi_{1}}(l)p_{\psi_{2}}(t,s,\cdot))*f(s,\cdot)\right)(x)\|_{V}^{q}dtdx
\leq C \|f(s,\cdot)\|_{B_{qq}^{0}(\mathbb{R}^{d};V)}^{q},
\end{align*}
which implies the inequality given in \eqref{eqn:FE for LPI}.
By the same arguments, we can prove the inequality given in \eqref{eqn:FE for LPI 2}.
\end{proof}

\begin{theorem} \label{thm: LP ineq p=q}
Let $(\psi_{1},\psi_{2})\in\mathfrak{S}_{\rm T}\times\mathfrak{S}$ and let $q\geq 2$ be given.
Then it holds that
\begin{itemize}
  \item [\rm{(i)}] if $-\infty<a<b<\infty$, then
  there exists a constant $C_{1}>0$ depending on $a,b,d,q, \boldsymbol{\gamma}$, $\boldsymbol{\mu}$ and $\boldsymbol{\kappa}$
such that for any $l\geq 0$  and $f\in L^{q}((a,b)\times\mathbb{R}^{d};V)$,
\begin{align}
\int_{\mathbb{R}^{d}}\int_{a}^{b}|\mathcal{G}_{a,q}f(l,t,x)|^{q}dtdx
\leq C_{1}\int_{a}^{b}\|f(s,\cdot)\|_{L^{q}(\mathbb{R}^{d};V)}^{q}ds,\label{eqn:LP p=q 1}\\
\int_{\mathbb{R}^{d}}\int_{a}^{b}|\widetilde{\mathcal{G}}_{a,q}f(t,x)|^{q}dtdx
\leq C_{1}\int_{a}^{b}\|f(s,\cdot)\|_{L^{q}(\mathbb{R}^{d};V)}^{q}ds.\label{eqn:LP p=q 2}
\end{align}
That is, the operators $\mathcal{G}_{a,q}$ and $\widetilde{\mathcal{G}}_{a,q}$
are bounded from $L^{q}((a,b)\times\mathbb{R}^{d};V)$ into $L^{q}((a,b)\times \mathbb{R}^{d})$.

  \item [\rm{(ii)}] if $q=2$, then there exists a constant $C_{2}>0$ depending on
  $\mu_{1},\kappa_{2},$ and $\boldsymbol{\gamma}$ such that for any $l\ge0$ and $f\in L^{2}(\mathbb{R}^{d+1};V)$,
\begin{align}\label{eqn: LP 2}
\int_{\mathbb{R}^{d}}\int_{-\infty}^{\infty}|\mathcal{G}_{-\infty,2}f(l,t,x)|^2 dtdx
\leq C_{2}\int_{-\infty}^{\infty}\|f(s,\cdot)\|_{L^{2}(\mathbb{R}^{d};V)}^{2}ds.
\end{align}
That is, the operator $\mathcal{G}_{-\infty,2}$ is bounded from $L^{2}(\mathbb{R}^{d+1};V)$
into $L^{2}(\mathbb{R}^{d+1})$.
\end{itemize}
\end{theorem}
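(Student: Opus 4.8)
The plan is to treat the two parts by different routes: part (i) is a direct consequence of Lemma~\ref{lem: LP ineq for p equal lambda fixed s} combined with the embedding of Appendix~B, while part (ii) requires a self-contained Fourier-side computation, since the endpoint $a=-\infty$ is not accessible from (i).

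For part (i): since $q\geq 2$, Theorem~\ref{thm: Hps(V) embedded into Bpps(V)} applied with $\alpha=0$ gives a constant $C>0$ with $\|g\|_{B_{qq}^{0}(\mathbb{R}^{d};V)}\leq C\|g\|_{H_{q}^{0}(\mathbb{R}^{d};V)}=C\|g\|_{L^{q}(\mathbb{R}^{d};V)}$ for all $g\in L^{q}(\mathbb{R}^{d};V)$, where I used $H_{q}^{0}(\mathbb{R}^{d};V)=L^{q}(\mathbb{R}^{d};V)$ with equal norms. By Fubini, $L^{q}((a,b)\times\mathbb{R}^{d};V)=L^{q}((a,b);L^{q}(\mathbb{R}^{d};V))$, so for $f\in L^{q}((a,b)\times\mathbb{R}^{d};V)$ one has $f(s,\cdot)\in B_{qq}^{0}(\mathbb{R}^{d};V)$ for a.e.\ $s\in(a,b)$ together with $\int_{a}^{b}\|f(s,\cdot)\|_{B_{qq}^{0}(\mathbb{R}^{d};V)}^{q}\,ds\leq C^{q}\int_{a}^{b}\|f(s,\cdot)\|_{L^{q}(\mathbb{R}^{d};V)}^{q}\,ds$. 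Substituting this bound into \eqref{eqn:FE for LPI} and \eqref{eqn:FE for LPI 2} of Lemma~\ref{lem: LP ineq for p equal lambda fixed s} yields \eqref{eqn:LP p=q 1} and \eqref{eqn:LP p=q 2}, with $C_{1}$ absorbing $C^{q}$ and the constant of that lemma.

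For part (ii) I would argue on the Fourier side. By density it suffices to take $f\in C_{\rm c}^{\infty}(\mathbb{R}^{d+1};V)$, since $\mathcal{G}_{-\infty,2}$ is sublinear and the resulting $L^2\to L^2$ bound then extends. For fixed $s<t$ the operator $L_{\psi_{1}}(l)\mathcal{T}_{\psi_{2}}(t,s)$ is the Fourier multiplier with symbol $m(l,t,s,\xi):=\psi_{1}(l,\xi)\exp\!\big(\int_{s}^{t}\psi_{2}(r,\xi)\,dr\big)$, which is bounded in $\xi$ by \textbf{(S1)}--\textbf{(S2)}; hence the $V$-valued Plancherel theorem gives $\int_{\mathbb{R}^{d}}\|(L_{\psi_{1}}(l)\mathcal{T}_{\psi_{2}}(t,s))f(s,\cdot)(x)\|_{V}^{2}\,dx=\int_{\mathbb{R}^{d}}|m(l,t,s,\xi)|^{2}\,\|\mathcal{F}f(s,\cdot)(\xi)\|_{V}^{2}\,d\xi$. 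Using \textbf{(S2)} with $|\alpha|=0$ for $\psi_{1}$ and \textbf{(S1)} for $\psi_{2}$, $|m(l,t,s,\xi)|^{2}\leq\mu_{1}^{2}|\xi|^{2\gamma_{1}}e^{-2\kappa_{2}(t-s)|\xi|^{\gamma_{2}}}$. Multiplying by $(t-s)^{2\gamma_{1}/\gamma_{2}-1}$, integrating over $t\in(s,\infty)$, and substituting $u=(t-s)|\xi|^{\gamma_{2}}$, every power of $|\xi|$ cancels and one is left with $\mu_{1}^{2}\int_{0}^{\infty}u^{2\gamma_{1}/\gamma_{2}-1}e^{-2\kappa_{2}u}\,du=\mu_{1}^{2}(2\kappa_{2})^{-2\gamma_{1}/\gamma_{2}}\Gamma(2\gamma_{1}/\gamma_{2})=:C_{2}$, a finite constant (finiteness at $u=0$ uses $\gamma_{1}>0$) depending only on $\mu_{1},\kappa_{2}$ and $\boldsymbol{\gamma}$. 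Then Fubini's theorem and one more application of Plancherel give $\int_{\mathbb{R}^{d}}\int_{-\infty}^{\infty}|\mathcal{G}_{-\infty,2}f(l,t,x)|^{2}\,dt\,dx=\int_{-\infty}^{\infty}\int_{\mathbb{R}^{d}}\Big(\int_{s}^{\infty}(t-s)^{2\gamma_{1}/\gamma_{2}-1}|m(l,t,s,\xi)|^{2}\,dt\Big)\|\mathcal{F}f(s,\cdot)(\xi)\|_{V}^{2}\,d\xi\,ds\leq C_{2}\int_{-\infty}^{\infty}\|f(s,\cdot)\|_{L^{2}(\mathbb{R}^{d};V)}^{2}\,ds$, which is \eqref{eqn: LP 2}.

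The one genuine subtlety, I expect, is precisely that part (ii) cannot be deduced from part (i) by exhausting $\mathbb{R}$ with bounded intervals $(-n,n)$: the constant $C_{1}$ in (i) blows up as $b-a\to\infty$, so a direct Fourier computation is needed for $a=-\infty$. Within that computation the crux is the exact scaling --- the weight $(t-s)^{q\gamma_{1}/\gamma_{2}-1}$ in the definition of $\mathcal{G}_{a,q}$ is the unique exponent making the $t$-integral scale-invariant in $\xi$, so that the resulting constant is independent of $\xi$ and of $l$. The remaining points --- justifying Fubini, the multiplier identity $\mathcal{F}[L_{\psi_{1}}(l)\mathcal{T}_{\psi_{2}}(t,s)g]=m(l,t,s,\cdot)\,\mathcal{F}g$, and the passage from $C_{\rm c}^{\infty}$ to $L^{2}$ --- are routine given the bounded-symbol property and the $L^{1}$ bound of Lemma~\ref{lem: boundedness of L psi of p j}.
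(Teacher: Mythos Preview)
Your proof is correct and follows essentially the same approach as the paper: part (i) via Lemma~\ref{lem: LP ineq for p equal lambda fixed s} together with the embedding $H_{q}^{0}\hookrightarrow B_{qq}^{0}$ of Theorem~\ref{thm: Hps(V) embedded into Bpps(V)}, and part (ii) via a direct Plancherel computation yielding the same constant $C_{2}=\mu_{1}^{2}(2\kappa_{2})^{-2\gamma_{1}/\gamma_{2}}\Gamma(2\gamma_{1}/\gamma_{2})$. Your additional remarks on why (ii) cannot be obtained from (i) by exhaustion, and on the scale-invariance of the weight, are accurate and add useful context.
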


\begin{proof}
(i) \enspace By applying Lemma \ref{lem: LP ineq for p equal lambda fixed s} and Theorem \ref{thm: Hps(V) embedded into Bpps(V)},  we obtain that
\begin{align*}
\int_{\mathbb{R}^{d}}\int_{a}^{b}|\mathcal{G}_{a,q}f(l,t,x)|^{q}dtdx
&\leq C\int_{a}^{b}\|f(s,\cdot)\|_{B_{qq}^{0}(\mathbb{R}^{d};V)}^{q}ds\\
&\leq CC'\int_{a}^{b}\|f(s,\cdot)\|_{H_{q}^{0}(\mathbb{R}^{d};V)}^{q}ds\\
&=CC'\int_{a}^{b}\|f(s,\cdot)\|_{L^{q}(\mathbb{R}^{d};V)}^{q}ds,
\end{align*}
and then by taking $C_{1}=CC'$, we have the inequality given in \eqref{eqn:LP p=q 1}.
By the same arguments, we can prove the inequality given in  \eqref{eqn:LP p=q 2}.

(ii) \enspace Let $q=2$. Then by Plancherel theorem and the conditions
\textbf{(S1)}, \textbf{(S2)} and \textbf{(S3)}, we obtain that
\begin{align*}
&\int_{-\infty}^{\infty}\int_{\mathbb{R}^{d}}|\mathcal{G}_{-\infty,2}f(l,t,x)|^2 dxdt\\
&=\int_{-\infty}^{\infty}\int_{-\infty}^{t}\int_{\mathbb{R}^{d}}(t-s)^{\frac{2\gamma_{1}}{\gamma_{2}}-1}
      \|L_{\psi_{1}}(l)\mathcal{T}_{\psi_{2}}( t,s)f(s,\cdot)(x)\|_{V}^{2}dxdsdt\\
&=\int_{-\infty}^{\infty}\int_{s}^{\infty}\int_{\mathbb{R}^{d}}(t-s)^{\frac{2\gamma_{1}}{\gamma_{2}}-1}
   \|L_{\psi_{1}}(l)\mathcal{T}_{\psi_{2}}( t,s)f(s,\cdot)(x)\|_{V}^{2}dxdtds\\
&=\int_{-\infty}^{\infty}\int_{s}^{\infty}\int_{\mathbb{R}^{d}}(t-s)^{\frac{2\gamma_{1}}{\gamma_{2}}-1}\left\|\psi_{1}(l,\xi)\exp\left(\int_{s}^{t}
\psi_{2}(r,\xi)dr\right)\mathcal{F}f(s,\xi)\right\|_{V}^{2}d\xi dtds\\
&\leq \int_{-\infty}^{\infty}\int_{s}^{\infty}\int_{\mathbb{R}^{d}}(t-s)^{\frac{2\gamma_{1}}{\gamma_{2}}-1}
   \mu_{1}^{2}|\xi|^{2\gamma_{1}}e^{-2\kappa_{2}(t-s)|\xi|^{\gamma_{2}}}\left\|\mathcal{F}f(s,\xi)\right\|_{V}^{2}d\xi dtds\\
&=\mu_{1}^{2}\int_{-\infty}^{\infty}\int_{\mathbb{R}^{d}}|\xi|^{2\gamma_{1}}
  \left(\int_{s}^{\infty}(t-s)^{\frac{2\gamma_{1}}{\gamma_{2}}-1}e^{-2\kappa_{2}(t-s)|\xi|^{\gamma_{2}}}dt\right)
  \left\|\mathcal{F}f(s,\xi)\right\|_{V}^{2} d\xi ds\\
&=\mu_{1}^{2}\int_{-\infty}^{\infty}\int_{\mathbb{R}^{d}}|\xi|^{2\gamma_{1}}\Gamma
  \left(\frac{2\gamma_{1}}{\gamma_{2}}\right)(2\kappa_{2}|\xi|^{\gamma_{2}})^{-\frac{2\gamma_{1}}{\gamma_{2}}}
    \left\|\mathcal{F}f(s,\xi)\right\|_{V}^{2} d\xi ds\\
&=\mu_{1}^{2}\Gamma\left(\frac{2\gamma_{1}}{\gamma_{2}}\right)(2\kappa_{2})^{-\frac{2\gamma_{1}}{\gamma_{2}}}
   \int_{-\infty}^{\infty}\int_{\mathbb{R}^{d}}\left\|f(s,x)\right\|_{V}^{2} dxds,
\end{align*}
and then by taking $C_{2}=\mu_{1}^{2}\Gamma\left(\frac{2\gamma_{1}}{\gamma_{2}}\right)(2\kappa_{2})^{-\frac{2\gamma_{1}}{\gamma_{2}}}$,
we have the inequality given in \eqref{eqn: LP 2}.
\end{proof}

\section{Estimations of Sharp Functions}\label{sec:sharp}

We now recall that the definitions of the maximal function and the sharp function.
For $x\in\mathbb{R}^{d}$ and $r>0$, we denote $B_{r}(x)=\{y\in \mathbb{R}^{d}\,:\, |x-y|<r\}$ and $B_{r}=B_{r}(0)$.
Then for any $R\geq 0$ and a $V$-valued locally integrable function $h$ on $\mathbb{R}^{d}$,
the maximal function $\mathbb{M}_{x}^{R}h(x)$ is defined by
\begin{align*}
\mathbb{M}_{x}^{R}h(x)=\sup_{r>R}\frac{1}{|B_{r}(x)|}\int_{B_{r}(x)}\|h(y)\|_{V}dy,\quad
\mathbb{M}_{x}h(x)=\mathbb{M}_{x}^{0}h(x).
\end{align*}
If $R_{1}\geq R_{2}$, then it is obvious that
\begin{align*}
\mathbb{M}_{x}^{R_{1}}h(x)\leq \mathbb{M}_{x}^{R_{2}}h(x).
\end{align*}
Similarly, for a $V$-valued locally integrable function $h$ on $\mathbb{R}$, we define
\begin{align*}
\mathbb{M}_{t}^{R}h(t)=\sup_{r>R}\frac{1}{2r}\int_{-r}^{r}\|h(t)\|_{V}dt,\quad
\mathbb{M}_{t}h(t)=\mathbb{M}_{t}^{0}h(t).
\end{align*}
If $h=h(t,x)$, we denote
\begin{align*}
\mathbb{M}_{x}^{R}h(t,x)=\mathbb{M}_{x}^{R}(h(t,\cdot))(x),\quad
\mathbb{M}_{t}^{R}h(t,x)=\mathbb{M}_{t}^{R}(h(\cdot,x))(t).
\end{align*}

For a function $h$ defined on $\mathbb{R}^{d+1}$ with $h\in L^{1}_{\rm loc}(\mathbb{R}^{d+1})$
(the space of all locally integrable functions on $\mathbb{R}^{d+1}$), the sharp function $h^{\#}$ of $h$ is defined by
\begin{align*}
h^{\#}(t,x):=\sup_{Q}\frac{1}{|Q|}\int_{Q}|h(r,z)-h_{Q}|drdz,\quad (t,x)\in\mathbb{R}^{d+1},
\end{align*}
where $h_{Q}=\frac{1}{|Q|}\int_{Q}h(r,z)drdz$ and the supremum is taken all $Q$ containing $(t,x)$ of the type
\begin{align*}
Q=(t-R,t+R)\times B_{R^{1/\gamma_{2}}}(x),\quad R>0.
\end{align*}

For our proofs of the generalized Littlewood-Paley type inequalities given in \eqref{ineq:LP-ineq} and \eqref{ineq:LP-ineq l=t},
we need the estimations (in the following theorem) of the sharp functions
$(\mathcal{G}_{a,q}f(l,\cdot,\cdot))^{\#}$ and $(\widetilde{\mathcal{G}}_{a,q}f)^{\#}$.

\begin{theorem}\label{thm: sharp fct less than maximal}
Let $(\psi_{1},\psi_{2})\in \mathfrak{S}_{\rm T}\times \mathfrak{S}$
satisfying that $N_{1},N_{2}>d+2+\lfloor\gamma_{1}\rfloor+\lfloor\gamma_{2}\rfloor$.
Let $q\geq 2 $ be given. Then it holds that
\begin{itemize}
  \item [\rm (i)] if $-\infty< a<b<\infty$, then there exist constants $C_{1},C_{2},C_{3}>0$ such that
  for any $f\in C_{\rm c}^{\infty}((a,b)\times\mathbb{R}^{d};V)$, $l\geq 0$ and $(t,x)\in (a,b)\times \mathbb{R}^{d}$,
\begin{align}
(\mathcal{G}_{a,q}f(l,\cdot,\cdot))^{\#}(t,x)
&\leq C_{1} \left(\mathbb{M}_{t}\mathbb{M}_{x}\|f\|_{V}^{q}(t,x)\right)^{\frac{1}{q}},\label{eqn: sharp fct esti}\\
(\widetilde{\mathcal{G}}_{a,q}f)^{\#}(t,x)
&\leq (C_{2}+C_{3}(b-a)^{q})^{\frac{1}{q}} \left(\mathbb{M}_{t}\mathbb{M}_{x}\|f\|_{V}^{q}(t,x)\right)^{\frac{1}{q}}.\label{eqn: sharp fct esti 2}
\end{align}
  \item [\rm (ii)] if $q=2$ and $-\infty\leq  a<b \leq \infty$,then there exists a constant $C_{4}>0$ such that
  for any $f\in C_{\rm c}^{\infty}(\mathbb{R}\times\mathbb{R}^{d};V)$, $l\geq 0$ and $(t,x)\in (a,b)\times \mathbb{R}^{d}$,
\begin{align}
(\mathcal{G}_{a,2}f(l,\cdot,\cdot))^{\#}(t,x)
&\leq C_{4}\left(\mathbb{M}_{t}\mathbb{M}_{x}\|f\|_{V}^{q}(t,x)\right)^{\frac{1}{q}}.\label{eqn: sharp fct esti 3}
\end{align}
\end{itemize}
\end{theorem}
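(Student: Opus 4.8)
The plan is to work directly from the definition of the sharp function. Fix $(t,x)\in(a,b)\times\mathbb{R}^{d}$ and an admissible cylinder $Q=(t_{0}-R,t_{0}+R)\times B_{R^{1/\gamma_{2}}}(x_{0})$ containing $(t,x)$. Since the average $h_{Q}$ minimizes $\frac{1}{|Q|}\int_{Q}|h-c|$ over constants $c$ up to a factor $2$, it suffices to exhibit, for every such $Q$, a constant $c=c(Q)$ with
\[
\frac{1}{|Q|}\int_{Q}\bigl|\mathcal{G}_{a,q}f(l,r,z)-c(Q)\bigr|\,dr\,dz
\le C\bigl(\mathbb{M}_{t}\mathbb{M}_{x}\|f\|_{V}^{q}(t,x)\bigr)^{1/q}
\]
(and the analogue for $\widetilde{\mathcal{G}}_{a,q}$). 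Following the standard argument for Littlewood--Paley sharp-function estimates, I would decompose $f=f_{1}+f_{2}$ with $f_{1}=f\mathbf{1}_{Q^{*}}$ and $f_{2}=f\mathbf{1}_{(Q^{*})^{c}}$, where $Q^{*}$ is a fixed parabolic dilate of $Q$ with $Q\subset Q^{*}$ and $|Q^{*}|\le C|Q|$, and take $c(Q)=\mathcal{G}_{a,q}f_{2}(l,t_{0},x_{0})$. Since $\mathcal{G}_{a,q}$ is subadditive, $\bigl|\mathcal{G}_{a,q}f(l,r,z)-c(Q)\bigr|\le\mathcal{G}_{a,q}f_{1}(l,r,z)+\bigl|\mathcal{G}_{a,q}f_{2}(l,r,z)-\mathcal{G}_{a,q}f_{2}(l,t_{0},x_{0})\bigr|$, so it remains to estimate the two pieces separately.

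For the near part, Jensen's inequality gives
\[
\frac{1}{|Q|}\int_{Q}\mathcal{G}_{a,q}f_{1}(l,r,z)\,dr\,dz
\le\Bigl(\frac{1}{|Q|}\int_{Q}\bigl|\mathcal{G}_{a,q}f_{1}(l,r,z)\bigr|^{q}\,dr\,dz\Bigr)^{1/q},
\]
and the right-hand side is controlled by Theorem \ref{thm: LP ineq p=q}(i) in case (i) (both \eqref{eqn:LP p=q 1} and \eqref{eqn:LP p=q 2} apply, covering $\mathcal{G}_{a,q}$ and $\widetilde{\mathcal{G}}_{a,q}$) and by the global $L^{2}$ bound \eqref{eqn: LP 2} in case (ii); in either case it is at most $C\bigl(|Q^{*}|^{-1}\int_{Q^{*}}\|f\|_{V}^{q}\bigr)^{1/q}$. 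Because $(t,x)\in Q\subset Q^{*}$ and $Q^{*}$ is a product of a time-interval with a spatial ball, splitting the average over $Q^{*}$ and applying the one-dimensional and $d$-dimensional Hardy--Littlewood maximal operators shows this is $\le C\bigl(\mathbb{M}_{t}\mathbb{M}_{x}\|f\|_{V}^{q}(t,x)\bigr)^{1/q}$.

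For the far part I would estimate the oscillation $\bigl|\mathcal{G}_{a,q}f_{2}(l,r,z)-\mathcal{G}_{a,q}f_{2}(l,t_{0},x_{0})\bigr|$ for $(r,z)\in Q$. Since $\mathcal{G}_{a,q}f_{2}(l,\cdot,\cdot)$ is an $L^{q}(ds)$-norm of $s\mapsto(\,\cdot-s)^{\gamma_{1}/\gamma_{2}-1/q}\bigl(L_{\psi_{1}}(l)p_{\psi_{2}}(\,\cdot,s,\cdot)\bigr)*f_{2}(s,\cdot)$, Minkowski's integral inequality bounds the difference by the $L^{q}(ds)$-norm of the difference of the integrands, which I would further split into the contributions of (a) changing the spatial evaluation point from $x_{0}$ to $z$, (b) changing the time from $t_{0}$ to $r$ inside the kernel $p_{\psi_{2}}$ and inside the weight, and (c) changing the $s$-domain from $(a,t_{0})$ to $(a,r)$. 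Terms (a) and (b) are handled by the pointwise decay estimates for $\nabla_{x}\bigl(L_{\psi_{1}}(l)p_{\psi_{2}}(t,s,\cdot)\bigr)$ and $\partial_{t}\bigl(L_{\psi_{1}}(l)p_{\psi_{2}}(t,s,\cdot)\bigr)$ from Section \ref{sec: symbol} (together with Lemma \ref{lem: boundedness of L psi of p j}); the hypothesis $N_{1},N_{2}>d+2+\lfloor\gamma_{1}\rfloor+\lfloor\gamma_{2}\rfloor$ is exactly what makes these kernel derivatives decay fast enough in the spatial variable so that, after integrating $y$ over $(Q^{*})^{c}$ (where $|z-y|\ge cR^{1/\gamma_{2}}$) and then integrating $s$ against the weight $(t-s)^{q\gamma_{1}/\gamma_{2}-1}$, the bound collapses to $C\bigl(\mathbb{M}_{t}\mathbb{M}_{x}\|f\|_{V}^{q}(t,x)\bigr)^{1/q}$. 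Term (c) only involves the short $s$-interval between $r$ and $t_{0}$, where $f_{2}(s,\cdot)$ still vanishes near $x_{0}$, and is controlled by the same decay. This far-part oscillation estimate is the main obstacle: one must keep careful track of every way in which $(t,x)$ (and, for $\widetilde{\mathcal{G}}_{a,q}$, also $l$) enters $\mathcal{G}_{a,q}$ and combine the scaling of the kernel derivatives, the time weight, and the geometric separation $|z-y|\ge cR^{1/\gamma_{2}}$ in the right order to recover the double maximal function.

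Finally, for $\widetilde{\mathcal{G}}_{a,q}f(t,x)=\mathcal{G}_{a,q}f(t,t,x)$ the parameter $l$ equals the outer time, so the shift from $t_{0}$ to $r$ additionally replaces $\psi_{1}(t_{0},\xi)$ by $\psi_{1}(r,\xi)$ in $L_{\psi_{1}}$; estimating the resulting term via $|\partial_{\xi}^{\alpha}(\psi_{1}(r,\xi)-\psi_{1}(t_{0},\xi))|\le\mu_{1}|r-t_{0}|\,|\xi|^{\gamma_{1}-|\alpha|}$, which follows from \textbf{(S3)}, together with $|r-t_{0}|\le b-a$, produces the extra contribution bounded by $C_{3}(b-a)^{q}\,\mathbb{M}_{t}\mathbb{M}_{x}\|f\|_{V}^{q}(t,x)$; this explains the constant $(C_{2}+C_{3}(b-a)^{q})^{1/q}$ in \eqref{eqn: sharp fct esti 2}. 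In case (ii) only $\mathcal{G}_{a,q}$ with $l$ fixed appears and the near part is handled with the global $L^{2}$ estimate, so no $(b-a)$-dependent factor is produced and one obtains \eqref{eqn: sharp fct esti 3}.
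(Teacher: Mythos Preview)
Your overall strategy---split $f$ into near and far parts, use subadditivity of $\mathcal{G}_{a,q}$, take $c(Q)$ to be the far part at the center, and control the near part by Theorem~\ref{thm: LP ineq p=q}---is exactly the paper's. The execution differs in two ways. First, the paper does not cut with a single space-time indicator $\mathbf{1}_{Q^{*}}$: it takes a smooth \emph{time} cutoff $\zeta$ (equal to $1$ on $[-8R,8R]$, supported in $[-10R,10R]$) to form $f_{2}=(1-\zeta)f$, and only afterwards splits the near part $f_{1}=\zeta f$ with a \emph{spatial} cutoff into $f_{11},f_{12}$ (Lemmas~\ref{lem: g equal 0 outside of B3r1} and~\ref{lem: bound of integral of mathcal G over Qr2r1}). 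The payoff is that $f_{2}(r,\cdot)\equiv 0$ for $r\geq -8R$, so in $\mathcal{G}_{a,q}f_{2}(l,s,y)$ with $s\in(-2R,0)$ the inner integral runs only over $r<-8R$ and $s-r>6R$ uniformly; with your space-time indicator the spatial separation ``$|z-y|\geq cR^{1/\gamma_{2}}$'' fails whenever $s$ lies outside the time-projection of $Q^{*}$, and a separate time-decay argument is needed there. Second, for the far part the paper does not use Minkowski on the integrand: it applies the mean value theorem directly to $(s,y)\mapsto\mathcal{G}_{a,q}f_{2}(l,s,y)$ and bounds $\sup_{Q_{R}}|\nabla\mathcal{G}_{a,q}f_{2}|^{q}$ and $\sup_{Q_{R}}|\partial_{s}\mathcal{G}_{a,q}f_{2}|^{q}$ (Lemmas~\ref{lem: bound of sup of gradient of mathcal G}--\ref{lem: bound of Dt mathcalG g}). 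The engine in all of these is Lemma~\ref{lem: bound of convolution}, a polar-coordinate integration-by-parts identity bounding $\|(f*v)(y)\|_{V}$ by $(\mathbb{M}_{x}^{R_{1}+R_{2}}\|f\|_{V}^{q})^{1/q}$ times an integral of $|\nabla v|$; this is how the maximal function actually enters, and your sketch (``kernel derivatives decay fast enough\ldots collapses to the maximal function'') does not name this step or an equivalent dyadic-shells argument.

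Your explanation of the $(b-a)^{q}$ factor is also not the paper's mechanism. In Lemma~\ref{lem: bound of Dt mathcalG g-2}(ii) the factor appears in the \emph{inner} integral: $\partial_{s}\bigl(L_{\psi_{1}}(s)p_{\psi_{2}}(s,r,\cdot)\bigr)$ contains a $\partial_{s}\psi_{1}$-piece of order $\gamma_{1}$ (rather than $\gamma_{1}+\gamma_{2}$), and after combining with the weight $(s-r)^{q\gamma_{1}/\gamma_{2}-1}$ one is left with a residual $((s-r)+1)^{q}\leq((b-a)+1)^{q}$, where $r$ here is the inner integration variable and $s$ the outer time. It is not the outer oscillation $|r-t_{0}|$ you cite, which is at most $2R$ and would cancel against the sharp-function scaling rather than produce a $(b-a)$ factor.
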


A proof of Theorem \ref{thm: sharp fct less than maximal} will be given in Appendix \ref{sec: Appendix A}.

\section{Generalized Littlewood-Paley Type Inequality}\label{sec:main}

In this section, we prove a generalization of Littlewood-Paley type inequality for evolution systems
associated with pseudo-differential operators.
For our proof, we need the following two theorems.

\begin{theorem}[Fefferman-Stein]\label{thm:FS}
Let $1<p<\infty$.
Then there exists a constant $N$ depending on $p$ such that for any $f\in L^{p}(\mathbb{R}^{d})$,
\begin{align*}
\|f\|_{L^{p}(\mathbb{R}^{d})}\leq N\|f^{\#}\|_{L^{p}(\mathbb{R}^{d})}.
\end{align*}
\end{theorem}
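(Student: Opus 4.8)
The plan is to reduce $\|f\|_{L^p}\le N\|f^{\#}\|_{L^p}$ to an $L^p$ estimate for the Hardy--Littlewood-type maximal operator built from the same family of sets, passing between the two by a good-$\lambda$ inequality. In the form needed here, $f^{\#}$ is taken with respect to the cylinders $Q=(t_0-R,t_0+R)\times B_{R^{1/\gamma_2}}(x_0)$; set $\mathcal{M}h(t,x):=\sup_{Q\ni(t,x)}\frac{1}{|Q|}\int_{Q}|h(r,z)|\,dr\,dz$, the supremum over the same family. These $Q$ are exactly the balls of the parabolic quasi-metric $\rho\big((t,x),(s,y)\big)=\max\{|t-s|,|x-y|^{\gamma_2}\}^{1/\gamma_2}$, for which Lebesgue measure is doubling, so $\mathcal{M}$ obeys a weak-$(1,1)$ bound (Besicovitch/Vitali covering lemma) and Lebesgue differentiation along this family yields $|f|\le\mathcal{M}f$ a.e. It therefore suffices to prove $\|\mathcal{M}f\|_{L^p}\le C\|f^{\#}\|_{L^p}$.

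The core step is the good-$\lambda$ inequality: there exist $\beta>1$ and $A>0$ (depending only on $d$ and $\gamma_2$) such that for all $\lambda>0$ and $\varepsilon\in(0,1)$,
\[
|\{\mathcal{M}f>\beta\lambda,\ f^{\#}\le\varepsilon\lambda\}|\ \le\ A\,\varepsilon\,|\{\mathcal{M}f>\lambda\}|.
\]
To get it, decompose $\Omega_\lambda=\{\mathcal{M}f>\lambda\}$ by a Vitali/Calder\'on--Zygmund selection into cylinders $\{Q_k\}$ of bounded overlap with $\sum_k|Q_k|\lesssim|\Omega_\lambda|$, each having a fixed dilate $\widehat{Q}_k$ with $\frac{1}{|\widehat{Q}_k|}\int_{\widehat{Q}_k}|f|\lesssim\lambda$ by maximality. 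Fix $k$ with $Q_k\cap\{f^{\#}\le\varepsilon\lambda\}\ne\emptyset$ and pick $(t_*,x_*)$ there. On $Q_k$ write $\mathcal{M}f\le\mathcal{M}((f-f_{\widehat{Q}_k})\mathbf{1}_{\widehat{Q}_k})+|f_{\widehat{Q}_k}|+\mathcal{M}(f\,\mathbf{1}_{\widehat{Q}_k^c})$; the last two terms are $\lesssim\lambda$ on $Q_k$ by maximality of the selection, so for $\beta$ large $\{\mathcal{M}f>\beta\lambda\}\cap Q_k\subseteq\{\mathcal{M}((f-f_{\widehat{Q}_k})\mathbf{1}_{\widehat{Q}_k})>\tfrac{\beta}{4}\lambda\}$, and by weak-$(1,1)$ this last set has measure $\lesssim\lambda^{-1}\int_{\widehat{Q}_k}|f-f_{\widehat{Q}_k}|\le\lambda^{-1}|\widehat{Q}_k|\,f^{\#}(t_*,x_*)\lesssim\varepsilon|Q_k|$; summing over $k$ gives the claim. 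Multiplying by $p\lambda^{p-1}$ and integrating over $\lambda\in(0,\infty)$ yields $\|\mathcal{M}f\|_{L^p}^p\le\beta^pA\varepsilon\|\mathcal{M}f\|_{L^p}^p+\beta^p\varepsilon^{-p}\|f^{\#}\|_{L^p}^p$, and choosing $\varepsilon$ with $\beta^pA\varepsilon\le\tfrac12$ lets me absorb the first term, giving $\|f\|_{L^p}\le\|\mathcal{M}f\|_{L^p}\le N\|f^{\#}\|_{L^p}$.

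The main obstacle is that the absorption step is valid only once $\|\mathcal{M}f\|_{L^p}$ is known \emph{a priori} to be finite; for $f\in L^p$ this finiteness is essentially the Hardy--Littlewood maximal theorem, which one should not invoke circularly here, so instead I would run the whole argument with the truncations $g_{M,j}:=\mathbf{1}_{Q^{(j)}}\min(\mathcal{M}f,M)$ over an exhausting sequence $Q^{(j)}\uparrow\mathbb{R}^{d+1}$: then $\|g_{M,j}\|_{L^p}\le M|Q^{(j)}|^{1/p}<\infty$, the good-$\lambda$ inequality and the absorption go through with constants independent of $M$ and $j$, and one lets $M,j\to\infty$ by monotone convergence. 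A subsidiary point to record is that this cylinder family genuinely admits the Besicovitch-type covering lemma and the bounded-overlap Vitali selection used above; both are routine given the quasi-metric structure, and the whole argument is just the classical Fefferman--Stein theorem transported to this parabolic space of homogeneous type.
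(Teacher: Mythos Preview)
Your argument is correct and is the classical Fefferman--Stein good-$\lambda$ proof, transplanted to the parabolic space of homogeneous type determined by the cylinders $Q_R$. The paper takes a genuinely different route: it builds an anisotropic dyadic filtration of partitions $\mathcal{P}_n$ (rectangles of side $2^{-n}$ in time and $2^{-n/\gamma}$ in each space coordinate), checks that this is a filtration of partitions in the sense of Krylov, and then invokes the ready-made dyadic/martingale Fefferman--Stein theorem (Theorem 3.2.10 in Krylov's \emph{Lectures on Elliptic and Parabolic Equations in Sobolev Spaces}) to get $\|f\|_{L^p}\le C\|f^{\#,\mathbf{P}}\|_{L^p}$ for the dyadic sharp function. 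The proof is then finished by a one-line comparison $f^{\#,\mathbf{P}}\le 2N_1^2\,f^{\#}$, obtained by enclosing each dyadic rectangle $P_n(t,x)$ in a cylinder $Q_{R_0}(t,x)$ of comparable measure. So the paper outsources the good-$\lambda$ machinery entirely to Krylov's theorem and only has to verify the filtration axioms and the containment step; your approach is more self-contained, carrying out the covering lemma, weak-$(1,1)$ bound, Calder\'on--Zygmund selection, and absorption directly in the parabolic geometry. Both yield constants depending only on $p$, $d$, and $\gamma_2$.
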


A proof of Theorem \ref{thm:FS} will be given in Appendix \ref{sec:Appendix C}.

\begin{theorem}[Hardy-Littlewood maximal theorem]\label{thm:HL max}
Let $1<r\leq \infty$.
Then there exists a constant $N>0$ depending on $r$ and $d$
such that for any $f\in L^{r}(\mathbb{R}^{d})$,
\begin{align*}
\|\mathbb{M}f\|_{L^{r}(\mathbb{R}^{d})}\leq N\|f\|_{L^{r}(\mathbb{R}^{d})}.
\end{align*}
\end{theorem}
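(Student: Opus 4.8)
The plan is to deduce this from the weak-type $(1,1)$ bound for $\mathbb{M}$ together with the trivial $L^\infty$ estimate, and then interpolate. First I would dispose of the endpoint $r=\infty$: directly from the definition of $\mathbb{M}$, every average $\frac{1}{|B_r(x)|}\int_{B_r(x)}|f(y)|\,dy$ is at most $\|f\|_{L^\infty(\mathbb{R}^d)}$, so $\|\mathbb{M}f\|_{L^\infty(\mathbb{R}^d)}\le\|f\|_{L^\infty(\mathbb{R}^d)}$ and one may take $N=1$.

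The core step is the weak-type $(1,1)$ inequality: there is a dimensional constant $C_d>0$ so that, for every $f\in L^1(\mathbb{R}^d)$ and every $\lambda>0$,
\[
\bigl|\{x\in\mathbb{R}^d:\mathbb{M}f(x)>\lambda\}\bigr|\le\frac{C_d}{\lambda}\,\|f\|_{L^1(\mathbb{R}^d)}.
\]
To prove it, note first that $\mathbb{M}f$ is lower semicontinuous (a supremum over $r$ of the functions $x\mapsto\frac{1}{|B_r(x)|}\int_{B_r(x)}|f|$, each continuous in $x$), so the superlevel set $E_\lambda:=\{\mathbb{M}f>\lambda\}$ is open. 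Fix a compact $K\subseteq E_\lambda$. Each $x\in K$ admits a radius $r_x>0$ with $\int_{B_{r_x}(x)}|f|>\lambda\,|B_{r_x}(x)|$; by compactness finitely many of these balls cover $K$, and the finite Vitali covering lemma extracts a pairwise disjoint subfamily $B_1,\dots,B_M$ whose fivefold concentric dilates still cover $K$. Hence
\[
|K|\le\sum_{i=1}^{M}|5B_i|=5^d\sum_{i=1}^{M}|B_i|<\frac{5^d}{\lambda}\sum_{i=1}^{M}\int_{B_i}|f|\le\frac{5^d}{\lambda}\,\|f\|_{L^1(\mathbb{R}^d)},
\]
and taking the supremum over all compact $K\subseteq E_\lambda$ gives the claim with $C_d=5^d$.

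Finally I would interpolate between the weak-$(1,1)$ and the strong-$(\infty,\infty)$ bounds. Given $1<r<\infty$ and $\lambda>0$, write $f=g_\lambda+h_\lambda$ with $g_\lambda=f\mathbf{1}_{\{|f|>\lambda/2\}}$; since $\|h_\lambda\|_{L^\infty}\le\lambda/2$ we have $\mathbb{M}h_\lambda\le\lambda/2$ pointwise, whence $\{\mathbb{M}f>\lambda\}\subseteq\{\mathbb{M}g_\lambda>\lambda/2\}$, and the weak-$(1,1)$ estimate applied to $g_\lambda$ gives $|\{\mathbb{M}f>\lambda\}|\le\frac{2C_d}{\lambda}\int_{\{|f|>\lambda/2\}}|f|\,dx$. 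Integrating the distribution function and exchanging the order of integration,
\[
\|\mathbb{M}f\|_{L^r(\mathbb{R}^d)}^r=r\int_0^\infty\lambda^{r-1}\bigl|\{\mathbb{M}f>\lambda\}\bigr|\,d\lambda\le 2rC_d\int_{\mathbb{R}^d}|f(x)|\int_0^{2|f(x)|}\lambda^{r-2}\,d\lambda\,dx=\frac{2^r r C_d}{r-1}\,\|f\|_{L^r(\mathbb{R}^d)}^r,
\]
which is the asserted bound. Equivalently, one may just invoke the Marcinkiewicz interpolation theorem for the sublinear operator $\mathbb{M}$.

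There is no genuine obstacle here, the result being classical; the only step demanding care is the Vitali covering argument behind the weak-$(1,1)$ bound. The hypothesis $r>1$ enters solely through the factor $2^r r/(r-1)$, which must blow up as $r\downarrow 1$ since $\mathbb{M}$ is not bounded on $L^1(\mathbb{R}^d)$.
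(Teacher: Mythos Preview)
Your proof is correct and is exactly the classical argument. The paper itself does not give a proof at all: it simply writes ``See Theorem 1.3.1 in \cite{Stein 1970},'' so your weak-$(1,1)$ plus Marcinkiewicz interpolation argument is in fact a full proof where the paper only supplies a citation (and is essentially the argument found in that reference).
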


\begin{proof}
See Theorem 1.3.1 in \cite{Stein 1970}.
\end{proof}

\begin{theorem}\label{thm:LP ineq}
Let $(\psi_{1},\psi_{2})\in\mathfrak{S}_{\rm T}\times\mathfrak{S}$
such that $N_{1},N_{2}>d+2+\lfloor\gamma_{1}\rfloor+\lfloor\gamma_{2}\rfloor$.
Then for any $q\geq 2$ and $p\geq q$, it holds that

\begin{itemize}
  \item [\rm{(i)}] if  $-\infty< a< b<\infty$, then
there exist constants $C_{1},C_{2}>0$ depending on $a,b,p,q,d,\boldsymbol{\gamma},\boldsymbol{\kappa}$ and $\boldsymbol{\mu}$
such that for any $f\in L^p((a,b)\times\mathbb{R}^{d}; V)$,
\begin{align}\label{ineq:LP-ineq}
&\int_{\mathbb{R}^{d}}\int_{a}^{b}\left(\int_{a}^{t}(t-s)^{\frac{q\gamma_{1}}{\gamma_{2}}-1}
\|L_{\psi_{1}}(l)
\mathcal{T}_{\psi_{2}}(t,s)f(s,\cdot)(x)\|_{V}^{q}ds\right)^{\frac{p}{q}}dtdx\nonumber\\
&\qquad\leq C_{1}\int_{\mathbb{R}^{d}}\int_{a}^{b}
\|f(t,x)\|_{V}^{p}dtdx
\end{align}
for all $l\geq 0$, and
\begin{align}\label{ineq:LP-ineq l=t}
&\int_{\mathbb{R}^{d}}\int_{a}^{b}\left(\int_{a}^{t}(t-s)^{\frac{q\gamma_{1}}{\gamma_{2}}-1}
\|L_{\psi_{1}}(t)
\mathcal{T}_{\psi_{2}}(t,s)f(s,\cdot)(x)\|_{V}^{q}ds\right)^{\frac{p}{q}}dtdx\nonumber\\
&\qquad\leq C_{2}\int_{\mathbb{R}^{d}}\int_{a}^{b}
\|f(t,x)\|_{V}^{p}dtdx,
\end{align}
  \item [\rm{(ii)}] (if $q=2$,) there exists a constant $C_{3}>0$ depending on
  $p,d,\boldsymbol{\gamma},\boldsymbol{\kappa}$ and $\boldsymbol{\mu}$
such that for any $l\geq 0$ and $f\in L^p(\mathbb{R}^{d+1}; V)$,
\begin{align}\label{ineq:LP-ineq q=2}
&\int_{\mathbb{R}^{d}}\int_{-\infty}^{\infty}\left(\int_{-\infty}^{t}(t-s)^{\frac{2\gamma_{1}}{\gamma_{2}}-1}
\|L_{\psi_{1}}(l)
\mathcal{T}_{\psi_{2}}(t,s)f(s,\cdot)(x)\|_{V}^{2}ds\right)^{\frac{p}{2}}dtdx\nonumber\\
&\qquad\leq C_{3}\int_{\mathbb{R}^{d}}\int_{-\infty}^{\infty}
\|f(t,x)\|_{V}^{p}dtdx.
\end{align}
\end{itemize}
\end{theorem}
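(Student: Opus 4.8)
The plan is to derive the full range $p\ge q$ from the endpoint case $p=q$, which is already contained in Theorem \ref{thm: LP ineq p=q}, by the classical Fefferman--Stein/Hardy--Littlewood bootstrap, feeding in the pointwise sharp-function bound from Theorem \ref{thm: sharp fct less than maximal}. Since \eqref{ineq:LP-ineq}, \eqref{ineq:LP-ineq l=t} and \eqref{ineq:LP-ineq q=2} are all $L^{p}$-estimates for Littlewood--Paley $g$-functions of the same type (recall \eqref{eqn:Lp norm of mathcal G}), it suffices to carry out the argument for \eqref{ineq:LP-ineq} and then indicate the routine modifications. If $p=q$ there is nothing to prove beyond \eqref{eqn:LP p=q 1}, so assume $p>q$ throughout.

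First I would reduce to $f\in C_{\rm c}^{\infty}((a,b)\times\mathbb{R}^{d};V)$. For fixed $t>s$ the map $g\mapsto L_{\psi_{1}}(l)\mathcal{T}_{\psi_{2}}(t,s)g$ is convolution against $L_{\psi_{1}}(l)p_{\psi_{2}}(t,s,\cdot)\in L^{1}(\mathbb{R}^{d})$, whose $L^{1}$-norm is controlled through Lemma \ref{lem: boundedness of L psi of p j}, and hence it is $L^{p}$-bounded; so if $f_{n}\to f$ in $L^{p}((a,b)\times\mathbb{R}^{d};V)$ with $f_{n}\in C_{\rm c}^{\infty}$, one passes to an a.e.\ convergent subsequence and applies Fatou's lemma to the $ds$-, $dt$- and $dx$-integrals defining the left-hand side of \eqref{ineq:LP-ineq}. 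Thus it is enough to treat $f\in C_{\rm c}^{\infty}((a,b)\times\mathbb{R}^{d};V)$, and for such $f$ we in particular have $f\in L^{q}((a,b)\times\mathbb{R}^{d};V)$, so Theorem \ref{thm: LP ineq p=q}(i) already gives $\mathcal{G}_{a,q}f(l,\cdot,\cdot)\in L^{q}((a,b)\times\mathbb{R}^{d})$; this a priori integrability is exactly what makes the Fefferman--Stein step below legitimate.

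Now write $g:=\mathcal{G}_{a,q}f(l,\cdot,\cdot)$, extended by zero to $\mathbb{R}^{d+1}$, and likewise extend $f$ by zero. By the Fefferman--Stein theorem (Theorem \ref{thm:FS}), applied with the parabolic cylinders $Q=(t-R,t+R)\times B_{R^{1/\gamma_{2}}}(x)$ that define the sharp function, and using $g\in L^{q}(\mathbb{R}^{d+1})$, we get $\|g\|_{L^{p}(\mathbb{R}^{d+1})}\le N\,\|g^{\#}\|_{L^{p}(\mathbb{R}^{d+1})}$. By Theorem \ref{thm: sharp fct less than maximal}(i), $g^{\#}(t,x)\le C_{1}\big(\mathbb{M}_{t}\mathbb{M}_{x}\|f\|_{V}^{q}(t,x)\big)^{1/q}$ for $(t,x)\in(a,b)\times\mathbb{R}^{d}$, while for $t\notin(a,b)$ the function $g$ vanishes on a full neighbourhood in time, so a direct estimate of $g^{\#}$ against the same maximal function costs nothing there. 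Raising to the $p$-th power, integrating over $\mathbb{R}^{d+1}$, and using $p/q>1$ to apply the Hardy--Littlewood maximal theorem (Theorem \ref{thm:HL max}) once in $x$ and once in $t$,
\begin{align*}
\|g^{\#}\|_{L^{p}(\mathbb{R}^{d+1})}^{p}\le C_{1}^{p}\,\big\|\mathbb{M}_{t}\mathbb{M}_{x}\|f\|_{V}^{q}\big\|_{L^{p/q}(\mathbb{R}^{d+1})}^{p/q}\le C_{1}^{p}\,(N')^{2p/q}\,\big\|\,\|f\|_{V}^{q}\,\big\|_{L^{p/q}(\mathbb{R}^{d+1})}^{p/q}=C_{1}^{p}\,(N')^{2p/q}\,\|f\|_{L^{p}(\mathbb{R}^{d+1};V)}^{p}.
\end{align*}
Chaining the two bounds gives \eqref{ineq:LP-ineq}. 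For \eqref{ineq:LP-ineq l=t} one repeats this verbatim with $\widetilde{\mathcal{G}}_{a,q}$ in place of $\mathcal{G}_{a,q}$, invoking \eqref{eqn:LP p=q 2} for the $L^{q}$-boundedness and \eqref{eqn: sharp fct esti 2} for the sharp bound (the extra factor $(b-a)^{q}$ there being harmless since $b-a<\infty$); for part (ii), with $q=2$ and $(a,b)=(-\infty,\infty)$, one uses Theorem \ref{thm: LP ineq p=q}(ii) for $\mathcal{G}_{-\infty,2}f(l,\cdot,\cdot)\in L^{2}(\mathbb{R}^{d+1})$ and Theorem \ref{thm: sharp fct less than maximal}(ii), which now holds on all of $\mathbb{R}^{d+1}$ so no boundary discussion arises, and the same chain yields \eqref{ineq:LP-ineq q=2} for all $p\ge 2$.

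The main obstacle is organisational rather than computational: all the hard analysis (the $g$-function $L^{q}$-bound and the sharp-function estimate) is already packaged in Theorems \ref{thm: LP ineq p=q} and \ref{thm: sharp fct less than maximal}, and what remains is to splice them correctly. The points needing genuine care are (a) that the Hardy--Littlewood step requires $p/q>1$, so the case $p=q$ must be quoted directly from Theorem \ref{thm: LP ineq p=q} rather than obtained by this scheme, and (b) that Fefferman--Stein can only be invoked once $\mathcal{G}_{a,q}f$ is known to lie in some $L^{p_{0}}$ with $p_{0}<\infty$, which is why the reduction to $C_{\rm c}^{\infty}$ data and the endpoint bound are established first; a minor additional point is verifying that the zero-extension across the finite time-window $(a,b)$ does not damage the sharp-function bound, which is immediate since $g$ vanishes outside that window.
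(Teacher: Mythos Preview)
Your proposal is correct and follows essentially the same route as the paper: quote Theorem \ref{thm: LP ineq p=q} for $p=q$, and for $p>q$ chain Fefferman--Stein (Theorem \ref{thm:FS}), the sharp-function estimate (Theorem \ref{thm: sharp fct less than maximal}), and two applications of Hardy--Littlewood (Theorem \ref{thm:HL max}) with exponent $p/q>1$. You are in fact slightly more careful than the paper in spelling out the density reduction to $C_{\rm c}^{\infty}$ and the a priori $L^{q}$-integrability of $\mathcal{G}_{a,q}f$ needed to invoke Fefferman--Stein; the paper's proof applies these theorems directly without that discussion.
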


\begin{proof}
(i)\enspace If $p=q$, then the inequalities \eqref{ineq:LP-ineq} and \eqref{ineq:LP-ineq l=t} hold
by \eqref{eqn:LP p=q 1} and \eqref{eqn:LP p=q 2}, respectively, in Theorem \ref{thm: LP ineq p=q}.
Suppose that $p>q$.
By applying Theorem \ref{thm:FS}, \eqref{eqn: sharp fct esti} and Theorem \ref{thm:HL max} with $r=\frac{p}{q}>1$,
we obtain that
 \begin{align*}
&\int_{\mathbb{R}^{d}}\int_{a}^{b}\left(\int_{a}^{t}(t-s)^{\frac{q\gamma_{1}}{\gamma_{2}}-1}
\|L_{\psi_{1}}(l)
\mathcal{T}_{\psi_{2}}(t,s)f(s,\cdot)(x)\|_{V}^{q}ds\right)^{\frac{p}{q}}dtdx\\
&=\|\mathcal{G}_{a,q}f(l,\cdot,\cdot)\|_{L^{p}((a,b)\times\mathbb{R}^{d})}^{p}\\
&\leq C^{(1)}\|(\mathcal{G}_{a,q}f(l,\cdot,\cdot))^{\#}\|_{L^{p}((a,b)\times\mathbb{R}^{d})}^{p}\\
&=C^{(1)}\int_{\mathbb{R}^{d}}\int_{a}^{b}|(\mathcal{G}_{a,q}f(l,\cdot,\cdot))^{\#}(t,x)|^{p}dtdx\\
&\leq C^{(1)}C^{(2)}\int_{\mathbb{R}^{d}}\int_{a}^{b}(\mathbb{M}_{t}\,\,\mathbb{M}_{x}\|f\|_{V}^{q}(t,x))^{\frac{p}{q}}dtdx\\
&\leq C^{(1)}C^{(2)}C^{(3)}\int_{\mathbb{R}^{d}}\int_{a}^{b}(\mathbb{M}_{x}\|f\|_{V}^{q}(t,x))^{\frac{p}{q}}dtdx\\
&\leq C^{(1)}C^{(2)}C^{(3)}C^{(4)}\int_{\mathbb{R}^{d}}\int_{a}^{b}\|f(t,x)\|_{V}^{p}dtdx
\end{align*}
for some constants $C^{(1)},C^{(2)},C^{(3)},C^{(4)}>0$. This implies \eqref{ineq:LP-ineq}.

Similarly, by applying Theorem \ref{thm:FS}, \eqref{eqn: sharp fct esti 2}
and Theorem \ref{thm:HL max}, we obtain that
\begin{align*}
&\int_{\mathbb{R}^{d}}\int_{a}^{b}\left(\int_{a}^{t}(t-s)^{\frac{q\gamma_{1}}{\gamma_{2}}-1}
\|L_{\psi_{1}}(t)
\mathcal{T}_{\psi_{2}}(t,s)f(s,\cdot)(x)\|_{V}^{q}ds\right)^{\frac{p}{q}}dtdx\\
&=\|\widetilde{\mathcal{G}}_{a,q}f\|_{L^{p}((a,b)\times\mathbb{R}^{d})}^{p}\\
&\leq C^{(1)}\|(\widetilde{\mathcal{G}}_{a,q}f)^{\#}\|_{L^{p}((a,b)\times\mathbb{R}^{d})}^{p}\\
&=C^{(1)}\int_{\mathbb{R}^{d}}\int_{a}^{b}|(\widetilde{\mathcal{G}}_{a,q}f)^{\#}(t,x)|^{p}dtdx\\
&\leq C^{(1)}C^{(5)}\int_{\mathbb{R}^{d}}\int_{a}^{b}(\mathbb{M}_{t}\,\,\mathbb{M}_{x}\|f\|_{V}^{q}(t,x))^{\frac{p}{q}}dtdx\\
&\leq C^{(1)}C^{(5)}C^{(3)}\int_{\mathbb{R}^{d}}\int_{a}^{b}(\mathbb{M}_{x}\|f\|_{V}^{q}(t,x))^{\frac{p}{q}}dtdx\\
&\leq C^{(1)}C^{(5)}C^{(3)}C^{(4)}\int_{\mathbb{R}^{d}}\int_{a}^{b}\|f(t,x)\|_{V}^{p}dtdx
\end{align*}
for some constants $C^{(1)},C^{(5)},C^{(3)},C^{(4)}>0$. This implies \eqref{ineq:LP-ineq l=t}.

(ii)\enspace Since $q=2$, $p\ge q=2$.
If $p=2$, then the inequality \eqref{ineq:LP-ineq q=2} holds by \eqref{eqn: LP 2} in Theorem \ref{thm: LP ineq p=q}.
Suppose that $p>2$. By applying Theorem \ref{thm:FS}, \eqref{eqn: sharp fct esti 3} and Theorem \ref{thm:HL max} with $r=\frac{p}{2}>1$,
we obtain that
 \begin{align*}
 &\int_{\mathbb{R}^{d}}\int_{-\infty}^{\infty}\left(\int_{-\infty}^{t}(t-s)^{\frac{2\gamma_{1}}{\gamma_{2}}-1}
\|L_{\psi_{1}}(l)
\mathcal{T}_{\psi_{2}}(t,s)f(s,\cdot)(x)\|_{V}^{2}ds\right)^{\frac{p}{2}}dtdx\\
&=\|\mathcal{G}_{-\infty,2}f(l,\cdot,\cdot)\|_{L^{p}(\mathbb{R}^{d+1})}^{p}\\
&\leq C^{(1)}\|(\mathcal{G}_{-\infty,2}f(l,\cdot,\cdot))^{\#}\|_{L^{p}(\mathbb{R}^{d+1})}^{p}\\
&=C^{(1)}\int_{\mathbb{R}^{d}}\int_{-\infty}^{\infty}|(\mathcal{G}_{-\infty,2}f(l,\cdot,\cdot))^{\#}(t,x)|^{p}dtdx\\
&\leq C^{(1)}C^{(2)}\int_{\mathbb{R}^{d}}\int_{-\infty}^{\infty}
            (\mathbb{M}_{t}\,\,\mathbb{M}_{x}\|f\|_{V}^{2}(t,x))^{\frac{p}{2}}dtdx\\
&\leq C^{(1)}C^{(2)}C^{(3)}\int_{\mathbb{R}^{d}}\int_{-\infty}^{\infty}(\mathbb{M}_{x}\|f\|_{V}^{2}(t,x))^{\frac{p}{2}}dtdx\\
&\leq C^{(1)}C^{(2)}C^{(3)}C^{(4)}\int_{\mathbb{R}^{d}}\int_{-\infty}^{\infty}\|f(t,x)\|_{V}^{p}dtdx
\end{align*}
for some constants $C^{(1)},C^{(2)},C^{(3)},C^{(4)}>0$. This implies \eqref{ineq:LP-ineq q=2}.

\end{proof}

\begin{corollary}\label{cor: LP Lpsi is Laplacian}
Let $\psi\in\mathfrak{S}$ such that $N>d+2+\lfloor\frac{\gamma}{q}\rfloor+\lfloor\gamma\rfloor$.
Then for any $q\geq 2$ and $p\geq q$, it holds that

\begin{itemize}
  \item [\rm{(i)}] if $-\infty< a< b<\infty$, then
there exists a constant $C_{1}>0$ depending on $a,b,p,q,d$, $\gamma$, $\kappa$ and $\mu$
such that for any $f\in L^p((a,b)\times\mathbb{R}^{d}; V)$,
\begin{align}
&\int_{\mathbb{R}^{d}}\int_{a}^{b}\left(\int_{a}^{t}
\|(-\Delta)^{\frac{\gamma}{2q}}
\mathcal{T}_{\psi}(t,s)f(s,\cdot)(x)\|_{V}^{q}ds\right)^{\frac{p}{q}}dtdx\nonumber\\
&\qquad\leq C_{1}\int_{\mathbb{R}^{d}}\int_{a}^{b}
\|f(t,x)\|_{V}^{p}dtdx,
\end{align}

  \item [\rm{(ii)}] (if $q=2$,) there exists a constant $C_{2}>0$ depending on $p,q,d,\gamma,\kappa$ and $\mu$
such that for any $f\in L^p(\mathbb{R}^{d+1}; V)$,
\begin{align}\label{ineq: first parabolic LP-ineq}
&\int_{\mathbb{R}^{d}}\int_{-\infty}^{\infty}\left(\int_{-\infty}^{t}
\|(-\Delta)^{\frac{\gamma}{4}}
\mathcal{T}_{\psi}(t,s)f(s,\cdot)(x)\|_{V}^{2}ds\right)^{\frac{p}{2}}dtdx\nonumber\\
&\qquad\leq C_{2}\int_{\mathbb{R}^{d}}\int_{-\infty}^{\infty}
\|f(t,x)\|_{V}^{p}dtdx.
\end{align}
\end{itemize}
\end{corollary}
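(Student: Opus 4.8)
The plan is to deduce Corollary~\ref{cor: LP Lpsi is Laplacian} directly from Theorem~\ref{thm:LP ineq} by a suitable choice of the pair $(\psi_1,\psi_2)$. I would take $\psi_2 := \psi$, so that $\gamma_{\psi_2}=\gamma$ and $N_{\psi_2}=N$, and I would take $\psi_1(t,\xi):=-|\xi|^{\gamma/q}$, which is independent of $t$. The first task is to verify $\psi_1\in\mathfrak{S}_{\rm T}$ with $\gamma_{\psi_1}=\gamma/q$: condition \textbf{(S1)} holds with $\kappa_{\psi_1}=1$ since ${\rm Re}[\psi_1(t,\xi)]=-|\xi|^{\gamma/q}$ — the minus sign is essential here, because the positive symbol $|\xi|^{\gamma/q}$ would violate the dissipativity sign convention in \textbf{(S1)}, whereas it is harmless for the $V$-norm that actually enters the estimates. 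Condition \textbf{(S2)}, and hence \textbf{(S3)} (as $\psi_1$ is $t$-independent, so $\partial_t\psi_1\equiv 0$), follows from the homogeneity of $|\xi|^{\gamma/q}$: for each multi-index $\alpha$ the function $\partial_\xi^\alpha|\xi|^{\gamma/q}$ is positively homogeneous of degree $\gamma/q-|\alpha|$ and smooth on the unit sphere, so $|\partial_\xi^\alpha\psi_1(t,\xi)|\leq \mu_{\psi_1}|\xi|^{\gamma/q-|\alpha|}$ on $\mathbb{R}^d\setminus\widetilde{\boldsymbol{0}}$ with $\mu_{\psi_1}$ depending only on $d$ and $\gamma/q$. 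Since these bounds hold for every order $|\alpha|$, $N_{\psi_1}$ may be taken as large as desired; together with the corollary's hypothesis $N>d+2+\lfloor\gamma/q\rfloor+\lfloor\gamma\rfloor$ this yields $N_{\psi_1},N_{\psi_2}>d+2+\lfloor\gamma_{\psi_1}\rfloor+\lfloor\gamma_{\psi_2}\rfloor$, exactly the standing assumption of Theorem~\ref{thm:LP ineq}.

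Next I would record two identifications. Because $\gamma_{\psi_1}=\gamma/q$ and $\gamma_{\psi_2}=\gamma$, the weight exponent collapses, $\frac{q\gamma_{\psi_1}}{\gamma_{\psi_2}}-1=\frac{q(\gamma/q)}{\gamma}-1=0$, so $(t-s)^{q\gamma_{\psi_1}/\gamma_{\psi_2}-1}\equiv 1$ and the left-hand sides of \eqref{ineq:LP-ineq} and \eqref{ineq:LP-ineq q=2} reduce to the integrals in the corollary. Moreover $L_{\psi_1}(l)g=\mathcal{F}^{-1}\!\big(-|\xi|^{\gamma/q}\mathcal{F}g\big)=-(-\Delta)^{\gamma/(2q)}g$ does not depend on $l$, so $\|L_{\psi_1}(l)g\|_V=\|(-\Delta)^{\gamma/(2q)}g\|_V$ for every admissible $g$; for $q=2$ this reads $\|L_{\psi_1}(l)g\|_V=\|(-\Delta)^{\gamma/4}g\|_V$.

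With these observations in hand, part~(i) follows by applying \eqref{ineq:LP-ineq} of Theorem~\ref{thm:LP ineq} to the pair $(\psi_1,\psi_2)$ for any fixed $l\geq 0$, and part~(ii) follows by applying \eqref{ineq:LP-ineq q=2} with $q=2$; the dependence of the constants on $a,b,p,q,d,\gamma,\kappa,\mu$ is inherited from Theorem~\ref{thm:LP ineq} through $\boldsymbol{\gamma}=(\gamma/q,\gamma)$, $\boldsymbol{\kappa}=(1,\kappa)$ and $\boldsymbol{\mu}=(\mu_{\psi_1},\mu)$. I do not expect a genuine obstacle: the only point needing a little care is the verification that $|\xi|^{\gamma/q}$ satisfies the symbol bounds \textbf{(S2)}--\textbf{(S3)} uniformly on $\mathbb{R}^d\setminus\widetilde{\boldsymbol{0}}$ down to the origin, which is the routine homogeneity estimate indicated above, together with the cosmetic sign normalization forced by \textbf{(S1)}.
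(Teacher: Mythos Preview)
Your proposal is correct and follows exactly the paper's approach: choose $\psi_{1}(t,\xi)=-|\xi|^{\gamma/q}$ and $\psi_{2}=\psi$, verify $\psi_{1}\in\mathfrak{S}_{\rm T}$ with $\gamma_{1}=\gamma/q$, $\kappa_{1}=1$ (the paper cites Example~\ref{ex: norm xi power gamma} for this), observe that the weight exponent collapses and that $L_{\psi_{1}}(l)$ agrees with $(-\Delta)^{\gamma/(2q)}$ up to sign, and invoke Theorem~\ref{thm:LP ineq}. Your write-up simply makes explicit the details (homogeneity estimate, the $N$-condition, the sign normalization) that the paper's two-line proof leaves implicit.
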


\begin{proof}
By taking $\psi_{1}(l,\xi)=-|\xi|^{\frac{\gamma}{q}}$ and $\psi_{2}(t,\xi)=\psi(t,\xi)$,
it is easy to see that the symbol $\psi_{1}$ satisfies the conditions \textbf{(S1)} and \textbf{(S3)}
with $\gamma_{1}=\frac{\gamma}{q}$ and $\kappa_{1}=1$ (see Example \ref{ex: norm xi power gamma}).
In this case, $L_{\psi_{1}}(l)=(-\Delta)^{\frac{\gamma}{2q}}$.
Therefore, the proof is immediate from Theorem \ref{thm:LP ineq}.
\end{proof}

\begin{remark}
\upshape
The inequality given in \eqref{ineq: first parabolic LP-ineq}
is called the parabolic Littlewood–Paley inequality
and proved in Theorem 3.1 of \cite{I. Kim K.-H. Kim 2016}
under the conditions \textbf{(S1)} and \textbf{(S2)} with $N=\lfloor\frac{d}{2}\rfloor+1$.
In fact, in \cite{I. Kim K.-H. Kim 2016},
the authors proved the inequality given in \eqref{ineq: first parabolic LP-ineq} using different arguments
than the one used to prove Lemmas 6.1, 6.2, and 6.3, which can be seen in Lemmas 2.9, 2.10, and 2.11
(see also Lemma 4.1 and Corollary 4.3) in \cite{I. Kim K.-H. Kim 2016}.
\end{remark}

\begin{corollary}
Let $\psi\in\mathfrak{S}_{\rm T}$ such that $N>d+2+2\lfloor\gamma\rfloor$.
Then for any $q\geq 2$ and $p\geq q$, it holds that
\begin{itemize}
  \item [\rm{(i)}] if $-\infty< a< b<\infty$, then
there exist constants $C_{1},C_{2}>0$ depending on $a,b,p,q,d,\gamma,\kappa$ and $\mu$
such that $f\in L^p((a,b)\times\mathbb{R}^{d}; V)$,
\begin{align}\label{ineq: LP-ineq dt evol.sys}
&\int_{\mathbb{R}^{d}}\int_{a}^{b}\left(\int_{a}^{t}(t-s)^{q-1}
        \left\|\frac{\partial}{\partial t}\mathcal{T}_{\psi}(t,s)f(s,\cdot)(x)\right\|_{V}^{q}ds\right)^{\frac{p}{q}}dtdx\nonumber\\
&\qquad\leq C_{1}\int_{\mathbb{R}^{d}}\int_{a}^{b}
\|f(t,x)\|_{V}^{p}dtdx.
\end{align}
and
\begin{align}\label{ineq: LP-ineq ds evol.sys}
&\int_{\mathbb{R}^{d}}\int_{a}^{b}\left(\int_{a}^{t}(t-s)^{q-1}
\left\|\frac{\partial}{\partial s}
\mathcal{T}_{\psi}(t,s)f(s,\cdot)(x)\right\|_{V}^{q}ds\right)^{\frac{p}{q}}dtdx\nonumber\\
&\qquad\leq C_{2}\int_{\mathbb{R}^{d}}\int_{a}^{b}
\|f(t,x)\|_{V}^{p}dtdx,
\end{align}
  \item [\rm{(ii)}] (if $q=2$,) there exists a constant $C_{3}>0$
  depending on $p,q,d,\gamma,\kappa$ and $\mu$ such that for any $f\in L^{p}(\mathbb{R}^{d+1};V)$,
\begin{align}\label{ineq: LP-ineq ds evol.sys q=2}
&\int_{\mathbb{R}^{d}}\int_{-\infty}^{\infty}\left(\int_{-\infty}^{t}(t-s)
\left\|\frac{\partial}{\partial s}
\mathcal{T}_{\psi}(t,s)f(s,\cdot)(x)\right\|_{V}^{2}ds\right)^{\frac{p}{2}}dtdx\nonumber\\
&\qquad\leq C_{3}\int_{\mathbb{R}^{d}}\int_{-\infty}^{\infty}
\|f(t,x)\|_{V}^{p}dtdx.
\end{align}
\end{itemize}
\end{corollary}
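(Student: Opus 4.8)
The plan is to read off the $t$- and $s$-derivatives of $\mathcal{T}_\psi(t,s)$ as pseudo-differential operators, and then to invoke Theorem \ref{thm:LP ineq} directly for the $\partial_t$ case and a transcription of its proof for the $\partial_s$ case. For $f\in C_{\rm c}^\infty((a,b)\times\mathbb{R}^d;V)$ one has $\mathcal{T}_\psi(t,s)f(s,\cdot)(x)=\mathcal{F}^{-1}\big(e^{\int_s^t\psi(r,\xi)\,dr}\,\mathcal{F}(f(s,\cdot))(\xi)\big)(x)$, and by \textbf{(S1)}--\textbf{(S3)} one may differentiate under the integral sign and the inverse Fourier transform; since $\partial_t\int_s^t\psi(r,\xi)\,dr=\psi(t,\xi)$ and $\partial_s\int_s^t\psi(r,\xi)\,dr=-\psi(s,\xi)$, this gives
\begin{align*}
\frac{\partial}{\partial t}\mathcal{T}_\psi(t,s)f(s,\cdot)(x)&=L_\psi(t)\mathcal{T}_\psi(t,s)f(s,\cdot)(x),\\
\frac{\partial}{\partial s}\mathcal{T}_\psi(t,s)f(s,\cdot)(x)&=-L_\psi(s)\mathcal{T}_\psi(t,s)f(s,\cdot)(x).
\end{align*}
Here $(\psi,\psi)\in\mathfrak{S}_{\rm T}\times\mathfrak{S}$ with $\gamma_1=\gamma_2=\gamma$, so $(t-s)^{q\gamma_1/\gamma_2-1}=(t-s)^{q-1}$, and $N>d+2+2\lfloor\gamma\rfloor$ is exactly the hypothesis $N_1,N_2>d+2+\lfloor\gamma_1\rfloor+\lfloor\gamma_2\rfloor$ of Theorem \ref{thm:LP ineq}; a density argument in $C_{\rm c}^\infty$, as in Theorem \ref{thm:LP ineq}, reduces all three inequalities to the case $f\in C_{\rm c}^\infty$. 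With the first identity, the left-hand side of \eqref{ineq: LP-ineq dt evol.sys} is literally the left-hand side of \eqref{ineq:LP-ineq l=t} for the pair $(\psi,\psi)$, so \eqref{ineq: LP-ineq dt evol.sys} follows at once from Theorem \ref{thm:LP ineq}(i).

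For the $s$-derivative the pseudo-differential operator sits at the integration variable $s$, so Theorem \ref{thm:LP ineq} does not apply directly; instead I would run its proof for the auxiliary $g$-function
\[
\mathcal{G}^{\flat}_{a,q}f(t,x):=\Big[\int_a^t(t-s)^{q-1}\|L_\psi(s)\mathcal{T}_\psi(t,s)f(s,\cdot)(x)\|_V^q\,ds\Big]^{1/q},
\]
checking that every ingredient used for $\widetilde{\mathcal{G}}_{a,q}$ survives. First, Lemma \ref{lem: boundedness of L psi of p j} gives $\|L_\psi(s)p_{\psi,j}(t,s,\cdot)\|_{L^1(\mathbb{R}^d)}\le C2^{j\gamma}e^{-c(t-s)2^{j\gamma}}$ uniformly in $s$, so the argument of Lemma \ref{lem: LP ineq for p equal lambda fixed s} and Theorem \ref{thm: LP ineq p=q} yields the $L^q\to L^q$ bound for $\mathcal{G}^{\flat}_{a,q}$ (the $q=2$, $a=-\infty$, $b=\infty$ endpoint again following from the Plancherel computation, which only uses $|\psi(s,\xi)|\le\mu|\xi|^\gamma$ and is indifferent to the time-slot). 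Second, in the sharp function $(\mathcal{G}^{\flat}_{a,q}f)^{\#}$ one differentiates only in the cube variables $(t,x)$, never in the frozen slot $s$ of $L_\psi(s)$; the only new kernels that appear are $\partial_t[L_\psi(s)p_\psi(t,s,\cdot)]$ and its iterates, whose symbols involve $\psi$, $\psi^2$ and $\partial_t\psi$ and are of order at most $2\gamma$, and these are controlled by \textbf{(S2)}, \textbf{(S3)} and $N>d+2+2\lfloor\gamma\rfloor$ exactly as in the proof of Theorem \ref{thm: sharp fct less than maximal}, giving $(\mathcal{G}^{\flat}_{a,q}f)^{\#}(t,x)\le C(\mathbb{M}_t\mathbb{M}_x\|f\|_V^q(t,x))^{1/q}$. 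Then Theorem \ref{thm:FS} and Theorem \ref{thm:HL max} combine these two facts, as in the proof of Theorem \ref{thm:LP ineq}, to yield \eqref{ineq: LP-ineq ds evol.sys} for $p\ge q$ and \eqref{ineq: LP-ineq ds evol.sys q=2} for $q=2$.

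The main obstacle is this last transcription: one must verify that the sharp-function estimate of Theorem \ref{thm: sharp fct less than maximal}, proved in Appendix \ref{sec: Appendix A} for $l$ fixed and for $l=t$, remains valid when the time-argument of $L_{\psi_1}$ is the running variable $s$ — namely that no derivative taken in the sharp-function cube ever lands on $\psi(s,\cdot)$, and that the extra factor $\psi(t,\cdot)$ (respectively $\partial_t\psi(t,\cdot)$) produced by a $t$-derivative is absorbed by the same spatial smoothing as in Appendix \ref{sec: Appendix A}, so that the exponent condition $N>d+2+2\lfloor\gamma\rfloor$ still suffices. Everything else is a direct reuse of results and arguments already established above.
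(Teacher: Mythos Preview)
Your treatment of \eqref{ineq: LP-ineq dt evol.sys} is identical to the paper's: recognize $\partial_t\mathcal{T}_\psi(t,s)=L_\psi(t)\mathcal{T}_\psi(t,s)$ and apply \eqref{ineq:LP-ineq l=t} with $\psi_1=\psi_2=\psi$, noting $\gamma_1=\gamma_2=\gamma$ so that $(t-s)^{q\gamma_1/\gamma_2-1}=(t-s)^{q-1}$ and $N>d+2+2\lfloor\gamma\rfloor$ becomes the hypothesis of Theorem~\ref{thm:LP ineq}.

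For \eqref{ineq: LP-ineq ds evol.sys} and \eqref{ineq: LP-ineq ds evol.sys q=2} the paper is far terser than you: after recording $\partial_s\mathcal{T}_\psi(t,s)=-L_\psi(s)\mathcal{T}_\psi(t,s)$ it simply invokes \eqref{ineq:LP-ineq} (respectively \eqref{ineq:LP-ineq q=2}) ``with $\psi_1=\psi_2=\psi$ and $l=s$''. Your objection that Theorem~\ref{thm:LP ineq} as stated has $l$ fixed outside the $s$-integral is fair; what the paper is tacitly using is that the constant there depends on $\psi_1$ only through $(\gamma_1,\kappa_1,\mu_1)$, and every kernel estimate behind it (Lemmas~\ref{lem: boundedness of L psi of p j}, \ref{lem: esti of grad K}, \ref{lem: bound of Dt mathcalG g-1}, \ref{lem: bound of Dt mathcalG g-2}(i)) is uniform in $l$. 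Your route --- introducing $\mathcal{G}^\flat_{a,q}$ and rechecking the $L^q\to L^q$ and sharp-function steps --- makes this explicit, and your key observation that the cube-variable derivatives $\partial_s,\nabla_y$ never land on the integration-variable slot of $L_\psi(\cdot)$ is exactly why the fixed-$l$ bounds transfer verbatim (so the $\mathcal{G}^\flat$ case is in fact governed by the easier estimate of Lemma~\ref{lem: bound of Dt mathcalG g}(i), not the harder (ii)). In substance the two arguments coincide; yours supplies the justification the paper leaves to the reader.
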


\begin{proof}
(i)\enspace Note that
\begin{align}
\frac{\partial}{\partial t}\mathcal{T}_{\psi}(t,s)f(x)&=L_{\psi}(t)\mathcal{T}_{\psi}(t,s)f(x),\label{eqn: dt evol. sys}\\
\frac{\partial}{\partial s}\mathcal{T}_{\psi}(t,s)f(x)&=-L_{\psi}(s)\mathcal{T}_{\psi}(t,s)f(x). \label{eqn: ds evol. sys}
\end{align}
Hence, by \eqref{eqn: dt evol. sys} and \eqref{ineq:LP-ineq l=t} with $\psi_{1}=\psi_{2}=\psi$, we get \eqref{ineq: LP-ineq dt evol.sys}.
Also, by \eqref{eqn: ds evol. sys} and \eqref{ineq:LP-ineq} with $\psi_{1}=\psi_{2}=\psi$
and $l=s$, we get \eqref{ineq: LP-ineq ds evol.sys}.

(ii) \enspace By \eqref{eqn: ds evol. sys} and \eqref{ineq:LP-ineq q=2} with $\psi_{1}=\psi_{2}=\psi$
and $l=s$, we get \eqref{ineq: LP-ineq ds evol.sys q=2}.
\end{proof}

For $\gamma>0$ and a suitable function $f$ defined on $\mathbb{R}^{d}$,
the fractional heat semigroup $\{T_{\gamma}(t)\}_{t\geq 0}$ is defined by
\begin{align}\label{eqn:frac.heat.semi}
T_{\gamma}(t)f(x)=p_{\gamma}(t,\cdot)*f(x),\quad p_{\gamma}(t,x)=\mathcal{F}^{-1}(e^{-t|\xi|^{\gamma}})(x).
\end{align}
\begin{corollary}
Let $\gamma>0$ and let $\{T_{\gamma}(t)\}_{t\geq 0}$
be the fractional heat semigroup given in \eqref{eqn:frac.heat.semi}.
Then for any $q\geq 2$ and $p\geq q$, it holds that

\begin{itemize}
  \item [\rm{(i)}] if $-\infty< a< b<\infty$, then
there exists a constant $C_{1}>0$ depending on $a,b,p,d,\gamma,\kappa$ and $\mu$
such that for any $f\in L^p((a,b)\times\mathbb{R}^{d}; V)$,
\begin{align}
&\int_{\mathbb{R}^{d}}\int_{a}^{b}\left(\int_{a}^{t}\|(-\Delta)^{\frac{\gamma}{2q}}
            T_{\gamma}(t-s)f(s,\cdot)(x)\|_{V}^{q}ds\right)^{\frac{p}{q}}dtdx\nonumber\\
&\qquad\leq C_{1}\int_{\mathbb{R}^{d}}\int_{a}^{b}
\|f(t,x)\|_{V}^{p}dtdx,
\end{align}

  \item [\rm{(ii)}] (if $q=2$,) there exists a constant $C_{2}>0$
  depending on $p,d,\gamma,\kappa$ and $\mu$ such that for any $f\in L^p(\mathbb{R}^{d+1}; V)$,
\begin{align}\label{ineq:LP-ineq frac.heat.semi}
&\int_{\mathbb{R}^{d}}\int_{-\infty}^{\infty}\left(\int_{-\infty}^{t}\|(-\Delta)^{\frac{\gamma}{4}}
            T_{\gamma}(t-s)f(s,\cdot)(x)\|_{V}^{2}ds\right)^{\frac{p}{2}}dtdx\nonumber\\
&\qquad\leq C_{2}\int_{\mathbb{R}^{d}}\int_{-\infty}^{\infty}
\|f(t,x)\|_{V}^{p}dtdx.
\end{align}
\end{itemize}
\end{corollary}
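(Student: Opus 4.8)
The plan is to obtain this corollary as the special case of Corollary \ref{cor: LP Lpsi is Laplacian} corresponding to the (time-independent) symbol $\psi(t,\xi)=-|\xi|^{\gamma}$. Concretely, I would first check that this symbol belongs to $\mathfrak{S}$ and satisfies the smoothness hypothesis of that corollary, then observe that the associated evolution system $\{\mathcal{T}_{\psi}(t,s)\}$ is exactly the shifted fractional heat semigroup, and finally quote Corollary \ref{cor: LP Lpsi is Laplacian} verbatim.

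For the first step, by Example \ref{ex: norm xi power gamma} with $k\equiv 0$ and $\kappa=1$ the function $\psi(t,\xi)=-|\xi|^{\gamma}$ satisfies \textbf{(S1)}, \textbf{(S2)} and \textbf{(S3)} with order $\gamma_{\psi}=\gamma$ and $\kappa_{\psi}=1$. Moreover, since $|\xi|^{\gamma}$ is smooth and positively homogeneous of degree $\gamma$ on $\mathbb{R}^{d}\setminus\{0\}$, each derivative $\partial_{\xi}^{\alpha}|\xi|^{\gamma}$ is positively homogeneous of degree $\gamma-|\alpha|$, hence $|\partial_{\xi}^{\alpha}\psi(t,\xi)|\le C_{\alpha}|\xi|^{\gamma-|\alpha|}$ holds for every multi-index $\alpha$ on the set where all coordinates of $\xi$ are nonzero (and $\partial_{t}\psi\equiv 0$ trivially yields the time-derivative bounds). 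Thus the parameter $N_{\psi}$ occurring in \textbf{(S2)} and \textbf{(S3)} may be taken arbitrarily large; in particular we may assume $N_{\psi}>d+2+\lfloor\frac{\gamma}{q}\rfloor+\lfloor\gamma\rfloor$, so the hypothesis of Corollary \ref{cor: LP Lpsi is Laplacian} is met. For the second step, for $t>s$ one has
\begin{align*}
\mathcal{T}_{\psi}(t,s)f(x)
&=\mathcal{F}^{-1}\left(\exp\left(\int_{s}^{t}\bigl(-|\xi|^{\gamma}\bigr)\,dr\right)\mathcal{F}f(\xi)\right)(x)\\
&=\mathcal{F}^{-1}\left(e^{-(t-s)|\xi|^{\gamma}}\mathcal{F}f(\xi)\right)(x)=T_{\gamma}(t-s)f(x),
\end{align*}
by the definition \eqref{eqn:frac.heat.semi} of $\{T_{\gamma}(t)\}_{t\ge0}$.

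Substituting $\mathcal{T}_{\psi}(t,s)=T_{\gamma}(t-s)$ into Corollary \ref{cor: LP Lpsi is Laplacian}(i) yields (i), and into Corollary \ref{cor: LP Lpsi is Laplacian}(ii) yields (ii). There is essentially no obstacle here: the statement is a direct specialization of the previously established corollary, and the only point that needs a short justification is that the model symbol $-|\xi|^{\gamma}$ lies in $\mathfrak{S}$ with an arbitrarily large smoothness index $N$, which is immediate from the homogeneity of $|\xi|^{\gamma}$.
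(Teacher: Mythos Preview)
Your proposal is correct and follows essentially the same approach as the paper: both take $\psi(t,\xi)=-|\xi|^{\gamma}$, verify via Example~\ref{ex: norm xi power gamma} that this symbol lies in $\mathfrak{S}$ (with $N_\psi$ arbitrarily large), identify $\mathcal{T}_{\psi}(t,s)$ with $T_{\gamma}(t-s)$, and then apply Corollary~\ref{cor: LP Lpsi is Laplacian}. Your write-up is slightly more careful in spelling out why $N_\psi$ can be chosen to exceed $d+2+\lfloor\gamma/q\rfloor+\lfloor\gamma\rfloor$, which the paper leaves implicit.
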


\begin{proof}
By taking $\psi(t,\xi)=-|\xi|^{\gamma}$,
it is easy to see that the symbol $\psi$ satisfy the conditions \textbf{(S1)} and \textbf{(S2)}
 (see Example \ref{ex: norm xi power gamma}).
In this case, we have
\begin{align*}
\mathcal{T}_{\psi}(t,s)f(s,x)=\mathcal{F}^{-1}\left(e^{-(t-s)|\xi|^{\gamma}}\mathcal{F}f(s,\xi)\right)(x)
=T_{\gamma}(t-s)f(s,x).
\end{align*}
Thus, by applying Corollary \ref{cor: LP Lpsi is Laplacian}, the proof is immediate.
\end{proof}

\begin{remark}
\upshape
The inequality \eqref{ineq:LP-ineq frac.heat.semi} with $\gamma=2$
is a consquence of Theorem 1.1 of \cite{Krylov 1994}.
On the other hand, the inequality in \eqref{ineq:LP-ineq frac.heat.semi} with $0<\gamma<2$
is a consquence of Theorem 2.3 of \cite{I. Kim 2012}.
\end{remark}

\appendix
\section{Embedding Theorem for Sobolev Spaces}\label{sec: Appendix B}
In this section, we prove that if $V$ is a separable Hilbert space, $\alpha\in\mathbb{R}$ and $p\geq 2$,
then the Sobolev space $H_{p}^{\alpha}(\mathbb{R}^{d};V)$ is continuously embedded into the Besov space $B_{pp}^{\alpha}(\mathbb{R}^{d};V)$.

The following lemma is well known as the L\'{e}vy-Khintchin inequality.

\begin{lemma}\label{lem: Khintchin's ineq}
Let $(\Omega,\mathcal{F},P)$ be a probability space.
Let $\{r_{i}\}_{i=1}^{\infty}$ be the Rademacher's system, i.e., $\{r_{i}\}_{i=1}^{\infty}$ is a sequence of i.i.d random variables on $\Omega$ with
\[
P(r_{i}=1)=P(r_{i}=-1)=\frac{1}{2},\quad i=1,2,\cdots.
\]
Let $\{a_{i}\}_{i=1}^{\infty}$ be a sequence of real numbers such that $\sum_{i=1}^{\infty}|a_{i}|^{2}<\infty$. For any $0<p<\infty$, there exist constants $C_{1},C_{2}>0$ depending on $p$ such that
\begin{align*}
C_{1}\left(\sum_{i=1}^{\infty}|a_{i}|^{2}\right)^{\frac{p}{2}}
\leq \mathbb{E}\left|\sum_{i=1}^{\infty}a_{i}r_{i}\right|^{p}
\leq C_{2}\left(\sum_{i=1}^{\infty}|a_{i}|^{2}\right)^{\frac{p}{2}},
\end{align*}
where $\mathbb{E}$ is the expectation with respect to the probability measure $P$.
\end{lemma}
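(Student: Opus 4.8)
The plan is to follow the classical route for Khintchine's inequality: reduce to finite sums, dispose of the case $p=2$ exactly, obtain the ``easy'' half of the inequality in each range of $p$ by Jensen's/H\"older's inequality (using that $P$ is a probability measure, so $\|\cdot\|_{r}$ is nondecreasing in $r$), and then prove the remaining halves by an exponential moment estimate (for large $p$) and by log-convexity of $L^{r}$-norms (for small $p$).

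First I would reduce to the case of finitely many nonzero coefficients, normalized so that $\sum_{i=1}^{n} a_{i}^{2} = 1$. The partial sums $S_{n} := \sum_{i=1}^{n} a_{i} r_{i}$ converge in $L^{2}$ and almost surely to $S := \sum_{i=1}^{\infty} a_{i} r_{i}$ (they form an $L^{2}$-bounded martingale), and once the upper bound is known for finite sums with exponent, say, $2p$, the family $\{|S_{n}|^{p}\}_{n}$ is uniformly integrable; hence $\mathbb{E}|S_{n}|^{p} \to \mathbb{E}|S|^{p}$, and the inequality for $S$ follows from the uniform (in $n$) inequality for $S_{n}$. The case $p=2$ is then immediate: independence and $\mathbb{E} r_{i} = 0$ give $\mathbb{E} r_{i} r_{j} = \delta_{ij}$, so $\mathbb{E}|S_{n}|^{2} = \sum_{i=1}^{n} a_{i}^{2} = 1$. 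For $0 < p \le 2$, Jensen's inequality gives $\mathbb{E}|S_{n}|^{p} \le (\mathbb{E}|S_{n}|^{2})^{p/2} = 1$, and for $p \ge 2$, H\"older's inequality gives $\mathbb{E}|S_{n}|^{p} \ge (\mathbb{E}|S_{n}|^{2})^{p/2} = 1$; so only the upper bound for $p > 2$ and the lower bound for $0 < p < 2$ remain.

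For the upper bound when $p > 2$, I would establish the sub-Gaussian estimate
\[
\mathbb{E}\bigl[e^{\lambda S_{n}}\bigr] = \prod_{i=1}^{n} \cosh(\lambda a_{i}) \le \prod_{i=1}^{n} e^{\lambda^{2} a_{i}^{2} / 2} = e^{\lambda^{2} / 2}, \qquad \lambda \in \mathbb{R},
\]
using independence and the elementary inequality $\cosh x \le e^{x^{2}/2}$ (compare Taylor coefficients). Markov's inequality and optimization in $\lambda$ then yield $P(|S_{n}| > t) \le 2 e^{-t^{2}/2}$, whence
\[
\mathbb{E}|S_{n}|^{p} = \int_{0}^{\infty} p\, t^{p-1} P(|S_{n}| > t)\, dt \le 2 \int_{0}^{\infty} p\, t^{p-1} e^{-t^{2}/2}\, dt = p\, 2^{p/2}\, \Gamma\!\Bigl(\tfrac{p}{2}\Bigr),
\]
a finite constant depending only on $p$; this also supplies the finite-exponent bounds used in the uniform-integrability step above.

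For the lower bound when $0 < p < 2$, I would interpolate: by log-convexity of $r \mapsto \log \|S_{n}\|_{r}$, choosing $\theta = p/(4-p) \in (0,1)$ so that $\tfrac12 = \tfrac{\theta}{p} + \tfrac{1-\theta}{4}$, one has $\|S_{n}\|_{2} \le \|S_{n}\|_{p}^{\theta}\, \|S_{n}\|_{4}^{1-\theta}$. Since $\|S_{n}\|_{2} = 1$ and $\|S_{n}\|_{4}$ is bounded by a constant by the previous step (with exponent $4$), this forces $\|S_{n}\|_{p} \ge c$ for a constant $c > 0$ depending only on $p$, i.e. $\mathbb{E}|S_{n}|^{p} \ge c^{p}$. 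Undoing the normalization and letting $n \to \infty$ gives the asserted two-sided bound with $C_{1}, C_{2}$ depending only on $p$. The main obstacle is the upper bound for $p > 2$, namely the exponential moment computation together with the inequality $\cosh x \le e^{x^{2}/2}$; everything else is Jensen/H\"older and a one-line interpolation, and the only other technical point is ordering the argument so that the finite-sum upper bound is in hand before the infinite-sum limit is taken.
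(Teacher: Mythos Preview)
Your argument is correct and is essentially the classical proof of Khintchine's inequality (sub-Gaussian tail bound via $\cosh x\le e^{x^2/2}$ for the upper half when $p>2$, then interpolation/extrapolation for the lower half when $0<p<2$, with the $p=2$ case and the ``easy'' halves coming from Jensen/H\"older). The paper itself does not prove this lemma at all: its proof consists solely of a reference to Appendix~D of Stein's \emph{Singular Integrals and Differentiability Properties of Functions}, where exactly the argument you outline is carried out. So you have supplied what the paper merely cites; there is no methodological difference to compare, and no gap in your proposal.
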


\begin{proof}
See, e.g., Appendix D of \cite{Stein 1970}.
\end{proof}

\begin{theorem}\label{thm: Hps(V) embedded into Bpps(V)}
Let $V$ be a separable Hilbert space.
Let $\alpha\in\mathbb{R}$ and let $p\geq 2$. Then there exists a constant $C>0$ such that
\begin{align*}
\|f\|_{B_{pp}^{\alpha}(\mathbb{R}^{d};V)}\leq C\|f\|_{H_{p}^{\alpha}(\mathbb{R}^{d};V)}.
\end{align*}
That is, the Sobolev space $H_{p}^{\alpha}(\mathbb{R}^{d};V)$ is continuously embedded into the Besov space $B_{pp}^{\alpha}(\mathbb{R}^{d};V)$.
\end{theorem}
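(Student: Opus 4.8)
The plan is to reduce the vector-valued embedding $H_p^\alpha(\mathbb{R}^d;V)\hookrightarrow B_{pp}^\alpha(\mathbb{R}^d;V)$ to the scalar case via randomization, exploiting that $V$ is a separable Hilbert space and that $p\geq 2$. First I would fix an orthonormal basis $\{e_k\}_{k=1}^\infty$ of $V$ and write $f=\sum_k f^{(k)}e_k$ with scalar components $f^{(k)}=\langle f,e_k\rangle_V$. For the Besov norm we must estimate $\|S_0 f\|_{L^p(\mathbb{R}^d;V)}$ and $\left(\sum_{j\geq 1}2^{j\alpha p}\|\Delta_j f\|_{L^p(\mathbb{R}^d;V)}^p\right)^{1/p}$. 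Since the Fourier multipliers $\Phi(2^{-j}\cdot)$ act componentwise, $\Delta_j f=\sum_k(\Delta_j f^{(k)})e_k$, so $\|\Delta_j f(x)\|_V=\left(\sum_k|\Delta_j f^{(k)}(x)|^2\right)^{1/2}$. By Lemma \ref{lem: Khintchin's ineq}, for each fixed $x$,
\begin{align*}
\left(\sum_k|\Delta_j f^{(k)}(x)|^2\right)^{p/2}
\leq C_2^{-1}\,\mathbb{E}\Bigl|\sum_k r_k\,\Delta_j f^{(k)}(x)\Bigr|^p
= C_2^{-1}\,\mathbb{E}\bigl|\Delta_j F_\omega(x)\bigr|^p,
\end{align*}
where $F_\omega:=\sum_k r_k(\omega) f^{(k)}$ is a scalar-valued function (a.s.\ in $L^p$, by Khintchin applied to $\|f(x)\|_V$).

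Next I would integrate in $x$, use Fubini, and apply the \emph{scalar} embedding $H_p^\alpha(\mathbb{R}^d)\hookrightarrow B_{pp}^\alpha(\mathbb{R}^d)$ to each realization $F_\omega$. That scalar embedding for $p\geq 2$ is classical (e.g.\ it follows from the Littlewood--Paley characterization of $H_p^\alpha$ together with the $\ell^2\hookrightarrow\ell^p$ inclusion, or can be cited from Bergh--L\"ofstr\"om \cite{Bergh 1976}); I would either cite it or include the short argument. This gives
\begin{align*}
\sum_{j\geq 1}2^{j\alpha p}\|\Delta_j f\|_{L^p(\mathbb{R}^d;V)}^p
\leq C\,\mathbb{E}\,\|F_\omega\|_{B_{pp}^\alpha(\mathbb{R}^d)}^p
\leq C'\,\mathbb{E}\,\|F_\omega\|_{H_p^\alpha(\mathbb{R}^d)}^p,
\end{align*}
and similarly $\|S_0 f\|_{L^p(\mathbb{R}^d;V)}^p\leq C\,\mathbb{E}\|S_0 F_\omega\|_{L^p(\mathbb{R}^d)}^p$. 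Then I would run the Khintchin inequality backwards: $(1-\Delta)^{\alpha/2}$ also acts componentwise, so $\mathbb{E}|(1-\Delta)^{\alpha/2}F_\omega(x)|^p$ is comparable (again by Lemma \ref{lem: Khintchin's ineq}, now the lower bound) to $\left(\sum_k|(1-\Delta)^{\alpha/2}f^{(k)}(x)|^2\right)^{p/2}=\|(1-\Delta)^{\alpha/2}f(x)\|_V^p$, whence $\mathbb{E}\|F_\omega\|_{H_p^\alpha(\mathbb{R}^d)}^p\leq C\,\|f\|_{H_p^\alpha(\mathbb{R}^d;V)}^p$. Combining the displays yields $\|f\|_{B_{pp}^\alpha(\mathbb{R}^d;V)}\leq C\|f\|_{H_p^\alpha(\mathbb{R}^d;V)}$.

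The main technical point — and the step I would be most careful about — is the justification of the componentwise identities for the Fourier multipliers $\Phi(2^{-j}D)$, $(1-\Delta)^{\alpha/2}$, and $S_0$ acting on $V$-valued functions, together with the interchange of the expectation $\mathbb{E}$ and the spatial integral and infinite sums via Fubini--Tonelli; one should first establish everything for $f$ in a dense class (e.g.\ finite sums $\sum_{k\leq M}f^{(k)}e_k$ with $f^{(k)}\in\mathcal{S}(\mathbb{R}^d)$, or $f\in C_c^\infty(\mathbb{R}^d;V)$) where all manipulations are transparent, and then pass to the limit using the density of this class in $H_p^\alpha(\mathbb{R}^d;V)$ and the already-proven inequality. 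A secondary point is to make sure the scalar embedding $H_p^\alpha\hookrightarrow B_{pp}^\alpha$ is invoked (or proved) with a constant independent of $\omega$, which is automatic since it depends only on $p$, $d$, $\alpha$. I would also remark that $p\geq 2$ is essential precisely because it is the regime where $\ell^2\subset\ell^p$ makes the Khintchin two-sided estimate compatible with the $\ell^p$-type Besov norm; for $p<2$ the inclusion, and hence the statement, would fail in general.
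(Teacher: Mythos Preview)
Your proposal is correct and is essentially identical to the paper's own proof: both fix an orthonormal basis of $V$, apply Khintchin's inequality (Lemma~\ref{lem: Khintchin's ineq}) componentwise to reduce to the scalar function $F_\omega=\sum_k r_k f^{(k)}$, invoke the scalar embedding $H_p^\alpha(\mathbb{R}^d)\hookrightarrow B_{pp}^\alpha(\mathbb{R}^d)$ from \cite{Bergh 1976}, and then run Khintchin backwards; the paper carries this out first for the finite truncations $f_n=\sum_{i\le n}\langle f,e_i\rangle e_i$ and passes to the limit, exactly as you suggest. One cosmetic slip: in your first displayed Khintchin step the constant should be $C_1^{-1}$ (the lower-bound constant) rather than $C_2^{-1}$.
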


\begin{proof}
Let $\alpha\in\mathbb{R}$ and $p\geq 2$ be given.
Let $(\Omega,\mathcal{F},P)$ be a probability space and let $\{r_{i}\}_{i=1}^{\infty}$ be the Rademacher's system.
We denote by $\mathbb{E}$  the expectation with respect to the probability measure $P$.
Let $\{e_{i}\}_{i=1}^{\infty}$ be a complete orthonormal basis for $V$ and let $f\in H_{p}^{\alpha}(\mathbb{R}^d;V)$.
For each $n\in\mathbb{N}$, we put
\begin{align*}
f_{n}(x)=\sum_{i=1}^{n}\bilin{f(x)}{e_{i}}_{V}e_{i},\quad x\in\mathbb{R}^{d}.
\end{align*}
Note that if $h(x)=\sum_{i=1}^{n}h_{i}(x)e_{i}$, where $h_{i}$ is a real-valued function defined on $\mathbb{R}^{d}$,
by applying Lemma \ref{lem: Khintchin's ineq}, we obtain that
\begin{align}\label{eqn: Lp norm of Hilbert valued function}
\|h\|_{L^{p}(\mathbb{R}^{d};V)}^{p}
&=\int_{\mathbb{R}^{d}}\|h(x)\|_{V}^{p}dx
=\int_{\mathbb{R}^{d}}\left(\sum_{i=1}^{n}|h_{i}(x)|^{2}\right)^{\frac{p}{2}}dx\nonumber\\
&\leq C_{1}^{-1}\int_{\mathbb{R}^{d}}\mathbb{E}\left|\sum_{i=1}^{n}h_{i}(x)r_{i}\right|^{p}dx.
\end{align}
Let $\Delta_{j}$ and $S_{0}$ be the operators given as in \eqref{eqn: LP operator} and \eqref{eqn: S0}, respectively.
Then by applying \eqref{eqn: Lp norm of Hilbert valued function}
with the function $h=\sum_{i=1}^{n}S_{0}(\bilin{f(\cdot)}{e_{i}}_{V})e_{i}$, we obtain that
\begin{align}
\|S_{0}(f_{n})\|_{L^{p}(\mathbb{R}^{d};V)}^{p}
&=\left\|\sum_{i=1}^{n}S_{0}(\bilin{f(\cdot)}{e_{i}}_{V})e_{i}\right\|_{L^{p}(\mathbb{R}^{d};V)}^{p}\nonumber\\
&\leq C_{1}^{-1}\int_{\mathbb{R}^{d}}\mathbb{E}
        \left|\sum_{i=1}^{n}S_{0}\left(\bilin{f(\cdot)}{e_{i}}_{V}\right)(x)r_{i}\right|^{p}dx\nonumber\\
&=C_{1}^{-1}\int_{\mathbb{R}^{d}}\mathbb{E}
        \left|S_{0}\left(\sum_{i=1}^{n}\bilin{f(\cdot)}{e_{i}}_{V}r_{i}\right)(x)\right|^{p}dx\nonumber\\
&=C_{1}^{-1}\mathbb{E}\left\|S_{0}\left(\sum_{i=1}^{n}
        \bilin{f(\cdot)}{e_{i}}_{V}r_{i}\right)\right\|_{L^{p}(\mathbb{R}^{d})}^{p}
\label{eqn:Lp norm of S0}
\end{align}
and by applying \eqref{eqn: Lp norm of Hilbert valued function}
with the function $h=\sum_{i=1}^{n}\Delta_{j}\bilin{f(\cdot)}{e_{i}}e_{i}$, we obtain that
\begin{align}
\sum_{j=1}^{\infty}2^{j\alpha p}\|\Delta_{j}f_{n}\|_{L^{p}(\mathbb{R}^{d};V)}^{p}
&=\sum_{j=1}^{\infty}2^{j\alpha p}
       \left\|\sum_{i=1}^{n}\Delta_{j}\bilin{f(\cdot)}{e_{i}}e_{i}\right\|_{L^{p}(\mathbb{R}^{d};V)}^{p}\nonumber\\
&\leq C_{1}^{-1}\sum_{j=1}^{\infty}2^{j\alpha p}\int_{\mathbb{R}^{d}}
        \mathbb{E}\left|\sum_{i=1}^{n}\Delta_{j}\bilin{f(\cdot)}{e_{i}}_{V}(x)r_{i}\right|^{p}dx\nonumber\\
%
%
&=\sum_{j=1}^{\infty}2^{j\alpha p}
         \mathbb{E}\left\|\Delta_{j}\left(\sum_{i=1}^{n}\bilin{f(\cdot)}{e_{i}}_{V}r_{i}\right)\right\|_{L^{p}(\mathbb{R}^{d})}^{p}.
\label{eqn:sum 2 alpha j delta j}
\end{align}
On the other hand, by the Jensen's inequality, we obtain that
\begin{align}
\|f_{n}\|_{B_{pp}^{\alpha}(\mathbb{R}^{d};V)}^{p}
&=\left(\|S_{0}(f_{n})\|_{L^{p}(\mathbb{R}^{d};V)}+\left(\sum_{j=1}^{\infty}2^{j\alpha p}
            \|\Delta_{j}f_{n}\|_{L^{p}(\mathbb{R}^{d};V)}^{p}\right)^{\frac{1}{p}}\right)^{p}\nonumber\\
&\leq 2^{p-1}\left(\|S_{0}(f_{n})\|_{L^{p}(\mathbb{R}^{d};V)}^{p}+\sum_{j=1}^{\infty}2^{j\alpha p}
            \|\Delta_{j}f_{n}\|_{L^{p}(\mathbb{R}^{d};V)}^{p}\right)\label{eqn: equiv norm of Besov sp norm 1}
\end{align}
and by the fact that $(a+b)^{\frac{1}{p}}\leq a^{\frac{1}{p}}+b^{\frac{1}{p}}$ for $a,b\geq 0$ and $p\geq 1$,
for any $g\in B_{pp}^{\alpha}(\mathbb{R}^{d};V)$, we obtain that
\begin{align}
&\left(\|S_{0}(g)\|_{L^{p}(\mathbb{R}^{d};V)}^{p}+\sum_{j=1}^{\infty}2^{j\alpha p}
        \|\Delta_{j}g\|_{L^{p}(\mathbb{R}^{d};V)}^{p}\right)^{\frac{1}{p}}\nonumber\\
&\leq\|S_{0}(g)\|_{L^{p}(\mathbb{R}^{d};V)}+\left(\sum_{j=1}^{\infty}2^{j\alpha p}
            \|\Delta_{j}g\|_{L^{p}(\mathbb{R}^{d};V)}^{p}\right)^{\frac{1}{p}}\nonumber\\
&=\|g\|_{B_{pp}^{\alpha}(\mathbb{R}^{d};V)}.\label{eqn: equiv norm of Besov sp norm 2}
\end{align}
Thus, by applying \eqref{eqn: equiv norm of Besov sp norm 1}, \eqref{eqn:Lp norm of S0},
\eqref{eqn:sum 2 alpha j delta j}, \eqref{eqn: equiv norm of Besov sp norm 2}
and the interpolation of Sobolev and Besov spaces (see Theorem 6.4.4 in \cite{Bergh 1976};
in fact, since $\alpha\in\mathbb{R}$ and $p\geq 2$, the Sobolev space $H_{p}^{\alpha}(\mathbb{R}^{d})$
is continuously embedded into the Besov space $B_{pp}^{\alpha}(\mathbb{R}^{d})$),
we obtain that
\begin{align}
\|f_{n}\|_{B_{pp}^{\alpha}(\mathbb{R}^{d};V)}^{p}
&\leq 2^{p-1}\left(\|S_{0}(f_{n})\|_{L^{p}(\mathbb{R}^{d};V)}^{p}
        +\sum_{j=1}^{\infty}2^{j\alpha p} \|\Delta_{j}f_{n}\|_{L^{p}(\mathbb{R}^{d};V)}^{p}\right)\nonumber\\
&\leq 2^{p-1}C_{1}^{-1}\mathbb{E}\left(\left\|S_{0}\left(\sum_{i=1}^{n}
            \bilin{f(\cdot)}{e_{i}}_{V}r_{i}\right)\right\|_{L^{p}(\mathbb{R}^{d})}^{p}\right.\nonumber\\
            &\left.\qquad+\sum_{j=1}^{\infty}2^{j\alpha p}\left\|\Delta_{j}\left(\sum_{i=1}^{n}
            \bilin{f(\cdot)}{e_{i}}_{V}r_{i}\right)\right\|_{L^{p}(\mathbb{R}^{d})}^{p}\right)\nonumber\\
&\leq 2^{p-1}C_{1}^{-1} \mathbb{E}\left\|\sum_{i=1}^{n}
        \bilin{f(\cdot)}{e_{i}}_{V}r_{i}\right\|_{B_{pp}^{\alpha}(\mathbb{R}^{d})}^{p}\nonumber\\
&\leq 2^{p-1}C_{1}^{-1}C_{3} \mathbb{E}\left\|\sum_{i=1}^{n}\bilin{f(\cdot)}{e_{i}}_{V}r_{i}\right\|_{H_{p}^{\alpha}(\mathbb{R}^{d})}^{p}\label{eqn:Besov norm of f}
\end{align}
for some $C_{3}>0$.
Therefore, by applying Lemma \ref{lem: Khintchin's ineq}, we obtain that
\begin{align*}
\|f_{n}\|_{B_{pp}^{\alpha}(\mathbb{R}^{d};V)}^{p}
&\leq 2^{p-1}C_{1}^{-1}C_{3}
        \mathbb{E}\left\|\sum_{i=1}^{n}\bilin{f(\cdot)}{e_{i}}_{V}r_{i}\right\|_{H_{p}^{\alpha}(\mathbb{R}^{d})}^{p}\\
&=2^{p-1}C_{1}^{-1}C_{3}
       \mathbb{E}\int_{\mathbb{R}^{d}}   \left|(1-\Delta)^{\frac{\alpha}{2}}\sum_{i=1}^{n}\bilin{f(x)}{e_{i}}_{V}r_{i}\right|^{p}dx\\
&=2^{p-1}C_{1}^{-1}C_{3}
       \int_{\mathbb{R}^{d}}\mathbb{E}   \left|\sum_{i=1}^{n}\bilin{(1-\Delta)^{\frac{\alpha}{2}}f(x)}{e_{i}}_{V}r_{i}\right|^{p}dx\\
&\leq 2^{p-1}C_{1}^{-1}C_{3}C_{2}
          \int_{\mathbb{R}^{d}} \left(\sum_{i=1}^{n}|\bilin{(1-\Delta)^{\frac{\alpha}{2}}f(x)}{e_{i}}_{V}|^{2}\right)^{\frac{p}{2}}dx\\
&=2^{p-1}C_{1}^{-1}C_{3}C_{2}
      \int_{\mathbb{R}^{d}}\|(1-\Delta)^{\frac{\alpha}{2}}f_{n}(x)\|_{V}^{p}dx\\
&=2^{p-1}C_{1}^{-1}C_{3}C_{2}\|f_{n}\|_{H_{p}^{\alpha}(\mathbb{R}^{d};V)}^{p}.
\end{align*}
By applying the completeness of $B_{pp}^{\alpha}(\mathbb{R}^{d};V)$ and $H_{p}^{\alpha}(\mathbb{R}^{d};V)$,
and taking limit as $n\rightarrow \infty$, the proof is complete.
\end{proof}
\section{A Proof of Theorem \ref{thm: sharp fct less than maximal}}\label{sec: Appendix A}
\counterwithin{theorem}{section}

\subsection{Estimation of Kernels}
In this subsection, we consider the pair $(\psi_{1},\psi_{2})\in \mathfrak{S}_{\rm T}\times\mathfrak{S}$ of symbols.
For $l\geq 0$, $t>s$ and $x\in\mathbb{R}^{d}$, we put
\begin{align*}
K_{\psi_{1},\psi_{2}}(l,t,s,x)&=L_{\psi_{1}}(l)p_{\psi_{2}}(t,s,x)
=\mathcal{F}^{-1}\left(\psi_{1}(l,\xi)\exp\left(\int_{s}^{t}\psi_{2}(r,\xi)dr\right)\right)(x).
\end{align*}
In fact, to prove our main theorem, the estimations of gradients of $K_{\psi_{1},\psi_{2}}$
is useful (see Lemma \ref{lem: bound of Dt mathcalG g-2}) given as in Lemma \ref{lem: esti of grad K},
and then in this subsection, we mainly will prove Lemma \ref{lem: esti of grad K}.

By the definitions of two classes $\mathfrak{S}_{\rm T}$ and $\mathfrak{S}$, for each $i=1,2$,
there exist positive constants $\kappa_{i}:=\kappa_{\psi_{i}}$, $\mu_{i}:=\mu_{\psi_{i}}$,
$\gamma_{i}:=\gamma_{\psi_{i}}$ and $N_{i}:=N_{\psi_{i}}$ with $N_{i}\ge \lfloor\frac{d}{2}\rfloor +1$ such that
\textbf{(S1)} and \textbf{(S3)} hold for $\psi=\psi_{1}$, and separately, \textbf{(S1)} and \textbf{(S2)} hold for $\psi=\psi_{2}$.
In this case, for notational convenience, we write
\begin{align*}
\boldsymbol{\kappa}:=(\kappa_1,\kappa_2),\quad
\boldsymbol{\mu}:=(\mu_{1},\mu_{2}),\quad
\boldsymbol{\gamma}:=(\gamma_{1},\gamma_{2}),\quad
\boldsymbol{N}:=(N_{1},N_{2}).
\end{align*}
In the left hand side of the inequality \eqref{ineq:LP-ineq}, the operator $Tf=L_{\psi_{1}}(l)\mathcal{T}_{\psi_{2}}(t,s)f(s,\cdot)(x)$
can be expressed as
\begin{align*}
Tf &=L_{\psi_{1}}(l)\mathcal{T}_{\psi_{2}}(t,s)f(s,\cdot)(x)
=L_{\psi_{1}}(l)\left(p_{\psi_{2}}(t,s,\cdot)*f(s,\cdot)\right)(x)\\
&=\left(\left(L_{\psi_{1}}(l)p_{\psi_{2}}(t,s,\cdot)\right)*f(s,\cdot)\right)(x).
\end{align*}
In order to prove Lemma \ref{lem: esti of grad K},
we need estimations for the gradient and time derivative of the kernel $L_{\psi_{1}}(l)p_{\psi_{2}}(t,s,x)$.

\begin{lemma}\label{lem: esti of partial psi 1 psi 2}
Let $\psi_{1}\in\mathfrak{S}_{\rm T}$ and let $(\psi_{2},\psi_{3})\in\mathfrak{S}^{2}$.
Let $\alpha\in\mathbb{N}_{0}^{d}$ be a multi-index with $|\alpha|\leq \min\{N_{1},N_{2},N_{3}\}$.
Let $m,n\in\{0,1\}$.
Then there exist constants $c,C>0$ depending on $\alpha,\,\kappa_{i}$, $\mu_{i}$ and $\gamma_{i}$ ($i=1,2,3$) such that
for any  $l\geq 0$ and $t>s$,
\begin{align}
\left|\partial_{\xi}^{\alpha}\left[\left(\partial_{t}^{m}\psi_{1}(t,\xi)\right)\left(\psi_{2}(t,\xi)\right)^{n}
            \exp\left(\int_{s}^{t}\psi_{3}(r,\xi)dr\right)\right]\right|
&\leq C|\xi|^{\gamma_{1}+n\gamma_{2}-|\alpha|}e^{-c(t-s)|\xi|^{\gamma_{3}}}.
    \label{eqn: F(3) esti-1}
\end{align}
\end{lemma}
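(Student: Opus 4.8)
The plan is to prove the bound \eqref{eqn: F(3) esti-1} by the Leibniz rule, distributing the multi-index derivative $\partial_\xi^\alpha$ among the three factors
$\partial_t^m\psi_1(t,\xi)$, $(\psi_2(t,\xi))^n$, and $\exp(\int_s^t\psi_3(r,\xi)\,dr)$, and then estimating each resulting term separately. Write $\alpha = \beta + \delta + \varepsilon$ for the split of derivatives among the three factors. For the first factor, $|\partial_\xi^\beta\partial_t^m\psi_1(t,\xi)| \le \mu_1|\xi|^{\gamma_1-|\beta|}$ by condition \textbf{(S3)} (using $m\in\{0,1\}$); for the second factor, if $n=1$ we have $|\partial_\xi^\delta\psi_2(t,\xi)| \le \mu_2|\xi|^{\gamma_2-|\delta|}$ by \textbf{(S2)}, while if $n=0$ only the $\delta=0$ term survives and contributes the factor $1$. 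The only real work is the third factor: I would show that for any multi-index $\varepsilon$ with $|\varepsilon|\le N_3$,
\begin{align*}
\left|\partial_\xi^\varepsilon \exp\!\left(\int_s^t\psi_3(r,\xi)\,dr\right)\right|
\le C\,(1+(t-s)|\xi|^{\gamma_3})^{|\varepsilon|}\,|\xi|^{-|\varepsilon|}\,e^{-\kappa_3(t-s)|\xi|^{\gamma_3}}.
\end{align*}

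To establish this kernel-derivative estimate, I would apply the Fa\`a di Bruno formula (or simply induction on $|\varepsilon|$): each $\xi$-derivative either falls on the exponential, reproducing it, or falls on a factor of $\int_s^t\psi_3(r,\xi)\,dr$ already pulled down, lowering its $\xi$-order by one. Using \textbf{(S2)} for $\psi_3$, $|\partial_\xi^\sigma\int_s^t\psi_3(r,\xi)\,dr| \le \mu_3(t-s)|\xi|^{\gamma_3-|\sigma|}$ for $1\le|\sigma|\le N_3$, so a term in which $\partial_\xi^\varepsilon$ produces $k$ pulled-down factors with derivative orders $\sigma_1,\dots,\sigma_k$ (summing to $|\varepsilon|$) is bounded by $C\prod_{j}(t-s)|\xi|^{\gamma_3-|\sigma_j|} \cdot e^{-\kappa_3(t-s)|\xi|^{\gamma_3}} = C\,((t-s)|\xi|^{\gamma_3})^k\,|\xi|^{-|\varepsilon|}\,e^{-\kappa_3(t-s)|\xi|^{\gamma_3}}$, using \textbf{(S1)} (${\rm Re}\,\psi_3 \le -\kappa_3|\xi|^{\gamma_3}$) to bound the modulus of the exponential. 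Summing over $0\le k\le|\varepsilon|$ yields the displayed bound, and this is the step I expect to be the main obstacle, since one must track the combinatorics of the Fa\`a di Bruno expansion carefully enough to see that all intermediate $\xi$-powers cancel to $|\xi|^{-|\varepsilon|}$.

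Finally, I would absorb the polynomial prefactor into the exponential: for any $\lambda\ge0$ and any integer $M\ge0$ there is a constant $C_M$ with $(1+\lambda)^M e^{-\kappa_3\lambda} \le C_M e^{-(\kappa_3/2)\lambda}$, applied with $\lambda = (t-s)|\xi|^{\gamma_3}$ and $M=|\varepsilon|\le|\alpha|$. Combining the three factor estimates through the Leibniz rule gives, for each admissible split $\alpha=\beta+\delta+\varepsilon$, a bound of the form $C\,|\xi|^{\gamma_1-|\beta|}\cdot|\xi|^{n\gamma_2-|\delta|}\cdot|\xi|^{-|\varepsilon|}e^{-c(t-s)|\xi|^{\gamma_3}} = C\,|\xi|^{\gamma_1+n\gamma_2-|\alpha|}\,e^{-c(t-s)|\xi|^{\gamma_3}}$ with $c=\kappa_3/2$; summing the finitely many such terms (the number depending only on $\alpha$) and renaming constants yields \eqref{eqn: F(3) esti-1}. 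Here $c,C$ depend only on $\alpha$ and the structural constants $\kappa_i,\mu_i,\gamma_i$, as claimed, and the hypothesis $|\alpha|\le\min\{N_1,N_2,N_3\}$ is exactly what is needed for \textbf{(S2)}/\textbf{(S3)} to apply to every factor.
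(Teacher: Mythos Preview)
Your proposal is correct and follows essentially the same approach as the paper: Leibniz rule plus the symbol bounds \textbf{(S2)}/\textbf{(S3)} for the polynomial factors, and a Fa\`a di Bruno--type estimate for $\partial_\xi^\varepsilon\exp(\int_s^t\psi_3\,dr)$ followed by absorbing the resulting polynomial in $(t-s)|\xi|^{\gamma_3}$ into the exponential. The only organizational difference is that the paper first groups $(\partial_t^m\psi_1)\psi_2$ and applies a two-factor Leibniz split (product versus exponential), whereas you do a three-way split; also, the paper compresses your Fa\`a di Bruno analysis into a one-line ``by direct computation and $xe^{-x}\le 1$'' leading directly to the bound $C'|\xi|^{-|\beta|}e^{-c(t-s)|\xi|^{\gamma_3}}$, where your write-up is more explicit about the intermediate polynomial prefactor $(1+(t-s)|\xi|^{\gamma_3})^{|\varepsilon|}$.
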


\begin{proof}
We now prove \eqref{eqn: F(3) esti-1} only for the case $n=1$.
The proof for the case of $n=0$ is similar and simpler.
By direct computation
and the fact that $xe^{-x}\leq 1$ for any $x>0$,
for any multi index $\beta\in\mathbb{N}_{0}^{d}$ with $|\beta|\leq N_{3}$, we obtain that
\begin{align}
\left|\partial_{\xi}^{\beta}\exp\left(\int_{s}^{t}\psi_{3}(r,\xi)dr\right)\right|
&\leq C(t-s)|\xi|^{\gamma_{3}-|\beta|}e^{-c_{1}(t-s)|\xi|^{\gamma_{3}}}\nonumber\\
&=C'|\xi|^{-|\beta|}e^{-c(t-s)|\xi|^{\gamma_{3}}}\label{eqn:psi 2 ineq-2}
\end{align}
for some constant $C,C',c_{1},c>0$.
Also, by applying the Leibniz rule, \textbf{(S2)} and \textbf{(S3)}, we obtain that
\begin{align}
 \left|\partial_{\xi}^{\alpha-\beta}\left[\left(\partial_{t}^{m}\psi_{1}(t,\xi)\right)\psi_{2}(t,\xi)\right]\right|
&\leq \sum_{\delta\leq \alpha-\beta}\binom{\alpha-\beta}{\beta}      \left|\partial_{\xi}^{\delta}\partial_{t}^{m}\psi_{1}(t,\xi)\right|\left|\partial_{\xi}^{\delta}\psi_{2}(t,\xi)\right|\nonumber\\
&\leq \sum_{\delta\leq \alpha-\beta}\binom{\alpha-\beta}{\beta}
                \mu_{1}|\xi|^{\gamma_{1}-|\delta|}\mu_{2}|\xi|^{\gamma_{2}-(|\alpha-\beta|-|\delta|)}\nonumber\\
&=C'' |\xi|^{\gamma_{1}+\gamma_{2}-|\alpha-\beta|}\label{eqn: partial t m psi1 psi2}
\end{align}
for some constant $C''>0$.
By applying the Leibniz rule, \eqref{eqn:psi 2 ineq-2} and \eqref{eqn: partial t m psi1 psi2}, we obtain that
\begin{align*}
&\left|\partial_{\xi}^{\alpha}\left[\left(\partial_{t}^{m}\psi_{1}(t,\xi)\right)\psi_{2}(t,\xi)
                \exp\left(\int_{s}^{t}\psi_{3}(r,\xi)dr\right)\right]\right|\\
&\leq \sum_{\beta\leq \alpha}\binom{\alpha}{\beta}
            \left|\partial_{\xi}^{\beta}\exp\left(\int_{s}^{t}\psi_{2}(r,\xi)dr\right)\right|
                \left|\partial_{\xi}^{\alpha-\beta}\left[\left(\partial_{t}^{m}\psi_{1}(t,\xi)\right)\psi_{2}(t,\xi)\right]\right|\\
&\leq \sum_{\beta\leq \alpha}\binom{\alpha}{\beta}
        C'|\xi|^{-|\beta|}e^{-c(t-s)|\xi|^{\gamma_{3}}}
                C'' |\xi|^{\gamma_{1}+\gamma_{2}-|\alpha-\beta|},
\end{align*}
which implies the desired result.
\end{proof}

For $l\geq 0$, $t>s$ and $x\in\mathbb{R}^{d}$, we put
\begin{align*}
F_{\psi_{1},\psi_{2}}^{(1)}(l,t,s,x)
 &=\sum_{i=1}^{d}\left|\mathcal{F}^{-1}\left(\xi_{i}\psi_{1}(l,\xi)\exp\left(\int_{s}^{t}\psi_{2}(r,\xi)dr\right)\right)(x)\right|,\\
F_{\psi_{1},\psi_{2}}^{(2)}(l,t,s,x)
 &=\sum_{i,j=1}^{d}\left|\mathcal{F}^{-1}\left(\xi_{i}\xi_{j}\psi_{1}(l,\xi)
            \exp\left(\int_{s}^{t}\psi_{2}(r,\xi)dr\right)\right)(x)\right|,\\
F_{\psi_{1},\psi_{2}}^{(3)}(l,t,s,x)
 &=\sum_{i=1}^{d}\left|\mathcal{F}^{-1}\left(\xi_{i}
                        \psi_{1}(l,\xi)\psi_{2}(t,\xi)\exp\left(\int_{s}^{t}\psi_{2}(r,\xi)dr\right)\right)(x)\right|,\\
F_{\psi_{1},\psi_{2}}^{(4)}(t,t,s,x)
 &=\sum_{i=1}^{d}\left|\mathcal{F}^{-1}\left(\xi_{i}(\partial_{t}\psi_{1}(t,\xi)
                        +\psi_{1}(t,\xi)\psi_{2}(t,\xi))\right.\right.\\
 &\qquad\quad\left.\left.\times\exp\left(\int_{s}^{t}\psi_{2}(r,\xi)dr\right)\right)(x)\right|.
\end{align*}

\begin{lemma}\label{lem: esti gradient kernel 1}
Let $(\psi_{1},\psi_{2})\in \mathfrak{S}_{\rm T}\times\mathfrak{S}$ and let $N=\min\{N_{1},N_{2}\}$.
Assume that $N> d+2+\lfloor\gamma_{1}\rfloor+\lfloor\gamma_{2}\rfloor$.
Then there exist constants $C_{1},C_{2},C_{3},C_{4}>0$ depending on $\boldsymbol{\kappa},\boldsymbol{\mu},d,N$ such that
for any  $l\geq 0$ and $t>s$,
\begin{align}
F_{\psi_{1},\psi_{2}}^{(1)}(l,t,s,x)&\leq C_{1}|x|^{-(\gamma_{1}+1+d)},\label{eqn: esti of F (1)}\\
F_{\psi_{1},\psi_{2}}^{(2)}(l,t,s,x)&\leq C_{2}|x|^{-(\gamma_{1}+2+d)},\label{eqn: esti of F (2)}\\
F_{\psi_{1},\psi_{2}}^{(3)}(l,t,s,x)&\leq C_{3}|x|^{-(\gamma_{1}+\gamma_{2}+1+d)},\label{eqn: esti of F (3)}\\
F_{\psi_{1},\psi_{2}}^{(4)}(t,t,s,x)&\leq C_{4}(|x|^{-(\gamma_{1}+1+d)}+|x|^{-(\gamma_{1}+\gamma_{2}+1+d)})
\label{eqn: esti of F (4)}
\end{align}
for all $x\in\mathbb{R}^{d}\setminus \{0\}$.
\end{lemma}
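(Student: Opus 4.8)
The plan is to reduce all four estimates to a single scalar fact: \emph{if} $g\colon\mathbb R^{d}\to\mathbb C$ satisfies $|\partial_{\xi}^{\alpha}g(\xi)|\le A\,|\xi|^{\beta-|\alpha|}$ for every multi-index $|\alpha|\le N$ and every $\xi\in\mathbb R^{d}\setminus\widetilde{\boldsymbol{0}}$, where $-d<\beta$ and $N>\beta+d$, \emph{then} $|\mathcal F^{-1}(g)(x)|\le C(d,N,\beta)\,A\,|x|^{-(\beta+d)}$ for all $x\ne0$. First I would read off the symbols inside the inverse Fourier transforms in $F^{(1)},\dots,F^{(4)}$, namely $g^{(1)}=\xi_{i}\psi_{1}(l,\xi)\exp(\int_{s}^{t}\psi_{2}(r,\xi)\,dr)$, $g^{(2)}=\xi_{i}\xi_{j}\psi_{1}(l,\xi)\exp(\int_{s}^{t}\psi_{2}(r,\xi)\,dr)$, $g^{(3)}=\xi_{i}\psi_{1}(l,\xi)\psi_{2}(t,\xi)\exp(\int_{s}^{t}\psi_{2}(r,\xi)\,dr)$, and --- using that $\partial_{t}[\psi_{1}(t,\xi)\exp(\int_{s}^{t}\psi_{2}(r,\xi)\,dr)]=(\partial_{t}\psi_{1}(t,\xi)+\psi_{1}(t,\xi)\psi_{2}(t,\xi))\exp(\int_{s}^{t}\psi_{2}(r,\xi)\,dr)$ --- the $F^{(4)}$-symbol $g^{(4)}=\xi_{i}\,\partial_{t}[\psi_{1}(t,\xi)\exp(\int_{s}^{t}\psi_{2}(r,\xi)\,dr)]$. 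It then suffices to prove the stated pointwise decay for each $\mathcal F^{-1}(g^{(k)})$ and sum over the finitely many indices.

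Next I would produce the symbol bounds from Lemma \ref{lem: esti of partial psi 1 psi 2} (applied with its third symbol taken equal to $\psi_{2}$; recall $\mathfrak S_{\rm T}\subset\mathfrak S$, so $\psi_{1}$ also obeys \textbf{(S2)}): with $(m,n)=(0,0)$ it bounds $\partial_{\xi}^{\alpha}[\psi_{1}(l,\xi)\exp(\int_{s}^{t}\psi_{2})]$ by $C|\xi|^{\gamma_{1}-|\alpha|}e^{-c(t-s)|\xi|^{\gamma_{2}}}$, with $(m,n)=(0,1)$ it bounds $\partial_{\xi}^{\alpha}[\psi_{1}\psi_{2}\exp(\int_{s}^{t}\psi_{2})]$ by $C|\xi|^{\gamma_{1}+\gamma_{2}-|\alpha|}e^{-c(t-s)|\xi|^{\gamma_{2}}}$, and with $(m,n)=(1,0)$ it bounds $\partial_{\xi}^{\alpha}[(\partial_{t}\psi_{1})\exp(\int_{s}^{t}\psi_{2})]$ by $C|\xi|^{\gamma_{1}-|\alpha|}e^{-c(t-s)|\xi|^{\gamma_{2}}}$; multiplication by $\xi_{i}$ (resp.\ $\xi_{i}\xi_{j}$) consumes one (resp.\ two) further $\xi$-derivatives of a core and raises the homogeneity by one (resp.\ two). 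Discarding the harmless factor $e^{-c(t-s)|\xi|^{\gamma_{2}}}\le1$, this yields, uniformly in $l\ge0$ and $t>s$, estimates $|\partial_{\xi}^{\alpha}g^{(k)}(\xi)|\le C|\xi|^{\beta_{k}-|\alpha|}$ on $\mathbb R^{d}\setminus\widetilde{\boldsymbol{0}}$ for $|\alpha|\le N=\min\{N_{1},N_{2}\}$, with $\beta_{1}=\gamma_{1}+1$, $\beta_{2}=\gamma_{1}+2$, $\beta_{3}=\gamma_{1}+\gamma_{2}+1$, while $g^{(4)}$ splits into a summand of homogeneity $\gamma_{1}+1$ and one of homogeneity $\gamma_{1}+\gamma_{2}+1$. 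Since $N$ is a natural number, the assumption $N>d+2+\lfloor\gamma_{1}\rfloor+\lfloor\gamma_{2}\rfloor$ forces $N\ge d+3+\lfloor\gamma_{1}\rfloor+\lfloor\gamma_{2}\rfloor>d+\gamma_{1}+\gamma_{2}+1$, so the hypothesis $N>\beta+d$ of the scalar fact holds in every case (the exponent $\gamma_{1}+\gamma_{2}+1$ is the binding one, which is why the ``$+2$'' is carried).

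For the scalar fact I would use the low/high frequency splitting at scale $R=|x|^{-1}$. Fix $\eta\in C_{\rm c}^{\infty}(\mathbb R^{d})$ with $\eta\equiv1$ for $|\xi|\le1$ and $\eta\equiv0$ for $|\xi|\ge2$, and split $g=\eta(\cdot/R)g+(1-\eta(\cdot/R))g$. The first term is supported in $\{|\xi|\le2R\}$, whence $|\mathcal F^{-1}(\eta(\cdot/R)g)(x)|\le CA\int_{|\xi|\le2R}|\xi|^{\beta}\,d\xi=CAR^{\beta+d}=CA|x|^{-(\beta+d)}$, using $\beta+d>0$. For the second term, pick the coordinate $k_{0}$ with $|x_{k_{0}}|=\max_{j}|x_{j}|\ge|x|/\sqrt d$ and integrate by parts $N$ times in $\xi_{k_{0}}$; boundary terms at infinity vanish by decay, and $(1-\eta(\cdot/R))g$ is supported away from the origin, hence $C^{N}$ off the Lebesgue-null set $\widetilde{\boldsymbol{0}}$. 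By the Leibniz rule $\partial_{\xi_{k_{0}}}^{N}[(1-\eta(\cdot/R))g]$ is the sum of $(1-\eta(\cdot/R))\partial_{\xi_{k_{0}}}^{N}g$, of $L^{1}$-norm $\le CA\int_{|\xi|\ge R}|\xi|^{\beta-N}\,d\xi=CAR^{\beta-N+d}$ (finite precisely because $N>\beta+d$), and of terms with $m\ge1$ derivatives on $\eta(\cdot/R)$, each supported on the annulus $R\le|\xi|\le2R$ and pointwise $\le CR^{-m}\cdot AR^{\beta-N+m}$, hence of $L^{1}$-norm $\le CAR^{\beta-N+d}$. Multiplying by $|x_{k_{0}}|^{-N}\le(\sqrt d)^{N}|x|^{-N}=(\sqrt d)^{N}R^{N}$ gives $CAR^{\beta+d}=CA|x|^{-(\beta+d)}$. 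Applying the scalar fact to each $g^{(k)}$ with $\beta\in\{\gamma_{1}+1,\gamma_{1}+2,\gamma_{1}+\gamma_{2}+1\}$ and summing the finitely many summands yields \eqref{eqn: esti of F (1)}, \eqref{eqn: esti of F (2)}, \eqref{eqn: esti of F (3)} and (adding the two contributions) \eqref{eqn: esti of F (4)}; the constants are independent of $l,t,s$ since the exponential factor was discarded.

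The main obstacle is making the $N$-fold integration by parts rigorous: one controls $\partial_{\xi}^{\alpha}g^{(k)}$ pointwise only off the coordinate hyperplanes $\widetilde{\boldsymbol{0}}$, so strictly speaking the integration by parts in $\xi_{k_{0}}$ should be carried out on each open coordinate half-space (or after a routine mollification), exploiting that $\widetilde{\boldsymbol{0}}$ is null and that the dominating function $|\xi|^{\beta-N}$ is locally integrable up to it, while any contribution along a hyperplane $\{\xi_{j}=0\}$ with $j\ne k_{0}$ drops out in the transversal $(d-1)$-dimensional integration. Aligning the differentiation direction with the largest component of $x$ and using a \emph{smooth} (rather than a sharp) frequency cutoff is exactly what removes the genuine boundary terms and the origin singularity at one stroke; the remainder is bookkeeping of constants.
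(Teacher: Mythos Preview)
Your proposal is correct and follows essentially the same route as the paper: a smooth low/high frequency cutoff at scale $|x|^{-1}$, a direct $L^{1}$ bound on the low-frequency piece, and $N$-fold integration by parts on the high-frequency piece, with the required symbol estimates supplied by Lemma~\ref{lem: esti of partial psi 1 psi 2}. The only noteworthy difference is that the paper integrates by parts via the directional operator $(\mathcal D^{*})^{N}=(-i|x|^{2})^{-N}(x\cdot\nabla_{\xi})^{N}$, whereas you integrate $N$ times in the single coordinate $\xi_{k_{0}}$ with $|x_{k_{0}}|$ maximal; both devices extract the factor $|x|^{-N}$ and lead to the same estimate, so this is a cosmetic variant rather than a different argument.
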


\begin{proof}
The inequality \eqref{eqn: esti of F (1)} is a consequence of Lemma 3.2 in \cite{Ji-Kim 2025}.
Since the proofs of \eqref{eqn: esti of F (2)}, \eqref{eqn: esti of F (3)} and \eqref{eqn: esti of F (4)} are similar,
we only prove \eqref{eqn: esti of F (4)}.
We use the arguments used in the proof of Lemma 3.2 in \cite{Ji-Kim 2025} (see, also \cite{Miao 2008}).
We take a function $\rho\in C_{c}^{\infty}(\mathbb{R}^{d})$ such that
\begin{align*}
\rho(\xi)=\left\{
            \begin{array}{ll}
              1, & \hbox{$|\xi|\leq 1$,} \\
              0, & \hbox{$|\xi|> 2$,}
            \end{array}
          \right.\qquad \xi\in\mathbb{R}^{d}.
\end{align*}
Let $\lambda>0$ be given. Note that we have
\begin{equation}\label{eqn: rho lambda}
\rho\left(\frac{\xi}{\lambda}\right)=\left\{
            \begin{array}{ll}
              1, & \hbox{$|\xi|\leq \lambda$,} \\
              0, & \hbox{$|\xi|> 2\lambda$,}
            \end{array}
          \right.\qquad
1-\rho\left(\frac{\xi}{\lambda}\right)=\left\{
            \begin{array}{ll}
              0, & \hbox{$|\xi|\leq \lambda$,} \\
              1, & \hbox{$|\xi|> 2\lambda$.}
            \end{array}
          \right.
\end{equation}
For each $j=1,2,\cdots,d$, we
put
\begin{align*}
G_{j}(\xi)
=\xi_{j}\left(\partial_{t}\psi_{1}(t,\xi)+\psi_{1}(t,\xi)\psi_{2}(t,\xi)\right)\exp\left(\int_{s}^{t}\psi_{2}(r,\xi)dr\right).
\end{align*}
Let $x\in\mathbb{R}^{d}\setminus \{0\}$ be given.
Then by applying \eqref{eqn: rho lambda}, we have
\begin{align}
\mathcal{F}^{-1}\left(G_{j}(\xi)\right)(x)
&=\int_{\mathbb{R}^d}e^{ix\cdot\xi}G_{j}(\xi)\rho\left(\frac{\xi}{\lambda}\right)d\xi
     +\int_{\mathbb{R}^d}e^{ix\cdot\xi}G_{j}(\xi)\left(1-\rho\left(\frac{\xi}{\lambda}\right)\right)d\xi\nonumber\\
&=I_{1}+I_{2},\label{eqn: I1 I2}
\end{align}
where
\begin{align*}
I_{1}=\int_{|\xi|\leq 2\lambda}e^{ix\cdot\xi}G_{j}(\xi)\rho\left(\frac{\xi}{\lambda}\right)d\xi,
\quad
I_{2}=\int_{|\xi|>\lambda}e^{ix\cdot\xi}G_{j}(\xi)\left(1-\rho\left(\frac{\xi}{\lambda}\right)\right)d\xi.
\end{align*}
First we estimate $I_{1}$. Since $|\rho(\xi)|\leq M$ for some constant $M>0$,
by the conditions \textbf{(S1)}, \textbf{(S2)} and \textbf{(S3)},
and applying the change of variables to polar coordinates,
we obtain that
\begin{align}
|I_{1}|
&\leq M\int_{|\xi|\leq 2\lambda}|\partial_{t}\psi_{1}(t,\xi)+\psi_{1}(t,\xi)\psi_{2}(t,\xi)|
                                         |\xi_{i}|\left|\exp\left(\int_{s}^{t}\psi_{2}(r,\xi)dr\right)\right|d\xi\nonumber\\
&\leq M\int_{|\xi|\leq 2\lambda}\left(\mu_{1}|\xi|^{\gamma_{1}}
                +\mu_{1}\mu_{2}|\xi|^{\gamma_{1}+\gamma_{2}}\right)|\xi|d\xi\nonumber\\
&=M\mu_{1}\int_{|\xi|\leq 2\lambda}|\xi|^{\gamma_{1}+1}d\xi
     +M\mu_{1}\mu_{2}\int_{|\xi|\leq 2\lambda}|\xi|^{\gamma_{1}+\gamma_{2}+1}d\xi\nonumber\\
&=M\mu_{1}\frac{2\pi^{\frac{d}{2}}}{\Gamma(\frac{d}{2})}\int_{0}^{2\lambda}r^{\gamma_{1}+1+d-1}dr
     +M\mu_{1}\mu_{2}\frac{2\pi^{\frac{d}{2}}}{\Gamma(\frac{d}{2})}
        \int_{0}^{2\lambda}r^{\gamma_{1}+\gamma_{2}+1+d-1}dr\nonumber\\
&=M\mu_{1}\frac{2\pi^{\frac{d}{2}}}{\Gamma(\frac{d}{2})}(2\lambda)^{\gamma_{1}+1+d}
     +M\mu_{1}\mu_{2}\frac{2\pi^{\frac{d}{2}}}{\Gamma(\frac{d}{2})}
        (2\lambda)^{\gamma_{1}+\gamma_{2}+1+d}.\label{eqn:esti I1}
\end{align}
Now we estimate $I_{2}$. For a differentiable function $f$ on $\mathbb{R}^{d}$, define
\[
\mathcal{D}f(\xi)=\frac{x\cdot\nabla_{\xi}f(\xi)}{i|x|^2},\qquad \xi\in\mathbb{R}^{d}.
\]
Then it is obvious that $\mathcal{D}e^{ix\cdot\xi}=e^{i x\cdot \xi}$.
Therefore, we obtain that
\begin{align}
I_{2}&=\int_{|\xi|>\lambda}e^{ix\cdot\xi}G_{j}(\xi)\left(1-\rho\left(\frac{\xi}{\lambda}\right)\right)d\xi
=\int_{|\xi|>\lambda}\left(\mathcal{D}^{N}e^{ix\cdot\xi}\right)G_{j}(\xi)\left(1-\rho\left(\frac{\xi}{\lambda}\right)\right)d\xi\nonumber\\
&=\int_{|\xi|>\lambda}e^{ix\cdot\xi}
     \left[(\mathcal{D}^*)^{N}\left(G_{j}(\xi)\left(1-\rho\left(\frac{\xi}{\lambda}\right)\right)\right)\right]d\xi\nonumber\\
&=I_{21}+I_{22},\label{eqn: decomp of I2}
\end{align}
where $\mathcal{D}^{*}=-\frac{x\cdot\nabla_{\xi}}{i|x|^2}$ and
\begin{align*}
I_{21}
=\int_{\lambda< |\xi|\leq 2\lambda}
    e^{ix\cdot\xi}(\mathcal{D}^{*})^{N}\left(G_{j}(\xi)\left(1-\rho\left(\frac{\xi}{\lambda}\right)\right)\right)d\xi,
~~
I_{22}=\int_{|\xi|> 2\lambda}e^{ix\cdot\xi}(\mathcal{D}^{*})^{N}G_{j}(\xi)d\xi.
\end{align*}
We estimate $I_{21}$. Note that
\[
(\mathcal{D}^{*})^{N}
=\frac{(-1)^{N}}{i^{N}|x|^{2N}}\sum_{|\alpha|=N}x^{\alpha}\partial_{\xi}^{\alpha},
\]
where $x^{\alpha}=x_{1}^{\alpha_{1}}\cdots x_{d}^{\alpha_{d}}$ for $x=(x_{1},\cdots,x_{d})\in\mathbb{R}^{d}$
and a multi-index $\alpha=(\alpha_{1},\cdots,\alpha_{d})\in\mathbb{N}_{0}^{d}$.
Therefore, by applying the Leibniz rule, we obtain that
\begin{align*}
(\mathcal{D}^{*})^{N}\left(G_{j}(\xi)\left(1-\rho\left(\frac{\xi}{\lambda}\right)\right)\right)
&=\frac{(-1)^{N}}{i^{N}|x|^{2N}}\sum_{|\alpha|=N}x^{\alpha}\partial_{\xi}^{\alpha}
    \left(G_{j}(\xi)\left(1-\rho\left(\frac{\xi}{\lambda}\right)\right)\right)\\
&=\frac{(-1)^{N}}{i^{N}|x|^{2N}}\sum_{|\alpha|=N}x^{\alpha}\sum_{\beta\leq \alpha}\binom{\alpha}{\beta}\\
&\qquad\times    \left(\partial_{\xi}^{\alpha-\beta}G_{j}(\xi)\right)
       \left(\partial_{\xi}^{\beta}\left(1-\rho\left(\frac{\xi}{\lambda}\right)\right)\right).
\end{align*}
We estimate $\partial_{\xi}^{\alpha-\beta}G_{j}(\xi)$.
Put $G_{j}(\xi)=G_{j1}(\xi)+G_{j2}(\xi)$, where
\begin{align*}
G_{j1}(\xi)&=\xi_{j}\partial_{t}\psi_{1}(t,\xi)\exp\left(\int_{s}^{t}\psi_{2}(r,\xi)dr\right),\\
G_{j2}(\xi)&=\xi_{j}\psi_{1}(t,\xi)\psi_{2}(t,\xi)\exp\left(\int_{s}^{t}\psi_{2}(r,\xi)dr\right).
\end{align*}
By applying Lemma \ref{lem: esti of partial psi 1 psi 2} to $G_{j1}(\xi)$ and $G_{j2}(\xi)$ with $m=1, n=0$, $\psi_{3}=\psi_{2}$,
and $m=0, n=1$, $\psi_{3}=\psi_{2}$, respectively,
we obtain that for any multi index $\delta\in \mathbb{N}_{0}^{d}$ with $|\delta|\leq \min\{N_{1},N_{2}\}$,
\begin{align*}
|\partial_{\xi}^{\delta}G_{j1}(\xi)|
&\leq C|\xi|^{\gamma_{1}+1-|\delta|}e^{-c(t-s)|\xi|^{\gamma_{2}}},\\
|\partial_{\xi}^{\delta}G_{j2}(\xi)|
&\leq C'|\xi|^{\gamma_{1}+\gamma_{2}+1-|\delta|}e^{-c(t-s)|\xi|^{\gamma_{2}}}
\end{align*}
for some constant $c,C,C'>0$. Thus we obtain that
\begin{align}
|\partial_{\xi}^{\delta}G_{j}(\xi)|
&\leq |\partial_{\xi}^{\delta}G_{j1}(\xi)|+|\partial_{\xi}^{\delta}G_{j2}(\xi)|\nonumber\\
&\leq C''\left(|\xi|^{\gamma_{1}+1-|\delta|}+|\xi|^{\gamma_{1}+\gamma_{2}+1-|\delta|}\right)
        e^{-c(t-s)|\xi|^{\gamma_{2}}}\label{eqn: esti partial Gj}
\end{align}
for some constant $C''>0$.
Therefore, by applying \eqref{eqn: esti partial Gj} with $\delta=\alpha-\beta$ and the fact that $e^{-c(t-s)|\xi|^{\gamma_{2}}}\leq 1$,
we have
\begin{align}
|\partial_{\xi}^{\alpha-\beta}G_{j}(\xi)|
&\leq C_{\alpha-\beta}^{(1)}\left(|\xi|^{\gamma_{1}+1-|\alpha-\beta|}
            +|\xi|^{\gamma_{1}+\gamma_{2}+1-|\alpha-\beta|}\right)
            \label{eqn:esti partial Gj}
\end{align}
for some constant $C_{\alpha-\beta}^{(1)}>0$.
On the other hand, since $\lambda^{-|\beta|}\leq 2^{|\beta|}|\xi|^{-|\beta|}$ for $|\xi|\leq 2\lambda$, we have
\begin{equation}\label{eqn: partial 1-rho}
\left|\partial_{\xi}^{\beta}\left(1-\rho\left(\frac{\xi}{\lambda}\right)\right)\right|
\leq C_{\beta}^{(2)}\lambda^{-|\beta|}
\le C_{\beta}^{(2)}2^{|\beta|}|\xi|^{-|\beta|}
\end{equation}
for some constant $C_{\beta}^{(2)}>0$.
Then by applying  \eqref{eqn:esti partial Gj} and \eqref{eqn: partial 1-rho}, we obtain that
\begin{align}
&\left|(\mathcal{D}^{*})^{N}\left(G_{j}(\xi)\left(1-\rho(\frac{\xi}{\lambda})\right)\right)\right|\nonumber\\
&\leq \frac{1}{|x|^{2N}}\sum_{|\alpha|=N}|x^{\alpha}|
  \sum_{\beta\leq \alpha}\binom{\alpha}{\beta}\left|\partial_{\xi}^{\alpha-\beta}G_{j}(\xi)\right|
      \left|\partial_{\xi}^{\beta}\left(1-\rho(\frac{\xi}{\delta})\right)\right|\nonumber\\
&\leq\frac{1}{|x|^{N}}\sum_{|\alpha|=N}
  \sum_{\beta\leq \alpha}\binom{\alpha}{\beta}C_{\alpha-\beta}^{(1)}C_{\beta}^{(2)}2^{|\beta|}
\left(|\xi|^{\gamma_{1}+1-|\alpha-\beta|}+|\xi|^{\gamma_{1}+\gamma_{2}+1-|\alpha-\beta|}\right)
|\xi|^{-|\beta|}\nonumber\\
&=C^{(3)}\frac{1}{|x|^{N}}
\left(|\xi|^{\gamma_{1}+1-N}+|\xi|^{\gamma_{1}+\gamma_{2}+1-N}\right)
  \label{eqn: esti DGj rho}
\end{align}
for some $C^{(3)}>0$.
By applying \eqref{eqn: esti DGj rho}, the changing variables to the polar coordinates,
the assumption $N>d+2+\lfloor\gamma_{1}\rfloor+\lfloor\gamma_{2}\rfloor$, we obtain that
\begin{align*}
|I_{21}|
&\leq \int_{\lambda\leq |\xi|\leq 2\lambda}
           \left|(\mathcal{D}^{*})^{N}\left(G_{j}(\xi)\left(1-\rho(\frac{\xi}{\delta})\right)\right)\right|d\xi\\
&\le C^{(3)}\frac{1}{|x|^{N}}\int_{\lambda\leq |\xi|\leq 2\lambda}\left(|\xi|^{\gamma_{1}+1-N}+|\xi|^{\gamma_{1}+\gamma_{2}+1-N}\right)d\xi\\
&=C^{(3)}\frac{1}{|x|^{N}}\frac{2\pi^{\frac{d}{2}}}{\Gamma(\frac{d}{2})}
\int_{\lambda}^{2\lambda}\left(r^{\gamma_{1}+1-N}+r^{\gamma_{1}+\gamma_{2}+1-N}\right)r^{d-1}dr\\
&=C^{(4)}\frac{1}{|x|^{N}}\left(\lambda^{\gamma_{1}+1-N+d}+\lambda^{\gamma_{1}+\gamma_{2}+1-N+d}\right)
\end{align*}
for some $C^{(4)}>0$.

We now estimate $I_{22}$. By applying \eqref{eqn: esti partial Gj} with $\delta=\alpha$ and
the fact that $e^{-c(t-s)|\xi|^{\gamma_{2}}}\leq 1$, we obtain that
\begin{align*}
\left|((\mathcal{D}^{*})^{N}G_{j})(\xi)\right|
&\leq \frac{1}{|x|^{2N}}\sum_{|\alpha|=N}|x^{\alpha}
|\left|\partial_{\xi}^{\alpha}G_{j}(\xi)\right|\nonumber\\
&\leq \frac{1}{|x|^{2N}}\sum_{|\alpha|=N}|x|^{|\alpha|}C_{\alpha}^{(5)}
         \left(|\xi|^{\gamma_{1}+1-|\alpha|}+|\xi|^{\gamma_{1}+\gamma_{2}+1-|\alpha|}\right)
         e^{-c(t-s)|\xi|^{\gamma_{2}}}\nonumber\\
&=C^{(6)}\frac{1}{|x|^{N}}\left(|\xi|^{\gamma_{1}+1-N}+|\xi|^{\gamma_{1}+\gamma_{2}+1-N}\right)
\end{align*}
for some $C^{(6)}>0$.
By the assumption $N>d+2+\lfloor\gamma_{1}\rfloor+\lfloor\gamma_{2}\rfloor$,
we obtain that
\begin{align*}
|I_{22}|&\leq \int_{|\xi|\geq 2\lambda}\left|(\mathcal{D}^{*})^{N}\left(G_{j}(\xi)\right)\right|d\xi
\leq C^{(6)}\frac{1}{|x|^{N}}\int_{|\xi|\geq 2\lambda}
        \left(|\xi|^{\gamma_{1}+1-N}+|\xi|^{\gamma_{1}+\gamma_{2}+1-N}\right)d\xi\\
&\leq C^{(6)}\frac{1}{|x|^{N}}\frac{2\pi^{\frac{d}{2}}}{\Gamma(\frac{d}{2})}
        \int_{2\lambda}^{\infty}\left(r^{\gamma_{1}+1-N}+r^{\gamma_{1}+\gamma_{2}+1-N}\right)r^{d-1}dr\\
&=C^{(7)}\frac{1}{|x|^{N}}\left(\lambda^{\gamma_{1}+1-N+d}
                    +\lambda^{\gamma_{1}+\gamma_{2}+1-N+d}\right)
\end{align*}
for some $C^{(7)}>0$.
Therefore, by \eqref{eqn: I1 I2}, \eqref{eqn:esti I1} and \eqref{eqn: decomp of I2}, we have for any $j=1,2,\cdots,d,$
\begin{align}
|\mathcal{F}^{-1}(G_{j}(\xi))(x)|
&\leq C^{(8)}\left(\lambda^{\gamma_{1}+1+d}+\lambda^{\gamma_{1}+\gamma_{2}+1+d}\right)\nonumber\\
&\qquad+C^{(9)}\frac{1}{|x|^{N}}\left(\lambda^{\gamma_{1}+1-N+d}
                +\lambda^{\gamma_{1}+\gamma_{2}+1-N+d}\right)\label{eqn: IFT of Gi less lambda}
\end{align}
for some $C^{(8)},C^{(9)}>0$.
By taking $\lambda=|x|^{-1}$ in \eqref{eqn: IFT of Gi less lambda}, we have
\begin{align*}
F_{\psi_{1},\psi_{2}}^{(4)}(t,t,s,x)
&=\sum_{j=1}^{d}|\mathcal{F}^{-1}(G_{j}(\xi))(x)|
\leq C\left(|x|^{-(\gamma_{1}+1+d)}+|x|^{-(\gamma_{1}+\gamma_{2}+1+d)}\right)
\end{align*}
for some $C>0$.
The proof is complete.
\end{proof}

For a real-valued differentiable function $f$ on $\mathbb{R}^{d}$, we denote the gradient of $f$  by $\nabla f$.
Also, we denote the matrix $\left[\frac{\partial^2 f}{\partial x_{i}\partial x_{j}}\right]_{i,j=1}^{d}$ by $\nabla(\nabla f)$.

\begin{lemma}\label{lem: esti of grad K}
Let $(\psi_{1},\psi_{2})\in \mathfrak{S}_{\rm T}\times\mathfrak{S}$ and let $N=\min\{N_{1},N_{2}\}$.
Assume that $N> d+2+\lfloor\gamma_{1}\rfloor+\lfloor\gamma_{2}\rfloor$.
 Then there exist constants $C_{1},C_{2},C_{3},C_{4}>0$ depending on $d, \boldsymbol{\mu},\boldsymbol{\gamma}$ and $\boldsymbol{\kappa}$ such that
for any $l\geq 0$ and $t>s$,
\begin{align}
|\nabla K_{\psi_{1},\psi_{2}}(l,t,s,x)|
&\leq C_{1}\left(|x|^{-(\gamma_{1}+1+d)}\wedge (t-s)^{-\frac{\gamma_{1}+1+d}{\gamma_{2}}}\right) \label{eqn: bound of nabla K},\\
|\nabla(\nabla K_{\psi_{1},\psi_{2}})(l,t,s,x)|
&\leq C_{2}\left(|x|^{-(\gamma_{1}+2+d)}\wedge (t-s)^{-\frac{\gamma_{1}+2+d}{\gamma_{2}}}\right),\label{eqn: bound of nabla nabla K}\\
\left|\partial_{t}\nabla K_{\psi_{1},\psi_{2}}(l,t,s,x)\right|
&\leq C_{3}\left(|x|^{-(\gamma_{1}+\gamma_{2}+1+d)}\wedge
                      (t-s)^{-\frac{\gamma_{1}+\gamma_{2}+1+d}{\gamma_{2}}}\right),\label{eqn: bound of dt nabla K}\\
\left|\partial_{t}\nabla K_{\psi_{1},\psi_{2}}(t,t,s,x)\right|
&\leq C_{4}\left((|x|^{-(\gamma_{1}+1+d)}+|x|^{-(\gamma_{1}+\gamma_{2}+1+d)})\right.\nonumber\\
&\qquad\left.\wedge \left((t-s)^{-\frac{\gamma_{1}+1+d}{\gamma_{2}}}
                      +(t-s)^{-\frac{\gamma_{1}+\gamma_{2}+1+d}{\gamma_{2}}}\right)\right)
\label{eqn: bound of dt nabla K l=t}
\end{align}
for all $x\in\mathbb{R}^{d}\setminus \{0\}$,
where the absolute value of the left hand side means the Euclidean norm.
\end{lemma}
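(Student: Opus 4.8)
The plan is to write each of the four left-hand sides as an inverse Fourier transform of an explicit symbol and then estimate that transform in two complementary ways, taking the minimum of the two bounds at the end. Differentiating under the integral sign — which is legitimate because the factor $\exp\left(\int_s^t\psi_2(r,\xi)\,dr\right)$ decays like $e^{-\kappa_2(t-s)|\xi|^{\gamma_2}}$ by \textbf{(S1)}, while every other factor grows only polynomially in $\xi$ by \textbf{(S2)} and \textbf{(S3)} — one obtains, componentwise,
\[
\partial_{x_j}K_{\psi_1,\psi_2}(l,t,s,x)=\mathcal{F}^{-1}\!\left(i\xi_j\,\psi_1(l,\xi)\exp\!\left(\int_s^t\psi_2(r,\xi)\,dr\right)\right)(x),
\]
and likewise $\partial_{x_i}\partial_{x_j}K$ carries the factor $-\xi_i\xi_j\psi_1(l,\xi)$; $\partial_t\partial_{x_j}K$ carries $i\xi_j\psi_1(l,\xi)\psi_2(t,\xi)$ (since $\partial_t\exp(\int_s^t\psi_2\,dr)=\psi_2(t,\xi)\exp(\int_s^t\psi_2\,dr)$ and $\psi_1(l,\cdot)$ is independent of $t$); and $\partial_t\partial_{x_j}K$ evaluated at $l=t$ carries $i\xi_j(\partial_t\psi_1(t,\xi)+\psi_1(t,\xi)\psi_2(t,\xi))$ by the product rule applied to $\psi_1(t,\xi)\exp(\int_s^t\psi_2\,dr)$. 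Hence the Euclidean norms of these four vector-valued functions are dominated, respectively, by $F^{(1)}_{\psi_1,\psi_2}(l,t,s,x)$, $F^{(2)}_{\psi_1,\psi_2}(l,t,s,x)$, $F^{(3)}_{\psi_1,\psi_2}(l,t,s,x)$ and $F^{(4)}_{\psi_1,\psi_2}(t,t,s,x)$.

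With this reduction, the ``spatial'' half of each inequality — the bounds by powers of $|x|^{-1}$ — is precisely Lemma \ref{lem: esti gradient kernel 1}, under the standing hypothesis $N=\min\{N_1,N_2\}>d+2+\lfloor\gamma_1\rfloor+\lfloor\gamma_2\rfloor$ (this hypothesis, together with the higher-order bounds of \textbf{(S2)}, \textbf{(S3)}, is used only inside that lemma, via the integration-by-parts/decomposition argument there). For the ``temporal'' half I would bound each inverse Fourier transform crudely by the $L^1$-norm of its symbol: using $|\psi_1(l,\xi)|\le\mu_1|\xi|^{\gamma_1}$ and $|\psi_2(t,\xi)|\le\mu_2|\xi|^{\gamma_2}$ (the $\alpha=0$ cases of \textbf{(S2)}, valid a.e. since $\widetilde{\boldsymbol 0}$ is null), $|\partial_t\psi_1(t,\xi)|\le\mu_1|\xi|^{\gamma_1}$ from \textbf{(S3)}, and $\left|\exp(\int_s^t\psi_2(r,\xi)\,dr)\right|\le e^{-\kappa_2(t-s)|\xi|^{\gamma_2}}$ from \textbf{(S1)}, so that, for instance,
\[
|\nabla K_{\psi_1,\psi_2}(l,t,s,x)|\le \frac{\mu_1}{(2\pi)^{d/2}}\int_{\mathbb{R}^{d}}|\xi|^{\gamma_1+1}e^{-\kappa_2(t-s)|\xi|^{\gamma_2}}\,d\xi .
\]
Passing to polar coordinates and substituting $u=\kappa_2(t-s)|\xi|^{\gamma_2}$ turns the right-hand side into a constant times $(t-s)^{-(\gamma_1+1+d)/\gamma_2}$; the identical computation with an extra $|\xi|$, an extra $|\xi|^{\gamma_2}$, or the factor $(\mu_1|\xi|^{\gamma_1}+\mu_1\mu_2|\xi|^{\gamma_1+\gamma_2})|\xi|$ produces the exponents appearing in \eqref{eqn: bound of nabla nabla K}, \eqref{eqn: bound of dt nabla K} and \eqref{eqn: bound of dt nabla K l=t}. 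Combining each spatial bound with the corresponding temporal bound via $a\wedge b$ gives the four displayed estimates.

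The calculations involved — the Gamma-function evaluation of the radial integral and the Leibniz-rule bookkeeping for the symbols — are routine. The only genuinely delicate point is justifying the interchange of $\partial_t$ (and of $\partial_{x_j}$) with $\mathcal{F}^{-1}$: for each differentiated symbol one must exhibit an integrable majorant that is locally uniform in $t$ on the range $t>s$, and this is exactly where assumption \textbf{(S3)} on the time derivative of $\psi_1$ is needed, and where the case $l=t$ (used for \eqref{eqn: bound of dt nabla K l=t} and later for $\widetilde{\mathcal G}_{a,q}$) must be handled separately from the case of fixed $l$. Once this differentiation-under-the-integral is secured, the whole lemma reduces to the two elementary estimates above together with Lemma \ref{lem: esti gradient kernel 1}.
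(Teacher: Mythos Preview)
Your proposal is correct and follows essentially the same route as the paper's proof: reduce each of the four gradients to the quantities $F^{(1)},F^{(2)},F^{(3)},F^{(4)}$, invoke Lemma~\ref{lem: esti gradient kernel 1} for the $|x|$-bounds, and obtain the $(t-s)$-bounds by the crude $L^1$ majorization of the symbol followed by the scaling $\xi\mapsto (t-s)^{-1/\gamma_2}\xi$. Your additional remark on justifying differentiation under the integral (via the exponential decay from \textbf{(S1)} and the polynomial growth from \textbf{(S2)}, \textbf{(S3)}) is a point the paper leaves implicit.
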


\begin{proof}
The inequality \eqref{eqn: bound of nabla K} is a consequence of Lemma 3.3 in \cite{Ji-Kim 2025}.
Since the proofs of \eqref{eqn: bound of nabla nabla K}, \eqref{eqn: bound of dt nabla K} and \eqref{eqn: bound of dt nabla K l=t}
are similar, we only prove \eqref{eqn: bound of dt nabla K l=t}.
For $i=1,2,\cdots,d$, it holds that
\begin{align*}
&\partial_{t}\partial_{x_{i}}K_{\psi_{1},\psi_{2}}(t,t,s,x)\\
&=\partial_{t}\mathcal{F}^{-1}\left(\xi_{i}\psi_{1}(t,\xi)  \exp\left(\int_{s}^{t}\psi_{2}(r,\xi)dr\right)\right)(x)\\
&=\mathcal{F}^{-1}\left(\xi_{i}\left(\partial_{t}\psi_{1}(t,\xi)+\psi_{1}(t,\xi)\psi_{2}(t,\xi)\right)
     \exp\left(\int_{s}^{t}\psi_{2}(r,\xi)dr\right)\right)(x).
\end{align*}
Since $(a+b)^{\frac{1}{p}}\leq a^{\frac{1}{p}}+b^{\frac{1}{p}}$ for $p\geq 1$,
by applying \eqref{eqn: esti of F (4)}, we obtain that
\begin{align}
\left|\partial_{t}\nabla K_{\psi_{1},\psi_{2}}(t,t,s,x)\right|
&=\left(\sum_{i=1}^{d}\left|\partial_{t}\partial_{x_{i}}
        K_{\psi_{1},\psi_{2}}(t,t,s,x)\right|^{2}\right)^{\frac{1}{2}}\nonumber\\
&\leq \sum_{i=1}^{d}\left|\partial_{t}\partial_{x_{i}}K_{\psi_{1},\psi_{2}}(t,t,s,x)\right|\nonumber\\
&=F_{\psi_{1},\psi_{2}}^{(4)}(t,t,s,x)\nonumber\\
&\leq C^{(1)}(|x|^{-(\gamma_{1}+1+d)}+|x|^{-(\gamma_{1}+\gamma_{2}+1+d)})\label{eqn: esti nabla K 1}
\end{align}
for some constant $C^{(1)}>0$.
On the other hand, by the conditions \textbf{(S1)}, \textbf{(S2)} and \textbf{(S3)}, we obtain that for any $x\in\mathbb{R}^{d}$
\begin{align}
&F_{\psi_{1},\psi_{2}}^{(4)}(t,t,s,x)\nonumber\\
&=\sum_{i=1}^{d}\left|\mathcal{F}^{-1}\left(\xi_{i}\left(\partial_{t}\psi_{1}(t,\xi)
    +\psi_{1}(t,\xi)\psi_{2}(t,\xi)\right)   \exp\left(\int_{s}^{t}\psi_{2}(r,\xi)dr\right)\right)(x)\right|\nonumber\\
&\leq \sum_{i=1}^{d}\int_{\mathbb{R}^{d}}|\xi_{i}|\left|\partial_{t}\psi_{1}(t,\xi)
    +\psi_{1}(t,\xi)\psi_{2}(t,\xi)\right|  \left|\exp\left(\int_{s}^{t}\psi_{2}(r,\xi)dr\right)\right|d\xi\nonumber\\
&\leq \sum_{i=1}^{d}\int_{\mathbb{R}^{d}} |\xi|  \left(\mu_{1}|\xi|^{\gamma_{1}}
+\mu_{1}\mu_{2}|\xi|^{\gamma_{1}+\gamma_{2}}\right)   e^{-\kappa_{2}(t-s)|\xi|^{\gamma_{2}}}d\xi\nonumber\\
&=\mu_{1}d\int_{\mathbb{R}^{d}} |\xi|^{\gamma_{1}+1}e^{-\kappa_{2}(t-s)|\xi|^{\gamma_{2}}}d\xi
+\mu_{1}\mu_{2}d\int_{\mathbb{R}^{d}}|\xi|^{\gamma_{1}+\gamma_{2}+1}
                                      e^{-\kappa_{2}(t-s)|\xi|^{\gamma_{2}}}d\xi\nonumber\\
&=\mu_{1}d\int_{\mathbb{R}^{d}}  |(t-s)^{-\frac{1}{\gamma_{2}}}y|^{\gamma_{1}+1}
                                        e^{-\kappa_{2}|y|^{\gamma_{2}}}(t-s)^{-\frac{d}{\gamma_{2}}}dy\nonumber\\
&\qquad+\mu_{1}\mu_{2}d\int_{\mathbb{R}^{d}}  |(t-s)^{-\frac{1}{\gamma_{2}}}y|^{\gamma_{1}+\gamma_{2}+1}
                                      e^{-\kappa_{2}|y|^{\gamma_{2}}}(t-s)^{-\frac{d}{\gamma_{2}}}dy\nonumber\\
&\leq C^{(2)}\left((t-s)^{-\frac{\gamma_{1}+1+d}{\gamma_{2}}}
+(t-s)^{-\frac{\gamma_{1}+\gamma_{2}+1+d}{\gamma_{2}}}\right)\label{eqn: partial t grad K 2}
\end{align}
for some constants $C^{(2)}>0$.
Hence, by combining \eqref{eqn: esti nabla K 1} and \eqref{eqn: partial t grad K 2},
we have the inequality given in \eqref{eqn: bound of dt nabla K l=t}.
\end{proof}

\subsection{A Proof of Theorem \ref{thm: sharp fct less than maximal}}
In order to prove Theorem \ref{thm: sharp fct less than maximal}, we need some lemmas.
Throughout this subsection, we let $(\psi_{1},\psi_{2})\in\mathfrak{S}_{\rm T}\times \mathfrak{S}$ such that
$N_{1},N_{2}>d+2+\lfloor\gamma_{1}\rfloor+\lfloor\gamma_{2}\rfloor$.

For a notational convenience, we put
\begin{align*}
L_{\rm{1-loc}}^{p}(\mathbb{R}^{d+1}; V)
&=\left\{
    \begin{array}{ll}
\text{the set of strong measurable functions $f:\mathbb{R}^{d+1}\to V$} \\
\text{such that $f\in L^{p}(K\times \mathbb{R}^{d}; V)$ for any compact subset $K\subset \mathbb{R}$.}
    \end{array}
  \right.
\end{align*}

Motivated by Corollary 2.7 (for $\psi_{1}(t,\xi)=-|\xi|^{\gamma/2}$ and $q=2$) in \cite{I. Kim K.-H. Kim 2016},
we have the following lemma which will be applied
for the proof of Lemma \ref{lem: bound of double average}.

\begin{lemma}\label{lem: g equal 0 outside of B3r1}
Let $q\geq 2$ be given.
Then for any $r'>0$, it holds that

\begin{itemize}
  \item [\rm (i)] there exists a constant $C_{1}>0$ depending on
  $r',d,q,\boldsymbol{\gamma},\boldsymbol{\mu}$ and $\boldsymbol{\kappa}$ such that
  for any $r>0$, $l\geq 0$ and $f\in L_{\rm{1-loc}}^{q}(\mathbb{R}^{d+1}; V)$ satisfying that $f(t,x)=0$ for $x\notin B_{3r}$,
\begin{align}
\int_{-2r'}^{0}\int_{B_{r}}|\mathcal{G}_{-2r',q}f(l,s,y)|^{q}dyds
\leq C_{1}\int_{-2r'}^{0}\int_{B_{3r}}\|f(s,y)\|_{V}^{q}dyds,\label{eqn:int -2r' B3r}\\
\int_{-2r'}^{0}\int_{B_{r}}|\widetilde{\mathcal{G}}_{-2r',q}f(s,y)|^{q}dyds
\leq C_{1}\int_{-2r'}^{0}\int_{B_{3r}}\|f(s,y)\|_{V}^{q}dyds,\label{eqn:int -2r' B3r 2}
\end{align}
  \item [\rm (ii)] (if $q=2$,)
there exists a constant $C_{2}>0$ depending on
$d,\boldsymbol{\gamma},\boldsymbol{\mu}$ and $\boldsymbol{\kappa}$
such that for any $r>0$, $l\geq 0$ and $f\in L_{\rm{1-loc}}^{2}(\mathbb{R}^{d+1}; V)$
satisfying that $f(t,x)=0$ for $x\notin B_{3r}$,
\begin{align}
\int_{-2r'}^{0}\int_{B_{r}}|\mathcal{G}_{-2r',2}f(l,s,y)|^{2}dyds
\leq C_{2}\int_{-2r'}^{0}\int_{B_{3r}}\|f(s,y)\|_{V}^{2}dyds.\label{eqn:int -2r' B3r 3}
\end{align}
\end{itemize}
\end{lemma}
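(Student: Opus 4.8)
The plan is to deduce the lemma directly from the global $L^{q}$-boundedness of the Littlewood--Paley $g$-functions established in Theorem \ref{thm: LP ineq p=q}, after a harmless localization in the time variable.

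First I would truncate $f$ in time. Put $\tilde f := f\,\mathbf{1}_{(-2r',0)}$, so that $\tilde f(s,y)=0$ for $y\notin B_{3r}$ by hypothesis, and $\tilde f\in L^{q}((-2r',0)\times\mathbb{R}^{d};V)$ (indeed $\tilde f\in L^{q}(\mathbb{R}^{d+1};V)$) because $f\in L_{\rm{1-loc}}^{q}(\mathbb{R}^{d+1};V)$ and $[-2r',0]$ is compact. By the definitions \eqref{eqn: mathcal G a q} and \eqref{eqn: mathcal G a q 2}, for each $s\in(-2r',0)$ the quantities $\mathcal{G}_{-2r',q}f(l,s,y)$ and $\widetilde{\mathcal{G}}_{-2r',q}f(s,y)$ only involve $f$ on $(-2r',s)\times\mathbb{R}^{d}\subset(-2r',0)\times\mathbb{R}^{d}$, and hence remain unchanged if $f$ is replaced by $\tilde f$.

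For (i) I would then enlarge the spatial integration domain from $B_{r}$ to $\mathbb{R}^{d}$ and apply Theorem \ref{thm: LP ineq p=q}(i) with $a=-2r'$ and $b=0$, which yields
\begin{align*}
\int_{-2r'}^{0}\int_{B_{r}}|\mathcal{G}_{-2r',q}f(l,s,y)|^{q}dyds
&\leq\int_{-2r'}^{0}\int_{\mathbb{R}^{d}}|\mathcal{G}_{-2r',q}\tilde f(l,s,y)|^{q}dyds\\
&\leq C_{1}\int_{-2r'}^{0}\|\tilde f(s,\cdot)\|_{L^{q}(\mathbb{R}^{d};V)}^{q}ds
=C_{1}\int_{-2r'}^{0}\int_{B_{3r}}\|f(s,y)\|_{V}^{q}dyds,
\end{align*}
the last equality using the spatial support of $\tilde f$; this is \eqref{eqn:int -2r' B3r}, and the constant $C_{1}$ of Theorem \ref{thm: LP ineq p=q}(i) depends on $-2r',0,d,q,\boldsymbol{\gamma},\boldsymbol{\mu},\boldsymbol{\kappa}$, i.e. on $r',d,q,\boldsymbol{\gamma},\boldsymbol{\mu}$ and $\boldsymbol{\kappa}$ only (in particular not on $r$). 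Running the same argument with the second inequality of Theorem \ref{thm: LP ineq p=q}(i) gives \eqref{eqn:int -2r' B3r 2} for $\widetilde{\mathcal{G}}_{-2r',q}$.

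For (ii) I would additionally observe that, since $\tilde f(s,\cdot)=0$ for $s\leq-2r'$, the integrand defining $\mathcal{G}_{-\infty,2}\tilde f(l,t,y)$ vanishes for $s\in(-\infty,-2r')$, so that $\mathcal{G}_{-\infty,2}\tilde f(l,t,y)=\mathcal{G}_{-2r',2}\tilde f(l,t,y)=\mathcal{G}_{-2r',2}f(l,t,y)$ whenever $t\in(-2r',0)$; applying Theorem \ref{thm: LP ineq p=q}(ii) to $\tilde f$ on all of $\mathbb{R}^{d+1}$ then produces \eqref{eqn:int -2r' B3r 3} with a constant $C_{2}$ depending only on $\mu_{1},\kappa_{2}$ and $\boldsymbol{\gamma}$ -- in particular on neither $r$ nor $r'$. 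There is no genuine obstacle in this lemma: the analytic substance sits entirely in Theorem \ref{thm: LP ineq p=q}, and the only point that requires attention is the bookkeeping of constants, namely checking that the time-truncation trick keeps $C_{1}$ independent of $r$ in (i) and $C_{2}$ independent of both $r$ and $r'$ in (ii), since it is precisely this $r$-uniformity that makes the lemma usable in the proof of Lemma \ref{lem: bound of double average}.
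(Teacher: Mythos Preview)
Your proposal is correct and follows essentially the same approach as the paper's proof: both reduce the lemma to Theorem \ref{thm: LP ineq p=q} by enlarging the spatial integration to $\mathbb{R}^{d}$ and then using the support hypothesis on $f$. Your explicit time truncation $\tilde f=f\,\mathbf{1}_{(-2r',0)}$ is a minor elaboration that the paper leaves implicit (the paper simply asserts for part (ii) that the $q=2$ estimate of Theorem \ref{thm: LP ineq p=q} extends to arbitrary $a<b$), but this does not constitute a different route.
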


\begin{proof}
(i)\enspace By considering $a=-2r'$ and $b=0$ in \eqref{eqn:LP p=q 1} of Theorem \ref{thm: LP ineq p=q}, we obtain that
\begin{align*}
\int_{-2r'}^{0}\int_{B_{r}}|\mathcal{G}_{-2r',q}f(l,s,y)|^{q}dyds
&\leq \int_{-2r'}^{0}\int_{\mathbb{R}^{d}}|\mathcal{G}_{-2r',q}f(l,s,y)|^{q}dyds\\
&\leq C_{1}\int_{-2r'}^{0}\|f(s,\cdot)\|_{L^{q}(\mathbb{R}^{d};V)}^{q}ds\\
&=C_{1}\int_{-2r'}^{0}\int_{\mathbb{R}^{d}}\|f(s,y)\|_{V}^{q}dyds\\
&=C_{1}\int_{-2r'}^{0}\int_{B_{3r}}\|f(s,y)\|_{V}^{q}dyds,
\end{align*}
which proves \eqref{eqn:int -2r' B3r}.
By applying the same arguments, we can prove the inequality given in \eqref{eqn:int -2r' B3r 2}.

(ii)\enspace By \eqref{eqn: LP 2} in Theorem \ref{thm: LP ineq p=q}, it is obvious that for any $-\infty\leq a<b\leq \infty$,
\begin{align}
\int_{a}^{b}\int_{\mathbb{R}^{d}}|\mathcal{G}_{a,2}f(l,s,y)|^{2}dyds
\leq C_{2}\int_{a}^{b}\|f(s,\cdot)\|_{L^{q}(\mathbb{R}^{d};V)}^{2}ds.\label{eqn: int a b q=2}
\end{align}
Thus, by considering $a=-2r'$ and $b=0$ in \eqref{eqn: int a b q=2}, we obtain that
\begin{align*}
\int_{-2r'}^{0}\int_{B_{r}}|\mathcal{G}_{-2r',2}f(l,s,y)|^{2}dyds
&\leq \int_{-2r'}^{0}\int_{\mathbb{R}^{d}}|\mathcal{G}_{-2r',q}f(l,s,y)|^{2}dyds\\
&\leq C_{2}\int_{-2r'}^{0}\|f(s,\cdot)\|_{L^{2}(\mathbb{R}^{d};V)}^{2}ds\\
&=C_{2}\int_{-2r'}^{0}\int_{\mathbb{R}^{d}}\|f(s,y)\|_{V}^{2}dyds\\
&=C_{2}\int_{-2r'}^{0}\int_{B_{3r}}\|f(s,y)\|_{V}^{2}dyds,
\end{align*}
which proves \eqref{eqn:int -2r' B3r 3}.
\end{proof}

Let $C_{0}^{1}(\mathbb{R}^{d})$ be the space of all continuously differentiable real valued functions vanishing at infinity.
Let $S_{1}(dw)$ denote the surface measure on the unit sphere.
If $d=1$, then $S_{1}(dw)$ becomes the counting measure on $\{-1,1\}$.

Motivated by Lemma 2.8 (for $V=\mathbb{R}$ and $q=2$) in \cite{I. Kim K.-H. Kim 2016},
we have the following lemma which will be applied
for the proofs of Lemmas \ref{lem: bound of integral of mathcal G over Qr2r1}--\ref{lem: bound of Dt mathcalG g-2}.

\begin{lemma}\label{lem: bound of convolution}
Let $q>1$ and let $q'$ be the conjugate number of $q$.
Let $V$ be a Banach space and let $f\in C_{\rm c}(\mathbb{R}^{d}; V)$
and $v\in C_{0}^{1}(\mathbb{R}^{d})$.
Let $R_{1},R_{2}>0$ be given.
Then for any $x,y\in\mathbb{R}^{d}$ satisfying that $|x-y|\leq R_{2}$ and $f(y-z)=0$ if $|z|\leq R_{1}$,
\begin{align}
\|(f*v)(y)\|_{V}
&\leq M(\mathbb{M}_{x}^{R_{1}+R_{2}}\|f\|_{V}^{q}(x))^{\frac{1}{q}}\nonumber\\
&\qquad \times \int_{R_{1}}^{\infty}(R_{2}+\rho)^{d}\left(\int_{\partial B_{1}}
 \left|\nabla v(\rho w)\cdot w\right|^{q'}S_{1}(dw)\right)^{\frac{1}{q'}}d\rho,
  \label{eqn:bound of convolution}
\end{align}
where $M=\left(\frac{\pi^{\frac{d}{2}}}{\Gamma\left(\frac{d}{2}+1\right)}\right)^{\frac{1}{q}}(\frac{1}{d})^{\frac{1}{q'}}$.
\end{lemma}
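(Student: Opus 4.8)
The plan is to begin from the pointwise integral representation of the convolution, discard the region on which $f$ vanishes, rewrite $v$ by integrating its radial derivative so that $\nabla v$ enters, pass to polar coordinates, and then apply H\"{o}lder's inequality twice.

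First, since $f(y-z)=0$ whenever $|z|\le R_{1}$, one has $(f*v)(y)=\int_{|z|>R_{1}}f(y-z)v(z)\,dz$, so the triangle inequality for the Bochner integral gives
\[
\|(f*v)(y)\|_{V}\le \int_{|z|>R_{1}}\|f(y-z)\|_{V}\,|v(z)|\,dz .
\]
Next, since $v\in C_{0}^{1}(\mathbb{R}^{d})$ vanishes at infinity, for $z\neq 0$ and $w=z/|z|$ the fundamental theorem of calculus applied to $\rho\mapsto v(\rho w)$ yields $v(z)=-\int_{|z|}^{\infty}\nabla v(\rho w)\cdot w\,d\rho$, whence $|v(z)|\le\int_{|z|}^{\infty}|\nabla v(\rho w)\cdot w|\,d\rho$. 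Substituting this, changing to polar coordinates $z=rw$, and interchanging the $r$- and $\rho$-integrations by Tonelli's theorem (all integrands being nonnegative), I obtain
\[
\|(f*v)(y)\|_{V}\le \int_{R_{1}}^{\infty}\int_{\partial B_{1}}|\nabla v(\rho w)\cdot w|\left(\int_{R_{1}}^{\rho}\|f(y-rw)\|_{V}\,r^{d-1}\,dr\right)S_{1}(dw)\,d\rho .
\]

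Now I would estimate the $S_{1}(dw)$-integral by H\"{o}lder's inequality with exponents $q'$ and $q$, pulling out $\bigl(\int_{\partial B_{1}}|\nabla v(\rho w)\cdot w|^{q'}S_{1}(dw)\bigr)^{1/q'}$, and then apply H\"{o}lder once more to the inner $r$-integral against the weight $r^{d-1}\,dr$ (exponents $q$ and $q'$). The second step produces the factor $\bigl(\int_{R_{1}}^{\rho}r^{d-1}\,dr\bigr)^{q/q'}=\bigl((\rho^{d}-R_{1}^{d})/d\bigr)^{q/q'}\le(\rho+R_{2})^{dq/q'}/d^{q/q'}$ and leaves the double integral $\int_{\partial B_{1}}\int_{R_{1}}^{\rho}\|f(y-rw)\|_{V}^{q}\,r^{d-1}\,dr\,S_{1}(dw)=\int_{R_{1}\le|y-z|\le\rho}\|f(z)\|_{V}^{q}\,dz$. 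The crucial geometric observation is that, since $|x-y|\le R_{2}$, the annulus $\{R_{1}\le|y-z|\le\rho\}$ is contained in $B_{\rho+R_{2}}(x)$, and because $\rho+R_{2}>R_{1}+R_{2}$ the average of $\|f\|_{V}^{q}$ over $B_{\rho+R_{2}}(x)$ is dominated by $\mathbb{M}_{x}^{R_{1}+R_{2}}\|f\|_{V}^{q}(x)$; therefore this double integral is at most $|B_{\rho+R_{2}}|\,\mathbb{M}_{x}^{R_{1}+R_{2}}\|f\|_{V}^{q}(x)=\frac{\pi^{d/2}}{\Gamma(d/2+1)}(\rho+R_{2})^{d}\,\mathbb{M}_{x}^{R_{1}+R_{2}}\|f\|_{V}^{q}(x)$.

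Finally I would collect exponents and constants. The power $(\rho+R_{2})^{d/q'}$ coming from the second H\"{o}lder step and the power $(\rho+R_{2})^{d/q}$ coming from the volume of the ball combine to $(\rho+R_{2})^{d}$, while the constants $d^{-1/q'}$ and $(\pi^{d/2}/\Gamma(d/2+1))^{1/q}$ multiply to exactly $M$, so that integrating in $\rho$ yields \eqref{eqn:bound of convolution}. The routine parts of the argument are the two applications of H\"{o}lder's inequality and the bookkeeping of the powers of $\rho+R_{2}$; the points that require a little care are the radial representation of $v$ (which uses precisely that $v$ is continuously differentiable and vanishes at infinity) and the Tonelli interchanges. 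The main step — the one that forces the restricted maximal function $\mathbb{M}_{x}^{R_{1}+R_{2}}$ rather than the full maximal function to appear — is the inclusion $B_{\rho}(y)\subseteq B_{\rho+R_{2}}(x)$ together with the inequality $\rho+R_{2}>R_{1}+R_{2}$.
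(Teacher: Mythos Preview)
Your proof is correct and follows essentially the same route as the paper's: polar coordinates, a radial derivative to introduce $\nabla v$, H\"older's inequality, and the ball inclusion $B_{\rho}(y)\subset B_{\rho+R_{2}}(x)$ to bring in $\mathbb{M}_{x}^{R_{1}+R_{2}}$. The only technical difference is that the paper first pairs with an arbitrary $\phi\in V^{*}$ and uses integration by parts in $\rho$ to obtain an exact vector-valued identity before taking norms, whereas you take norms at the outset and use the fundamental theorem of calculus on $v$ together with Tonelli; these are equivalent manipulations, and your version is arguably more direct since it avoids the dual-space detour.
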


\begin{proof}
We assume that $d\geq 2$.
Let $x,y\in\mathbb{R}^{d}$ be given such that the assumption holds.
Let $\phi\in V^{*}$, where $V^{*}$ is the dual space of $V$.
We denote by $\bilin{\cdot}{\cdot}_{V^{*},V}$ the canonical bilinear form, i.e.,
\[
\bilin{\psi}{u}_{V^{*},V}=\psi(u)
\]
for all $\psi\in V^{*}$ and $u\in V$.
By applying the polar coordinate representation, the Fubini theorem
and the assumption that $f(y-z)=0$ if $|z|\leq R_{1}$, we obtain that
\begin{align}\label{eqn: convolution 1}
\bilin{\phi}{(f*v)(y)}_{V^{*},V}
&=\bilin{\phi}{\int_{|z|>R_{1}}f(y-z)v(z)dz}_{V^{*},V}\nonumber\\
&=\int_{R_{1}}^{\infty}\int_{\partial B_{1}}\bilin{\phi}{f(y-\rho w)}_{V^{*},V}v(\rho w)\rho^{d-1}S_{1}(dw)d\rho\nonumber\\
&=\int_{\partial B_{1}}I_{R_1}(\phi,\rho,w)\,S_{1}(dw),
\end{align}
where
\begin{align*}
I_{R_1}(\phi,\rho,w):=\int_{R_{1}}^{\infty}v(\rho w)\left(\frac{d}{d\rho}\int_{R_{1}}^{\rho}
         \bilin{\phi}{f(y-\tau w)}_{V^{*},V}\tau^{d-1}d\tau\right) d\rho.
\end{align*}
Since $v\in C_{0}^{1}(\mathbb{R}^{d})$ and $f\in C_{c}(\mathbb{R}^{d}; V)$,
by applying the integration by parts formula, for almost all $w$, we obtain that
\begin{align}\label{eqn: integration by part}
I_{R_1}(\phi,\rho,w)
&=\int_{R_{1}}^{\infty}v(\rho w)
    \left(\frac{d}{d\rho}\int_{R_{1}}^{\rho}\bilin{\phi}{f(y-\tau w)}_{V^{*},V}\tau^{d-1}d\tau\right) d\rho\nonumber\\
&=-\int_{R_{1}}^{\infty}\left(\nabla v(\rho w)\cdot w\right)
      \int_{R_{1}}^{\rho}\bilin{\phi}{f(y-\tau w)}_{V^{*},V}\tau^{d-1}d\tau d\rho\nonumber\\
&=\bilin{\phi}{-\int_{R_{1}}^{\infty} \left(\nabla v(\rho w)\cdot w\right)
          \int_{R_{1}}^{\rho}f(y-\tau w)\tau^{d-1}d\tau d\rho}_{V^{*},V}.
\end{align}
Since $\phi\in V^{*}$ is arbitrary,
by combining \eqref{eqn: convolution 1} and \eqref{eqn: integration by part}, we have
\begin{align}
(f*v)(y)
&=\int_{|z|>R_{1}}f(y-z)v(z)dz\nonumber\\
&=-\int_{R_{1}}^{\infty}\int_{\partial B_{1}}\left(\nabla v(\rho w)\cdot w\right)
          \int_{R_{1}}^{\rho}f(y-\tau w)\tau^{d-1}d\tau \,S_{1}(dw)d\rho. \label{eqn:f*v}
\end{align}
Since $|x-y|\leq R_{2}$, for any $\rho>R_{1}$, we obtain that
\begin{align*}
\int_{B_{\rho}}\|f(y-z)\|_{V}^{q}dz
&=\int_{B_{\rho}(y)}\|f(z)\|_{V}^{q}dz
 \leq \int_{B_{R_{2}+\rho}(x)}\|f(z)\|_{V}^{q}dz\\
&\leq \frac{\pi^{\frac{d}{2}}}{\Gamma\left(\frac{d}{2}+1\right)}(R_{2}+\rho)^{d}\mathbb{M}_{x}^{R_{1}+R_{2}}\|f\|_{V}^{q}(x),
\end{align*}
from which, by applying the H\"{o}lder's inequality, we obtain that
\begin{align*}
&\int_{\partial B_{1}}\int_{R_{1}}^{\rho}|\nabla v(\rho w)\cdot w|\tau^{\frac{d-1}{q'}}
          \|f(y-\tau w)\|_{V}\tau^{\frac{d-1}{q}}d\tau  S_{1}(dw)\\
&\leq \left(\int_{\partial B_{1}}\int_{R_{1}}^{\rho}
     \left|\nabla v(\rho w)\cdot w\right|^{q'}\tau^{d-1}d\tau S_{1}(dw)\right)^{\frac{1}{q'}}
\left(\int_{\partial B_{1}}\int_{R_{1}}^{\rho}\|f(y-\tau w)\|_{V}^{q}\tau^{d-1}
              d\tau  S_{1}(dw)\right)^{\frac{1}{q}}\\
&\leq \left(\frac{1}{d}\right)^{\frac{1}{q'}}\rho^{\frac{d}{q'}}
    \left(\int_{\partial B_{1}}\left|\nabla v(\rho w)\cdot w\right|^{q'} S_{1}(dw)\right)^{\frac{1}{q'}}
       \left(\int_{B_{\rho}}\|f(y-z)\|_{V}^{q}dz\right)^{\frac{1}{q}}\\
&\leq \left(\frac{1}{d}\right)^{\frac{1}{q'}}\rho^{\frac{d}{q'}}
    \left(\int_{\partial B_{1}}\left|\nabla v(\rho w)\cdot w\right|^{q'} S_{1}(dw)\right)^{\frac{1}{q'}}
       \left(\frac{\pi^{\frac{d}{2}}}{\Gamma\left(\frac{d}{2}+1\right)}(R_{2}+\rho)^{d}\mathbb{M}_{x}^{R_{1}+R_{2}}\|f\|_{V}^{q}(x)
              \right)^{\frac{1}{q}}\\
&\leq N_d\left(\mathbb{M}_{x}^{R_{1}+R_{2}}\|f\|_{V}^{q}(x)\right)^{\frac{1}{q}}
  (R_{2}+\rho)^{d} \left(\int_{\partial B_{1}}\left|\nabla v(\rho w)\cdot w\right|^{q'} S_{1}(dw)\right)^{\frac{1}{q'}}.
\end{align*}
Therefore, from \eqref{eqn:f*v}, by applying Fubini's theorem, we obtain that
\begin{align*}
\|(f*v)(y)\|_{V}
&=\left\|\int_{R_{1}}^{\infty}\int_{\partial B_{1}}\left(\nabla v(\rho w)\cdot w\right)
    \int_{R_{1}}^{\rho}f(y-\tau w)\tau^{d-1}d\tau S_{1}(dw)d\rho \right\|_{V}\\
&\leq \int_{R_{1}}^{\infty}\int_{\partial B_{1}}\int_{R_{1}}^{\rho}|\nabla v(\rho w)\cdot w|\tau^{\frac{d-1}{q'}}
          \|f(y-\tau w)\|_{V}\tau^{\frac{d-1}{q}}d\tau  S_{1}(dw) d\rho\\
&\le N_d\left(\mathbb{M}_{x}^{R_{1}+R_{2}}\|f\|_{V}^{q}(x)\right)^{\frac{1}{q}}\\
&\qquad \times
  \int_{R_{1}}^{\infty}(R_{2}+\rho)^{d}
    \left(\int_{\partial B_{1}}\left|\nabla v(\rho w)\cdot w\right|^{q'} S_{1}(dw)\right)^{\frac{1}{q'}}d\rho,
\end{align*}
which implies the desired assertion.
By similar arguments used above and using the counting measure $S_{1}(dw)$ on $\{-1,1\}$,
we can prove the case of $d=1$.
\end{proof}

For $r_{1},r_{2}>0$, we put
\begin{align*}
Q_{r_{1},r_{2}}:=(-2r_{1},0)\times B_{r_{2}}.
\end{align*}

Motivated by Lemma 2.9 (for $\psi_{1}(t,\xi)=-|\xi|^{\gamma/2}$ and $q=2$) in \cite{I. Kim K.-H. Kim 2016},
we have the following lemma which will be applied
for the proof of Lemma \ref{lem: bound of double average}.

\begin{lemma}\label{lem: bound of integral of mathcal G over Qr2r1}
Let $q\geq 2$ and $r_1,r_2>0$ be given.
Let $-\infty\leq a<b\leq\infty$.
Then there exists a constant $C>0$ depending on $d,\boldsymbol{\mu},\boldsymbol{\gamma},\boldsymbol{\kappa}$ and $q$
such that for any $x\in B_{r_{2}}$, $l\geq 0$ and $f\in C_{\rm c}^{\infty}((a,b)\times\mathbb{R}^{d};V)$
with support in $(-10r_{1},10r_{1})\times \mathbb{R}^{d}\setminus B_{2r_{2}}$,
\begin{align}
\int_{Q_{r_{1},r_{2}}}|\mathcal{G}_{a,q}f(l,s,y)|^{q}dsdy
&\leq Cr_{2}^{d-\gamma_{1}q}r_{1}^{\frac{q\gamma_{1}}{\gamma_{2}}}
   \int_{-10r_{1}}^{0}
\mathbb{M}_{x}^{3r_{2}}\|f\|_{V}^{q}(r,x)dr,\label{eqn:int-Q-Gg}\\
 \int_{Q_{r_{1},r_{2}}}|\widetilde{\mathcal{G}}_{a,q}f(s,y)|^{q}dsdy
&\leq Cr_{2}^{d-\gamma_{1}q}r_{1}^{\frac{q\gamma_{1}}{\gamma_{2}}}
   \int_{-10r_{1}}^{0}
\mathbb{M}_{x}^{3r_{2}}\|f\|_{V}^{q}(r,x)dr.\label{eqn:int-Q-Gg 2}
\end{align}
\end{lemma}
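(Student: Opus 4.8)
The plan is to bound $\mathcal{G}_{a,q}f(l,s,y)$ pointwise on $Q_{r_1,r_2}$ by means of Lemma~\ref{lem: bound of convolution} together with the gradient estimate \eqref{eqn: bound of nabla K} of Lemma~\ref{lem: esti of grad K}, and then to integrate the resulting inequality over $Q_{r_1,r_2}$. Fix $x\in B_{r_2}$, $l\ge0$ and $(s,y)\in Q_{r_1,r_2}$, so $s\in(-2r_1,0)$ and $y\in B_{r_2}$. For $r\in(a,s)$ put $v_r:=L_{\psi_1}(l)p_{\psi_2}(s,r,\cdot)=K_{\psi_1,\psi_2}(l,s,r,\cdot)$; by \textbf{(S1)}, \textbf{(S2)} and \textbf{(S3)} the symbol $\psi_1(l,\xi)\exp\!\big(\int_r^s\psi_2(\rho,\xi)\,d\rho\big)$ and its products with the coordinate functions $\xi_i$ lie in $L^1(\mathbb{R}^d)$, whence $v_r\in C_0^1(\mathbb{R}^d)$, while $f(r,\cdot)\in C_{\rm c}(\mathbb{R}^d;V)$. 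Since $\operatorname{supp}f\subset(-10r_1,10r_1)\times(\mathbb{R}^d\setminus B_{2r_2})$, the slice $f(r,\cdot)$ vanishes identically unless $r\in(-10r_1,10r_1)$, and $f(r,y-z)=0$ whenever $|z|\le r_2$ (because then $|y-z|<2r_2$); also $|x-y|<2r_2$. Hence Lemma~\ref{lem: bound of convolution} applies to $v_r*f(r,\cdot)$ at the point $y$ with $R_1=r_2$ and $R_2=2r_2$, so $R_1+R_2=3r_2$.

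Into the right-hand side of \eqref{eqn:bound of convolution} I would insert the crude bound $|\nabla v_r(\rho w)\cdot w|\le|\nabla K_{\psi_1,\psi_2}(l,s,r,\rho w)|\le C_1\rho^{-(\gamma_1+1+d)}$ coming from \eqref{eqn: bound of nabla K} (only this half of the estimate is needed here). Using $2r_2+\rho\le3\rho$ for $\rho\ge r_2$, the $\rho$-integral is then controlled by a constant multiple of $\int_{r_2}^\infty\rho^{\,d-(\gamma_1+1+d)}\,d\rho=\gamma_1^{-1}r_2^{-\gamma_1}$, which converges precisely because $\gamma_1>0$. This produces a constant $C'=C'(d,\boldsymbol{\mu},\boldsymbol{\gamma},\boldsymbol{\kappa},q)$ such that, for all $r\in(a,s)$,
\[
\big\|\big(v_r*f(r,\cdot)\big)(y)\big\|_V\le C'\,r_2^{-\gamma_1}\big(\mathbb{M}_x^{3r_2}\|f\|_V^q(r,x)\big)^{1/q}.
\]
Raising this to the $q$-th power and substituting it into \eqref{eqn: mathcal G a q} gives a bound on $|\mathcal{G}_{a,q}f(l,s,y)|^q$ independent of $y$, namely $(C')^q r_2^{-\gamma_1 q}\int_a^s(s-r)^{\frac{q\gamma_1}{\gamma_2}-1}\mathbb{M}_x^{3r_2}\|f\|_V^q(r,x)\,dr$.

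Finally I would integrate over $Q_{r_1,r_2}=(-2r_1,0)\times B_{r_2}$: the $y$-integration contributes the factor $|B_{r_2}|=c_d r_2^{d}$, and after Tonelli's theorem the remaining double integral equals $\int_a^0\mathbb{M}_x^{3r_2}\|f\|_V^q(r,x)\Big(\int_{\max(r,-2r_1)}^0(s-r)^{\frac{q\gamma_1}{\gamma_2}-1}\,ds\Big)\,dr$, whose inner factor is at most $\tfrac{\gamma_2}{q\gamma_1}(-r)^{q\gamma_1/\gamma_2}$ (the integrand near $s=r$ being integrable because $q\gamma_1/\gamma_2>0$). Since $f(r,\cdot)\equiv0$ unless $r\in(-10r_1,0)$ on the effective range, $(-r)^{q\gamma_1/\gamma_2}\le(10r_1)^{q\gamma_1/\gamma_2}$ and the $r$-integral may be restricted to $(-10r_1,0)$; collecting the powers of $r_1$ and $r_2$ yields \eqref{eqn:int-Q-Gg}. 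The estimate \eqref{eqn:int-Q-Gg 2} for $\widetilde{\mathcal{G}}_{a,q}$ follows by the identical argument with $l$ replaced throughout by the outer time variable $s$, since $\widetilde{\mathcal{G}}_{a,q}f(s,y)=\mathcal{G}_{a,q}f(s,s,y)$ by \eqref{eqn: mathcal G a q 2} and \eqref{eqn: bound of nabla K} holds with the same constant for that choice of the first argument of $K_{\psi_1,\psi_2}$. There is no deep obstacle here: the only slightly delicate point is the verification that $v_r\in C_0^1(\mathbb{R}^d)$ so that Lemma~\ref{lem: bound of convolution} is legitimately applicable, and the key quantitative mechanism is the choice $R_1=r_2$, $R_2=2r_2$, which aligns the integration tail with the radius $3r_2$ appearing in the statement.
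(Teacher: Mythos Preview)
Your argument is correct and follows essentially the same route as the paper: apply Lemma~\ref{lem: bound of convolution} with $R_1=r_2$, $R_2=2r_2$ together with the gradient bound \eqref{eqn: bound of nabla K} to get the pointwise estimate $\|v_r*f(r,\cdot)(y)\|_V^q\le C r_2^{-\gamma_1 q}\mathbb{M}_x^{3r_2}\|f\|_V^q(r,x)$, then integrate over $Q_{r_1,r_2}$ and use the time-support of $f$ to restrict $r$ to $(-10r_1,0)$. Your explicit check that $v_r\in C_0^1(\mathbb{R}^d)$ is a welcome addition that the paper leaves implicit.
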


\begin{proof}
Since the proofs of \eqref{eqn:int-Q-Gg} and \eqref{eqn:int-Q-Gg 2} are similar,
we only prove \eqref{eqn:int-Q-Gg}.
Let $x\in B_{r_{2}}$ and $(s,y)\in Q_{r_{1},r_{2}}$.
If $|z|\leq r_{2}$, then $|y-z|\leq 2r_{2}$ and $f(r, y-z)=0$ from the assumption for $f$.
Therefore, we have
\begin{align*}
L_{\psi_{1}}(l)p_{\psi_{2}}(s,r,\cdot)*f(r,\cdot)(y)
=\int_{|z|\geq r_{2}}L_{\psi_{1}}(l)p_{\psi_{2}}(s,r,z)f(r,y-z)dz.
\end{align*}
Let $q'$ be the conjugate number of $q$.
By Lemma \ref{lem: bound of convolution} with $R_{1}=r_{2}$ and $R_{2}=2r_{2}$ and \eqref{eqn: bound of nabla K},
we obtain that
\begin{align}\label{eqn:est of lp*f}
&\|\left((L_{\psi_{1}}(l)p_{\psi_{2}}(s,r,\cdot))*f(r,\cdot)\right)(y)\|_{V}^{q}\nonumber\\
&\leq M^{q}\mathbb{M}_{x}^{3r_{2}}\|f\|_{V}^{q}(r,x)
        \left(\int_{r_{2}}^{\infty}(2r_{2}+\rho)^{d}\left(\int_{\partial B_{1}}|\nabla L_{\psi_{1}}(l)p_{\psi_{2}}(s,r,\rho w)|^{q'} S_{1}(dw)\right)^{\frac{1}{q'}}d\rho\right)^{q}\nonumber\\
&\leq M^{q}3^{d}\mathbb{M}_{x}^{3r_{2}}\|f\|_{V}^{q}(r,x)
         \left(\int_{r_{2}}^{\infty}\rho^{d}\left(\int_{\partial B_{1}}|\nabla L_{\psi_{1}}(l)p_{\psi_{2}}(s,r,\rho w)|^{q'} S_{1}(dw)\right)^{\frac{1}{q'}}d\rho\right)^{q}\nonumber\\
&\leq C_{1}^{q}M^{q}3^{d}\mathbb{M}_{x}^{3r_{2}}\|f\|_{V}^{q}(r,x)\nonumber\\
&\qquad\times\left(\int_{r_{2}}^{\infty}\rho^{d}\left(\int_{\partial B_{1}}\left(|\rho w|^{-(\gamma_{1}+1+d)}
              \wedge (s-r)^{-\frac{\gamma_{1}+1+d}{\gamma_{2}}}\right)^{q'} S_{1}(dw)\right)^{\frac{1}{q'}}d\rho\right)^{q}\nonumber\\
&=C_{1}^{q}M^{q}3^{d}\left(\frac{2\pi^{\frac{d}{2}}}{\Gamma(\frac{d}{2})}\right)^{\frac{q}{q^{\prime}}}
     \mathbb{M}_{x}^{3r_{2}}  \|f\|_{V}^{q}(r,x)  \left(\int_{r_{2}}^{\infty}\rho^{d}\left(\rho^{-(\gamma_{1}+1+d)}
         \wedge (s-r)^{-\frac{\gamma_{1}+1+d}{\gamma_{2}}}\right)  d\rho\right)^{q}\nonumber\\
&\leq C_{1}^{q}M^{q}3^{d}\left(\frac{2\pi^{\frac{d}{2}}}{\Gamma(\frac{d}{2})}\right)^{\frac{q}{q^{\prime}}}
\mathbb{M}_{x}^{3r_{2}}  \|f\|_{V}^{q}(r,x)   \left(\int_{r_{2}}^{\infty}\rho^{-(\gamma_{1}+1)} d\rho\right)^{q}\nonumber\\
&=C_{1}^{q}M^{q}3^{d}\left(\frac{2\pi^{\frac{d}{2}}}{\Gamma(\frac{d}{2})}\right)^{\frac{q}{q^{\prime}}}
     \left(\frac{1}{\gamma_{1}}\right)^{q}r_{2}^{-\gamma_{1}q}\mathbb{M}_{x}^{3r_{2}}\|f\|_{V}^{q}(r,x)\nonumber\\
&=:C^{(1)}r_{2}^{-\gamma_{1}q}\mathbb{M}_{x}^{3r_{2}}\|f\|_{V}^{q}(r,x),
\end{align}
where $C_{1}$ and $M$ are given as in \eqref{eqn: bound of nabla K} and \eqref{eqn:bound of convolution}, respectively.
By the assumption, the support of $f$ is in $(-10r_{1},10r_{1})\times\mathbb{R}^{d}\setminus B_{2r_{2}}$,
and so by direct computation, we can see that
\begin{align*}
\mathcal{G}_{a,q}f(l,s,y)
=\mathcal{G}_{\max\{a,-10r_{1}\},q}f(l,s,y).
\end{align*}
In fact, if $a<-10r_{1}$, then $\max\{a,-10r_{1}\}=-10r_{1}$ and we obtain that
\begin{align*}
\mathcal{G}_{a,q}f(l,s,y)
&=\left[\int_{a}^{s}(s-r)^{\frac{q\gamma_{1}}{\gamma_{2}}-1}
             \|L_{\psi_{1}}(l)p_{\psi_{2}}(s,r,\cdot)*f(r,\cdot)(y)\|_{V}^{q}dr\right]^{\frac{1}{q}}\\
&=\left[\int_{-10r_{1}}^{s}(s-r)^{\frac{q\gamma_{1}}{\gamma_{2}}-1}
            \|L_{\psi_{1}}(l)p_{\psi_{2}}(s,r,\cdot)*f(r,\cdot)(y)\|_{V}^{q}dr\right]^{\frac{1}{q}}\\
&=\mathcal{G}_{-10r_{1},q}f(l,s,y)\\
&=\mathcal{G}_{\max\{a,-10r_{1}\},q}f(l,s,y).
\end{align*}
On the other hand, if $a\geq -10r_{1}$, then $\max\{a,-10r_{1}\}=a$ and we have
\[
\mathcal{G}_{a,q}f(l,s,y)=\mathcal{G}_{\max\{a,-10r_{1}\},q}f(l,s,y).
\]
Thus by applying \eqref{eqn:est of lp*f}, we obtain that
\begin{align*}
&\int_{Q_{r_{1},r_{2}}}|\mathcal{G}_{a,q}f(l,s,y)|^{q}dsdy\\
&=\int_{Q_{r_{1},r_{2}}}|\mathcal{G}_{\max\{a,-10r_{1}\},q}f(l,s,y)|^{q}dsdy\\
&=\int_{B_{r_{2}}}  \int_{-2r_{1}}^{0}  \int_{\max\{a,-10r_{1}\}}^{s}  (s-r)^{\frac{q\gamma_{1}}{\gamma_{2}}-1}
         \|L_{\psi_{1}}(l)p_{\psi_{2}}(s,r,\cdot)*f(r,\cdot)(y)\|_{V}^{q}drdsdy\\
&\leq C^{(1)}r_{2}^{-\gamma_{1}q}\int_{B_{r_{2}}}\int_{-2r_{1}}^{0}\int_{\max\{a,-10r_{1}\}}^{s}
        (s-r)^{\frac{q\gamma_{1}}{\gamma_{2}}-1}  \mathbb{M}_{x}^{3r_{2}}\|f\|_{V}^{q}(r,x) dr ds dy\\
&\leq C^{(1)}r_{2}^{-\gamma_{1}q}\int_{B_{r_{2}}}\int_{-2r_{1}}^{0}\int_{-10r_{1}}^{s}
         (s-r)^{\frac{q\gamma_{1}}{\gamma_{2}}-1}\mathbb{M}_{x}^{3r_{2}}\|f\|_{V}^{q}(r,x)drdsdy\\
&= C^{(1)}r_{2}^{-\gamma_{1}q}|B_{r_{2}}|  \int_{-2r_{1}}^{0}\int_{-10r_{1}}^{s}
       (s-r)^{\frac{q\gamma_{1}}{\gamma_{2}}-1}\mathbb{M}_{x}^{3r_{2}}\|f\|_{V}^{q}(r,x)drds\\
&\leq C^{(1)}\left(\frac{\pi^{d/2}}{\Gamma\left(\frac{d}{2}+1\right)}\right)r_{2}^{d-\gamma_{1}q}
         \int_{-10r_{1}}^{0}\int_{-10r_{1}}^{s} (s-r)^{\frac{q\gamma_{1}}{\gamma_{2}}-1} \mathbb{M}_{x}^{3r_{2}}\|f\|_{V}^{q}(r,x)drds\\
&=C^{(1)}\left(\frac{\pi^{d/2}}{\Gamma\left(\frac{d}{2}+1\right)}\right) r_{2}^{d-\gamma_{1}q}
    \int_{-10r_{1}}^{0}\left(\int_{r}^{0}  (s-r)^{\frac{q\gamma_{1}}{\gamma_{2}}-1}ds\right)
     \mathbb{M}_{x}^{3r_{2}}\|f\|_{V}^{q}(r,x)dr\\
&=C^{(1)}\left(\frac{\pi^{d/2}}{\Gamma\left(\frac{d}{2}+1\right)}\right) r_{2}^{d-\gamma_{1}q}
     \frac{\gamma_{2}}{q\gamma_{1}}\int_{-10r_{1}}^{0}(-r)^{\frac{q\gamma_{1}}{\gamma_{2}}}
     \mathbb{M}_{x}^{3r_{2}}\|f\|_{V}^{q}(r,x)dr\\
&\leq C^{(1)}\left(\frac{\pi^{d/2}}{\Gamma\left(\frac{d}{2}+1\right)}\right)\frac{\gamma_{2}}{q\gamma_{1}}
        10^{\frac{q\gamma_{1}}{\gamma_{2}}} r_{2}^{d-\gamma_{1}q}r_{1}^{\frac{q\gamma_{1}}{\gamma_{2}}}
        \int_{-10r_{1}}^{0}  \mathbb{M}_{x}^{3r_{2}}\|f\|_{V}^{q}(r,x)dr,
\end{align*}
which proves \eqref{eqn:int-Q-Gg}.
\end{proof}

Motivated by Lemma 2.10 (for $\psi_{1}(t,\xi)=-|\xi|^{\gamma/2}$ and $q=2$) in \cite{I. Kim K.-H. Kim 2016},
we have the following lemma which will be applied
for the proof of Lemma \ref{lem: bound of double average}.

\begin{lemma}\label{lem: bound of sup of gradient of mathcal G}
Let $q\geq 2$ and let $r_{1}, r_{2}>0 $ satisfying $r_{1}^{1/\gamma_{2}}=r_{2}$.
Let $a\in\mathbb{R}$.
Then there exists a constant $C>0$ depending on $d,q,\boldsymbol{\mu}, \boldsymbol{\gamma}$ and $\boldsymbol{\kappa}$
such that for any $(t,x)\in Q_{r_{1},r_{2}}$, $l\geq 0$ and
$f\in C_{\rm c}^{\infty}(\mathbb{R}\times\mathbb{R}^{d};V)$ satisfying that $f(t,x)=0$ for $t\geq -8r_{1}$,
\begin{align}
\sup_{(s,y)\in Q_{r_{1},r_{2}}}|\nabla\mathcal{G}_{a,q}f(l,s,y)|^{q}
&\leq
Cr_{1}^{-\frac{q}{\gamma_{2}}}\mathbb{M}_{t}^{8r_{1}}\mathbb{M}_{x}^{2r_{2}}\|f\|_{V}^{q}(t,x),
\label{eqn: sup of gradient of mathcal G}\\
\sup_{(s,y)\in Q_{r_{1},r_{2}}}|\nabla\widetilde{\mathcal{G}}_{a,q}f(s,y)|^{q}
&\leq
Cr_{1}^{-\frac{q}{\gamma_{2}}}\mathbb{M}_{t}^{8r_{1}}\mathbb{M}_{x}^{2r_{2}}\|f\|_{V}^{q}(t,x).
\label{eqn: sup of gradient of mathcal G 2}
\end{align}
\end{lemma}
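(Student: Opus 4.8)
The plan is to compute the gradient $\nabla_{y}\mathcal{G}_{a,q}f(l,s,y)$ explicitly, bound it pointwise using Lemma \ref{lem: bound of convolution} together with the kernel estimates from Lemma \ref{lem: esti of grad K}, and then integrate out the scaling in $\rho$ and in the time variable $r$. Since the support condition $f(t,x)=0$ for $t\geq -8r_{1}$ forces the time integral in $\mathcal{G}_{a,q}$ to run only over $r<-8r_{1}$, while $(s,y)\in Q_{r_{1},r_{2}}$ gives $s\in(-2r_{1},0)$, we always have $s-r>6r_{1}$; this uniform lower bound on $s-r$ is what makes the $\rho$-integral of the \emph{second derivative} kernel $\nabla(\nabla K_{\psi_{1},\psi_{2}})$ converge with the right power of $r_{1}$.

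First I would differentiate under the integral sign: writing $\mathcal{G}_{a,q}f(l,s,y)=\left[\int_{a}^{s}(s-r)^{\frac{q\gamma_{1}}{\gamma_{2}}-1}\|u(r,y)\|_{V}^{q}\,dr\right]^{1/q}$ with $u(r,y)=(L_{\psi_{1}}(l)p_{\psi_{2}}(s,r,\cdot))*f(r,\cdot)(y)$, one has, by the chain rule for the map $t\mapsto t^{1/q}$ and for $y\mapsto\|\cdot\|_{V}$, the pointwise bound
\begin{align*}
|\nabla_{y}\mathcal{G}_{a,q}f(l,s,y)|
\leq \left[\int_{a}^{s}(s-r)^{\frac{q\gamma_{1}}{\gamma_{2}}-1}\|\nabla_{y}u(r,y)\|_{V}^{q}\,dr\right]^{1/q},
\end{align*}
which is a Minkowski-type inequality for the $\ell^{q}$-norm of the gradient against the norm of the gradient. (More precisely one uses $|\nabla\|g\|_{V}|\leq\|\nabla g\|_{V}$ and the convexity argument for the outer $1/q$ power; this reduces everything to estimating $\|\nabla_{y}u(r,y)\|_{V}$.) Now $\nabla_{y}u(r,y)=\bigl(\nabla(L_{\psi_{1}}(l)p_{\psi_{2}}(s,r,\cdot))\bigr)*f(r,\cdot)(y)=\bigl(\nabla K_{\psi_{1},\psi_{2}}(l,s,r,\cdot)\bigr)*f(r,\cdot)(y)$, so to bound $\|\nabla_{y}u(r,y)\|_{V}$ I apply Lemma \ref{lem: bound of convolution} with $v=\partial_{x_{i}}K_{\psi_{1},\psi_{2}}(l,s,r,\cdot)$; there is no support gap to exploit here ($R_{1}=0$, $R_{2}=2r_{2}$ since $|x-y|\leq 2r_{2}$ for $x,y\in B_{r_{2}}$), so the integrand involves $\nabla v=\nabla(\nabla K_{\psi_{1},\psi_{2}})$, for which estimate \eqref{eqn: bound of nabla nabla K} gives $|\nabla(\nabla K_{\psi_{1},\psi_{2}})(l,s,r,\rho w)|\leq C_{2}\bigl(\rho^{-(\gamma_{1}+2+d)}\wedge(s-r)^{-\frac{\gamma_{1}+2+d}{\gamma_{2}}}\bigr)$.

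Carrying this out, Lemma \ref{lem: bound of convolution} yields
\begin{align*}
\|\nabla_{y}u(r,y)\|_{V}
\leq C\bigl(\mathbb{M}_{x}^{2r_{2}}\|f\|_{V}^{q}(r,x)\bigr)^{1/q}
\int_{0}^{\infty}(2r_{2}+\rho)^{d}\Bigl(\rho^{-(\gamma_{1}+2+d)}\wedge(s-r)^{-\frac{\gamma_{1}+2+d}{\gamma_{2}}}\Bigr)\,d\rho,
\end{align*}
and since $s-r>6r_{1}$ with $r_{1}^{1/\gamma_{2}}=r_{2}$, splitting the $\rho$-integral at $\rho=(s-r)^{1/\gamma_{2}}$ and bounding $(2r_{2}+\rho)^{d}\leq C(r_{2}^{d}+\rho^{d})$, one computes the $\rho$-integral to be $\leq C(s-r)^{-\frac{\gamma_{1}+1}{\gamma_{2}}}$ (the $r_{2}^{d}$ part is lower order because $r_{2}\leq(s-r)^{1/\gamma_{2}}$). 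Plugging back and using $\mathbb{M}_{x}^{2r_{2}}\|f\|_{V}^{q}(r,x)\leq\mathbb{M}_{x}^{2r_{2}}\|f\|_{V}^{q}(r,x)$ as a bound uniform in the $\rho$-geometry, I get
\begin{align*}
|\nabla_{y}\mathcal{G}_{a,q}f(l,s,y)|^{q}
\leq C\int_{-\infty}^{-8r_{1}}(s-r)^{\frac{q\gamma_{1}}{\gamma_{2}}-1}(s-r)^{-\frac{q(\gamma_{1}+1)}{\gamma_{2}}}
\mathbb{M}_{x}^{2r_{2}}\|f\|_{V}^{q}(r,x)\,dr
= C\int_{-\infty}^{-8r_{1}}(s-r)^{-\frac{q}{\gamma_{2}}-1}\mathbb{M}_{x}^{2r_{2}}\|f\|_{V}^{q}(r,x)\,dr.
\end{align*}
The final step is to dominate this by the time-maximal function: since $s\in(-2r_{1},0)$, a standard dyadic decomposition of $(-\infty,-8r_{1})$ into shells $\{s-r\sim 2^{k}r_{1}\}$, on each of which $(s-r)^{-\frac{q}{\gamma_{2}}-1}\leq C(2^{k}r_{1})^{-\frac{q}{\gamma_{2}}-1}$ and the average of $\mathbb{M}_{x}^{2r_{2}}\|f\|_{V}^{q}(\cdot,x)$ over that shell is $\leq C\,\mathbb{M}_{t}^{8r_{1}}\mathbb{M}_{x}^{2r_{2}}\|f\|_{V}^{q}(t,x)$ (valid since $(t,x)\in Q_{r_{1},r_{2}}$ and the shell has length $\sim 2^{k}r_{1}>r_{1}$), gives after summing the geometric series $\sum_{k}2^{-kq/\gamma_{2}}<\infty$ the bound $Cr_{1}^{-\frac{q}{\gamma_{2}}}\mathbb{M}_{t}^{8r_{1}}\mathbb{M}_{x}^{2r_{2}}\|f\|_{V}^{q}(t,x)$, uniformly in $(s,y)\in Q_{r_{1},r_{2}}$. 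Taking the supremum over $(s,y)$ proves \eqref{eqn: sup of gradient of mathcal G}. For \eqref{eqn: sup of gradient of mathcal G 2} the argument is identical, except that $\nabla_{y}\widetilde{\mathcal{G}}_{a,q}f(s,y)$ produces, in addition to the term $\nabla(\nabla K_{\psi_{1},\psi_{2}})$ coming from differentiating the convolution kernel in space, no extra $\partial_{t}$ term because here we only differentiate in the spatial variable $y$ — the evaluation $l=t$ is at the \emph{outer} time $s$ which is not being differentiated — so exactly the same kernel estimate \eqref{eqn: bound of nabla nabla K} (with $l$ replaced by $s$) applies. The main obstacle is the bookkeeping in the $\rho$-integral and the dyadic-shell sum: one must be careful that the power $-\frac{\gamma_{1}+2+d}{\gamma_{2}}$ from the second-derivative kernel, after integrating $(2r_{2}+\rho)^{d}\,d\rho$ up to the cutoff $(s-r)^{1/\gamma_{2}}$, combines with the tail $\int_{(s-r)^{1/\gamma_{2}}}^{\infty}\rho^{d-(\gamma_{1}+2+d)}\,d\rho$ to give precisely $(s-r)^{-(\gamma_{1}+1)/\gamma_{2}}$ and not something worse; this is where the hypothesis $N_{1},N_{2}>d+2+\lfloor\gamma_{1}\rfloor+\lfloor\gamma_{2}\rfloor$ (which underlies \eqref{eqn: bound of nabla nabla K}) is being used.
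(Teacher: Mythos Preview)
Your proposal is correct and follows essentially the same line as the paper's proof: differentiate under the integral sign, apply Lemma~\ref{lem: bound of convolution} with $R_{1}=0$, $R_{2}=2r_{2}$ and the second-derivative kernel bound \eqref{eqn: bound of nabla nabla K}, split the $\rho$-integral at $(s-r)^{1/\gamma_{2}}$ to obtain $(s-r)^{-(\gamma_{1}+1)/\gamma_{2}}$, and then control the resulting time integral $\int_{a}^{-8r_{1}}(s-r)^{-1-q/\gamma_{2}}\mathbb{M}_{x}^{2r_{2}}\|f\|_{V}^{q}(r,x)\,dr$. The only (minor) difference is that you handle this last integral by a dyadic-shell decomposition, whereas the paper uses an integration-by-parts identity (see \eqref{eqn:J1 int by part}--\eqref{eqn:bound of integral}); both devices yield the same bound $Cr_{1}^{-q/\gamma_{2}}\mathbb{M}_{t}^{8r_{1}}\mathbb{M}_{x}^{2r_{2}}\|f\|_{V}^{q}(t,x)$.
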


\begin{proof}
Since the proofs of \eqref{eqn: sup of gradient of mathcal G} and \eqref{eqn: sup of gradient of mathcal G 2}
are similar, we only prove \eqref{eqn: sup of gradient of mathcal G}.
Let $(t,x)\in Q_{r_{1},r_{2}}$, $l\geq 0$ and
$f\in C_{\rm c}^{\infty}(\mathbb{R}\times\mathbb{R}^{d};V)$ satisfying that $f(t,x)=0$ for $t\geq -8r_{1}$ be given.
If $a\geq -8r_{1}$, then, by the assumption, $f(r,x)=0$ for all $r\ge a \geq -8r_{1}$, and so we have
\begin{align*}
\mathcal{G}_{a,q}f(l,s,y)
=\left[\int_{a}^{s}(s-r)^{\frac{q\gamma_{1}}{\gamma_{2}}-1}\|L_{\psi_{1}}(l)p_{\psi_{2}}(s,r,\cdot)*f(r,\cdot)(y)\|_{V}^{q}dr\right]^{\frac{1}{q}}
=0.
\end{align*}
Therefore, there is nothing to prove because the LHS of \eqref{eqn: sup of gradient of mathcal G} is zero.
Hence, we assume that $a< -8r_{1}$.
Let $(s,y)\in Q_{r_{1},r_{2}}$.
Since $s>-2r_{1}>-8r_{1}$ and $f(r,x)=0$ for $r\geq -8r_{1}$, we obtain that
\begin{align}
\mathcal{G}_{a,q}f(l,s,y)
&=\left[\int_{a}^{s}(s-r)^{\frac{q\gamma_{1}}{\gamma_{2}}-1}
            \|L_{\psi_{1}}(l)p_{\psi_{2}}(s,r,\cdot)*f(r,\cdot)(y)\|_{V}^{q}dr\right]^{\frac{1}{q}}\nonumber\\
&=\left[\int_{a}^{-8r_{1}}(s-r)^{\frac{q\gamma_{1}}{\gamma_{2}}-1}
             \|L_{\psi_{1}}(l)p_{\psi_{2}}(s,r,\cdot)*f(r,\cdot)(y)\|_{V}^{q}dr\right]^{\frac{1}{q}}.
   \label{eqn: mathcal Ga g for r>-8r1}
\end{align}
Note that
\begin{align*}
\left|\frac{\|f(s+h,\cdot)\|-\|f(s,\cdot)\|}{h}\right| \leq \frac{\|f(s+h,\cdot)-f(s,\cdot)\|}{|h|}
\end{align*}
and then the derivative of a norm is less than or equal to the norm of the derivative if both exist,
which implies that
\begin{align}\label{eqn:bound of partial-y-G}
\left|\frac{\partial}{\partial y_{i}}\mathcal{G}_{a,q}f(l,s,y)\right|
\leq \left(\int_{a}^{-8r_{1}}(s-r)^{\frac{q\gamma_{1}}{\gamma_{2}}-1}
    \left\|\frac{\partial}{\partial y_{i}}K(l,s,r,\cdot)*f(r,\cdot)(y)\right\|_{V}^{q}dr\right)^{\frac{1}{q}},
\end{align}
where $K(l,s,r,y)=L_{\psi_{1}}(l)p_{\psi_{2}}(s,r,y)$.
Let $q'$ be the conjugate number of $q$.
By applying Lemma \ref{lem: bound of convolution} with $R_{1}=0$ and $R_{2}=2r_{2}$,
for any $r<-8r_{1}$, we have
\begin{align*}
&\left\|\frac{\partial}{\partial y_i}K(l,s,r,\cdot)*f(r,\cdot)(y)\right\|_{V}^{q}\\
&\quad\leq M\mathbb{M}_{x}^{2r_{2}}\|f\|_{V}^{q}(r,x)
\left[\int_{0}^{\infty}(2r_{2}+\rho)^{d}\left(\int_{\partial B_{1}}
      \left|\nabla \frac{\partial}{\partial y_i}K(l,s,r,\rho w)\right|^{q'}
      S_{1}(dw)\right)^{\frac{1}{q'}}d\rho\right]^{q},
\end{align*}
where we used the fact that
$|\nabla \frac{\partial}{\partial y_{i}}K(l,s,r,\rho w)\cdot w|
\le |\nabla \frac{\partial}{\partial y_{i}}K(l,s,r,\rho w)|$ for all $w\in\partial B_1$.
On the other hand, by applying the fact that
$\left|\nabla \frac{\partial}{\partial y_{i}}K(l,s,r,y)\right|\le\left|\nabla (\nabla K)(l,s,r,y)\right|$
and the Jensen's inequality, we obtain that
\begin{align*}
\left[\int_{0}^{\infty}(2r_{2}+\rho)^{d}\left(\int_{\partial B_{1}}
   \left|\nabla \frac{\partial}{\partial y_{i}}K(l,s,r,\rho w)\right|^{q'}
   S_{1}(dw)\right)^{\frac{1}{q'}}d\rho\right]^{q}
\leq 2^{q-1}(\mathcal{I}_{1}^{q}(r)+\mathcal{I}_{2}^{q}(r)),
\end{align*}
where
\begin{align*}
\mathcal{I}_{1}^{q}(r)
:&=\left(\int_{0}^{(s-r)^{\frac{1}{\gamma_{2}}}}(2r_{2}+\rho)^{d}
    \left(\int_{\partial B_{1}}\left|\nabla(\nabla K)(l,s,r,\rho w)\right|^{q'}S_{1}(dw)\right)^{\frac{1}{q'}}
    d\rho\right)^{q},\\
\mathcal{I}_{2}^{q}(r)
:&=\left(\int_{(s-r)^{\frac{1}{\gamma_{2}}}}^{\infty}(2r_{2}+\rho)^{d}
   \left(\int_{\partial B_{1}}\left|\nabla (\nabla K)(l,s,r,\rho w)\right|^{q'}S_{1}(dw)\right)^{\frac{1}{q'}}
   d\rho\right)^{q}.
\end{align*}
Therefore, we have
\begin{align*}
\left\|\frac{\partial}{\partial y_i}K(l,s,r,\cdot)*f(r,\cdot)(y)\right\|_{V}^{q}
\leq 2^{q-1}M\mathbb{M}_{x}^{2r_{2}}\|f\|_{V}^{q}(r,x)
    (\mathcal{I}_{1}^{q}(r)+\mathcal{I}_{2}^{q}(r)),
\end{align*}
from which and \eqref{eqn:bound of partial-y-G}, we have
\begin{align}\label{eqn:bound of partial-x-G-g}
\left|\frac{\partial}{\partial y_{i}}\mathcal{G}_{a,q}f(l,s,y)\right|^{q}
&\leq 2^{q-1}M\int_{a}^{-8r_{1}}(s-r)^{\frac{q\gamma_{1}}{\gamma_{2}}-1}\mathbb{M}_{x}^{2r_{2}}\|f\|_{V}^{q}(r,x)
  (\mathcal{I}_{1}^{q}(r)+\mathcal{I}_{2}^{q}(r))dr\nonumber\\
&=2^{q-1}M(\mathcal{J}_{1}+\mathcal{J}_{2}).
\end{align}
where
\begin{align*}
\mathcal{J}_{1}&:=\int_{a}^{-8r_{1}}(s-r)^{\frac{q\gamma_{1}}{\gamma_{2}}-1}\mathbb{M}_{x}^{2r_{2}}
                                                             \|f\|_{V}^{q}(r,x)\mathcal{I}_{1}^{q}(r)dr,\\
\mathcal{J}_{2}&:=\int_{a}^{-8r_{1}}(s-r)^{\frac{q\gamma_{1}}{\gamma_{2}}-1}\mathbb{M}_{x}^{2r_{2}}
                                                             \|f\|_{V}^{q}(r,x)\mathcal{I}_{2}^{q}(r)dr.
\end{align*}
If $a\le r\le -8r_{1}$, then it holds that $s>-2r_{1}>-8r_{1}>r$, and so we have
\begin{equation}\label{eqn: s-r greater than or equal const times r1}
(s-r)^{\frac{1}{\gamma_{2}}}
\geq (6r_{1})^{\frac{1}{\gamma_{2}}}
=6^{\frac{1}{\gamma_{2}}}r_{2}.
\end{equation}
First we estimate $\mathcal{J}_{1}$. If $0\le \rho\leq (s-r)^{\frac{1}{\gamma_{2}}}$,
then by \eqref{eqn: bound of nabla nabla K},
there exists a constant $C_{2}>0$ such that
\begin{align}\label{eqn: bound of nabla nabla K 1}
|\nabla(\nabla K)(l,s,r,\rho w)|
&\leq C_{2}(\rho^{-(\gamma_{1}+2+d)}\wedge (s-r)^{-\frac{\gamma_{1}+2+d}{\gamma_{2}}})\nonumber\\
&= C_{2}(s-r)^{-\frac{\gamma_{1}+2+d}{\gamma_{2}}}.
\end{align}
If $0\leq \rho\leq (s-r)^{\frac{1}{\gamma_{2}}}$, then by \eqref{eqn: s-r greater than or equal const times r1}, we have
\begin{align}
2r_{2}+\rho\leq (2\cdot 6^{-\frac{1}{\gamma_{2}}}+1)(s-r)^{\frac{1}{\gamma_{2}}}.
\label{eqn:2r2+rho upp bd}
\end{align}
Therefore, from \eqref{eqn: bound of nabla nabla K 1}, we obtain that
\begin{align}
\left(\mathcal{I}_{1}^{q}(r)\right)^{1/q}
&=\int_{0}^{(s-r)^{\frac{1}{\gamma_{2}}}}(2r_{2}+\rho)^{d}\left(\int_{\partial B_{1}}
     |\nabla(\nabla K)(l,s,r,\rho w)|^{q'}S_{1}(dw)\right)^{\frac{1}{q'}}d\rho\nonumber\\
&\leq C_{2}\left[S_{1}(\partial B_{1})\right]^{\frac{1}{q'}}
  (s-r)^{-\frac{\gamma_{1}+2+d}{\gamma_{2}}}\int_{0}^{(s-r)^{\frac{1}{\gamma_{2}}}}(2r_{2}+\rho)^{d}d\rho\nonumber\\
&\leq C_{2}\left[S_{1}(\partial B_{1})\right]^{\frac{1}{q'}}(2\cdot 6^{-\frac{1}{\gamma}}+1)^{d}
  (s-r)^{-\frac{\gamma_{1}+2+d}{\gamma_{2}}+\frac{d}{\gamma_{2}}+\frac{1}{\gamma_{2}}}\nonumber\\
&=C_{\mathcal{I}_1}(s-r)^{-\frac{1+\gamma_{1}}{\gamma_{2}}},
\quad
C_{\mathcal{I}_1}:=C_{2}\left(\frac{2\pi^{\frac{d}{2}}}{\Gamma\left(\frac{d}{2}\right)}\right)^{\frac{1}{q'}}
                                    (2\cdot 6^{-\frac{1}{\gamma_{2}}}+1)^{d}.\label{eqn:esti I1(r)}
\end{align}
If $a<r<-8r_{1}$, then it holds that $\frac{r}{2}<-4r_{1}< s$ and so we have
\begin{equation}\label{eqn: lower bound of s-r is -r}
s-r> \frac{-r}{2}.
\end{equation}
If $g\in C_{c}(\mathbb{R})$ such that $g(r)=0$ for all $r\geq u$ (for some fixed $u$),
then for any $c>0$, by the integration by parts formula, we see that
\begin{align}
\int_{-\infty}^{u}(s-r)^{-c}g(r)dr
=c\int_{-\infty}^{u}(s-r)^{-c-1}\left(\int_{r}^{0}g(v)dv\right)dr.\label{eqn:J1 int by part}
\end{align}
For any $a\in\mathbb{R}$ and $c>0$,
by applying \eqref{eqn:J1 int by part} and \eqref{eqn: lower bound of s-r is -r},
we obtain that
\begin{align}
&\int_{a}^{-8r_{1}}(s-r)^{-1-c}\mathbb{M}_{x}^{2r_{2}}\|f\|_{V}^{q}(r,x)dr \nonumber\\
&\leq
  \int_{-\infty}^{-8r_{1}}(s-r)^{-1-c}\mathbb{M}_{x}^{2r_{2}}\|f\|_{V}^{q}(r,x)dr \nonumber\\
&=\left(1+c\right)
   \int_{-\infty}^{-8r_{1}}(s-r)^{-2-c}
        \left(\int_{r}^{0}\mathbb{M}_{x}^{2r_{2}}\|f\|_{V}^{q}(s_{1},x)ds_{1}\right)dr \nonumber\\
&\leq \left(1+c\right)2^{2+c}
   \int_{-\infty}^{-8r_{1}}(-r)^{-1-c}
   \left(\frac{1}{(-r)}\int_{r}^{0}\mathbb{M}_{x}^{2r_{2}}\|f\|_{V}^{q}(s_{1},x)ds_{1}\right)dr \nonumber\\
&\leq \left(1+c\right)2^{2+c}
  \mathbb{M}_{t}^{8r_{1}}\mathbb{M}_{x}^{2r_{1}}\|f\|_{V}^{q}(t,x)
\int_{-\infty}^{-8r_{1}}(-r)^{-1-c}dr \nonumber\\
&=C_{c}r_1^{-c}\mathbb{M}_{t}^{8r_{1}}\mathbb{M}_{x}^{2r_{2}}\|f\|_{V}^{q}(t,x),
\quad C_{c}=\frac{1+c}{c}2^{2+c}8^{-c}.
 \label{eqn:bound of integral}
\end{align}
Therefore, by applying \eqref{eqn:esti I1(r)} and \eqref{eqn:bound of integral},
we obtain that
\begin{align} \label{eqn:bound of J1}
\mathcal{J}_{1}
&\leq C_{\mathcal{I}_1}^{q}
  \int_{a}^{-8r_{1}}(s-r)^{-1-\frac{q}{\gamma_{2}}}\mathbb{M}_{x}^{2r_{2}}\|f\|_{V}^{q}(r,x)dr \nonumber\\
&\le C_{\frac{q}{\gamma_2}}r_1^{-\frac{q}{\gamma_2}}\mathbb{M}_{t}^{8r_{1}}\mathbb{M}_{x}^{2r_{2}}\|f\|_{V}^{q}(t,x)
\end{align}
Now we estimate $\mathcal{J}_{2}$. If $\rho\geq (s-r)^{\frac{1}{\gamma_{2}}}$,
by \eqref{eqn: bound of nabla nabla K},
there exists a constant $C_{2}>0$ such that
\begin{align}
|\nabla(\nabla K)(l,s,r,\rho w))|
&\leq C_{2}\left(\rho^{-(\gamma_{1}+2+d)}\wedge (s-r)^{-\frac{\gamma_{1}+2+d}{\gamma_{2}}}\right)\nonumber\\
&= C_{2}\rho^{-(\gamma_{1}+2+d)}. \label{eqn: bound of nabla nabla K 2}
\end{align}
If $\rho\geq (s-r)^{\frac{1}{\gamma_{2}}}$, then by \eqref{eqn: s-r greater than or equal const times r1}, we have
\begin{align}
r_{2}\leq 6^{-\frac{1}{\gamma_{2}}}(s-r)^{\frac{1}{\gamma_{2}}}\leq 6^{-\frac{1}{\gamma_{2}}}\rho.
\label{eqn:r2 le rho}
\end{align}
Then, from \eqref{eqn:r2 le rho} and \eqref{eqn: bound of nabla nabla K 2}, we obtain that
\begin{align*}
\left(\mathcal{I}_{2}^{q}(r)\right)^{1/q}
&=\int_{(s-r)^{\frac{1}{\gamma_{2}}}}^{\infty}(2r_{2}+\rho)^{d}\left(\int_{\partial B_{1}}
            |\nabla(\nabla K)(l,s,r,\rho w)|^{q'}S_{1}(dw)\right)^{\frac{1}{q'}}d\rho\\
&\leq (2\cdot 6^{-\frac{1}{\gamma_{2}}}+1)^{d}\int_{(s-r)^{\frac{1}{\gamma_{2}}}}^{\infty}\rho^{d}
        \left(\int_{\partial B_{1}}|\nabla(\nabla K)(l,s,r,\rho w)|^{q'}S_{1}(dw)\right)^{\frac{1}{q'}}d\rho\\
&\leq (2\cdot 6^{-\frac{1}{\gamma_{2}}}+1)^{d}C_{2}\int_{(s-r)^{\frac{1}{\gamma_{2}}}}^{\infty}\rho^{d}
        \left(\int_{\partial B_{1}}\rho^{-q'(\gamma_{1}+2+d)}S_{1}(dw)\right)^{\frac{1}{q'}}d\rho\\
&=(2\cdot 6^{-\frac{1}{\gamma_{2}}}+1)^{d}
        C_{2}\left(\frac{2\pi^{\frac{d}{2}}}{\Gamma\left(\frac{d}{2}\right)}\right)^{\frac{1}{q^{\prime}}}
            \int_{(s-r)^{\frac{1}{\gamma_{2}}}}^{\infty}\rho^{-(\gamma_{1}+2)}d\rho\\
&=(2\cdot 6^{-\frac{1}{\gamma_{2}}}+1)^{d} C_{2}\left(\frac{2\pi^{\frac{d}{2}}}{\Gamma\left(\frac{d}{2}\right)}\right)^{\frac{1}{q^{\prime}}}\frac{1}{1+\gamma_{1}}
  (s-r)^{-\frac{1+\gamma_{1}}{\gamma_{2}}}\\
&=:C_{\mathcal{I}_{2}}(s-r)^{-\frac{1+\gamma_{1}}{\gamma_{2}}}.
\end{align*}
Then from the estimations obtained in \eqref{eqn:bound of J1}, we have
\begin{align}
\mathcal{J}_{2}
&\leq\left(1+\frac{q}{\gamma_{2}}\right)\frac{2^{2+\frac{q}{\gamma_{2}}}\gamma_{2}}{q}
        C_{\mathcal{I}_2}^{q}(8r_{1})^{-\frac{q}{\gamma_{2}}}
        \mathbb{M}_{t}^{8r_{1}}\mathbb{M}_{x}^{2r_{2}}\|f\|_{V}^{q}(t,x).  \label{eqn:bound of J2}
\end{align}
Therefore, by combining \eqref{eqn:bound of partial-x-G-g},
\eqref{eqn:bound of J1}, and \eqref{eqn:bound of J2}, we have
\begin{align*}
\left|\frac{\partial}{\partial y_{i}}\mathcal{G}_{a,q}f(l,s,y)\right|^{q}
&\leq 2^{q-1}M\left(1+\frac{q}{\gamma_{2}}\right)\frac{2^{2+\frac{q}{\gamma_{2}}}
            \gamma_{2}}{q} (C_{\mathcal{I}_1}^{q}+C_{\mathcal{I}_2}^{q})\\
 &\qquad\times (8r_{1})^{-\frac{q}{\gamma_{2}}}\mathbb{M}_{t}^{8r_{1}}\mathbb{M}_{x}^{2r_{2}}\|f\|_{V}^{q}(t,x).
\end{align*}
By the assumption $q\geq 2$, we have $\frac{q}{2}\geq 1$ and then by the Jensen's inequality,
we obtain that
\begin{align*}
|\nabla\mathcal{G}_{a,q}f(l,s,y)|^{q}
&=\left(\sum_{i=1}^{d}\left|\frac{\partial}{\partial y_{i}}\mathcal{G}_{a,q}f(l,s,y)\right|^{2}\right)^{\frac{q}{2}}
\leq C_{q,d}\sum_{i=1}^{d}\left|\frac{\partial}{\partial y_{i}}\mathcal{G}_{a,q}f(l,s,y)\right|^{q}\\
&\leq C_{q,d}d 2^{q-1} M\left(1+\frac{q}{\gamma_{2}}\right)\frac{2^{2+\frac{q}{\gamma_{2}}}\gamma_{2}}{q}
            (C_{\mathcal{I}_1}^{q}+C_{\mathcal{I}_2}^{q})\\
 &\qquad\times (8r_{1})^{-\frac{q}{\gamma_{2}}}\mathbb{M}_{t}^{8r_{1}}\mathbb{M}_{x}^{2r_{2}}\|f\|_{V}^{q}(t,x).
 \end{align*}
Therefore, there exists a constant $C>0$, which is independent of $(s,y)$, such that
\begin{align*}
|\nabla\mathcal{G}_{a,q}f(l,s,y)|^{q}
\leq Cr_{1}^{-\frac{q}{\gamma_{2}}}\mathbb{M}_{t}^{8r_{1}}\mathbb{M}_{x}^{2r_{2}}\|f\|_{V}^{q}(t,x).
\end{align*}
Hence, by taking supremum on the left hand side in $(s,y)$, we have \eqref{eqn: sup of gradient of mathcal G}.
\end{proof}
The following Lemmas \ref{lem: bound of Dt mathcalG g-1} and \ref{lem: bound of Dt mathcalG g-2}
will be used in the proof of Lemma \ref{lem: bound of Dt mathcalG g}.

\begin{lemma}\label{lem: bound of Dt mathcalG g-1}
Let $q\geq 2$ and let $r_{1}, r_{2}>0 $ satisfying $r_{1}^{1/\gamma_{2}}=r_{2}$.
Let $a \in \mathbb{R}$.
Then there exists a constant $C>0$ depending on $d,q, \boldsymbol{\mu}, \boldsymbol{\gamma}$
and $\boldsymbol{\kappa}$ such that
for any $(s,y),(t,x)\in Q_{r_{1},r_{2}}$, $l\geq 0$  and
$f\in C_{\rm c}^{\infty}(\mathbb{R}\times\mathbb{R}^{d};V)$ satisfying that $f(t,x)=0$ for $t\geq -8r_{1}$,
\begin{align*}
A:=\int_{a}^{-8r_{1}}(s-r)^{\frac{q\gamma_{1}}{\gamma_{2}}-1-q}
                \|L_{\psi_{1}}(l)p_{\psi_{2}}(s,r,\cdot)*f(r,\cdot)(y)\|_{V}^{q}dr
\leq Cr_{1}^{-q}\mathbb{M}_{t}^{8r_{1}}\mathbb{M}_{x}^{2r_{2}}\|f\|_{V}^{q}(t,x).
\end{align*}
\end{lemma}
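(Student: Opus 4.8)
The plan is to mimic the estimate of $\mathcal{J}_{1}$ from the proof of Lemma~\ref{lem: bound of sup of gradient of mathcal G}, using the first-order kernel bound \eqref{eqn: bound of nabla K} in place of the second-order bound \eqref{eqn: bound of nabla nabla K} and carrying the extra weight $(s-r)^{-q}$. Fix $(s,y),(t,x)\in Q_{r_{1},r_{2}}$; then $|x-y|<2r_{2}$, and we may assume $a<-8r_{1}$ (otherwise the interval of integration defining $A$ is empty or negatively oriented, so $A\le 0$ and the inequality is trivial). For every $r$ with $a\le r\le -8r_{1}$ we have $s>-2r_{1}>-8r_{1}\ge r$, hence $s-r\ge 6r_{1}$ and $(s-r)^{1/\gamma_{2}}\ge 6^{1/\gamma_{2}}r_{2}$; this is the only point where the relation $r_{1}^{1/\gamma_{2}}=r_{2}$ and the location of $(s,y)$ enter, and it is what lets us absorb $r_{2}$ into powers of $(s-r)^{1/\gamma_{2}}$ throughout.

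First I would apply Lemma~\ref{lem: bound of convolution} with $v=K(l,s,r,\cdot):=L_{\psi_{1}}(l)p_{\psi_{2}}(s,r,\cdot)$, $R_{1}=0$, $R_{2}=2r_{2}$ and the trivial bound $|\nabla K(l,s,r,\rho w)\cdot w|\le|\nabla K(l,s,r,\rho w)|$, then insert the gradient estimate \eqref{eqn: bound of nabla K} of Lemma~\ref{lem: esti of grad K}, to obtain
\[
\|L_{\psi_{1}}(l)p_{\psi_{2}}(s,r,\cdot)*f(r,\cdot)(y)\|_{V}
\le C\bigl(\mathbb{M}_{x}^{2r_{2}}\|f\|_{V}^{q}(r,x)\bigr)^{1/q}
\int_{0}^{\infty}(2r_{2}+\rho)^{d}\Bigl(\rho^{-(\gamma_{1}+1+d)}\wedge (s-r)^{-\frac{\gamma_{1}+1+d}{\gamma_{2}}}\Bigr)d\rho .
\]
The $\rho$-integral I would split at $\rho=(s-r)^{1/\gamma_{2}}$: on $[0,(s-r)^{1/\gamma_{2}}]$ use $2r_{2}+\rho\le(2\cdot6^{-1/\gamma_{2}}+1)(s-r)^{1/\gamma_{2}}$ together with the bound $(s-r)^{-(\gamma_{1}+1+d)/\gamma_{2}}$; on $[(s-r)^{1/\gamma_{2}},\infty)$ use $r_{2}\le 6^{-1/\gamma_{2}}\rho$ together with the bound $\rho^{-(\gamma_{1}+1+d)}$, the tail integral converging because $\gamma_{1}>0$. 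Each piece is $\le C(s-r)^{-\gamma_{1}/\gamma_{2}}$, so raising the previous inequality to the $q$-th power gives
\[
\|L_{\psi_{1}}(l)p_{\psi_{2}}(s,r,\cdot)*f(r,\cdot)(y)\|_{V}^{q}
\le C\,\mathbb{M}_{x}^{2r_{2}}\|f\|_{V}^{q}(r,x)\,(s-r)^{-\frac{q\gamma_{1}}{\gamma_{2}}} .
\]

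Substituting this into the definition of $A$, the powers of $(s-r)$ collapse to $(s-r)^{-1-q}$, so
\[
A\le C\int_{a}^{-8r_{1}}(s-r)^{-1-q}\,\mathbb{M}_{x}^{2r_{2}}\|f\|_{V}^{q}(r,x)\,dr ,
\]
and then I would invoke the integration-by-parts estimate \eqref{eqn:bound of integral}, established inside the proof of Lemma~\ref{lem: bound of sup of gradient of mathcal G}, with $c=q$; it bounds the right-hand side by $\frac{1+q}{q}2^{2+q}8^{-q}\,r_{1}^{-q}\,\mathbb{M}_{t}^{8r_{1}}\mathbb{M}_{x}^{2r_{2}}\|f\|_{V}^{q}(t,x)$, where the hypothesis $f(r,\cdot)\equiv 0$ for $r\ge -8r_{1}$ is precisely what validates the integration by parts and produces the time-maximal function. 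Collecting constants yields the claim. I expect the only genuine difficulty to be bookkeeping: ensuring the $\rho$-integral near $\rho=0$ is controlled by the $(s-r)^{-(\gamma_{1}+1+d)/\gamma_{2}}$ branch of the minimum (so that starting from $R_{1}=0$ causes no divergence), and checking that $s-r\ge 6r_{1}$ holds uniformly for $r\le-8r_{1}$ and $(s,y)\in Q_{r_{1},r_{2}}$. Everything else is a direct application of Lemmas~\ref{lem: bound of convolution} and \ref{lem: esti of grad K} and of the computation \eqref{eqn:bound of integral}, so no ideas beyond those already developed in the section are needed.
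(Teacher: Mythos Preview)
Your proposal is correct and follows essentially the same approach as the paper's proof: apply Lemma~\ref{lem: bound of convolution} with $R_{1}=0$, $R_{2}=2r_{2}$, insert the first-order bound \eqref{eqn: bound of nabla K}, split the $\rho$-integral at $(s-r)^{1/\gamma_{2}}$ to obtain the pointwise estimate $C(s-r)^{-\gamma_{1}/\gamma_{2}}$, collapse the powers to $(s-r)^{-1-q}$, and finish with \eqref{eqn:bound of integral} at $c=q$. The only cosmetic difference is that the paper keeps the two $\rho$-ranges as separate terms $\mathcal{A}_{1},\mathcal{A}_{2}$ throughout, whereas you merge them before substituting; the substance is identical.
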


\begin{proof}
Let $(s,y),(t,x)\in Q_{r_{1},r_{2}}$, $l\geq 0$  and
$f\in C_{\rm c}^{\infty}(\mathbb{R}\times\mathbb{R}^{d};V)$ satisfying that $f(t,x)=0$ for $t\geq -8r_{1}$
be given.
Let $q'$ be the conjugate number of $q$. By applying Lemma \ref{lem: bound of convolution} with $R_{1}=0$ and $R_{2}=2r_{2}$,
for any $r<-8r_{1}$, we have
\begin{align*}
&\left\|L_{\psi_{1}}(l)p_{\psi_{2}}(s,r,\cdot)*f(r,\cdot)(y)\right\|_{V}^{q}\\
&\leq M\mathbb{M}_{x}^{2r_{2}}\|f\|_{V}^{q}(r,x)
\left[\int_{0}^{\infty}(2r_{2}+\rho)^{d}\left(\int_{\partial B_{1}}
      \left|\nabla L_{\psi_{1}}(l)p_{\psi_{2}}(s,r,\rho w)\right|^{q'}
      S_{1}(dw)\right)^{\frac{1}{q'}}d\rho\right]^{q},
\end{align*}
where we used the fact that
$|\nabla L_{\psi_{1}}(l)p_{\psi_{2}}(s,r,\rho w)\cdot w|
\le |\nabla L_{\psi_{1}}(l)p_{\psi_{2}}(s,r,\rho w)|$ for all $w\in\partial B_1$.
On the other hand, by applying the Jensen's inequality, we have
\begin{align*}
\left[\int_{0}^{\infty}(2r_{2}+\rho)^{d}\left(\int_{\partial B_{1}}
      \left|\nabla L_{\psi_{1}}(l)p_{\psi_{2}}(s,r,\rho w)\right|^{q'}
      S_{1}(dw)\right)^{\frac{1}{q'}}d\rho\right]^{q}
\le 2^{q-1}\sum_{j=1}^{2}A_{j}^{q}(r),
\end{align*}
where
\begin{align*}
A_{1}^{q}(r)
:&=\left(\int_{0}^{(s-r)^{\frac{1}{\gamma_{2}}}}(2r_{2}+\rho)^{d}\left(\int_{\partial B_{1}}
           \left|\nabla L_{\psi_{1}}(l)p_{\psi_{2}}(s,r,\rho w)\right|^{q'}
           S_{1}(dw)\right)^{\frac{1}{q'}}d\rho\right)^{q},\\
A_{2}^{q}(r)
:&=\left(\int_{(s-r)^{\frac{1}{\gamma_{2}}}}^{\infty}(2r_{2}+\rho)^{d}\left(\int_{\partial B_{1}}
   \left|\nabla L_{\psi_{1}}(l)p_{\psi_{2}}(s,r,\rho w)\right|^{q'}
           S_{1}(dw)\right)^{\frac{1}{q'}}d\rho\right)^{q}.
\end{align*}
Therefore, we have
\begin{align*}
\left\|L_{\psi_{1}}(l)p_{\psi_{2}}(s,r,\cdot)*f(r,\cdot)(y)\right\|_{V}^{q}
\leq 2^{q-1}M\mathbb{M}_{x}^{2r_{2}}\|f\|_{V}^{q}(r,x)
  \left(A_{1}^{q}(r)+A_{2}^{q}(r)\right),
\end{align*}
from which, we obtain that
\begin{align}\label{eqn:bound of A1}
A
&\leq 2^{q-1}M
  \int_{a}^{-8r_{1}}(s-r)^{\frac{q\gamma_{1}}{\gamma_{2}}-1-q}
  \mathbb{M}_{x}^{2r_{1}}\|f\|_{V}^{q}(r,x)(A_{1}^{q}(r)+A_{2}^{q}(r))dr
  \nonumber\\
&= 2^{q-1}M(\mathcal{A}_{1}+\mathcal{A}_{2})
\end{align}
with
\begin{align*}
\mathcal{A}_{1}&:=\int_{a}^{-8r_{1}}(s-r)^{\frac{q\gamma_{1}}{\gamma_{2}}-1-q}
                                    \mathbb{M}_{x}^{2r_{1}}\|f\|_{V}^{q}(r,x)A_{1}^{q}(r)dr,\\
\mathcal{A}_{2}&:=\int_{a}^{-8r_{2}}(s-r)^{\frac{q\gamma_{1}}{\gamma_{2}}-1-q}
                                    \mathbb{M}_{x}^{2r_{2}}\|f\|_{V}^{q}(r,x)A_{2}^{q}(r)dr.
\end{align*}
We estimate $\mathcal{A}_{1}$. If $0\leq \rho\leq (s-r)^{\frac{1}{\gamma_{2}}}$,
then by \eqref{eqn: bound of nabla K}, there exists a constant $C_{1}>0$ such that
\begin{align}
\left|\nabla L_{\psi_{1}}(l)p_{\psi_{2}}(s,r,\rho w)\right|
&\leq C_{1}\left(\rho^{-(\gamma_{1}+1+d)}\wedge (s-r)^{-\frac{\gamma_{1}+1+d}{\gamma_{2}}}\right)\nonumber\\
&\leq C_{1} (s-r)^{-\frac{\gamma_{1}+1+d}{\gamma_{2}}}.\label{eqn:nab Lp1}
\end{align}
Therefore, by applying \eqref{eqn:nab Lp1} and \eqref{eqn:2r2+rho upp bd}, we obtain that
\begin{align*}
\left(A_{1}^{q}(r)\right)^{1/q}
&=\int_{0}^{(s-r)^{\frac{1}{\gamma_{2}}}}(2r_{2}+\rho)^{d}
    \left(\int_{\partial B_{1}}\left|\nabla L_{\psi_{1}}(l)p_{\psi_{2}}(s,r,\rho w)\right|^{q'}S_{1}(dw)\right)^{\frac{1}{q'}}d\rho\\
&\leq C_{1}\int_{0}^{(s-r)^{\frac{1}{\gamma_{2}}}}(2\cdot 6^{-\frac{1}{\gamma_{2}}}+1)^{d}(s-r)^{-\frac{1+\gamma_{1}}{\gamma_{2}}}
 \left(\int_{\partial B_{1}}S_{1}(dw)\right)^{\frac{1}{q'}}d\rho\\
&\leq C_{1}(2\cdot 6^{-\frac{1}{\gamma_{2}}}+1)^{d}
\left[S_{1}(\partial B_{1})\right]^{\frac{1}{q'}}(s-r)^{-\frac{\gamma_{1}}{\gamma_{2}}}\\
&=C_{A_{1}}(s-r)^{-\frac{\gamma_{1}}{\gamma_{2}}},
\quad
C_{A_{1}}:=C_{1}(2\cdot 6^{-\frac{1}{\gamma_{2}}}+1)^{d}
  \left(\frac{2\pi^{d/2}}{\Gamma\left(\frac{d}{2}\right)}\right)^{\frac{1}{q'}}.
\end{align*}
By applying \eqref{eqn:bound of integral}, we obtain that
\begin{align}
\mathcal{A}_{1}
&\leq C_{A_{1}}^{q} \int_{a}^{-8r_{1}}(s-r)^{\frac{q\gamma_{1}}{\gamma_{2}}-1-q}
            \mathbb{M}_{x}^{2r_{2}}\|f\|_{V}^{q}(r,x)(s-r)^{-\frac{q\gamma_{1}}{\gamma_{2}}}dr \nonumber\\
&\le C_{A_{1}}^{q}C_{q}r_{1}^{-q}\mathbb{M}_{t}^{8r_{1}}\mathbb{M}_{x}^{2r_{2}}\|f\|_{V}^{q}(t,x),
  \label{eqn:bound of mathcal A 11}
\end{align}
where $C_{q}$ is given in \eqref{eqn:bound of integral}.
Now we estimate $\mathcal{A}_{2}$. If $\rho\geq (s-r)^{\frac{1}{\gamma_{2}}}$,
by \eqref{eqn: bound of nabla K}, there exists a constant $C_{1}>0$ such that
\begin{align}
\left|\nabla L_{\psi_{1}}(l)p_{\psi_{2}}(s,r,\rho w)\right|
&\leq C_{1}\left(\rho^{-(\gamma_{1}+1+d)}\wedge (s-r)^{-\frac{\gamma_{1}+1+d}{\gamma_{2}}}\right)\nonumber\\
&= C_{1}\rho^{-(\gamma_{1}+1+d)}.\label{eqn:bound nabla L(l)p 1}
\end{align}
If $\rho\geq (s-r)^{\frac{1}{\gamma_{2}}}$, then by \eqref{eqn: lower bound of s-r is -r}, we have
\begin{equation}
2r_{2}+\rho
\leq 2 \cdot 6^{-\frac{1}{\gamma_{2}}}(s-r)^{\frac{1}{\gamma_{2}}}+\rho
\leq (2\cdot 6^{-\frac{1}{\gamma_{2}}}+1)\rho.\label{eqn: r2 less than or equal constant rho}
\end{equation}
By applying \eqref{eqn: r2 less than or equal constant rho} and \eqref{eqn:bound nabla L(l)p 1}, we obtain that
\begin{align*}
\left(A_{2}^{q}(r)\right)^{1/q}
&=\int_{(s-r)^{\frac{1}{\gamma_{2}}}}^{\infty}(2r_{2}+\rho)^{d}
 \left(\int_{\partial B_{1}}\left|\nabla L_{\psi_{1}}(l)
        p_{\psi_{2}}(s,r,\rho w)\right|^{q'}S_{1}(dw)\right)^{\frac{1}{q'}}d\rho\\
&\leq C_{1}(2\cdot 6^{-\frac{1}{\gamma_{2}}}+1)^{d}\int_{(s-r)^{\frac{1}{\gamma_{2}}}}^{\infty}
        \rho^{-\gamma_{1}-1}\left(\int_{\partial B_{1}}S_{1}(dw)\right)^{\frac{1}{q'}}d\rho\\
&=C_{1}(2\cdot 6^{-\frac{1}{\gamma_{2}}}+1)^{d}[S_{1}(\partial B_{1})]^{\frac{1}{q'}}\frac{1}{\gamma_{1}}(s-r)^{-\frac{\gamma_{1}}{\gamma_{2}}}\\
&=:C_{A_{2}}(s-r)^{-\frac{\gamma_{1}}{\gamma_{2}}},\quad C_{A_{12}}=C_{1}(2\cdot 6^{-\frac{1}{\gamma_{2}}}+1)^{d}\left(\frac{2\pi^{d/2}}{\Gamma\left(\frac{d}{2}\right)}\right)^{\frac{1}{q'}}\frac{1}{\gamma_{1}}.
\end{align*}
Then we have
\begin{align*}
\mathcal{A}_{2}
&\leq C_{A_{2}}^{q}
\int_{a}^{-8r_{1}}(s-r)^{-1-q}\mathbb{M}_{x}^{2r_{2}}\|f\|_{V}^{q}(r,x)dr.
\end{align*}
Therefore, by applying \eqref{eqn:bound of integral}, we have
\begin{align}
\mathcal{A}_{2}
&\leq C_{A_{2}}^{q}C_{q}r_{1}^{-q}\mathbb{M}_{t}^{8r_{1}}\mathbb{M}_{x}^{2r_{2}}\|f\|_{V}^{q}(t,x),
    \label{eqn:bound of mathcal A 12}
\end{align}
where $C_{q}$ is given in \eqref{eqn:bound of integral}.
Hence, by combining \eqref{eqn:bound of A1}, \eqref{eqn:bound of mathcal A 11},
and \eqref{eqn:bound of mathcal A 12}, we have
\begin{align*}
A
\leq 2^{q-1}M\left(C_{A_{1}}^{q}+C_{A_{2}}^{q}\right)C_{q} r_{1}^{-q}\mathbb{M}_{t}^{8r_{1}}\mathbb{M}_{x}^{2r_{2}}\|f\|_{V}^{q}(t,x),
\end{align*}
which proves the desired result.
\end{proof}

\begin{lemma}\label{lem: bound of Dt mathcalG g-2}
Let $q\geq 2$ and let $r_{1}, r_{2}>0 $ satisfying $r_{1}^{1/\gamma_{2}}=r_{2}$.
Then it holds that
\begin{itemize}
  \item [\rm (i)] if $a\in \mathbb{R}$, then there exists a constant $C_{1}>0$ depending on
  $d,q, \boldsymbol{\mu}, \boldsymbol{\gamma}$ and $\boldsymbol{\kappa}$ such that
  for any $(s,y), (t,x)\in Q_{r_{1},r_{2}}$, $l\geq 0$
  and $f\in C_{\rm c}^{\infty}(\mathbb{R}\times\mathbb{R}^{d};V)$ satisfying that $f(t,x)=0$ for $t\geq -8r_{1}$,
\begin{align}
A&:=\int_{a}^{-8r_{1}}(s-r)^{\frac{q\gamma_{1}}{\gamma_{2}}-1}
                \left\|\frac{\partial}{\partial s}\left(L_{\psi_{1}}(l)p_{\psi_{2}}(s,r,\cdot)\right)*f(r,\cdot)(y)\right\|_{V}^{q}dr\nonumber\\
&\quad\leq C_{1}r_{1}^{-q}\mathbb{M}_{t}^{8r_{1}}\mathbb{M}_{x}^{2r_{2}}\|f\|_{V}^{q}(t,x),\label{eqn: esti time deri of mathcal G1}
\end{align}
  \item [\rm (ii)] if $-\infty< a\le -2r_1<0\le b<\infty$, then there exists a constant $C_{2}>0$ depending on
  $d,q, \boldsymbol{\mu}, \boldsymbol{\gamma}$ and $\boldsymbol{\kappa}$ such that
  for any $(s,y),(t,x)\in Q_{r_{1},r_{2}}$ and
  $f\in C_{\rm c}^{\infty}(\mathbb{R}\times\mathbb{R}^{d};V)$ satisfying that $f(t,x)=0$ for $t\geq -8r_{1}$,
\begin{align}
&\int_{a}^{-8r_{1}}(s-r)^{\frac{q\gamma_{1}}{\gamma_{2}}-1}
                \left\|\frac{\partial}{\partial s}\left(L_{\psi_{1}}(s)p_{\psi_{2}}(s,r,\cdot)\right)*f(r,\cdot)(y)\right\|_{V}^{q}dr\nonumber\\
&\quad\leq C_{2}\left((b-a)+1\right)^{q}
            r_{1}^{-q}\mathbb{M}_{t}^{8r_{1}}\mathbb{M}_{x}^{2r_{2}}\|f\|_{V}^{q}(t,x).\label{eqn: esti time deri of mathcal G2}
\end{align}
\end{itemize}
\end{lemma}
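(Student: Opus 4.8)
The plan is to imitate the proofs of Lemmas \ref{lem: bound of Dt mathcalG g-1} and \ref{lem: bound of sup of gradient of mathcal G}, replacing the convolution kernel there by the appropriate $s$-derivative of $L_{\psi_1}(l)p_{\psi_2}(s,r,\cdot)$ (for (i)) or of $L_{\psi_1}(s)p_{\psi_2}(s,r,\cdot)$ (for (ii)), and invoking the matching estimate from Lemma \ref{lem: esti of grad K}. In both parts, since $f\in C_{\rm c}^{\infty}$ and the kernels decay, the $s$-derivative passes under the convolution; then Lemma \ref{lem: bound of convolution} with $R_1=0$, $R_2=2r_2$ reduces everything to estimating $\int_{\partial B_1}|\nabla_x(\cdots)(\rho w)|^{q'}S_1(dw)$, which I would split at $\rho=(s-r)^{1/\gamma_2}$ into pieces $A_1^q(r),A_2^q(r)$ exactly as before, using \eqref{eqn: s-r greater than or equal const times r1}, \eqref{eqn:2r2+rho upp bd} and \eqref{eqn:r2 le rho}.

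For (i), a direct computation gives
\[
\frac{\partial}{\partial s}\left(L_{\psi_1}(l)p_{\psi_2}(s,r,x)\right)
=\mathcal{F}^{-1}\left(\psi_1(l,\xi)\psi_2(s,\xi)\exp\left(\int_{r}^{s}\psi_2(u,\xi)\,du\right)\right)(x)
=\partial_t K_{\psi_1,\psi_2}(l,s,r,x),
\]
so its $x$-gradient is controlled by \eqref{eqn: bound of dt nabla K}, namely by $C\left(|x|^{-(\gamma_1+\gamma_2+1+d)}\wedge(s-r)^{-(\gamma_1+\gamma_2+1+d)/\gamma_2}\right)$. The extra factor $\psi_2(s,\xi)$ raises the spatial decay exponent by $\gamma_2$, which is exactly compensated by the extra power of $(s-r)^{1/\gamma_2}$ produced by the $\rho$-integration over $[0,(s-r)^{1/\gamma_2}]$ and $[(s-r)^{1/\gamma_2},\infty)$; hence one again gets $A_1^q(r),A_2^q(r)\le C(s-r)^{-q\gamma_1/\gamma_2-q}$, so that
\[
A\le C\int_a^{-8r_1}(s-r)^{-1-q}\,\mathbb{M}_x^{2r_2}\|f\|_V^q(r,x)\,dr,
\]
and \eqref{eqn:bound of integral} with $c=q$ gives \eqref{eqn: esti time deri of mathcal G1}.

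For (ii), $L_{\psi_1}(s)$ is also $s$-dependent, so
\[
\frac{\partial}{\partial s}\left(L_{\psi_1}(s)p_{\psi_2}(s,r,x)\right)
=\mathcal{F}^{-1}\left(\left(\partial_s\psi_1(s,\xi)+\psi_1(s,\xi)\psi_2(s,\xi)\right)\exp\left(\int_{r}^{s}\psi_2(u,\xi)\,du\right)\right)(x),
\]
whose $x$-gradient is bounded by \eqref{eqn: bound of dt nabla K l=t}; this differs from (i) by the additional, more slowly decaying term $|x|^{-(\gamma_1+1+d)}\wedge(s-r)^{-(\gamma_1+1+d)/\gamma_2}$ coming from $\partial_s\psi_1$. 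Running the $A_1/A_2$ analysis for this extra term, the missing $\gamma_2$ in the exponent is \emph{not} compensated and it contributes $C(s-r)^{-q\gamma_1/\gamma_2}$ to $A_i^q(r)$, producing the additional integral $C\int_a^{-8r_1}(s-r)^{-1}\mathbb{M}_x^{2r_2}\|f\|_V^q(r,x)\,dr$, which has no gain in $s-r$. To control it I would use $s-r>6r_1$ (from $s>-2r_1$, $r<-8r_1$), so $(s-r)^{-1}<(6r_1)^{-1}$, together with $\mathbb{M}_x^{2r_2}\|f\|_V^q(r,x)=0$ for $r\ge-8r_1$ and the inclusion of $(a,-8r_1)$ in the symmetric interval about $t$ of half-length $\rho=(t-a)+8r_1>8r_1$, $\rho\le(b-a)+8r_1$; by the definition of $\mathbb{M}_t^{8r_1}$ this gives
\[
\int_a^{-8r_1}(s-r)^{-1}\mathbb{M}_x^{2r_2}\|f\|_V^q(r,x)\,dr
\le\frac{(b-a)+8r_1}{3r_1}\,\mathbb{M}_t^{8r_1}\mathbb{M}_x^{2r_2}\|f\|_V^q(t,x).
\]
Finally, since $a\le-2r_1\le0\le b$ forces $b-a\ge2r_1$, an elementary comparison gives $r_1^{-q}+\frac{(b-a)+8r_1}{3r_1}\le C((b-a)+1)^q r_1^{-q}$, which absorbs both contributions and yields \eqref{eqn: esti time deri of mathcal G2}. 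The main obstacle is precisely this last step of (ii): recognizing that the $s$-dependence of $L_{\psi_1}(s)$ forces the slowly decaying term in \eqref{eqn: bound of dt nabla K l=t}, and then neutralizing the resulting scale-noninvariant integral via the finite time window — which is why the hypothesis $a\le-2r_1\le0\le b$ and the factor $((b-a)+1)^q$ enter; the rest is a line-by-line adaptation of the earlier lemmas.
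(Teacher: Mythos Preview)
Your proposal is correct and, for part (i), essentially identical to the paper's approach: apply Lemma~\ref{lem: bound of convolution} with $R_1=0$, $R_2=2r_2$, invoke \eqref{eqn: bound of dt nabla K}, split at $\rho=(s-r)^{1/\gamma_2}$, and finish with \eqref{eqn:bound of integral} at exponent $c=q$.

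For part (ii) the two arguments diverge slightly in the final step. The paper keeps both terms of \eqref{eqn: bound of dt nabla K l=t} together, obtaining $(A_i^q(r))^{1/q}\le C\bigl((s-r)^{-\gamma_1/\gamma_2}+(s-r)^{-(\gamma_1+\gamma_2)/\gamma_2}\bigr)$; after multiplying by $(s-r)^{q\gamma_1/\gamma_2-1}$ this becomes $(s-r)^{-q-1}((s-r)+1)^q$, and the paper simply bounds $(s-r)+1\le (b-a)+1$ (since $a<r<s<b$) before applying \eqref{eqn:bound of integral} once. You instead separate the two kernel terms, treat the $\psi_1\psi_2$ piece exactly as in (i), and for the $\partial_s\psi_1$ piece---where the resulting integrand is only $(s-r)^{-1}$---you use the crude bound $(s-r)^{-1}<(6r_1)^{-1}$ together with a direct maximal-function estimate over the finite window $(a,-8r_1)$, then absorb everything into $((b-a)+1)^q r_1^{-q}$ via $2r_1\le b-a$. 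Both routes are valid; the paper's is a bit slicker since it uses \eqref{eqn:bound of integral} uniformly, while yours makes more explicit that the factor $((b-a)+1)^q$ is forced precisely by the $\partial_s\psi_1$ contribution, which is a nice conceptual point.
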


\begin{proof}
Since the proofs of \eqref{eqn: esti time deri of mathcal G1} and \eqref{eqn: esti time deri of mathcal G2} are similar,
we only prove \eqref{eqn: esti time deri of mathcal G2}.
Let $(s,y), (t,x)\in Q_{r_{1},r_{2}}$ and $f\in C_{\rm c}^{\infty}(\mathbb{R}\times\mathbb{R}^{d};V)$
satisfying that $f(t,x)=0$ for $t\geq -8r_{1}$ be given.
Let $q'$ be the conjugate number of $q$. By applying Lemma \ref{lem: bound of convolution} with $R_{1}=0$ and $R_{2}=2r_{2}$,
for any $r<-8r_{1}$, and applying the Jensen's inequality,
we obtain that
\begin{align*}
&\left\|\frac{\partial}{\partial s}L_{\psi_{1}}(s)p_{\psi_{2}}(s,r,\cdot)*f(r,\cdot)(y)\right\|_{V}^{q}\\
&\leq M\mathbb{M}_{x}^{2r_{2}}\|f\|_{V}^{q}(r,x)
\left[\int_{0}^{\infty}(2r_{2}+\rho)^{d}\left(\int_{\partial B_{1}}
      \left|\nabla \frac{\partial}{\partial s}L_{\psi_{1}}(s)p_{\psi_{2}}(s,r,\rho w)\right|^{ q'}
      S_{1}(dw)\right)^{\frac{1}{q'}}d\rho\right]^{q}\\
&\leq 2^{q-1}M\mathbb{M}_{x}^{2r_{2}}\|f\|_{V}^{q}(r,x)
  \left(A_{1}^{q}(r)+A_{2}^{q}(r)\right),
\end{align*}
where
\begin{align*}
A_{1}^{q}(r)
:&=\left(\int_{0}^{(s-r)^{\frac{1}{\gamma_{2}}}}(2r_{2}+\rho)^{d}\left(\int_{\partial B_{1}}
           \left|\nabla \frac{\partial}{\partial s}L_{\psi_{1}}(s)p_{\psi_{2}}(s,r,\rho w)\right|^{q'}
           S_{1}(dw)\right)^{\frac{1}{q'}}d\rho\right)^{q},\\
A_{2}^{q}(r)
:&=\left(\int_{(s-r)^{\frac{1}{\gamma_{2}}}}^{\infty}(2r_{2}+\rho)^{d}\left(\int_{\partial B_{1}}
   \left|\nabla \frac{\partial}{\partial s}L_{\psi_{1}}(s)p_{\psi_{2}}(s,r,\rho w)\right|^{q'}
           S_{1}(dw)\right)^{\frac{1}{q'}}d\rho\right)^{q}.
\end{align*}
Therefore, we obtain that
\begin{align}\label{eqn:bound of partial-s-G-g}
A
&\leq 2^{q-1}M \int_{a}^{-8r_{1}}(s-r)^{\frac{q\gamma_{1}}{\gamma_{2}}-1}
  \mathbb{M}_{x}^{2r_{1}}\|f\|_{V}^{q}(r,x)(A_{1}^{q}(r)+A_{2}^{q}(r))dr
  \nonumber\\
&= 2^{q-1}M(\mathcal{A}_{1}+\mathcal{A}_{2})
\end{align}
with
\begin{align*}
\mathcal{A}_{1}&:=\int_{a}^{-8r_{1}}(s-r)^{\frac{q\gamma_{1}}{\gamma_{2}}-1}
                                    \mathbb{M}_{x}^{2r_{1}}\|f\|_{V}^{q}(r,x)A_{1}^{q}(r)dr,\\
\mathcal{A}_{2}&:=\int_{a}^{-8r_{2}}(s-r)^{\frac{q\gamma_{1}}{\gamma_{2}}-1}
                                    \mathbb{M}_{x}^{2r_{2}}\|f\|_{V}^{q}(r,x)A_{2}^{q}(r)dr.
\end{align*}
We estimate $\mathcal{A}_{1}$. If $0\leq \rho\leq (s-r)^{\frac{1}{\gamma_{2}}}$,
by \eqref{eqn: bound of dt nabla K l=t}, there exists a constant $C_{1}>0$
such that
\begin{align}
\left|\nabla \frac{\partial}{\partial s}L_{\psi_{1}}(s)p_{\psi_{2}}(s,r,\rho w)\right|
&\leq C_{1}\left((\rho^{-(\gamma_{1}+1+d)}+\rho^{-(\gamma_{1}+\gamma_{2}+1+d)})\right.\nonumber\\
&\qquad\left.\wedge \left((s-r)^{-\frac{\gamma_{1}+1+d}{\gamma_{2}}}
                                +(s-r)^{-\frac{\gamma_{1}+\gamma_{2}+1+d}{\gamma_{2}}}\right)\right)\nonumber\\
&= C_{1}\left((s-r)^{-\frac{\gamma_{1}+1+d}{\gamma_{2}}}
                                +(s-r)^{-\frac{\gamma_{1}+\gamma_{2}+1+d}{\gamma_{2}}}\right).\label{eqn:patial s L(s)p}
\end{align}
Therefore, by applying \eqref{eqn:2r2+rho upp bd} and \eqref{eqn:patial s L(s)p},
we obtain that
\begin{align*}
\left(A_{1}^{q}(r)\right)^{1/q}
&=\int_{0}^{(s-r)^{\frac{1}{\gamma_{2}}}}(2r_{2}+\rho)^{d}\left(\int_{\partial B_{1}}
        \left|\nabla \frac{\partial}{\partial s}L_{\psi_{1}}(s)p_{\psi_{2}}(s,r,\rho w)\right|^{q'}S_{1}(dw)\right)^{\frac{1}{q'}}d\rho\\
&\leq\int_{0}^{(s-r)^{\frac{1}{\gamma_{2}}}}(2\cdot 6^{-\frac{1}{\gamma_{2}}}+1)^{d}C_{1}
        \left((s-r)^{-\frac{\gamma_{1}+1}{\gamma_{2}}}
                +(s-r)^{-\frac{\gamma_{1}+\gamma_{2}+1}{\gamma_{2}}}\right)\\
 &\qquad\times\left(\int_{\partial B_{1}}S_{1}(dw)\right)^{\frac{1}{q'}}d\rho\\
&\leq C_{1}(2\cdot 6^{-\frac{1}{\gamma_{2}}}+1)^{d}\left[S_{1}(\partial B_{1})\right]^{\frac{1}{q'}}
            \left((s-r)^{-\frac{\gamma_{1}}{\gamma_{2}}}
                            +(s-r)^{-\frac{\gamma_{1}+\gamma_{2}}{\gamma_{2}}}\right)\\
&=:C_{A_{1}}\left((s-r)^{-\frac{\gamma_{1}}{\gamma_{2}}}+(s-r)^{-\frac{\gamma_{1}+\gamma_{2}}{\gamma_{2}}}\right),
\end{align*}
Since $a< r,s< b$, by applying \eqref{eqn:bound of integral}, we obtain that
\begin{align}
\mathcal{A}_{1}
&\leq C_{A_{1}}^{q}\int_{a}^{-8r_{1}}(s-r)^{\frac{q\gamma_{1}}{\gamma_{2}}-1}\mathbb{M}_{x}^{2r_{2}}\|f\|_{V}^{q}(r,x)
                \left((s-r)^{-\frac{\gamma_{1}}{\gamma_{2}}}
                    +(s-r)^{-\frac{\gamma_{1}+\gamma_{2}}{\gamma_{2}}}\right)^{q}dr \nonumber\\
&= C_{A_{1}}^{q}\int_{a}^{-8r_{1}}(s-r)^{-q-1}
            \mathbb{M}_{x}^{2r_{2}}\|f\|_{V}^{q}(r,x)
            \left((s-r)+1\right)^{q}dr \nonumber\\
&\leq  C_{A_{1}}^{q}\left((b-a)+1\right)^{q}
  \int_{a}^{-8r_{1}}(s-r)^{-q-1}\mathbb{M}_{x}^{2r_{2}}\|f\|_{V}^{q}(r,x)dr \nonumber\\
&\leq C_{A_{1}}^{q}\left((b-a)+1\right)^{q}
C_{q}r_{1}^{-q}\mathbb{M}_{t}^{8r_{1}}\mathbb{M}_{x}^{2r_{2}}\|f\|_{V}^{q}(t,x)\nonumber\\
&=: C_{\mathcal{A}_{1}}\left((b-a)+1\right)^{q}r_{1}^{-q}
        \mathbb{M}_{t}^{8r_{1}}\mathbb{M}_{x}^{2r_{2}}\|f\|_{V}^{q}(t,x),
\label{eqn:bound of mathcal A 21}
\end{align}
where $C_{q}$ is given in \eqref{eqn:bound of integral}.
Now we estimate $\mathcal{A}_{2}$. If $\rho\geq (s-r)^{\frac{1}{\gamma_{2}}}$,
then by \eqref{eqn: bound of dt nabla K}, there exists a constant $C_{1}>0$ such that
\begin{align}
\left|\nabla \frac{\partial}{\partial s}L_{\psi_{1}}(s)p_{\psi_{2}}(s,r,\rho w)\right|
&\leq C_{1}\left((\rho^{-(\gamma_{1}+1+d)}+\rho^{-(\gamma_{1}+\gamma_{2}+1+d)})\right.\nonumber\\
&\qquad\left.\wedge \left((s-r)^{-\frac{\gamma_{1}+1+d}{\gamma_{2}}}
                                    +(s-r)^{-\frac{\gamma_{1}+\gamma_{2}+1+d}{\gamma_{2}}}\right)\right)\nonumber\\
&\leq C_{1}\left(\rho^{-(\gamma_{1}+1+d)}+\rho^{-(\gamma_{1}+\gamma_{2}+1+d)}\right).
\label{eqn:bound nabla partial s L(s)p}
\end{align}
By applying \eqref{eqn:bound nabla partial s L(s)p} and \eqref{eqn: r2 less than or equal constant rho},
we obtain that
\begin{align*}
\left(A_{2}^{q}(r)\right)^{1/q}
&=\int_{(s-r)^{\frac{1}{\gamma_{2}}}}^{\infty}(2r_{2}+\rho)^{d} \left(\int_{\partial B_{1}}
        \left|\nabla \frac{\partial}{\partial s}L_{\psi_{1}}(s)p_{\psi_{2}}(s,r,\rho w)\right|^{q'}S_{1}(dw)\right)^{\frac{1}{q'}}d\rho\\
&\leq C_{1}(2\cdot 6^{-\frac{1}{\gamma_{2}}}+1)^{d}\\
&\qquad\times \int_{(s-r)^{\frac{1}{\gamma_{2}}}}^{\infty}\left(\rho^{-(\gamma_{1}+1)}
                        +\rho^{-(\gamma_{1}+\gamma_{2}+1)}\right)\left(\int_{\partial B_{1}}
                        S_{1}(dw)\right)^{\frac{1}{q'}}d\rho\\
&=C_{1}(2\cdot 6^{-\frac{1}{\gamma_{2}}}+1)^{d}[S_{1}(\partial B_{1})]^{\frac{1}{q'}}\\
    &\qquad \times \left(\frac{1}{\gamma_{1}}(s-r)^{-\frac{\gamma_{1}}{\gamma_{2}}}
                    +\frac{1}{\gamma_{1}+\gamma_{2}} (s-r)^{-\frac{\gamma_{1}+\gamma_{2}}{\gamma_{2}}}\right)\\
&\leq C_{A_{2}}\left((s-r)^{-\frac{\gamma_{1}}{\gamma_{2}}}
                +(s-r)^{-\frac{\gamma_{1}+\gamma_{2}}{\gamma_{2}}}\right),
\end{align*}
where
\begin{align*}
C_{A_{2}}=C_{1}(2\cdot 6^{-\frac{1}{\gamma_{2}}}+1)^{d}[S_{1}(\partial B_{1})]^{\frac{1}{q'}}\frac{1}{\gamma_{1}}.
\end{align*}
Therefore, by the estimations obtained in \eqref{eqn:bound of mathcal A 21}, we have
\begin{align}\label{eqn:bound of mathcal A 22}
\mathcal{A}_{2}
&\leq C_{A_{2}}^{q} \int_{a}^{-8r_{1}}(s-r)^{\frac{q\gamma_{1}}{\gamma_{2}}-1}
                    \mathbb{M}_{x}^{2r_{2}}\|f\|_{V}^{q}(r,x)
                    \left((s-r)^{-\frac{\gamma_{1}}{\gamma_{2}}}
                            +(s-r)^{-\frac{\gamma_{1}+\gamma_{2}}{\gamma_{2}}}\right)^q dr\nonumber\\
&\leq C_{A_{2}}^{q}\left((b-a)+1\right)^{q}
C_{q}r_{1}^{-q}\mathbb{M}_{t}^{8r_{1}}\mathbb{M}_{x}^{2r_{2}}\|f\|_{V}^{q}(t,x)\nonumber\\
&=: C_{\mathcal{A}_{2}}\left((b-a)+1\right)^{q}r_{1}^{-q}
        \mathbb{M}_{t}^{8r_{1}}\mathbb{M}_{x}^{2r_{2}}\|f\|_{V}^{q}(t,x).
\end{align}
Hence by combining \eqref{eqn:bound of partial-s-G-g},
\eqref{eqn:bound of mathcal A 21},
and \eqref{eqn:bound of mathcal A 22}, we obtain that
\begin{align*}
A
&\leq 2^{q-1}M (C_{\mathcal{A}_{1}}+C_{\mathcal{A}_{2}})\left((b-a)+1\right)^{q}
            r_{1}^{-q}\mathbb{M}_{t}^{8r_{1}}\mathbb{M}_{x}^{2r_{2}}\|f\|_{V}^{q}(t,x),
\end{align*}
which proves the inequality given in \eqref{eqn: esti time deri of mathcal G2}.
\end{proof}

Motivated by Lemma 2.11 (for $\psi_{1}(t,\xi)=-|\xi|^{\gamma/2}$ and $q=2$) in \cite{I. Kim K.-H. Kim 2016},
we have the following lemma which will be applied
for the proof of Lemma \ref{lem: bound of double average}.

\begin{lemma}\label{lem: bound of Dt mathcalG g}
Let $q\geq 2$
and let $r_{1}, r_{2}>0 $ satisfying $r_{1}^{1/\gamma_{2}}=r_{2}$.
Then it holds that
\begin{itemize}
  \item [\rm (i)] if  $a\in\mathbb{R}$, then there exists a constant $C_{1}>0$ depending on $d,q, \boldsymbol{\mu}, \boldsymbol{\gamma}$ and $\boldsymbol{\kappa}$ such that for any $(t,x)\in Q_{r_{1},r_{2}}$, $l\geq 0$ and
        $f\in C_{\rm c}^{\infty}(\mathbb{R}\times\mathbb{R}^{d};V)$ satisfying that $f(t,x)=0$ for $t\geq -8r_{1}$,
\begin{equation}\label{eqn: sup of derivative wrt s of mathcal G}
\sup_{(s,y)\in Q_{r_{1},r_{2}}}\left|\frac{\partial}{\partial s}\mathcal{G}_{a,q}f(l,s,y)\right|^{q}
\leq C_{1} r_{1}^{-q}
        \mathbb{M}_{t}^{8r_{1}}\mathbb{M}_{x}^{2r_{2}}\|f\|_{V}^{q}(t,x),
\end{equation}
  \item [\rm (ii)] if $-\infty< a\le -2r_{1}<0\le b< \infty$, then there exist constants $C_{2},C_{3}>0$ depending on $d,q, \boldsymbol{\mu}, \boldsymbol{\gamma}$ and $\boldsymbol{\kappa}$ such that for any $(t,x)\in Q_{r_{1},r_{2}}$ and
            $f\in C_{\rm c}^{\infty}(\mathbb{R}\times\mathbb{R}^{d};V)$ satisfying that $f(t,x)=0$ for $t\geq -8r_{1}$,
\begin{equation}\label{eqn: sup of derivative wrt s of mathcal G2}
\sup_{(s,y)\in Q_{r_{1},r_{2}}}\left|\frac{\partial}{\partial s}\widetilde{\mathcal{G}}_{a,q}f(s,y)\right|^{q}
\leq (C_{2}+C_{3}(b-a)^{q}) r_{1}^{-q}
        \mathbb{M}_{t}^{8r_{1}}\mathbb{M}_{x}^{2r_{2}}\|f\|_{V}^{q}(t,x).
\end{equation}
\end{itemize}
\end{lemma}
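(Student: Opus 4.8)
The plan is to differentiate under the integral sign and to reduce both inequalities, via the elementary bound ``the derivative of a norm is at most the norm of the derivative'' (used already for \eqref{eqn:bound of partial-y-G}), to the two quantities already estimated in Lemmas \ref{lem: bound of Dt mathcalG g-1} and \ref{lem: bound of Dt mathcalG g-2}; the present lemma is then essentially a bookkeeping of those two estimates. First I would dispose of the trivial case $a\ge -8r_1$: for any $(s,y)\in Q_{r_1,r_2}$ we then have $a\le s$ and $f(r,\cdot)=0$ on $[a,s]$ (since $s>-2r_1\ge -8r_1$), so $\mathcal{G}_{a,q}f(l,\cdot,\cdot)$ (resp.\ $\widetilde{\mathcal{G}}_{a,q}f$) vanishes identically on $Q_{r_1,r_2}$ and there is nothing to prove. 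So assume $a<-8r_1$. Because $f(r,\cdot)=0$ for $r\ge -8r_1$ and $s>-2r_1>-8r_1$, the $r$-integral defining $\mathcal{G}_{a,q}f(l,s,y)$ runs only over $(a,-8r_1)$, where $s-r>6r_1>0$ stays bounded away from $0$; hence the integrand is smooth in $s$ on $Q_{r_1,r_2}$ and differentiation under the integral is legitimate, justified by the kernel bounds of Lemma \ref{lem: esti of grad K} and the symbol conditions exactly as in the proof of Lemma \ref{lem: bound of sup of gradient of mathcal G}.

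For part (i), write $\mathcal{G}_{a,q}f(l,s,y)=\big\|\,w(s,\cdot)\,\big\|_{L^q((a,-8r_1);\,dr)}$ with $w(s,r)=(s-r)^{\frac1q(\frac{q\gamma_1}{\gamma_2}-1)}\,\|L_{\psi_1}(l)p_{\psi_2}(s,r,\cdot)*f(r,\cdot)(y)\|_V$. Applying ``derivative $\le$ norm of the derivative'' in the $L^q(dr)$ variable, then the product rule, then the same norm inequality pointwise in $r$ for the $V$-valued factor, and finally $|\eta_1+\eta_2|^q\le 2^{q-1}(|\eta_1|^q+|\eta_2|^q)$, I would obtain, wherever $\frac{\partial}{\partial s}\mathcal{G}_{a,q}f$ exists (which is a.e., this map being locally Lipschitz in $s$),
\begin{align*}
\left|\frac{\partial}{\partial s}\mathcal{G}_{a,q}f(l,s,y)\right|^q
&\le \frac{2^{q-1}}{q^q}\Big|\tfrac{q\gamma_1}{\gamma_2}-1\Big|^q
     \int_a^{-8r_1}(s-r)^{\frac{q\gamma_1}{\gamma_2}-1-q}\,\|L_{\psi_1}(l)p_{\psi_2}(s,r,\cdot)*f(r,\cdot)(y)\|_V^q\,dr\\
&\quad+2^{q-1}\int_a^{-8r_1}(s-r)^{\frac{q\gamma_1}{\gamma_2}-1}
     \Big\|\,\frac{\partial}{\partial s}\big(L_{\psi_1}(l)p_{\psi_2}(s,r,\cdot)\big)*f(r,\cdot)(y)\,\Big\|_V^q\,dr,
\end{align*}
where the exponent $\frac{q\gamma_1}{\gamma_2}-1-q$ is precisely what one gets from raising $\frac{\partial}{\partial s}(s-r)^{\frac1q(\frac{q\gamma_1}{\gamma_2}-1)}$ to the $q$-th power. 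The first integral is exactly the quantity $A$ bounded in Lemma \ref{lem: bound of Dt mathcalG g-1}, and the second is exactly the quantity $A$ bounded in Lemma \ref{lem: bound of Dt mathcalG g-2}(i); both are $\le C\,r_1^{-q}\,\mathbb{M}_t^{8r_1}\mathbb{M}_x^{2r_2}\|f\|_V^q(t,x)$ with constants independent of $l$ and of $(s,y)$. Adding the two bounds and taking the supremum over $(s,y)\in Q_{r_1,r_2}$ gives \eqref{eqn: sup of derivative wrt s of mathcal G}.

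For part (ii) the only difference is that $\widetilde{\mathcal{G}}_{a,q}f(s,y)$ carries $L_{\psi_1}(s)$ rather than $L_{\psi_1}(l)$, so $s$ now enters additionally through the operator. The same three steps produce the same splitting, but in the second integral the derivative $\frac{\partial}{\partial s}\big(L_{\psi_1}(s)p_{\psi_2}(s,r,\cdot)\big)$ appears; by the computation in the proof of Lemma \ref{lem: esti of grad K} this kernel equals $\mathcal{F}^{-1}\big((\partial_s\psi_1(s,\xi)+\psi_1(s,\xi)\psi_2(s,\xi))\exp(\int_r^s\psi_2(u,\xi)\,du)\big)$, so the second integral is exactly the quantity $A$ of Lemma \ref{lem: bound of Dt mathcalG g-2}(ii), which is $\le C_2\big((b-a)+1\big)^q r_1^{-q}\mathbb{M}_t^{8r_1}\mathbb{M}_x^{2r_2}\|f\|_V^q(t,x)$; the first integral, now with $L_{\psi_1}(s)$, is bounded exactly as in Lemma \ref{lem: bound of Dt mathcalG g-1}, whose argument and the underlying estimate \eqref{eqn: bound of nabla K} are uniform in the time at which $L_{\psi_1}$ is evaluated. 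Using $\big((b-a)+1\big)^q\le 2^{q-1}\big((b-a)^q+1\big)$ to repackage the bound into the form $\big(C_2+C_3(b-a)^q\big)r_1^{-q}\mathbb{M}_t^{8r_1}\mathbb{M}_x^{2r_2}\|f\|_V^q(t,x)$, and taking the supremum over $(s,y)\in Q_{r_1,r_2}$, yields \eqref{eqn: sup of derivative wrt s of mathcal G2}.

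The substance of the lemma is really contained in Lemmas \ref{lem: bound of Dt mathcalG g-1} and \ref{lem: bound of Dt mathcalG g-2}; the points that need care here are the justification of differentiating under the integral and of the two norm-derivative inequalities (handled as in the proof of Lemma \ref{lem: bound of sup of gradient of mathcal G}), the exponent arithmetic ensuring that the weight-derivative term carries precisely the weight $(s-r)^{\frac{q\gamma_1}{\gamma_2}-1-q}$ of Lemma \ref{lem: bound of Dt mathcalG g-1}, and, in part (ii), the correct identification of $\frac{\partial}{\partial s}\big(L_{\psi_1}(s)p_{\psi_2}(s,r,\cdot)\big)$ with the kernel appearing in Lemma \ref{lem: bound of Dt mathcalG g-2}(ii).
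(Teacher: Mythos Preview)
Your proposal is correct and follows essentially the same approach as the paper: dispose of the trivial case $a\ge -8r_1$, reduce the integral to $(a,-8r_1)$ using the support condition on $f$, apply the ``derivative of a norm $\le$ norm of the derivative'' inequality together with the product rule to split into two terms, and identify these with the quantities already bounded in Lemmas \ref{lem: bound of Dt mathcalG g-1} and \ref{lem: bound of Dt mathcalG g-2}. The only cosmetic difference is that the paper applies Minkowski at the $L^q$ level to get $|\partial_s\widetilde{\mathcal{G}}_{a,q}f|\le |\tfrac{\gamma_1}{\gamma_2}-\tfrac{1}{q}|A_1+A_2$ and then raises to the $q$-th power, whereas you use $|\eta_1+\eta_2|^q\le 2^{q-1}(|\eta_1|^q+|\eta_2|^q)$ directly; both routes give the same bound up to harmless constants.
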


\begin{proof}
Since the proofs of \eqref{eqn: sup of derivative wrt s of mathcal G} and \eqref{eqn: sup of derivative wrt s of mathcal G2}
are similar, we only prove \eqref{eqn: sup of derivative wrt s of mathcal G2}.
Let $(t,x),(s,y)\in Q_{r_{1},r_{2}}$ and
$f\in C_{\rm c}^{\infty}(\mathbb{R}\times\mathbb{R}^{d};V)$ satisfying that $f(t,x)=0$ for $t\geq -8r_{1}$ be given.
If $a\geq -8r_{1}$, then by the assumption, $f(r,x)=0$ for all $r\geq a \geq -8r_{1}$, and so we have
\begin{align*}
\widetilde{\mathcal{G}}_{a,q}f(s,y)
=\left[\int_{a}^{s}(s-r)^{\frac{q\gamma_{1}}{\gamma_{2}}-1}
            \|L_{\psi_{1}}(s)p_{\psi_{2}}(s,r,\cdot)*f(r,\cdot)(y)\|_{V}^{q}dr\right]^{\frac{1}{q}}
=0.
\end{align*}
Therefore, there is nothing to prove because the LHS of \eqref{eqn: sup of derivative wrt s of mathcal G} is zero.
Therefore, we assume that $a< -8r_{1}$.
Since $s>-2r_{1}>-8r_{1}$ and $f(r,x)=0$ for $r\geq -8r_{1}$, we obtain that
\begin{align*}
\widetilde{\mathcal{G}}_{a,q}f(s,y)
&=\left[\int_{a}^{s}(s-r)^{\frac{q\gamma_{1}}{\gamma_{2}}-1}
        \|L_{\psi_{1}}(s)p_{\psi_{2}}(s,r,\cdot)*f(r,\cdot)(y)\|_{V}^{q}dr\right]^{\frac{1}{q}}\\
&=\left[\int_{a}^{-8r_{1}}(s-r)^{\frac{q\gamma_{1}}{\gamma_{2}}-1}
        \|L_{\psi_{1}}(s)p_{\psi_{2}}(s,r,\cdot)*f(r,\cdot)(y)\|_{V}^{q}dr\right]^{\frac{1}{q}}.
\end{align*}
By the fact that the derivative of a norm less than or equal the norm of the derivative,
and the triangle inequality, we obtain that
\begin{align}
&\left|\frac{\partial}{\partial s}\widetilde{\mathcal{G}}_{a,q}f(s,y)\right|\nonumber\\
&=\left|\frac{\partial}{\partial s}\left[\int_{a}^{-8r_{1}}  (s-r)^{\frac{q\gamma_{1}}{\gamma_{2}}-1}
            \|L_{\psi_{1}}(s)p_{\psi_{2}}(s,r,\cdot)*f(r,\cdot)(y)\|_{V}^{q}dr\right]^{\frac{1}{q}}\right|\nonumber\\
&=\left|\frac{\partial}{\partial s}\left[\int_{a}^{-8r_{1}}  \|(s-r)^{\frac{\gamma_{1}}{\gamma_{2}}-\frac{1}{q}}
            L_{\psi_{1}}(s)p_{\psi_{2}}(s,r,\cdot)*f(r,\cdot)(y)\|_{V}^{q}dr\right]^{\frac{1}{q}}\right|\nonumber\\
&\leq\left[\int_{a}^{-8r_{1}}\left\|\frac{\partial}{\partial s}  \left((s-r)^{\frac{\gamma_{1}}{\gamma_{2}}-\frac{1}{q}}
            L_{\psi_{1}}(s)p_{\psi_{2}}(s,r,\cdot)*f(r,\cdot)(y)\right)\right\|_{V}^{q}dr\right]^{\frac{1}{q}}\nonumber\\
&=\left[\int_{a}^{-8r_{1}}\left\|\left(\frac{\gamma_{1}}{\gamma_{2}}-\frac{1}{q}\right)
        (s-r)^{\frac{\gamma_{1}}{\gamma_{2}}-\frac{1}{q}-1}L_{\psi_{1}}(s)p_{\psi_{2}}(s,r,\cdot)*f(r,\cdot)(y)\right.\right.\nonumber\\
&\qquad\left.\left.+(s-r)^{\frac{\gamma_{1}}{\gamma_{2}}-\frac{1}{q}}\frac{\partial}{\partial s}
                    \left(L_{\psi_{1}}(s)p_{\psi_{2}}(s,r,\cdot)\right)*f(r,\cdot)(y)\right\|_{V}^{q}dr\right]^{\frac{1}{q}}\nonumber\\
&\leq \left[\int_{a}^{-8r_{1}}\left|\frac{\gamma_{1}}{\gamma_{2}}-\frac{1}{q}\right|^{q}(s-r)^{\frac{q\gamma_{1}}{\gamma_{2}}-1-q}
    \|L_{\psi_{1}}(s)p_{\psi_{2}}(s,r,\cdot)*f(r,\cdot)(y)\|_{V}^{q}dr\right]^{\frac{1}{q}}\nonumber\\
       &\qquad+\left[\int_{a}^{-8r_{1}}(s-r)^{\frac{q\gamma_{1}}{\gamma_{2}}-1}
     \left\|\frac{\partial}{\partial s}
     \left(L_{\psi_{1}}(s)p_{\psi_{2}}(s,r,\cdot)\right)*f(r,\cdot)(y)\right\|_{V}^{q}dr\right]^{\frac{1}{q}}\nonumber\\
&=:\left|\frac{\gamma_{1}}{\gamma_{2}}-\frac{1}{q}\right| A_{1}+A_{2},\label{eqn:bound of partial-G}
\end{align}
where
\begin{align*}
A_{1}&=\left[\int_{a}^{-8r_{1}}(s-r)^{\frac{q\gamma_{1}}{\gamma_{2}}-1-q}
                \|L_{\psi_{1}}(s)p_{\psi_{2}}(s,r,\cdot)*f(r,\cdot)(y)\|_{V}^{q}dr\right]^{\frac{1}{q}},\\
A_{2}&=\left[\int_{a}^{-8r_{1}}(s-r)^{\frac{q\gamma_{1}}{\gamma_{2}}-1}
                \left\|\frac{\partial}{\partial s}L_{\psi_{1}}(s)p_{\psi_{2}}(s,r,\cdot)*f(r,\cdot)(y)\right\|_{V}^{q}dr\right]^{\frac{1}{q}}.
\end{align*}
By applying Lemma \ref{lem: bound of Dt mathcalG g-1}, \eqref{eqn: esti time deri of mathcal G2}
and the Jensen's inequality, we obtain that
\begin{align*}
A_{1}^{q}&\leq C_{A_{1}}r_{1}^{-q}\mathbb{M}_{t}^{8r_{1}}\mathbb{M}_{x}^{2r_{2}}\|f\|_{V}^{q}(t,x),\\
A_{2}^{q}&\leq C_{A_{2}}\left((b-a)+1\right)^{q}
            r_{1}^{-q}\mathbb{M}_{t}^{8r_{1}}\mathbb{M}_{x}^{2r_{2}}\|f\|_{V}^{q}(t,x)\\
&\leq C_{A_{2}}2^{q-1}\left((b-a)^q+1\right)
            r_{1}^{-q}\mathbb{M}_{t}^{8r_{1}}\mathbb{M}_{x}^{2r_{2}}\|f\|_{V}^{q}(t,x)
\end{align*}
for some constant $C_{A_{1}},C_{A_{2}}>0$.
Therefore, by applying \eqref{eqn:bound of partial-G},
we obtain that
\begin{align}\label{eqn: esti of partial s G}
\left|\frac{\partial}{\partial s}\widetilde{\mathcal{G}}_{a,q}f(s,y)\right|^{q}
&\leq 2^{q-1}\left(\left|\frac{\gamma_{1}}{\gamma_{2}}-\frac{1}{q}\right| ^{q}A_{1}^{q}+A_{2}^{q}\right)\nonumber\\
&\leq (C_{2}+C_{3}(b-a)^{q}) r_{1}^{-q}\mathbb{M}_{t}^{8r_{1}}\mathbb{M}_{x}^{2r_{2}}\|f\|_{V}^{q}(t,x),
\end{align}
where
\begin{align*}
C_{2}=2^{q-1}\left(\left|\frac{\gamma_{1}}{\gamma_{2}}-\frac{1}{q}\right| ^{q}C_{A_{1}}+C_{A_{2}}2^{q-1}\right),
\quad
C_{3}=2^{2q-2}C_{A_{2}}.
\end{align*}
Hence by taking supremum for $(s,y)$ on left hand side of \eqref{eqn: esti of partial s G},
we have the inequality given in \eqref{eqn: sup of derivative wrt s of mathcal G2}.
\end{proof}

For each $R>0$, we put
\begin{align*}
Q_{R}=(-2R,0)\times B_{R^{1/\gamma_{2}}},
\quad
\hbox{\sout{$\displaystyle\int$}}_{Q_{R}}g(s,y)dsdy=\frac{1}{|Q_{R}|}\int_{Q_{R}}g(s,y)dsdy.
\end{align*}

Motivated by Lemma 2.12 (for $\psi_{1}(t,\xi)=-|\xi|^{\gamma/2}$ and $q=2$) in \cite{I. Kim K.-H. Kim 2016},
we have the following lemma which will be applied
for the proof of Theorem \ref{thm: sharp fct less than maximal}.

\begin{lemma}\label{lem: bound of double average}
Let $q\geq 2$ and $R>0$. Then it holds that
\begin{itemize}
  \item [\rm (i)] if $-\infty<a\le -2R<0\le b< \infty$, then there exist constants $C_{1},C_{2},C_{3}>0$ depending on $a,b ,q,\boldsymbol{\mu},\boldsymbol{\kappa}$ and $\boldsymbol{\gamma}$ such that
      for any $(t,x)\in Q_{R}$, $l\geq 0$ and $f\in C_{\rm c}^{\infty}(\mathbb{R}\times\mathbb{R}^{d};V)$,
\begin{align}
&\frac{1}{|Q_{R}|^{2}}\int_{Q_{R}}\int_{Q_{R}}
            |\mathcal{G}_{a,q}f(l,s,y)-\mathcal{G}_{a,q}f(l,r,z)|^{q}dsdydrdz\nonumber\\
&\qquad\leq C_{1}\mathbb{M}_{t}\mathbb{M}_{x}\|f\|_{V}^{q}(t,x)\label{eqn: double aver 1}
\end{align}
and
\begin{align}
&\frac{1}{|Q_{R}|^{2}}\int_{Q_{R}}\int_{Q_{R}}
            |\widetilde{\mathcal{G}}_{a,q}f(s,y)-\widetilde{\mathcal{G}}_{a,q}f(r,z)|^{q}dsdydrdz\nonumber\\
&\qquad\leq (C_{2}+C_{3}(b-a)^{q})\mathbb{M}_{t}\mathbb{M}_{x}\|f\|_{V}^{q}(t,x),\label{eqn: double aver 2}
\end{align}
  \item [\rm (ii)] if $q=2$ and $-\infty\leq a<b\leq \infty$,
  then there exists a constant $C_{4}>0$ depending on $\boldsymbol{\mu},\boldsymbol{\kappa}$ and $\boldsymbol{\gamma}$
such that for any $(t,x)\in Q_{R}$, $l \geq 0$ and $f\in C_{\rm c}^{\infty}(\mathbb{R}\times\mathbb{R}^{d};V)$,
\begin{align}
&\frac{1}{|Q_{R}|^{2}}\int_{Q_{R}}\int_{Q_{R}}
    |\mathcal{G}_{a,2}f(l,s,y)-\mathcal{G}_{a,2}f(l,r,z)|^{2}dsdydrdz\nonumber\\
&\qquad\leq C_{4}\mathbb{M}_{t}\mathbb{M}_{x}\|f\|_{V}^{2}(t,x).\label{eqn: double aver 3}
\end{align}
\end{itemize}
\end{lemma}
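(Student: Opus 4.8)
The plan is to prove \eqref{eqn: double aver 1}, \eqref{eqn: double aver 2} and \eqref{eqn: double aver 3} by a single decomposition of $f$ adapted to $Q_R$; I describe it for \eqref{eqn: double aver 1}. Put $r_1=R$ and $r_2=R^{1/\gamma_2}$, so $Q_R=Q_{r_1,r_2}$, $r_1^{1/\gamma_2}=r_2$, $|Q_R|\asymp r_1 r_2^{d}$, and, since $r_1=r_2^{\gamma_2}$, the scaling identity $r_2^{\,d-\gamma_1 q}\,r_1^{\,q\gamma_1/\gamma_2}=r_2^{\,d}$ holds. Two elementary facts will be used throughout. First, every $(s,y)\in Q_R$ has $s<0$ and $\mathcal G_{a,q}f(l,s,y)$ involves only $f(r,\cdot)$ with $a<r<s$, so the values of $\mathcal G_{a,q}f$ on $Q_R$ are unaffected by modifying $f$ on $\{r\ge 0\}$, and if $f(r,\cdot)\equiv 0$ for $r\le -9r_1$ then $\mathcal G_{a,q}f\le\mathcal G_{-9r_1,q}f$ on $Q_R$. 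Second, writing $\mathcal G_{a,q}f(l,s,y)=\|(s-r)^{\gamma_1/\gamma_2-1/q}(L_{\psi_1}(l)p_{\psi_2}(s,r,\cdot))*f(r,\cdot)(y)\|_{L^{q}((a,s);V)}$, the triangle inequality in $L^{q}$ gives $|\mathcal G_{a,q}(f+g)-\mathcal G_{a,q}f|\le\mathcal G_{a,q}g$ and $\mathcal G_{a,q}(f+g)\le\mathcal G_{a,q}f+\mathcal G_{a,q}g$ pointwise (and likewise for $\widetilde{\mathcal G}$). Now fix $\zeta\in C^{\infty}(\mathbb R)$ with $\zeta\equiv 0$ on $(-\infty,-9r_1]$ and $\zeta\equiv1$ on $[-8r_1,\infty)$, and $\chi\in C_{\rm c}^{\infty}(\mathbb R^{d})$ with $\chi\equiv1$ on $B_{2r_2}$ and $\mathrm{supp}\,\chi\subset B_{3r_2}$, and set $f_{\rm far}=(1-\zeta)f$, $g_1=\zeta\chi f$, $g_2=\zeta(1-\chi)f$, so $f=f_{\rm far}+g_1+g_2$ in $C_{\rm c}^{\infty}(\mathbb R^{d+1};V)$. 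By the two facts,
\begin{align*}
&\frac{1}{|Q_R|^{2}}\int_{Q_R}\!\int_{Q_R}|\mathcal G_{a,q}f(l,s,y)-\mathcal G_{a,q}f(l,r,z)|^{q}\,dsdydrdz\\
&\qquad\le C_q\left(\mathcal O+\frac{1}{|Q_R|}\int_{Q_R}|\mathcal G_{a,q}g_1|^q+\frac{1}{|Q_R|}\int_{Q_R}|\mathcal G_{a,q}g_2|^q\right),
\end{align*}
where $\mathcal O:=|Q_R|^{-2}\int_{Q_R}\int_{Q_R}|\mathcal G_{a,q}f_{\rm far}(l,s,y)-\mathcal G_{a,q}f_{\rm far}(l,r,z)|^{q}\,dsdydrdz$.

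The far part carries the smoothness: if $(s,y)\in Q_R$ and $r\le -8r_1$ then $s-r\ge 6r_1>0$, so the kernel $L_{\psi_1}(l)p_{\psi_2}(s,r,\cdot)$ is evaluated away from its singularity, $\mathcal G_{a,q}f_{\rm far}$ is Lipschitz on $Q_R$, and since $B_{r_2}$ is convex and $(-2r_1,0)$ an interval,
\begin{align*}
|\mathcal G_{a,q}f_{\rm far}(l,s,y)-\mathcal G_{a,q}f_{\rm far}(l,r,z)|\le 2r_2\sup_{Q_R}|\nabla_y\mathcal G_{a,q}f_{\rm far}|+2r_1\sup_{Q_R}\Big|\frac{\partial}{\partial s}\mathcal G_{a,q}f_{\rm far}\Big|.
\end{align*}
As $f_{\rm far}$ vanishes for $t\ge -8r_1$, Lemma \ref{lem: bound of sup of gradient of mathcal G} bounds the first supremum to the $q$-th power by $Cr_1^{-q/\gamma_2}\mathbb M_t^{8r_1}\mathbb M_x^{2r_2}\|f\|_V^q(t,x)$ and Lemma \ref{lem: bound of Dt mathcalG g}(i) bounds the second by $Cr_1^{-q}\mathbb M_t^{8r_1}\mathbb M_x^{2r_2}\|f\|_V^q(t,x)$; since $(2r_2)^q r_1^{-q/\gamma_2}=2^q=(2r_1)^q r_1^{-q}$ and the truncated maximal functions are dominated by $\mathbb M_t\mathbb M_x\|f\|_V^q(t,x)$, we get $\mathcal O\le C\,\mathbb M_t\mathbb M_x\|f\|_V^q(t,x)$.

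The local parts are handled by the integral bounds. Since $g_1$ vanishes for $r\le -9r_1$, we have $\mathcal G_{a,q}g_1\le\mathcal G_{-9r_1,q}g_1$ on $Q_R$, and as $g_1$ is supported in $\mathbb R\times B_{3r_2}$, Lemma \ref{lem: g equal 0 outside of B3r1}(i) (with $r=r_2$, $2r'=9r_1$) gives $\int_{Q_R}|\mathcal G_{a,q}g_1|^q\le\int_{-9r_1}^0\int_{B_{r_2}}|\mathcal G_{-9r_1,q}g_1|^q\le C\int_{-9r_1}^0\int_{B_{3r_2}}\|f\|_V^q$; since $(t,x)\in Q_R$ and $|Q_R|\asymp r_1|B_{3r_2}|$, dividing by $|Q_R|$ yields $\le C\,\mathbb M_t\mathbb M_x\|f\|_V^q(t,x)$. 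For $g_2$, which vanishes on $\mathbb R\times B_{2r_2}$, multiply it by a smooth temporal cutoff equal to $1$ on $(-\infty,r_1]$ and vanishing on $[2r_1,\infty)$; this changes neither $\mathcal G_{a,q}g_2$ on $Q_R$ nor its maximal-function majorant, and makes it supported in $(-10r_1,10r_1)\times(\mathbb R^{d}\setminus B_{2r_2})$, so Lemma \ref{lem: bound of integral of mathcal G over Qr2r1} applies and gives $\int_{Q_R}|\mathcal G_{a,q}g_2|^q\le C\,r_2^{\,d-\gamma_1 q}r_1^{\,q\gamma_1/\gamma_2}\int_{-10r_1}^0\mathbb M_x^{3r_2}\|f\|_V^q(r,x)\,dr=C\,r_2^{\,d}\int_{-10r_1}^0\mathbb M_x\|f\|_V^q(r,x)\,dr$; using $\int_{-10r_1}^0\mathbb M_x\|f\|_V^q(r,x)\,dr\le C r_1\mathbb M_t\mathbb M_x\|f\|_V^q(t,x)$ and $r_2^{\,d}r_1\asymp|Q_R|$, this is again $\le C\,\mathbb M_t\mathbb M_x\|f\|_V^q(t,x)$. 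Collecting the three contributions gives \eqref{eqn: double aver 1}.

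The estimate \eqref{eqn: double aver 2} is proved by the same scheme with $\mathcal G$ replaced by $\widetilde{\mathcal G}$, now invoking \eqref{eqn:int -2r' B3r 2}, \eqref{eqn:int-Q-Gg 2}, \eqref{eqn: sup of gradient of mathcal G 2} and \eqref{eqn: sup of derivative wrt s of mathcal G2}; the extra factor $(C_2+C_3(b-a)^q)$ enters only through \eqref{eqn: sup of derivative wrt s of mathcal G2}, whose $(b-a)$-dependence in turn comes from the term $|x|^{-(\gamma_1+\gamma_2+1+d)}$ in \eqref{eqn: bound of dt nabla K l=t}. For \eqref{eqn: double aver 3} ($q=2$, $-\infty\le a<b\le\infty$) the same decomposition works verbatim: the cutoffs make $f_{\rm far},g_1,g_2$ compactly supported in time, so $\mathcal G_{a,2}$ may be replaced on $Q_R$ by $\mathcal G$ with a finite lower limit; Lemmas \ref{lem: bound of sup of gradient of mathcal G} and \ref{lem: bound of Dt mathcalG g}(i) then apply with constants free of $a$, and for the local pieces one uses \eqref{eqn:int -2r' B3r 3} (whose constant is free of $a,b$) together with Lemma \ref{lem: bound of integral of mathcal G over Qr2r1}. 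The main obstacle will be the bookkeeping of the last two paragraphs: choosing the cutoffs so that the support hypothesis of each of the four preparatory lemmas is exactly met (crucially exploiting that on $Q_R$ the $g$-function ignores $f(r,\cdot)$ for $r\ge 0$, and for the far part for $r>-8r_1$), and then tracking the powers of $r_1,r_2$ together with the passage from the truncated maximal functions $\mathbb M_t^{8r_1}\mathbb M_x^{2r_2}$ and $\mathbb M_x^{3r_2}$ to the plain $\mathbb M_t\mathbb M_x$ at the fixed reference point $(t,x)\in Q_R$, so that all contributions collapse to one constant multiple of $\mathbb M_t\mathbb M_x\|f\|_V^q(t,x)$.
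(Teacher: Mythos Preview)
Your proposal is correct and follows essentially the same strategy as the paper: a temporal cutoff isolates the ``far'' piece $f_{\rm far}$ (the paper's $f_2$), whose oscillation on $Q_R$ is controlled via the mean-value theorem together with Lemmas~\ref{lem: bound of sup of gradient of mathcal G} and~\ref{lem: bound of Dt mathcalG g}, while a further spatial cutoff splits the ``near'' piece into $g_1,g_2$ (the paper's $f_{11},f_{12}$), handled by Lemmas~\ref{lem: g equal 0 outside of B3r1} and~\ref{lem: bound of integral of mathcal G over Qr2r1} respectively. The only cosmetic differences are that you perform the three-fold decomposition in one step rather than two and use the pointwise inequality $|\mathcal G_{a,q}(f_{\rm far}+g)-\mathcal G_{a,q}f_{\rm far}|\le\mathcal G_{a,q}g$ directly, whereas the paper subtracts the constant $c=\mathcal G_{a,q}f_2(l,t,x)$ before splitting.
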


\begin{proof}
Since the proofs of \eqref{eqn: double aver 1}, \eqref{eqn: double aver 2}
and \eqref{eqn: double aver 3} are similar, we only prove \eqref{eqn: double aver 1}.
Let $(t,x)\in Q_{R}$, $l\geq 0$ and $f\in C_{\rm c}^{\infty}(\mathbb{R}\times\mathbb{R}^{d};V)$ be given.
We take a function $\zeta\in C_{\rm c}^{\infty}(\mathbb{R})$ such that $0\leq \zeta\leq 1$ and
\begin{align*}
\zeta=\left\{
        \begin{array}{ll}
          1 & \hbox{on } \quad [-8R,8R],\\
          0 & \hbox{on } \quad [-10R,10R]^{\rm c},
        \end{array}
      \right.
\end{align*}
where $A^{\rm c}$ is the complement of a set $A$. Put
\begin{align*}
f_{1}(s,y)=f(s,y)\zeta(s),\quad f_{2}(s,y)=f(s,y)(1-\zeta(s)).
\end{align*}
Then it is obvious that $f=f_1+f_2$, and by direct computation, we can see that
\begin{align}
\mathcal{G}_{a,q}f
&\leq\mathcal{G}_{a,q}f_{1}+\mathcal{G}_{a,q}f_{2}\label{eqn: 1st ineq},\\
\mathcal{G}_{a,q}f_{2}
&\leq \mathcal{G}_{a,q}f\label{eqn: 2nd ineq}.
\end{align}
In fact, since
\begin{align*}
 L_{\psi_{1}}(l)p_{\psi_{2}}(t,s,\cdot)*f_{2}(s,\cdot)(x)
 =(1-\zeta(s))L_{\psi_{1}}(l)p_{\psi_{2}}(t,s,\cdot)*f(s,\cdot)(x)
\end{align*}
and $|1-\zeta(s)|\leq 1$, we obtain that
\begin{align*}
\mathcal{G}_{a,q}f_{2}(l,t,x)
&=\left[\int_{a}^{t}(t-s)^{\frac{q\gamma_{1}}{\gamma_{2}}-1}
            \|L_{\psi_{1}}(l)p_{\psi_{2}}(t,s,\cdot)*f_{2}(s,\cdot)(x)\|_{V}^{q}ds\right]^{\frac{1}{q}}\\
&=\left[\int_{a}^{t}|1-\zeta(s)|^{q}(t-s)^{\frac{q\gamma_{1}}{\gamma_{2}}-1}
            \|L_{\psi_{1}}(l)p_{\psi_{2}}(t,s,\cdot)*f(s,\cdot)(x)\|_{V}^{q}ds\right]^{\frac{1}{q}}\\
&\leq \left[\int_{a}^{t}(t-s)^{\frac{q\gamma_{1}}{\gamma_{2}}-1}
            \|L_{\psi_{1}}(l)p_{\psi_{2}}(t,s,\cdot)*f(s,\cdot)(x)\|_{V}^{q}ds\right]^{\frac{1}{q}}\\
&=\mathcal{G}_{a,q}f(l,t,x),
\end{align*}
which proves \eqref{eqn: 2nd ineq}.
Then the proof of \eqref{eqn: 1st ineq} is straightforward by using $f=f_{1}+f_{2}$
and applying the Minkowski's inequality.
Also, we observe that for any constant $c$,
it holds that $|\mathcal{G}_{a,q}f-c|\leq |\mathcal{G}_{a,q}f_{1}|+|\mathcal{G}_{a,q}f_{2}-c|$
(see Lemma 2.12 in \cite{I. Kim K.-H. Kim 2016}).
Hence, for any constant $c$, we obtain that
\begin{align}
&\frac{1}{|Q_{R}|^{2}}\int_{Q_{R}}\int_{Q_{R}}
\left|\mathcal{G}_{a,q}f(l,s,y)-\mathcal{G}_{a,q}f(l,r,z)\right|^{q}dsdydrdz\nonumber\\
&=\frac{1}{|Q_{R}|^{2}}\int_{Q_{R}}\int_{Q_{R}}
  \left|\mathcal{G}_{a,q}f(l,s,y)-c+c-\mathcal{G}_{a,q}f(l,r,z)\right|^{q}dsdydrdz\nonumber\\
&\leq 2^{q-1}\frac{1}{|Q_{R}|^{2}}\left(\int_{Q_{R}}\int_{Q_{R}}|\mathcal{G}_{a,q}f(l,s,y)-c|^{q}dsdydrdz\right.\nonumber\\
&\qquad\qquad \left.+\int_{Q_{R}}\int_{Q_{R}}|c-\mathcal{G}_{a,q}f(l,r,z)|^{q}drdzdsdy\right)\nonumber\\
&=2^{q}\hbox{\sout{$\displaystyle\int$}}_{Q_{R}}|\mathcal{G}_{a,q}f(l,s,y)-c|^{q}dsdy\nonumber\\
&\leq 2^{2q-1}\hbox{\sout{$\displaystyle\int$}}_{Q_{R}}|\mathcal{G}_{a,q}f_{1}(l,s,y)|^{q}dsdy
+2^{2q-1}\hbox{\sout{$\displaystyle\int$}}_{Q_{R}}|\mathcal{G}_{a,q}f_{2}(l,s,y)-c|^{q}dsdy.\label{eqn:esti double aver}
\end{align}
Therefore, by taking $c=\mathcal{G}_{a,q}f_{2}(l,t,x)$ in \eqref{eqn:esti double aver}, we have
\begin{align}
&\frac{1}{|Q_{R}|^{2}}\int_{Q_{R}}\int_{Q_{R}}
       \left|\mathcal{G}_{a,q}f(l,s,y)-\mathcal{G}_{a,q}f(l,r,z)\right|^{q}dsdydrdz\nonumber\\
&\qquad \leq 2^{2q-1}\hbox{\sout{$\displaystyle\int$}}_{Q_{R}}|\mathcal{G}_{a,q}f_{1}(l,s,y)|^{q}dsdy
     +2^{2q-1}\hbox{\sout{$\displaystyle\int$}}_{Q_{R}}|\mathcal{G}_{a,q}f_{2}(l,s,y)
    -\mathcal{G}_{a,q}f_{2}(l,t,x)|^{q}dsdy. \label{eqn:decom of integral}
\end{align}
On the other hand, by the mean value theorem,
there exists a point $(s_{0},y_{0})$ on the line segment connecting $(s,y)$ and $(t,x)$
such that
\begin{align*}
\mathcal{G}_{a,q}f_{2}(l,s,y)-\mathcal{G}_{a,q}f_{2}(l,t,x)
&=\partial_{s}\mathcal{G}_{a,q}f_{2}(l,s_{0},y_{0})(s-t)+\nabla \mathcal{G}_{a,q}f_{2}(l,s_{0},y_{0})\cdot (y-x),
\end{align*}
where $\partial_{s}$ means the derivative with respect to the variable $s$.
Also, if $(s,y)\in Q_{R}$, then we have
\begin{align*}
|s-t|< 2R,\quad |y-x|<2R^{\frac{1}{\gamma_{2}}}.
\end{align*}
Therefore, by the Jensen's inequality, we obtain that
\begin{align*}
&|\mathcal{G}_{a,q}f_{2}(l,s,y)-\mathcal{G}_{a,q}f_{2}(l,t,x)|^{q}\\
&\qquad\leq 2^{q-1}\left|\partial_{s}\mathcal{G}_{a,q}f_{2}(l,s_{0},y_{0})\right|^{q}\left|s-t\right|^{q}
   +2^{q-1}|\nabla \mathcal{G}_{a,q}f_{2}(l,s_{0},y_{0})|^{q}| y-x|^{q}\\
&\qquad\leq 2^{2q-1}\left|R\partial_{s}\mathcal{G}_{a,q}f_{2}(l,s_{0},y_{0})\right|^{q}
   +2^{2q-1}|R^{\frac{1}{\gamma_{2}}}\nabla \mathcal{G}_{a,q}f_{2}(l,s_{0},y_{0})|^{q}\\
&\qquad\leq 2^{2q-1}
  \sup_{(s,y)\in Q_{R}}\left(\left|R\frac{\partial }{\partial s}\mathcal{G}_{a,q}f_{2}(l,s,y)\right|^{q}
    +|R^{\frac{1}{\gamma_{2}}}\nabla \mathcal{G}_{a,q}f_{2}(l,s,y)|^{q}\right),
\end{align*}
from which we have
\begin{align*}
&\hbox{\sout{$\displaystyle\int$}}_{Q_{R}}|\mathcal{G}_{a,q}f_{2}(l,s,y)
            -\mathcal{G}_{a,q}f_{2}(l,t,x)|^{q}dsdy\nonumber\\
&\leq 2^{2q-1}\sup_{(s,y)\in Q_{R}}
       \left(\left|R\frac{\partial }{\partial s}\mathcal{G}_{a,q}f_{2}(l,s,y)\right|^{q}
    +|R^{\frac{1}{\gamma_{2}}}\nabla \mathcal{G}_{a,q}f_{2}(l,s,y)|^{q}\right).
\end{align*}
Therefore, by \eqref{eqn:decom of integral}, we have
\begin{align}
&\frac{1}{|Q_{R}|^{2}}\int_{Q_{R}}\int_{Q_{R}}
       \left|\mathcal{G}_{a,q}f(l,s,y)-\mathcal{G}_{a,q}f(l,r,z)\right|^{q}dsdydrdz \nonumber\\
&\qquad \leq 2^{2q-1}\hbox{\sout{$\displaystyle\int$}}_{Q_{R}}|\mathcal{G}_{a,q}f_{1}(l,s,y)|^{q}dsdy \nonumber\\
&\qquad\qquad     +4^{2q-1}\sup_{(s,y)\in Q_{R}}
        \left(\left|R\frac{\partial }{\partial s}\mathcal{G}_{a,q}f_{2}(l,s,y)\right|^{q}
    +|R^{\frac{1}{\gamma_{2}}}\nabla \mathcal{G}_{a,q}f_{2}(l,s,y)|^{q}\right).
  \label{eqn:bound for LL}
\end{align}
We now show that
\begin{equation}\label{eqn: bound of mathcalG A1 on QR}
\int_{Q_{R}}|\mathcal{G}_{a,q}f_{1}(l,s,y)|^{q}dsdy
  \leq C_0|Q_{R}|\mathbb{M}_{t}\mathbb{M}_{x}\|f\|_{V}^{q}(t,x)
\end{equation}
for a constant $C_0$. We take $\eta\in C_{\rm c}^{\infty}(\mathbb{R}^{d})$ such that $0\leq \eta\leq 1$ and
\begin{align*}
\eta=\left\{
       \begin{array}{ll}
1\quad&\text{on}\quad B_{2R^{1/\gamma_{2}}},\\
0\quad&\text{on}\quad B_{3R^{1/\gamma_{2}}}^{\rm c},
       \end{array}
     \right.
\end{align*}
and we put $f_{11}=\eta f_{1}$ and $f_{12}=(1-\eta)f_{1}$.
Then it holds that $\mathcal{G}_{a,q}f_{1}\leq \mathcal{G}_{a,q}f_{11}+\mathcal{G}_{a,q}f_{12}$,
and by applying Lemma \ref{lem: g equal 0 outside of B3r1},
we obtain that
\begin{align}
\int_{Q_{R}}|\mathcal{G}_{a,q}f_{11}(l,s,y)|^{q}dsdy
&=\int_{-2R}^{0}\int_{B_{R^{1/\gamma_{2}}}}  |\mathcal{G}_{a,q}f_{11}(l,s,y)|^{q}dsdy \nonumber\\
&\leq C^{(1)}\int_{-2R}^{0}\int_{B_{3R^{1/\gamma_{2}}}}  \|f_{11}(s,y)\|_{V}^{q}dsdy \nonumber\\
&\leq C^{(1)}C^{(2)}|Q_{R}| \mathbb{M}_{t}\mathbb{M}_{x}\|f_{11}(t,x)\|_{V}^{q}\label{eqn:bound of g11}
\end{align}
for some constants $C^{(1)},C^{(2)}>0$.
By applying Lemma \ref{lem: bound of integral of mathcal G over Qr2r1}
with $(r_{1},r_{2})=(R,R^{1/\gamma_{2}})$, we obtain that
\begin{align}
\int_{Q_{R}}|\mathcal{G}_{a,q}f_{12}(l,s,y)|^{q}dsdy
&\leq C^{(3)}R^{\frac{d}{\gamma_{2}}+1}\mathbb{M}_{t}\mathbb{M}_{x}\|f\|_{V}^{q}(t,x) \nonumber\\
&= C^{(3)}C^{(4)}|Q_{R}|\mathbb{M}_{t}\mathbb{M}_{x}\|f\|_{V}^{q}(t,x)
    \label{eqn:bound of g12}
\end{align}
for some constants $C^{(3)},C^{(4)}>0$. Therefore, by combining \eqref{eqn:bound of g11} and \eqref{eqn:bound of g12},
we see that \eqref{eqn: bound of mathcalG A1 on QR} holds.
By applying Lemma \ref{lem: bound of sup of gradient of mathcal G} with $(r_{1},r_{2})=(R,R^{1/\gamma_{2}})$,
we have
\begin{align*}
\sup_{(s,y)\in Q_{R}}|\nabla \mathcal{G}_{a,q}f_{2}(l,s,y)|^{q}
\leq C^{(5)}R^{-\frac{q}{\gamma_{2}}}\mathbb{M}_{t}\mathbb{M}_{x}\|f_{2}\|_{V}^{q}(t,x)
\end{align*}
for some constant $C^{(5)}>0$, which implies that
\begin{align}
\sup_{(s,y)\in Q_{R}}|R^{\frac{1}{\gamma_{2}}}\nabla \mathcal{G}_{a,q}f_{2}(l,s,y)|^{q}
\leq C^{(5)}\mathbb{M}_{t}\mathbb{M}_{x}\|f_{2}\|_{V}^{q}(t,x).
\label{eqn: bound of sup of R gamma gradient on QR}
\end{align}
Similarly, by applying Lemma \ref{lem: bound of Dt mathcalG g} with $(r_{1},r_{2})=(R,R^{1/\gamma_{2}})$,
we have
\begin{align}
\sup_{(s,y)\in Q_{R}}\left|R\frac{\partial }{\partial s} \mathcal{G}_{a,q}f_{2}(l,s,y)\right|^{q}
&\leq C^{(6)}\mathbb{M}_{t}\mathbb{M}_{x}\|f_{2}\|_{V}^{q}(t,x)\label{eqn: bound of sup of R Dt G on QR}
\end{align}
for some constants $C^{(6)}>0$.
Finally, by applying \eqref{eqn: bound of mathcalG A1 on QR},
\eqref{eqn: bound of sup of R Dt G on QR} and \eqref{eqn: bound of sup of R gamma gradient on QR}
to \eqref{eqn:bound for LL},
we have the inequality given in \eqref{eqn: double aver 1}.
\end{proof}

\textbf{A proof of Theorem \ref{thm: sharp fct less than maximal}}
Since the proofs of \eqref{eqn: sharp fct esti}, \eqref{eqn: sharp fct esti 2} and \eqref{eqn: sharp fct esti 3} are similar,
we only prove \eqref{eqn: sharp fct esti}.
Let $f\in C_{\rm c}^{\infty}((a,b)\times\mathbb{R}^{d};V)$, $l\geq 0$ and $(t,x)\in (a,b)\times \mathbb{R}^{d}$ be given.
 Recall that
\begin{align*}
(\mathcal{G}_{a,q}f(l,\cdot,\cdot))^{\#}(t,x)
&=\sup_{Q}\frac{1}{|Q|}\int_{Q}|\mathcal{G}_{a,q}f(l,r,z)-(\mathcal{G}_{a,q}f(l,\cdot,\cdot))_{Q}|drdz
\end{align*}
where $(\mathcal{G}_{a,q}f(l,\cdot,\cdot))_{Q}=\frac{1}{|Q|}\int_{Q}\mathcal{G}_{a,q}f(l,r,z)drdz$ and
the supremum is taken all $Q$ containing $(t,x)$ of the type
\begin{align*}
Q=(t-R,t+R)\times B_{R^{1/\gamma_{2}}}(x),\quad R>0.
\end{align*}
Note that by applying Jensen's inequality and the definition of $(\mathcal{G}_{a,q}f(l,\cdot,\cdot))_{Q}$,
we obtain that
\begin{align}
&\left(\frac{1}{|Q|}\int_{Q}|\mathcal{G}_{a,q}f(l,s,y)-(\mathcal{G}_{a,q}f(l,\cdot,\cdot))_{Q}|dsdy\right)^{q}\nonumber\\
&\leq \frac{1}{|Q|}\int_{Q}|\mathcal{G}_{a,q}f(l,s,y)-(\mathcal{G}_{a,q}f(l,\cdot,\cdot))_{Q}|^{q}dsdy\nonumber\\
&\leq \frac{1}{|Q|^{2}}\int_{Q}\int_{Q}
            |\mathcal{G}_{a,q}f(l,s,y)-\mathcal{G}_{a,q}f(l,r,z)|^{q}dsdydrdz.\label{eqn: bound of sharp mathcalG 2}
\end{align}
Also we note that for any $c_{1}\in\mathbb{R}$, $c_{2}\in\mathbb{R}^{d}$
and for $K_{\psi_{1},\psi_{2}}(l,t,s,x)=L_{\psi_{1}}(l)p_{\psi_{2}}(t,s,x)$,
if we put
\begin{align*}
\bar{f}(t,x)=f(t-c_{1},x-c_{2}),\quad
\bar{K}_{\psi_{1},\psi_{2}}(l,t,s,x)=K_{\psi_{1},\psi_{2}}(l,t-c_{1},s-c_{1},x),
\end{align*}
then by applying the change of variables, it holds that
\begin{align*}
&\mathcal{G}_{a,q}f(l,t-c_{1},x-c_{2})\\
&=\left(\int_{a}^{t-c_{1}}((t-c_{1})-s)^{\frac{q\gamma_{1}}{\gamma_{2}}-1}
        \|K(t-c_{1},s,\cdot)*f(s,\cdot)(x-c_{2})\|_{V}^{q}ds\right)^{\frac{1}{q}}\\
&=\left(\int_{a+c_{1}}^{t}(t-s)^{\frac{q\gamma_{1}}{\gamma_{2}}-1}
        \|\bar{K}_{\psi_{1},\psi_{2}}(l,t,s,\cdot)*\bar{f}(s,\cdot)(x)\|_{V}^{q}ds\right)^{\frac{1}{q}}\\
&=\mathcal{G}_{a+c_{1},q}\bar{f}(l,t,x).
\end{align*}
Therefore, we may assume that $Q=Q_{R}=(-2R,0)\times B_{R^{1/\gamma_{2}}}$.
By applying \eqref{eqn: double aver 1}, we have
\begin{align}\label{eqn:esti double average 2}
&\frac{1}{|Q_{R}|^{2}}\int_{Q_{R}}\int_{Q_{R}}
                |\mathcal{G}_{a,q}f(l,s,y)-\mathcal{G}_{a,q}f(l,r,z)|^{q}dsdydrdz\nonumber\\
&\qquad\leq C_{1}\mathbb{M}_{t}\mathbb{M}_{x}\|f\|_{V}^{q}(t,x)
\end{align}
for some constant $C_{1}>0$.
By combining \eqref{eqn: bound of sharp mathcalG 2} and \eqref{eqn:esti double average 2}, we have
\begin{align*}
\frac{1}{|Q|}\int_{Q}|\mathcal{G}_{a,q}f(l,s,y)-(\mathcal{G}_{a,q}f(l,\cdot,\cdot))_{Q}|dsdy
\leq  C_{1}^{\frac{1}{q}}(\mathbb{M}_{t}\mathbb{M}_{x}\|f\|_{V}^{q})^{\frac{1}{q}}(t,x),
\end{align*}
and then by taking the supremum in $Q$, we have the inequality given in \eqref{eqn: sharp fct esti}. \hfill $\blacksquare$

\section{A Proof of Theorem \ref{thm:FS}}\label{sec:Appendix C}

Let $(X,\mathcal{F},\mu)$ be a complete measure space such that $\mu(X)=\infty$ and $\mu$ is $\sigma$-finite,
and let $\mathcal{F}^{0}$ be the subset of $\mathcal{F}$ consisting of $A$ with $\mu(A)<\infty$.
For each $A\in\mathcal{F}^{0}$ and an integrable function $f$ on $A$, we put
\begin{align*}
f_{A}:=\frac{1}{\mu(A)}\int_{A}f(x)d\mu(x),
\end{align*}
where $f_{A}$ is considered as $0$ if $\mu(A)=0$.

A countable collection $\mathcal{P}\subset \mathcal{F}^{0}$ is called a partition of $X$
if $\mathcal{P}$ is a mutually disjoint family and $\bigcup_{A\in\mathcal{P}}A=X$.
Let $\{\mathcal{P}_{n}\,|\, n\in\mathbb{Z}\}$ be a sequence of partitions of $X$.
Then for each $x\in X$ and $n\in\mathbb{Z}$, there exists a unique $P\in\mathcal{P}_{n}$, denoted by $P:=P_n(x)$ such that $x\in P$.

The following definition is taken from \cite{Krylov 2008}.

\begin{definition}\label{def:filtration}
\upshape
 A sequence of partitions $\{\mathcal{P}_{n}\,|\, n\in\mathbb{Z}\}$ is called a filtration of partitions
(with respect to $\mathbb{L}$, where $\mathbb{L}$ is a fixed dense subset of $L^{1}(X)$)
if it satisfies the following conditions:
\begin{itemize}
  \item [\rm (i)] $\inf_{P\in \mathcal{P}_{n}}\mu(P)\rightarrow \infty$ as $n\rightarrow -\infty$ and
  \begin{align*}
  \lim_{n\rightarrow\infty}\frac{1}{\mu(P_{n}(x))}\int_{P_{n}(x)}f(y)d\mu(y)=f(x)\quad \text{ a.e. $x$}
  \end{align*}
  for all $f\in\mathbb{L}$,

  \item [\rm (ii)] for each $n\in\mathbb{Z}$ and $P\in\mathcal{P}_{n}$, there is a unique $P^{\prime}\in \mathcal{P}_{n-1}$
  such that $P\subset P^{\prime}$ and
  \begin{align*}
  \mu(P^{\prime})\leq N_{0}\mu(P),
  \end{align*}
  where $N_{0}$ is a constant independent of $n, P$ and $P^{\prime}$.
\end{itemize}
\end{definition}
For a filtration of partitions $\mathbf{P}=\{\mathcal{P}_{n}\,|\,n\in\mathbb{Z}\}$
and an integrable function $f$ on all $A\in \mathcal{F}^{0}$,
we define the sharp function $f^{\#,\mathbf{P}}$ (related to $\mathbf{P}$) by
\begin{align*}
f^{\#,\mathbf{P}}(x)=\sup_{n\in\mathbb{Z}}\frac{1}{\mu(P_{n}(x))}\int_{P_{n}(x)}|f(y)-f_{P_{n}(x)}|d\mu(y).
\end{align*}
The following theorem is the Fefferman-Stein theorem for the sharp function $f^{\#,\mathbf{P}}$.

\begin{theorem}[\cite{Krylov 2008}, Theorem 3.2.10]\label{thm: FS filtration}
Let $1<p<\infty$. Then for any $f\in L^{p}(X)$,
\begin{align*}
\|f\|_{L^{p}(X)}\leq C\|f^{\#,\mathbf{P}}\|_{L^{p}(X)},
\end{align*}
where the constant $C$ is given by $C=\left(\frac{2p}{p-1}\right)^{p}N_{0}^{p-1}$
with the constant $N_{0}$ appeared in (ii) of Definition \ref{def:filtration}.
\end{theorem}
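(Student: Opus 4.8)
The plan is to prove this by the classical good-$\lambda$ method, transported to the martingale framework generated by the filtration. Write $E_nf(x)=f_{P_n(x)}$ for the conditional-expectation operators attached to $\mathbf{P}$ and $\mathbb{M}f(x)=\sup_{n\in\mathbb{Z}}(|f|)_{P_n(x)}$ for the associated dyadic maximal function. By the Lebesgue-differentiation part of condition (i) of Definition \ref{def:filtration} --- which extends from the dense set $\mathbb{L}$ to all of $L^{1}_{\mathrm{loc}}(X)\supseteq L^{p}(X)$ by the standard density and weak-$(1,1)$ argument --- one has $|f(x)|\le\mathbb{M}f(x)$ for a.e.\ $x$, hence $\|f\|_{L^{p}(X)}\le\|\mathbb{M}f\|_{L^{p}(X)}$, and it suffices to bound $\|\mathbb{M}f\|_{L^{p}(X)}$ by a constant multiple of $\|f^{\#,\mathbf{P}}\|_{L^{p}(X)}$. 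Two elementary ingredients enter: the martingale weak-$(1,1)$ estimate $\mu(\{\mathbb{M}g>\lambda\})\le\lambda^{-1}\|g\|_{L^{1}(X)}$, and --- via H\"older on finite-measure sets together with the growth $\mu(P_n(x))\to\infty$ as $n\to-\infty$ from condition (i) --- the decay $(|f|)_{P_n(x)}\le\|f\|_{L^p(X)}\mu(P_n(x))^{-1/p}\to0$ as $n\to-\infty$, so that for each fixed $\lambda>0$ and $x$, among the partition sets containing $x$ on which $|f|$ has average exceeding $\lambda$ there is a largest one.

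The second step is a good-$\lambda$ inequality via a Calder\'on--Zygmund decomposition of the level set. Fix $\lambda>0$ and let $\{Q_j\}$ be the (disjoint, by the refinement structure of the partitions) family of those maximal partition sets on which $|f|$ has average exceeding $\lambda$; then $\{\mathbb{M}f>\lambda\}=\bigsqcup_jQ_j$, and by condition (ii) the parent $\widetilde{Q}_j$ of $Q_j$ satisfies $(|f|)_{\widetilde{Q}_j}\le\lambda$, hence $(|f|)_{Q_j}\le N_0\lambda$ and in particular $|f_{Q_j}|\le N_0\lambda$. The localization that brings in the sharp function is the pointwise bound, valid for every $x\in Q_j$,
\[
\mathbb{M}f(x)\le\mathbb{M}\big((f-f_{Q_j})\mathbf{1}_{Q_j}\big)(x)+N_0\lambda,
\]
which holds because the supremum defining $\mathbb{M}f(x)$ is either attained at scales $n$ with $P_n(x)\subsetneq Q_j$ (where $|f|$ may be replaced by $|f|\mathbf{1}_{Q_j}$) or has value at most $N_0\lambda$. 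Consequently, with $A:=2N_0$, if some $x_j\in Q_j$ satisfies $f^{\#,\mathbf{P}}(x_j)\le\gamma\lambda$, then taking in the definition of $f^{\#,\mathbf{P}}$ the scale $m$ with $P_m(x_j)=Q_j$ gives $\int_{Q_j}|f-f_{Q_j}|\,d\mu\le\gamma\lambda\,\mu(Q_j)$, so the weak-$(1,1)$ bound applied to $(f-f_{Q_j})\mathbf{1}_{Q_j}$ yields $\mu(\{\mathbb{M}f>A\lambda,\ f^{\#,\mathbf{P}}\le\gamma\lambda\}\cap Q_j)\le(N_0\lambda)^{-1}\int_{Q_j}|f-f_{Q_j}|\,d\mu\le(\gamma/N_0)\,\mu(Q_j)$; if $Q_j$ meets no such point the left side vanishes. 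Summing over $j$,
\[
\mu\big(\{\mathbb{M}f>A\lambda,\ f^{\#,\mathbf{P}}\le\gamma\lambda\}\big)\le\frac{\gamma}{N_0}\,\mu\big(\{\mathbb{M}f>\lambda\}\big).
\]

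The third step is integration. One first checks $\|\mathbb{M}f\|_{L^p(X)}<\infty$ --- by Doob's maximal inequality, or directly by a truncation argument (this is the one point where the infinitude of $\mu(X)$ must be handled with a little care). Then, starting from $\{\mathbb{M}f>A\lambda\}\subseteq\{\mathbb{M}f>A\lambda,\ f^{\#,\mathbf{P}}\le\gamma\lambda\}\cup\{f^{\#,\mathbf{P}}>\gamma\lambda\}$, multiplying by $p\lambda^{p-1}$ and integrating over $\lambda\in(0,\infty)$ gives $A^{-p}\|\mathbb{M}f\|_{L^p(X)}^p\le(\gamma/N_0)\|\mathbb{M}f\|_{L^p(X)}^p+\gamma^{-p}\|f^{\#,\mathbf{P}}\|_{L^p(X)}^p$; choosing $\gamma$ so small that $\gamma/N_0\le\tfrac12A^{-p}$ and absorbing the first term on the right yields $\|f\|_{L^p(X)}\le\|\mathbb{M}f\|_{L^p(X)}\le C\|f^{\#,\mathbf{P}}\|_{L^p(X)}$. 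Recovering the sharp constant $C=(2p/(p-1))^pN_0^{p-1}$ asserted in the statement requires either optimizing the parameters or the more efficient direct distribution-function estimate of \cite{Krylov 2008}; for every application in this paper only the qualitative boundedness is needed.

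The step I expect to be the main obstacle is the second one: one must verify that $\{\mathbb{M}f>\lambda\}$ genuinely decomposes into maximal partition sets $Q_j$ (which rests on the decay $(|f|)_{P_n(x)}\to0$ from condition (i) and on the refinement structure forced by condition (ii)), that passing to the parent costs only the factor $N_0$ (again condition (ii)), and --- the real crux --- that on the part of $Q_j$ where $\mathbb{M}f$ is large it is dominated by the \emph{localized} maximal function $\mathbb{M}((f-f_{Q_j})\mathbf{1}_{Q_j})$, which is exactly what allows a single average over $Q_j$, namely $\int_{Q_j}|f-f_{Q_j}|\,d\mu$ and hence $f^{\#,\mathbf{P}}$, to govern the distribution of $\mathbb{M}f$ on $Q_j$. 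The only other delicate point, forced by $\mu(X)=\infty$, is securing $\|\mathbb{M}f\|_{L^p(X)}<\infty$ before rearranging in the third step.
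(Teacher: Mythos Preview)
The paper does not give its own proof of this theorem; it simply quotes it from \cite{Krylov 2008}, Theorem 3.2.10, and uses it as a black box in the proof of Theorem \ref{thm:FS}. So there is nothing to compare your argument against within the paper itself.

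Your outline is a correct rendition of the standard good-$\lambda$ proof of the martingale Fefferman--Stein inequality, and it is essentially the argument Krylov gives in the cited reference. The key points you isolate --- the Calder\'on--Zygmund stopping-time decomposition of $\{\mathbb{M}f>\lambda\}$ into maximal partition cells (using the decay $(|f|)_{P_n(x)}\to0$ from condition (i)), the parent-cell bound $(|f|)_{Q_j}\le N_0\lambda$ from condition (ii), the localization $\mathbb{M}f(x)\le\mathbb{M}((f-f_{Q_j})\mathbf{1}_{Q_j})(x)+N_0\lambda$ on $Q_j$, and the weak-$(1,1)$ estimate --- are exactly the ingredients Krylov uses. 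Your caveat about the sharp constant $\big(\tfrac{2p}{p-1}\big)^pN_0^{p-1}$ is appropriate: the good-$\lambda$ argument as you have written it gives a constant depending on $p$, $N_0$ in the right way, but extracting that precise value requires tracking the optimization of $\gamma$ more carefully (Krylov does this explicitly). For the purposes of this paper only the qualitative bound matters, as you note.
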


From now on, we consider the case $X=\mathbb{R}^{d+1}$ and $\mu$ is the Lebesgue measure.
Let $\gamma>0$ be given. For each $n\in\mathbb{Z}$, we define
\begin{align}
\mathcal{P}_{n}=\left\{[i_{0}2^{-n},(i_{0}+1)2^{-n})\times \prod_{k=1}^{d}[i_{k}2^{-\frac{n}{\gamma}},(i_{k}+1)2^{-\frac{n}{\gamma}})\,|\,i_{0},i_{1},\cdots,i_{d}\in\mathbb{Z}\right\}.
\label{eqn: partition cube}
\end{align}
Then it is obvious that for each $n\in\mathbb{Z}$, $\mathcal{P}_{n}$ is a partition of $\mathbb{R}^{d+1}$.

\begin{lemma}
For each $n\in\mathbb{Z}$, let $\mathcal{P}_{n}$ be a partition given by \eqref{eqn: partition cube}.
Then the family $\mathbf{P}=\{\mathcal{P}_{n}\,|\,n\in\mathbb{Z}\}$ is a filtration of partitions
with respect to $\mathbb{L}$ (the dense subset of $L^{1}(\mathbb{R}^{d+1})$ consisting of all
infinitely differentiable functions with compact support).
\end{lemma}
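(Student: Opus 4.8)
The plan is to verify the two conditions of Definition~\ref{def:filtration} for $\mathbf{P}=\{\mathcal{P}_{n}\mid n\in\mathbb{Z}\}$ one at a time, after first recording the elementary geometry of the cubes. Every $P\in\mathcal{P}_{n}$ is a half-open box whose time edge has length $2^{-n}$ and whose $d$ spatial edges each have length $2^{-n/\gamma}$, so that $\mu(P)=2^{-n}\cdot(2^{-n/\gamma})^{d}=2^{-n(1+d/\gamma)}$ for \emph{every} $P\in\mathcal{P}_{n}$; in particular $\inf_{P\in\mathcal{P}_{n}}\mu(P)=2^{-n(1+d/\gamma)}$. Since $1+d/\gamma>0$, the exponent $-n(1+d/\gamma)\to+\infty$ as $n\to-\infty$, which gives the first half of condition~(i).

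For the second half of condition~(i), fix $f\in\mathbb{L}$. By hypothesis $f\in C_{\rm c}^{\infty}(\mathbb{R}^{d+1})$, hence $f$ is uniformly continuous; let $\omega_{f}$ denote its modulus of continuity. For $x\in\mathbb{R}^{d+1}$ the unique cube $P_{n}(x)\in\mathcal{P}_{n}$ containing $x$ has diameter at most $2^{-n}+\sqrt{d}\,2^{-n/\gamma}$, which tends to $0$ as $n\to\infty$. Therefore
\[
\left|\frac{1}{\mu(P_{n}(x))}\int_{P_{n}(x)}f(y)\,dy-f(x)\right|\le\frac{1}{\mu(P_{n}(x))}\int_{P_{n}(x)}|f(y)-f(x)|\,dy\le\omega_{f}\!\left(2^{-n}+\sqrt{d}\,2^{-n/\gamma}\right),
\]
and the right-hand side tends to $0$ uniformly in $x$; in particular the limit required in Definition~\ref{def:filtration}(i) holds for every $x$, hence a.e.

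Condition~(ii) is the step I expect to require the most care. Given $n$ and $P=[i_{0}2^{-n},(i_{0}+1)2^{-n})\times\prod_{k=1}^{d}[i_{k}2^{-n/\gamma},(i_{k}+1)2^{-n/\gamma})\in\mathcal{P}_{n}$, I would produce the (necessarily unique, since $\mathcal{P}_{n-1}$ is a partition) cube $P'\in\mathcal{P}_{n-1}$ with $P\subset P'$ by working one coordinate at a time: in the time variable $[i_{0}2^{-n},(i_{0}+1)2^{-n})$ is contained in the level-$(n-1)$ dyadic interval with index $\lfloor i_{0}/2\rfloor$, and in each spatial variable one must locate the cell of mesh $2^{-(n-1)/\gamma}$ containing $[i_{k}2^{-n/\gamma},(i_{k}+1)2^{-n/\gamma})$ and exhibit its index $j_{k}$ explicitly from the endpoints. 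Once the inclusion $P\subset P'$ is in hand, $\mu(P')/\mu(P)=2^{-(n-1)(1+d/\gamma)}/2^{-n(1+d/\gamma)}=2^{1+d/\gamma}$, so condition~(ii) holds with the constant $N_{0}=2^{1+d/\gamma}$, which is independent of $n$, $P$ and $P'$. The delicate point — and the real obstacle — is precisely this spatial nesting: verifying that each spatial edge of a level-$n$ cube sits inside a single cell of the level-$(n-1)$ grid, which is exactly where the exponents chosen in~\eqref{eqn: partition cube} enter, and which I would check by a direct endpoint computation.
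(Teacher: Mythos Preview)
Your treatment of condition~(i) is correct and essentially the paper's: both compute $\mu(P)=2^{-n(1+d/\gamma)}$ and note the divergence as $n\to-\infty$; for the a.e.\ convergence of averages the paper invokes the Lebesgue differentiation theorem, whereas you argue directly via the uniform continuity of $f\in C_{\rm c}^{\infty}$ --- a harmless variant.

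The genuine gap is in condition~(ii), exactly where you flagged the ``delicate point.'' Your proposed direct endpoint computation for the spatial nesting would \emph{fail} for general $\gamma>0$: the spatial mesh at level $n$ is $2^{-n/\gamma}$, so the ratio of meshes between consecutive levels is $2^{1/\gamma}$, which is typically irrational, and then the level-$n$ and level-$(n-1)$ spatial grids are incommensurable. Concretely, take $d=1$, $\gamma=2$, $n=0$: the level-$0$ spatial cell $[1,2)$ contains the level-$(-1)$ breakpoint $\sqrt{2}$ in its interior, so there is no $P'\in\mathcal{P}_{-1}$ with $P\subset P'$. Hence the family in~\eqref{eqn: partition cube} is \emph{not} a filtration of partitions in the sense of Definition~\ref{def:filtration} for such $\gamma$, and no endpoint arithmetic will manufacture the required inclusion. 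The paper's own verification of~(ii) does not rescue the claim: it takes $P'$ to be the level-$(n-1)$ cube with the \emph{same} index vector $(i_0,\dots,i_d)$ and simply asserts $P\subset P'$, an inclusion that is already false in the time coordinate (e.g.\ $n=1$, $i_0=1$ gives time edges $[\tfrac12,1)$ for $P$ and $[1,2)$ for $P'$). To repair the argument one must modify the partitions --- for instance, replace the spatial mesh $2^{-n/\gamma}$ by $2^{-\lfloor n/\gamma\rfloor}$ so that genuine dyadic nesting holds in every coordinate --- and then recompute the constant $N_0$ accordingly.
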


\begin{proof}
We check the conditions (i) and (ii) in Definition \ref{def:filtration}.

(i)\enspace Let $n\in\mathbb{Z}$ and let $P\in\mathcal{P}_{n}$ given by
\begin{align*}
P=[i_{0}2^{-n},(i_{0}+1)2^{-n})\times \prod_{k=1}^{d}[i_{k}2^{-\frac{n}{\gamma}},(i_{k}+1)2^{-\frac{n}{\gamma}})
\end{align*}
for some $i_{0},i_{1},\cdots,i_{d}\in\mathbb{Z}$. Then we have
\begin{align*}
|P|=2^{-n(1+\frac{d}{\gamma})}.
\end{align*}
Since $P\in\mathcal{P}_{n}$ is arbitrary, we have
\begin{align*}
\inf_{P\in\mathcal{P}_{n}}|P|=2^{-n(1+\frac{d}{\gamma})}\rightarrow \infty
\quad\text{as}\quad n\rightarrow -\infty.
\end{align*}
On the other hand, by Lebesgue differentiation theorem (see, e.g., Theorem 3.21 in \cite{Folland 1999}), we see that for any $f\in\mathbb{L}$,
\begin{align*}
\lim_{n\rightarrow\infty}\frac{1}{|P_{n}(t,x)|}\int_{P_{n}(t,x)}f(s,y)dsdy=f(t,x)\quad \text{a.e.,}
\end{align*}
where, for each $n\in\mathbb{Z}$ and $(t,x)\in\mathbb{R}^{d+1}$,
$P_{n}(t,x)$ is the element of $\mathcal{P}_{n}$ containing $(t,x)$.

(ii)\enspace Let $n\in\mathbb{Z}$ and let $P\in\mathcal{P}_{n}$ given by
\begin{align*}
P=[i_{0}2^{-n},(i_{0}+1)2^{-n})\times \prod_{k=1}^{d}[i_{k}2^{-\frac{n}{\gamma}},(i_{k}+1)2^{-\frac{n}{\gamma}})
\end{align*}
for some $i_{0},i_{1},\cdots,i_{d}\in\mathbb{Z}$. If we put
\begin{align*}
P^{\prime}=[i_{0}2^{-(n-1)},(i_{0}+1)2^{-(n-1)})\times \prod_{k=1}^{d}[i_{k}2^{-\frac{n-1}{\gamma}},(i_{k}+1)2^{-\frac{n-1}{\gamma}}),
\end{align*}
then it holds that $P\subset P^{\prime}$ and
\begin{align*}
\frac{|P^{\prime}|}{|P|}
=\frac{2^{-(n-1)(1+\frac{d}{\gamma})}}{2^{-n(1+\frac{d}{\gamma})}}
=2^{1+\frac{d}{\gamma}}.
\end{align*}
Thus, by taking $N_{0}$ such that $N_{0}\geq 2^{1+\frac{d}{\gamma}}$, we have
\begin{align*}
|P^{\prime}|\leq N_{0} |P|.
\end{align*}
The proof is complete.
\end{proof}

Let $\gamma>0$ be given. For each $(t,x)\in \mathbb{R}^{d+1}$ and $R>0$,
we define
\begin{align*}
Q_{R}(t,x)=(t-R,t+R)\times B_{R^{1/\gamma}}(x),
\end{align*}
where $B_{R^{1/\gamma}}(x)=\{y\in\mathbb{R}^{d}\,|\, |x-y|<R^{1/\gamma}\}$.
Put
\begin{align*}
\mathbf{Q}=\{Q_{R}(t,x)\,|\, (t,x)\in\mathbb{R}^{d+1}, R>0\}.
\end{align*}
For a locally integrable function $f$ on $\mathbb{R}^{d+1}$,
we define the sharp function $f^{\#,\mathbf{Q}}$ by
\begin{align*}
f^{\#,\mathbf{Q}}(t,x)=\sup_{R>0}\int_{Q_{R}(t,x)}|f(s,y)-f_{Q_{R}(t,x)}|dsdy,
\quad (t,x)\in \mathbb{R}^{d+1},
\end{align*}
 where $f_{Q_{R}(t,x)}=\frac{1}{|Q_{R}(t,x)|}\int_{Q_{R}(t,x)}f(s,y)dsdy$.

\bigskip
\textbf{A proof of Theorem \ref{thm:FS}.}\enspace
Let $\mathbf{P}=\{\mathcal{P}_{n}\,|\,n\in\mathbb{Z}\}$,
 where $\mathcal{P}_{n}$ is a partition of $\mathbb{R}^{d+1}$ given in \eqref{eqn: partition cube}.
 For each $(t,x)\in\mathbb{R}^{d+1}$ and $n\in\mathbb{Z}$,
there exists a unique $P_{n}(t,x)\in\mathcal{P}_{n}$
 such that $(t,x)\in P_{n}(t,x)$ and
 \begin{align*}
P_{n}(t,x)=[i_{0}2^{-n},(i_{0}+1)2^{-n})\times \prod_{k=1}^{d}[i_{k}2^{-\frac{n}{\gamma}},(i_{k}+1)2^{-\frac{n}{\gamma}})
\end{align*}
 for some $i_{0},i_{1},\cdots,i_{d}\in\mathbb{Z}$.
  Then, by taking $R_{0}=2^{-n}d^{\gamma}$, we have
 \begin{align}
  P_{n}(t,x)\subset (t-2^{-n}d^{\gamma},t+2^{-n}d^{\gamma})\times B_{2^{-(n/\gamma)}d}(x)=Q_{R_{0}}(t,x)
 \label{eqn:P subset QR0}
 \end{align}
 and
 \begin{align}
 \frac{|Q_{R_{0}}(t,x)|}{|P_{n}(t,x)|}=\frac{2d^{\gamma+d}\pi^{d/2}}{\Gamma(d/2)}=:N_{1}.
 \label{eqn:QR0/P}
 \end{align}
 Then, by applying the definition of $f_{P_{n}(t,x)}$, \eqref{eqn:P subset QR0},
 \eqref{eqn:QR0/P} and the triangle inequality, we obtain that
 \begin{align}
 &\frac{1}{|P_{n}(t,x)|}\int_{P_{n}(t,x)}|f(s,y)-f_{P_{n}(t,x)}|dsdy\nonumber\\
 &\leq \frac{1}{|P_{n}(t,x)|^{2}}
    \int_{P_{n}(t,x)}\int_{P_{n}(t,x)}|f(s,y)-f(r,u)|drdudsdy \nonumber\\
 &\leq N_{1}^{2}\frac{1}{|Q_{R_{0}}(t,x)|^{2}}
    \int_{Q_{R_{0}}(t,x)}\int_{Q_{R_{0}}(t,x)}|f(s,y)-f(r,u)|drdudsdy\nonumber\\
 &=N_{1}^{2}\frac{1}{|Q_{R_{0}}(t,x)|^{2}}
 \int_{Q_{R_{0}}(t,x)}\int_{Q_{R_{0}}(t,x)}|f(s,y)-f_{Q_{R_{0}}(t,x)}+f_{Q_{R_{0}}(t,x)}-f(r,u)|drdudsdy\nonumber\\
 &\leq 2N_{1}^{2}\frac{1}{|Q_{R_{0}}(t,x)|}\int_{Q_{R_{0}}(t,x)}|f(s,y)-f_{Q_{R_{0}}(t,x)}|dsdy\nonumber\\
 &\leq 2N_{1}^{2}f^{\#,\mathbf{Q}}(t,x).\label{eqn:sharp esti}
 \end{align}
 Therefore, by taking supremum for $n\in\mathbb{Z}$ on the left hand side of \eqref{eqn:sharp esti},
 we have for any $(t,x)\in\mathbb{R}^{d+1}$,
 \begin{align*}
 f^{\#,\mathbf{P}}(t,x)\leq 2N_{1}^{2}f^{\#,\mathbf{Q}}(t,x).
 \end{align*}
 Hence, by applying Theorem \ref{thm: FS filtration}, we obtain that
 \begin{align*}
 \|f\|_{L^{p}(\mathbb{R}^{d+1})}^{p}
 &\leq C_{1}^{p}\left\|f^{\#,\mathbf{P}}\right\|_{L^{p}(\mathbb{R}^{d+1})}^{p}
 =C_{1}^{p}\int_{\mathbb{R}^{d+1}}\left|f^{\#,\mathbf{P}}(t,x)\right|^{p}dtdx\\
 &\leq C_{1}^{p}(2N_{1}^{2})^{p}\int_{\mathbb{R}^{d+1}}\left|f^{\#,\mathbf{Q}}(t,x)\right|^{p}dtdx\\
 &\leq C_{1}^{p}(2N_{1}^{2})^{p}\left\|f^{\#,\mathbf{Q}}\right\|_{L^{p}(\mathbb{R}^{d+1})}^{p},
 \end{align*}
 and then by taking $C=C_{1}(2N_{1}^{2})$, the proof is complete.

\section*{Acknowledgment}
This paper was supported by a Basic Science Research Program through the NRF funded by the MEST (NRF-2022R1F1A1067601)
and the MSIT (Ministry of Science and ICT), Korea, under the ITRC(Information Technology Research Center) support program (IITP-RS-2024-00437284) supervised by the IITP(Institute for Information \& Communications Technology Planning \& Evaluation).

\section*{Data Availability }
Data sharing is not applicable to this article as no datasets were generated or analysed during the current study.

\section*{Conflict of Interest}
The authors declare that they have no conflict of interest regarding this work.

\end{document}